\documentclass[10pt,reqno]{amsart}
\usepackage[final]{optional}
\usepackage[british,american]{babel}
\usepackage{bbm}
\usepackage{mathrsfs}
\usepackage{amsfonts, amsmath, wasysym,nccmath}

\usepackage{amssymb,amsthm,paralist}
\usepackage{latexsym,nicefrac}
\usepackage[usenames]{color}

\usepackage{url}

\definecolor{darkgreen}{rgb}{0,0.5,0}
\definecolor{darkred}{rgb}{0.7,0,0}
\usepackage[colorlinks, citecolor=darkgreen, linkcolor=darkred]{hyperref}
\usepackage{esint}
\usepackage{bibgerm}
\usepackage[normalem]{ulem}

\theoremstyle{plain}
\newtheorem{lemma}{Lemma}[section]
\newtheorem{thm}[lemma]{Theorem}
\newtheorem{prop}[lemma]{Proposition}
\newtheorem{cor}[lemma]{Corollary}

\theoremstyle{definition}

\newcommand{\ptheta}{\partial_\theta}
\newcommand{\pnu}{\partial_\nu}
\newcommand{\pnug}{\partial_{\nu_g}}

\newtheorem{rmk}[lemma]{Remark}

\numberwithin{equation}{section}

\newcommand{\al}{\alpha}

\newcommand{\de}{\delta}

\newcommand{\Om}{\Omega}

\newcommand{\La}{\Lambda}

\newcommand{\si}{\sigma}

\newcommand{\Si}{\Sigma}

\newcommand{\vph}{\varphi}
\newcommand{\vphi}{\varphi}

\newcommand{\ep}{\varepsilon}

\newcommand{\peps}{\partial_{\eps}}

\newcommand{\De}{\Delta}

\newcommand{\R}{\ensuremath{{\mathbb R}}}
\newcommand{\N}{\ensuremath{{\mathbb N}}}

\newcommand{\upto}{\uparrow}
\newcommand{\embed}{\hookrightarrow}

\newcommand{\grad}{\nabla}

\DeclareMathOperator{\inj}{inj}

\newcommand{\norm}[1]{\Vert#1\Vert} 
\DeclareMathOperator{\Div}{div}

\newcommand{\beq}{\begin{equation}}
\newcommand{\eeq}{\end{equation}}
\newcommand{\beqs}{\begin{equation*}}
\newcommand{\eeqs}{\end{equation*}}
\newcommand{\beqa}{\begin{equation}\begin{aligned}}
\newcommand{\eeqa}{\end{aligned}\end{equation}}
\newcommand{\beqas}{\begin{equation*}\begin{aligned}}
\newcommand{\eeqas}{\end{aligned}\end{equation*}}
\newcommand{\brmk}{\begin{rmk}}
\newcommand{\ermk}{\end{rmk}}
\newcommand{\partref}[1]{\hbox{(\csname @roman\endcsname{\ref{#1}})}}
\newcommand{\half}{\frac{1}{2}}
\newcommand{\mhalf}{\mfrac{1}{2}}
\newcommand{\thalf}{\tfrac{1}{2}}
\newcommand{\dd}{\mathrm{d}} 
\newcommand{\Id}{\text{Id}}

\newcommand{\leqs}{\lesssim}

\newcommand{\pt}{\partial_t}
\newcommand{\ps}{\partial_s}

\newcommand{\M}{\ensuremath{{\mathcal M}}}

\newcommand{\abs}[1]{\vert#1\vert}

\newcommand{\eps}{\varepsilon}
\newcommand{\na}{\nabla}

\newcommand{\dist}{\text{dist}}

\newcommand{\Col}{\mathcal C}
\newcommand{\Cyl}{{\mathscr{C}}}

\newcommand{\Eh}{E_{\half}} 
\newcommand{\pSi}{{\partial \Sigma}}
\newcommand{\MM}{\mathcal{M}} 
\newcommand{\Rea}{\text{Re}}
\newcommand{\Ima}{\text{Im}}
\newcommand{\HH}{\mathcal{H}} 
\newcommand{\png}{{\partial_{\nu_g}}}
\newcommand{\UU}{{\mathcal U}} 
\newcommand{\Ecut}{E_\varphi}

\newcommand{\supp}{\text{supp}}
\newcommand{\DD}{\mathbb{D}}
\newcommand{\ptaug}{{\partial_{\tau_g}}}
\newcommand{\Eupp}{\hat E}

\newcommand{\hf}{\frac{1}{2}} 
\newcommand{\p}{\partial} 
\newcommand{\greq}{\geqslant} 
\newcommand{\subs}{\subseteq} 
\newcommand{\inner}[2]{\langle #1, #2\rangle} 
\newcommand{\la}{\lambda} 
\newcommand{\err}{\text{err}}
\newcommand{\peta}{\partial_\eta}
\newcommand{\ptau}{\partial_\tau}
\newcommand{\MMiota}{\MM_{\iota_0}}
\newcommand{\MMtwoiota}{\MM_{2\iota_0}}

\title{\sc Flowing to free boundary minimal surfaces}
\author{Melanie Rupflin, Michael Struwe and Christopher Wright}
\date{\today}
\begin{document}
\maketitle
\begin{abstract}
We introduce a flow that is designed to flow maps $u:\Si\to \R^n$ which map the boundary of a general domain surface $\Si$ into a given (not necessarily connected) submanifold $N\hookrightarrow \R^n$ towards a free boundary (branched) minimal immersion supported by $N$. In the case when $\Si$ is the unit disc $D$, this task can be achieved by means of the Plateau-flow introduced in the work \cite{S} of the second author. When $\Si\neq D$, however, also the conformal type of the domain metric plays a role and it no longer suffices to deform the trace of the given map into a half-harmonic map as in \cite{S}. In order to overcome this issue, here we combine ideas of the Plateau-flow from \cite{S} with ideas of the Teichm\"uller harmonic flow from \cite{Rupflin-Topping}, in order to flow both an initial map $u_0$ with trace $u_0\colon\pSi\to N$ and an initial domain metric $g_0$ in a way that produces, as time tends to infinity, a half-harmonic map from $\pSi$ into $N$ whose harmonic extension is conformal and hence is a (branched) minimal immersion.  
\end{abstract}

\section{Introduction}
\subsection{Half-harmonic maps}
Let $N\hookrightarrow \R^n$ be a smooth, not necessarily connected,  closed submanifold of any dimension and let  $\Si$ be a closed surface with boundary $\partial \Si\neq \emptyset$. Given a metric $g$ on $\Si$, following \cite{DaLio-Riviere} and \cite{DaLio-Pigati}, we define the half-energy $E_{\half}(u,g)$ of a function $u\in H^\half(\partial \Si; N)$ as the Dirichlet energy of its harmonic extension, i.e.
\beq\label{def:half-energy}
    E_{\half}(u,g):= E_g(u_g):=\half\int_{\Si}\abs{\dd u_g}_g^2\,\dd v_g.
\eeq
Here and in the following  $E_g$ is the standard Dirichlet energy with respect to the metric $g$ and $u_g\colon\Si\to\R^n$ denotes the harmonic extension with respect to the metric $g$ of a given map $u\in H^\half(\pSi; N)=H^\half((\partial \Si, \dd s_g);N)$, i.e. the unique function $u_g:\Si\to \R^n$ with $\De_g u_g=0$ and trace $u_g\vert_{\pSi}=u$.

We furthermore denote by $\pi:N_{\eta}\to N$ the nearest point projection onto $N$, which is well-defined and smooth on the tubular neighbourhood 
\beqs
    N_{\eta}:= \{y \in \R^n:\dist_{\R^n}(y,N) < \eta\}
\eeqs
of $N$ if $\eta>0$ is chosen small enough, and recall that for any $p\in N$ the orthogonal projection $P_p:\R^n\to T_pN$ of the ambient $\R^n$ onto the tangent space $T_pN$ to $N$ at $p$ can be equivalently written as $P_p=\dd\pi(p)$.

As in \cite{DaLio-Riviere} and \cite{DaLio-Pigati}, we call a map $u\in H^\half(\partial \Si; N)$ half-harmonic if it is a critical point of $\Eh(u,g)$ in the sense that $\tfrac{\dd}{\dd \ep}\big|_{\ep=0} E_{\half}(\pi(u+\eps v),g)=0$ for every $v\in H^\half(\pSi; \R^n)$. As 
\beq\label{eq:energy_var-map}
    \mfrac{\dd}{\dd \ep}\Big|_{\ep=0} E_{\half}(\pi(u+\eps v),g)=\int_{\pSi}\peps \pi(u+\eps v) |_{\ep=0} \cdot\png u_g \dd s_g =\int_{\pSi} v\cdot P_u(\png u_g)\dd s_g,
\eeq
where $\nu_g$ is the outwards unit normal of $(\Si,g)$ along $\pSi$, this is equivalent to asking that the harmonic extension $u_g$ of $u$ satisfies the equation 
\beq\label{eq:half-harmonic}
    P_u(\png u_g)=0\ \hbox{ on }\pSi.
\eeq
Half-harmonic maps are related to free boundary minimal surfaces supported by $N$, that is, parametrisations $u\colon\Si\to\R^n$ of minimal surfaces with trace $u(\pSi)\subset N$ that meet the supporting surface $N$ orthogonally. Recall that a map $u_g\colon\Si\to\R^n$ whose components are harmonic functions parametrises a (possibly branched) minimal surface if and only if $u_g$ is (weakly) conformal, i.e. so that $u_g^*g_{\R^N}=\rho^2 g$ for a function $\rho\geq 0$. Orthogonality being implied by \eqref{eq:half-harmonic}, the harmonic extension of a half-harmonic map $u\in H^\half(\partial\Si;N)$ thus parametrises a free boundary minimal surface whenever this extension is (weakly) conformal. (In the following we will always allow branch points.)

We note that if $\Si=D$, then as observed in \cite{DaLio-Martinazzi-Riviere}, \cite{DaLio-Pigati} and \cite{Millot-Sire}, the equation \eqref{eq:half-harmonic} is in fact a sufficient condition for weak conformality of the harmonic function $u_g$. While for surfaces $\Si$ of higher genus or higher connectivity this condition is not sufficient, we can exploit that maps $u_g\colon\Si\to\R^n$ are weakly conformal if and only if the half-energy is critical with respect to variations of the metric. This was shown by Da Lio and Pigati \cite{DaLio-Pigati} in the setting of the half-energy and of half-harmonic maps considered here. Since we will use the corresponding properties of the variation of the half-energy in our construction of the flow, we briefly recall how this key property can be obtained. 

To this end, we note that the first variation of the Dirichlet energy $E_g$ (for fixed metric $g$) along maps $v_\eps=u_{g_\eps}$ that are obtained as harmonic extensions of a fixed map $u:\pSi\to \R^n$ with respect to a family of metrics $g_\eps$ with $g_{\eps=0}=g$ is always so that 
\begin{equation*}
    \frac{\dd}{\dd\eps} E_{g}(v_{\eps})=\int_\Si \inner{\na_g(\peps v_\eps)}{\na_g u_g}_g \dd v_g =-\int_{\Si} \peps v_{\eps}\cdot \Delta_g u_g \dd v_g+\int_{\p\Si}\peps v_\eps \cdot \png u_g \dd s_g=0
\end{equation*}
as $u_g$ is harmonic and $\peps v_\eps\vert_{\partial \Si}=0$. Here and in the following we use the convention that all derivatives with respect to $\eps$ are evaluated at $\eps=0$. The variation of the half-energy with respect to the metric hence reduces to the variation of the Dirichlet energy of the fixed map $u_g$ with respect to a varying metric $g_\eps$, which we recall is given by 
\beq\label{eq:energy_var_metric}
    \frac{\dd}{\dd\eps} E_{\half}(u,g_\eps) =\frac{\dd}{\dd\eps} E_{g_\eps}(u_g) =-\half\int_{\Si}\langle \peps g_\eps,k(u_g,g)\rangle_g \dd v_g,
\eeq
see for instance \cite[Lemma 3.4.1]{Baird_Wood_Textbook}. Here and in the following 
\beq\label{def:stress}
    k(v,g)=v^*g_{\R^n}-\half \abs{\dd v}_g^2 g
\eeq 
denotes the stress-energy tensor of maps $v:\Si\to \R^n$, which we note is always trace-free and of course vanishes if and only if $v$ is conformal.

\subsection{Gradient flow}
In view of the above, in order to define a flow that will evolve an initial (parametrised) surface to a free boundary minimal surface it is natural to consider a gradient flow of the half-energy. 

In the special case of the disc, where the moduli space of metrics is given by a single point, this was achieved by Wettstein in \cite{Wettstein_1,Wettstein_2,Wettstein_3} and by the second author in \cite{S}. In fact, in this case we see an amazing similarity of the salient features of the heat flow for half-harmonic maps, called ``Plateau flow'', and the heat flow of harmonic maps from a closed surface into smooth closed target manifolds laid out in \cite{Struwe-85}. In particular, for any given initial data $u_0\in H^\half(\partial\Si;N)$, it was shown in \cite{S} that there is a unique global weak solution to the Plateau flow whose energy is non-increasing and which is smooth for $t>0$ away from finitely many points in space-time where non-trivial disc-type free boundary minimal surfaces "bubble off". Furthermore, the results of \cite{S} ensure that along a sequence of times going to infinity, the solution converges weakly to a disc-type free boundary minimal surface, again away from finitely many "bubble points" where non-trivial disc-type free boundary minimal surfaces emerge after rescaling. As recently shown by the third author \cite{Wright}, uniqueness in this case holds even in an a priori much larger class of weak solutions to the Plateau flow, which in particular includes all weak solutions with non-increasing energy, again in exact analogy with what is known in the case of the harmonic map heat flow from \cite{Freire} and \cite{Rupflin_08}.

In the present paper, we now consider the corresponding problem for surfaces $\Si$ of higher genus and/or connectivity. We introduce a coupled flow which evolves both an initial map $u_0\colon\partial\Si\to N$ and an initial domain metric $g_0$ by (a projection of) the negative gradient of $E_\hf$ so as to yield a critical point of the half-energy with respect to both $u$ and $g$, hence inducing a free boundary minimal surface.

\begin{rmk}
	In the following, we will only consider domain surfaces $\Si$ which are orientable as for non-orientable $\Si$ all results can be recovered by passing to the orientable double cover and working with metrics and function spaces which are invariant under the non-trivial covering space transformation.
\end{rmk}
Since the energy is conformally invariant, we can restrict the class of admissible metrics to consist of a unique representative for each conformal class of metric. For this we make use of standard uniformisation results, see e.g. \cite{Osgood_Phillips_Sarnak}. For (orientable) $\Si\neq D$ of general type we will work with the unique representative $g$ of each conformal class for which the boundary curves are geodesics in $(\Si,g)$ and which is hyperbolic (i.e. has Gauss curvature $K_g\equiv -1$). When $\Si$ is a cylinder we instead choose as our unique representative the flat metric $g$  ($K_g\equiv 0$) for which the boundary curves are geodesics and $(\Si,g)$ has unit area. We denote by $\MM(\Sigma)$ the space of such constant curvature metrics and note that our definition of the flow will ensure that for an initial metric $g_0\in\MM(\Si)$, the evolving metrics always will be in $\MM(\Si)$.

We recall that at any $g\in\MM(\Si)$ the tangent space splits $L^2(\Si,g)$-orthogonally as 
\beq\label{eq:split}
    T_{g}\MM(\Si)=\{L_X g\}+H(g),
\eeq
where $\{L_X g\}$ is the set of Lie-derivatives generated by vector fields that are parallel to $\pSi$ on $\pSi$, giving rise to $1$-parameter families of diffeomorphisms of $\Si$, and where the  \textit{horizontal space} $H(g)$ consists of all symmetric $(0,2)$-tensors $h$ which are trace-free and divergence-free and which satisfy $h(\nu_g,\tau_g)=0$ on $\partial \Si$,  $\tau_g$ a unit tangent vector field along $(\pSi,\dd s_g)$, c.f. \cite{Tromba}.

\begin{rmk}\label{rmk:conformal}
    We note that for a half-harmonic map $u:(\p\Si,g) \rightarrow N$ the tensor $k(u_g,g)$ is always an element of $H(g)$. Indeed since $u_g$ is harmonic $k(u_g,g)$ is divergence free, while \eqref{eq:half-harmonic} implies that $k(u_g,g)(\nu_g,\tau_g)=\inner{\pnug u_g}{\ptaug u_g}$ vanishes on the boundary since $u$ maps into $N$. Hence the harmonic extension of a half-harmonic map is conformal if and only if $P_g^H(k(u_g,g)) = 0$, where we define $P_g^{H}$ as the $L^2(\Si,g)$-orthogonal projection from the space of symmetric $(0,2)$-tensors to $H(g)$. 
\end{rmk}

Following the construction of the Teichm\"uller harmonic map flow in \cite{Rupflin-Topping}, we now exploit the fact that the energy is invariant under simultaneous pull-back of both the map and the metric (by the same diffeomorphism) to restrict the movement of the metric component to be $L^2$-orthogonal to the space of tensors $\{L_X g\}$ that is generated by the action of diffeomorphisms on the metric. We hence define our flow as 
\beqa\label{def:flow-abstract}
    \partial_t u&= -\na^{L^2}_{u} E_\half(u,g)= -P_u(\png u_g)\\
    \partial_t g&=-P_g^{H}\big(\na_g^{L^2} E_\half(u,g)\big)=\half  P^H_g(k(u_g,g)).
\eeqa
The evolution of the map component $u=u(t)$ is hence described by a variant of the Plateau flow 
studied by the second author in \cite{S}, albeit now considered on a general surface $\Si$ with a time-dependent metric $g(t)$ rather than on a disc with fixed metric; the evolution of the metric $g=g(t)$ on the other hand can be viewed as an evolution equation in the infinite dimensional manifold $\M(\Si)$ of constant curvature metrics with geodesic boundary curves described above. We observe that in the case when $\Si=D$ we have $H(g)=\{0\}$ for any metric $g$ on $D$, and so our coupled gradient flow \eqref{def:flow-abstract} reduces to the Plateau flow for maps $u(t)$ on the disc (with fixed metric) considered in \cite{S}.

\subsection{Main results}
Our first main result establishes the existence of solutions to this new geometric flow and gives a description of the potential singularities that the components of this system of equations might form. 

\begin{thm}\label{thm:1}
    Let $\Si\neq D$ be any orientable surface with boundary and let $\M(\Si)$ be the set of metrics with constant curvature and geodesic boundary considered above. Then to any initial metric $g_0\in \MM(\Si)$ and any initial map $u_0\in H^\half((\pSi,g_0);N)$ there exists a weak solution $(u,g)$ of the coupled flow
    \beq\label{eq:flow}
        \partial_t u =-P_u(\png u_g),\qquad \partial_t g=\half P^H_g(k(u_g,g))
    \eeq
    which has non-increasing energy and is defined on a maximal interval $[0,T_\infty)$ where $T_\infty=\infty$ unless the domain metrics degenerate in finite time, i.e. unless 
    $$\inj(\Si,g(t))\to 0 \text{ as } t\upto T_\infty \text{ for a finite } T_\infty.$$ Furthermore,
    \begin{enumerate}
        \item Away from a finite number of singular times $0<T_i^s<T_\infty$, both the map and metric component of the flow are smooth and the energy decays according to 
        \beq\label{eq:energy-decay}
            \frac{d}{dt}E_{\half}(u,g)=-\norm{P_u(\png u_g)}_{L^2(\pSi,g)}^2-\frac14 \norm{P_g^{H}(k(u_g,g))}_{L^2(\Si,g)}^2.
        \eeq
        \item Across each $T_i^s$ the flow of metrics remains regular in the sense that $g(t)$ is Lip\-schitz continuous in time with respect to any $C^k$-metric in space. 
        \item Any such singular time $T_i^s$ is characterised by the \emph{bubbling-off} of a finite number of minimal discs of the map component, exactly as in \cite{S}, see also  Section \ref{sect:bubble}.
    \end{enumerate}
\end{thm}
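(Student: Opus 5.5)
The strategy is to treat \eqref{eq:flow} as a coupled system: a Plateau-type flow for the trace $u$ on the one-dimensional boundary $(\pSi,g(t))$, driven by a finite-dimensional ODE for the metric on the Teichmüller-type space $\MM(\Si)$. The key structural fact is that $H(g)$ is finite dimensional and depends smoothly on $g$ as long as $\inj(\Si,g)$ stays bounded below. So, given any map with bounded half-energy, the metric velocity $\half P^H_g(k(u_g,g))$ is bounded in every $C^k$ norm by a constant times $E_\half(u,g)$. This follows because, for an $L^2$-orthonormal basis $\{\Psi_j\}$ of $H(g)$, we have $|\langle k(u_g,g),\Psi_j\rangle|\le \norm{\Psi_j}_{L^\infty}\,2E_\half$, and $\norm{\Psi_j}_{C^k}$ is controlled by the injectivity radius via elliptic estimates for holomorphic quadratic differentials with the boundary condition $h(\nu_g,\tau_g)=0$, for example after doubling $\Si$. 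This bound gives property (ii) directly once the energy is known to be non-increasing: $g$ is Lipschitz in $t$ with values in $C^k$ on any interval where $\inj$ stays away from zero, even across singular times of $u$.

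\emph{Step 1 (local existence and regularity).} I would first construct the flow as the Plateau flow of \cite{S} with a time-dependent, smoothly varying, uniformly equivalent background metric. Using collar or Fermi coordinates along each boundary geodesic, the Dirichlet-to-Neumann operator $u\mapsto\png u_g$ differs from $(-\Delta_{\pSi})^{1/2}$ only by a smoothing operator with smooth $g$-dependence, so the small-energy $\ep$-regularity and local $H^\half\cap$ smooth estimates of \cite{S} carry over. The scheme is a fixed-point iteration. Given $g$ Lipschitz in $C^k$, solve the $u$-equation. Given $u$, solve the ODE $\pt g=\half P^H_g(k(u_g,g))$, which is Lipschitz in $g$ on $\{\inj\ge\de\}$. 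Contract on a short time interval, whose length depends only on the local energy concentration of $u_0$ in $H^\half$ (as in \cite{S}) and on $\de$.

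\emph{Step 2 (energy identity).} For smooth solutions I would compute $\frac{d}{dt}E_\half(u,g)$ by combining \eqref{eq:energy_var-map} with $v=\pt u$ and \eqref{eq:energy_var_metric} with $\peps g=\pt g$. Using that $P^H_g$ is an orthogonal projection, this yields \eqref{eq:energy-decay}. In particular the energy is non-increasing, and the quantities $\int\norm{P_u(\png u_g)}^2dt$ and $\int\norm{P^H_g k}^2dt$ are bounded by $E_\half(u_0,g_0)$.

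\emph{Step 3 (singular times, continuation, maximal interval).} Singularities arise only when energy concentrates at boundary points. At such a time $T$ one rescales, using the $L^2$-bound on $\pt u$ and the smoothness of $g$ near $T$ given by the Lipschitz control. As in \cite{S}, this produces a nontrivial half-harmonic map from the disc, hence a minimal disc, carrying at least a fixed quantum $\ep_0$ of energy. Each singular time therefore loses at least $\ep_0$, so there are only finitely many; this gives (iii). One restarts from the weak limit $u(T)$, with $g(T)$ defined by continuity, and the energy identity then holds away from the singular times. Iterating, the flow extends as long as $\inj(\Si,g(t))$ is bounded below. If $T_\infty<\infty$ and $\inj$ does not tend to $0$, then $\inj$ is bounded below along a sequence of times. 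Since $g$ moves with speed at most $C(\inj)E_\half$ in $\MM(\Si)$, and finite speed over finite time keeps $g$ in a compact subset of moduli, $\inj$ is in fact bounded below on all of $[0,T_\infty)$, contradicting maximality. This uses that $\inj$ cannot drop to zero in finite time at bounded speed away from the degeneration region, by Mumford compactness.

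\emph{Main obstacle.} The hardest step is establishing the uniform, injectivity-radius-dependent estimates for $P^H_g$ and for the dependence of the Dirichlet-to-Neumann operator on $g$. These are what allow the disc-based analysis of \cite{S} (local $\ep$-regularity, bubbling) to be transplanted to moving metrics on general $\Si$. I would first prove these estimates on the doubled closed surface, where the analogous statements from \cite{Rupflin-Topping} are available.
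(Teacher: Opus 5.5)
Your treatment of the energy identity, of property (ii) via $\norm{\pt g}_{C^k}\leq C(\inj)\Eh$, of the bubbling analysis and of the finiteness of the singular times matches the paper. Step 1, however, carries almost all of the analytic weight, and it would not work as you describe it.

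You propose to contract on the coupled nonlinear system with $u_0\in H^{\half}$, on an interval whose length depends only on the energy concentration of $u_0$. A contraction needs Lipschitz dependence of the solution of the degenerate $u$-equation on the metric curve. In the energy identity for $w=u_1-u_2$, the forcing terms $\partial_{\nu_{g_1}}\big((u_2)_{g_1}-(u_2)_{g_2}\big)$, $(\partial_{\nu_{g_1}}-\partial_{\nu_{g_2}})(u_2)_{g_2}$ and $(P_{u_1}-P_{u_2})\partial_{\nu}(u_2)_{g_2}$ are paired against the full normal derivative $\partial_\nu w_{g_1}$. The equation, though, only damps $P_{u_1}\partial_\nu w_{g_1}$. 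At the energy level these are pairings of two $H^{-\half}$ quantities, so they can only be controlled through higher norms of $u_2$. For $H^{\half}$ data those norms blow up as $t\downarrow 0$, so the contraction time cannot depend on energy concentration alone.

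Nor does the existence theory of \cite{S} transfer verbatim to a general $\Si$ with a moving metric. Its Galerkin proof of higher regularity uses that derivatives of the basis functions stay in the Galerkin spaces, which fails for Steklov eigenfunctions of a general surface. In addition, that basis moves with $g(t)$.

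The paper therefore argues in stages:
\begin{enumerate}
\item It regularises and linearises to $\pt u=-(\eps+P_v)\pnu u_g$. It solves this for frozen $(v,g)$ by Galerkin combined with differentiation in time (Lemma \ref{lemma:ep_new_sol}), and for moving $(v,g)$ by time discretisation.
\item It contracts only for $\eps>0$; the Lipschitz constant in Lemma \ref{lemma:ep_Lip_est} is of order $\eps^{-1}$.
\item It obtains an $\eps$-independent existence time from the $\eps$-independent $H^m$ bounds of Lemma \ref{lemma:ep_Hm_apriori}.
\item A time depending only on energy concentration comes afterwards, from three further ingredients. The first is the a priori estimates of Proposition \ref{prop:higher_reg}, which rest on the improved regularity of $P_u^\perp\pnu\nabla^{k-1}u_g$ (Lemma \ref{lemma:orth-proj-lot}) and on a small-energy $L^4$ bound at $s=\tfrac32$. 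The second is the local energy inequality (Lemmas \ref{lemma:Ecut} and \ref{lemma:Tmin}). The third is approximation of $H^{\half}$ data by smooth data, followed by compactness.
\end{enumerate}
These are the missing ideas.

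There are also two smaller points.

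First, the metric equation is not a finite-dimensional ODE on a Teichm\"uller-type space. $\MM(\Si)$ is infinite dimensional, and because $g\mapsto H(g)$ is not integrable \cite{Tromba}, horizontal curves are not confined to any finite-dimensional submanifold. Moreover, the $u$-equation depends on the metric $g(t)$ itself, not only on its class. The Picard argument therefore has to be run in $(\MM^{m+1},\dist_{H^{m+1}})$, using Lipschitz estimates for $g\mapsto P^H_g$ as in Lemma \ref{lemma:projection}. You do single these out as the main obstacle and propose to obtain them by doubling, which is what the paper does via \cite{R-existence}.

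Second, in Step 3 your global claim that finite speed keeps $g$ in a compact part of moduli space is circular, since your speed bound $C(\inj)\Eh$ degenerates as $\inj\to 0$. What is true, and what the paper uses (Remark \ref{rmk:uniform_metric_time}), is a local statement. From any time with $\inj\geq 2\iota_0$, the metric remains in $\UU$, and hence satisfies $\inj\geq\iota_0$, for a time $T_0(\bar E,\iota_0)$. This yields the dichotomy between $\inj\to 0$ and continuation.
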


Here and in the following we say that $(u,g)$ is a weak solution of the flow to initial data $(u_0,g_0)$ on an interval $[0,T)$ if 
\vspace{-\baselineskip}
\begin{itemize}
    \item the metric component is a continuous curve $g:[0,T)\to (\M^3(\Si),\dist_{H^3})$ with $g(0)=g_0$ which is differentiable at  a.e. $t\in [0,T)$ and so that the second equation in \eqref{eq:flow} is satisfied at a.e. such $t$. Here $\MM^3$ denotes the set of constant curvature metrics with coefficients in $H^3$ as discussed in Section \ref{sect:main} below. 
    \item the map component is given by a $u\in L^\infty([0,T); H^{\half}((\pSi,\dd s_g);N)$ which is so that $\pt u\in L_{loc}^2([0,T);L^2(\p\Si,g))$ and so that  the first equation of \eqref{eq:flow} is satisfied in the sense of distributions.
\end{itemize}

In the absence of singularities at infinity, the following result ensures that the flow deforms the given initial map into a free boundary minimal surface as desired.

\begin{thm}\label{thm:2}
    Suppose that $(u,g)$ is a global weak solution of \eqref{eq:flow} for which $\inj(\Si,g)$ remains bounded away from zero and for which energy does not concentrate as $t\to \infty $, i.e. so that $\limsup_{t\to \infty} \sup_{x\in \pSi} E_g(u_g(t),B_{r}^{g(t)}(x))\to 0$ as $r\to 0$. 
    
    Then there exist $t_j\to \infty$ so that after pull-back by diffeomorphisms the pairs $(u(t_j),g(t_j))$ converge smoothly to a limiting pair $(u^*,g^*)\in C^\infty(\pSi;N)\times \MM(\Si)$ which, if $u^*$ is non-constant, is so that $u^*_{g^*}:(\Si,g^*)\to \R^n$ is (weakly) conformal, harmonic and so that $u^*_{g^*}(\Si)$ meets $N$ orthogonally; that is, $u^*_{g^*}(\Si)$ represents a (possibly branched) free boundary minimal surface supported by $N$.
\end{thm}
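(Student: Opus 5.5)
The plan is to combine the energy identity \eqref{eq:energy-decay} with compactness of the metrics in moduli space and $\varepsilon$-regularity for the map component.

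\textbf{Step 1: choice of times.} Since the energy is non-increasing and bounded below, integrating \eqref{eq:energy-decay} over $[0,\infty)$ gives
\[
\int_0^\infty \Big(\norm{P_u(\png u_g)}_{L^2(\pSi,g)}^2+\tfrac14\norm{P_g^H(k(u_g,g))}_{L^2(\Si,g)}^2\Big)\,\dd t\le E_\half(u_0,g_0).
\]
The finitely many singular times have measure zero, so this holds even across them. We can therefore choose times $t_j\to\infty$, away from the singular times, at which both tension terms tend to $0$.

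\textbf{Step 2: compactness of the metrics.} Because $\inj(\Si,g(t_j))$ is bounded below and $g(t_j)\in\MM(\Si)$, the Mumford-type compactness for hyperbolic (or unit-area flat) metrics with geodesic boundary applies. It yields diffeomorphisms $f_j$ such that $f_j^*g(t_j)\to g^*$ smoothly, with $g^*\in\MM(\Si)$. We then replace $u(t_j)$ by $u(t_j)\circ f_j|_{\pSi}$. Energy, tensions and the horizontal projection are all invariant under simultaneous pull-back, so nothing in Step 1 changes. From now on the metrics $g_j$ are uniformly equivalent to $g^*$ in every $C^k$, and the harmonic extensions $u_j:=u(t_j)_{g_j}$ have uniformly bounded Dirichlet energy.

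\textbf{Step 3: regularity of the maps (the main obstacle).} Let $f_j:=P_{u}(\png u_{g})(t_j)$, so $f_j\to0$ in $L^2(\pSi)$. Each $u_j$ is then a harmonic map with free-boundary-type Neumann data $P_{u_j}(\png u_j)=f_j$ along $\pSi$.

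The no-concentration hypothesis, together with uniform equivalence of the metrics, gives energy smallness on balls of a fixed radius $r_0$ centred at boundary points. There I would invoke the $\varepsilon$-regularity estimates for the Plateau flow from \cite{S}, now on a fixed-size half-ball with smooth metric $g_j$ and uniform constants. Applied to the inhomogeneous half-harmonic equation with $L^2$ right-hand side, they should give uniform $H^{1}(\pSi)$ bounds, hence $H^{3/2}(\Si)$ bounds on $u_j$ near the boundary. Interior estimates are trivial since $u_j$ is harmonic.

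This yields a subsequence converging strongly in $H^1(\Si)$, weakly in $H^{3/2}$, to a limit $u^*_{g^*}$ that is harmonic with respect to $g^*$ and satisfies $P_{u^*}(\png u^*_{g^*})=0$. Smoothness of $u^*$ then follows from the regularity theory for half-harmonic maps \cite{DaLio-Riviere,DaLio-Pigati}. To upgrade the subsequential convergence to smooth convergence, I would use the flow's local smoothing: parabolic estimates on $[t_j-1,t_j]$ from \cite{S} under small energy give higher-order bounds. Carrying out these localized estimates, uniformly in the varying metric, is the step I expect to be hardest.

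\textbf{Step 4: conformality.} Strong $H^1$ convergence gives $k(u_j,g_j)\to k(u^*_{g^*},g^*)$ in $L^1$. The spaces $H(g)$ are finite-dimensional and depend smoothly on $g$, so $P^H_{g_j}\to P^H_{g^*}$ as operators on $L^1$ (dual to smooth bases). Hence
\[
P^H_{g^*}\big(k(u^*_{g^*},g^*)\big)=\lim_j P^H_{g_j}\big(k(u_j,g_j)\big)=0.
\]
By Remark~\ref{rmk:conformal}, $k(u^*_{g^*},g^*)\in H(g^*)$ because $u^*$ is half-harmonic. It therefore vanishes, so $u^*_{g^*}$ is weakly conformal.

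\textbf{Step 5: conclusion.} $u^*_{g^*}$ is harmonic, weakly conformal, and by \eqref{eq:half-harmonic} meets $N$ orthogonally along $\pSi$. If $u^*$ is non-constant, it is therefore a (possibly branched) free boundary minimal surface supported by $N$.
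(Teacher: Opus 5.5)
Your proposal is correct and follows essentially the same route as the paper. The paper also picks almost-critical times from \eqref{eq:energy-decay} and applies Mumford compactness. It then uses the boundary $\varepsilon$-regularity estimate of Lemma~\ref{lemma:loc_reg} to get uniform $H^1(\pSi)$ bounds and a half-harmonic limit. Conformality comes from the continuity of $P^H_g$ and $k$ (via \eqref{est:PH_Lip}) together with Remark~\ref{rmk:conformal}, all packaged in Proposition~\ref{prop:sequence_converge}.

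The step you flag as hardest is already supplied by the smoothing estimate \eqref{est:higher_reg_smoothing} of Proposition~\ref{prop:smooth_existence}. That estimate also yields smoothness of $u^*$ directly, so the appeal to Da Lio--Pigati regularity is unnecessary.
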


\begin{rmk}\label{rmk:Plateau_Problem}
    We note that in the special case where $N$ is given by a collection of $k$ disjoint closed curves $\Gamma_1,\ldots,\Gamma_k$, $k$ the number of boundary components $\si_i$ of $\Si$, and where each $u_0\vert_{\si_i}$ is a map into $\Gamma_i$ with non-zero degree, this property is preserved along regular solutions of the flow. In this case the minimal surface obtained in the above theorem can be thought of as a solution of a generalised version of the Douglas-Plateau problem, 
    compare also the discussion in Sections 1.5 and 1.6 of \cite{S}.
\end{rmk}

\textbf{Outline of the paper.}  We discuss the key steps of the proof of short time existence of solutions to the flow  in Section \ref{sect:main} and carry out the required detailed analysis of the equations satisfied by the metric and map  components in the subsequent Sections \ref{sect:metric} and \ref{sect:map}. In Section \ref{sect:higher_reg}, we prove higher regularity estimates which enable us to deduce the smoothness of the solution until either the metric degenerates or energy concentrates. Combined with the analysis of finite time singularities, which is carried out in Section \ref{sect:bubble} and closely follows \cite{S}, this allows us to complete the proof of our first main result, Theorem \ref{thm:1}. Finally, in Section \ref{sect:asym} we study the asymptotic behaviour, proving Theorem \ref{thm:2}.

\textbf{Acknowledgment:}
Christopher Wright was supported by the Engineering and Physical Sciences Research
Council. For the purpose of open access, the authors have applied a CC BY public copyright licence to any author accepted manuscript arising from this submission.
\section{Short time existence of solutions}\label{sect:main}
In this section, we explain the key steps needed to prove local existence of solutions of \eqref{eq:flow} for given smooth initial data $(u_0,g_0)$. This proof will be based on an iteration argument that is carried out on a sufficiently small interval $[0,T]$ and in suitable Sobolev spaces $X_m(T)$ of maps of regularity $H^m$ and  curves of metrics in $\MM^{m+1}$ of regularity $C^1_tH^{m+1}$ as made precise below. This argument will be based on results about  (related) equations for the metric and map component which are stated in a series of lemmas in the present section, but whose proofs are postponed to Section \ref{sect:metric} (for results about the metric) and Section \ref{sect:map} (for results about the map). 

In the following, we will focus mainly on the case of domains of general type as the analysis of the metric component simplifies very significantly if $\Si$ is a cylinder, as in this case the equation for the metric reduces to a well-controlled ODE. We will discuss this special case in Section \ref{subsec:cylinder} and for now simply note that all results stated below are applicable also for $\Si$ a cylinder and the corresponding space $\MM(\Si)$ of flat unit area metrics. 

\subsection{Set-up}
Given a surface $\Si$ with boundary, we recall that we may think of this domain as a fixed subset of the Schottky double $\hat\Si$, which is the closed surface that we obtain as quotient of $\Si\times\{-1,1\}$ under the identification $(p,1)\sim(p,-1)$ for $p\in\partial \Si$. Moreover, we note that if $g$ is any hyperbolic metric on $\Si$ for which the boundary curves are geodesics, then $g$ can be extended by (even) symmetry to a hyperbolic metric $\hat g$ on $\hat\Si$. 

Following the approach of Tromba \cite{Tromba}, given any $k\in \N_{\ge 3}$ and any fixed finite (and symmetric) set of (smooth) coordinate charts on $\hat\Si$ we can consider the set  $\MM^{k}=\MM^{k}(\Si)$ of metrics $g$ on $\Si$ which we obtain as restrictions of hyperbolic metrics $\hat g$ on $\hat\Si$ which have the above even symmetry across $\pSi$ and whose coefficients are of class $H^{k}$ in these charts. We measure the distance between two elements of $\MM^{k}$ by letting
\beqs
    \dist_{H^{k}}(g_1,g_2):=\inf\int_1^2\norm{\p_s g(s)}_{H^{k}(\Si,g(s))}\dd s,
\eeqs
where the infimum is taken over the set of all $C^1$-paths $g=g(s)\in\MM^{k}(\Si)$, $1\le s\le 2$, that connect $g_1=g(1)$ and $g_2=g(2)$. Here and in the following all Sobolev norms are computed using the Levi-Civit\`a connection with respect to the indicated metric. 

Given any $m\geq 2$ and a curve of metrics $g\in C^1([0,T],\MM^{k})$ we will work with maps that are contained in the space 
\beqa\label{def:Xm}
	X_m(T,g) = \{u:\pSi\times [0,T]\to \R^n& :u_{g} \in L^\infty ([0,T]; \ H^m(\Si,g)) \\&  \text{ and } \p_t (u_{g})\in 
     L^\infty ([0,T]; H^1(\Si,g)) \},
\eeqa
which we equip with the corresponding norms 
\beq\label{def:norm_Xm}
    \norm{u}_{X_m(T,g)}^2 := \norm{u_{g}}_{L^\infty([0,T];H^m(\Si,g))}^2 + \norm{\p_t (u_{g})}_{L^\infty([0,T]; H^1(\Si,g))}^2.
\eeq

While these norms depend on the specific choice of $g$, we will see that the spaces $X_m(T,g)$ themselves are indeed independent of the choice of $g$ as the change of harmonic extensions and Sobolev norms along such curves of metrics is well controlled, compare Lemma \ref{lemma:metric_variations}. We hence often drop the reference to $g$ in the notation for $X_m(T,g)$ and simply write $X_m(T)$, and will at times also use the shorthand $X_{m,loc}(T):= \bigcup_{T'<T} X_{m}(T')$. We furthermore note that in the proof of short-time existence we will for the most part only need to consider curves of metrics which are contained in a small neighbourhood $\UU$ of the initial metric $g_0$ and whose velocity is uniformly bounded. For such curves of metrics, these norms are uniformly equivalent, compare Lemma \ref{lemma:U-basics} below, which will allow us to work with respect to the fixed norm $\norm{\cdot}_{X_m(T,g_0)}$ in the relevant fixed point argument.

We furthermore note that we can always work with the spaces $\MM^{m+1}$ which we obtain by fixing coordinate charts in which the initial metric is smooth, and will see that this property is preserved along the flow. We can hence in the following always assume that the initial metric is an element of $\MM^{m+1}$ for any $m\in \N$.

In the following we also use the convention that all constants are allowed to depend on the setting we consider, i.e. the fixed (topology of the) domain surface $\Si$, the fixed manifold $N\hookrightarrow \R^n$ and later on also on the fixed extensions $P_\cdot$ and $P_\cdot^\perp$ of the orthogonal projections onto $T_\cdot N$ respectively $T_\cdot ^\perp N$  chosen in Remark \ref{rmk:extension-projection}, without further mentioning this. 
\subsection{Evolution of the metric component}
We first consider the problem of solving the equation 
\beq\label{eq:metric_lemma}
    \partial_t g=\half P^H_g(k(v_g,g)) \text{ with } g(0)=g_0
\eeq
for a given map $v\in X_2(T,g_0)$, $k$ the stress-energy tensor defined in \eqref{def:stress}. In Section \ref{sect:metric} we will prove

\begin{prop}\label{prop:step1}
    For any $m\ge 2$ and any $\iota_0>0$ there exist constants $C,\delta>0$ so that  the following holds true for any $T>0$, any $g_0\in \MM^{m+1}$ with $\inj(\Si,g_0)\geq 2\iota_0$ and the neighbourhood $\UU$ of $g_0$ in $\MM^{m+1}$ described in Lemma \ref{lemma:U-basics}. 

    For any $v\in X_2(T)$ there exists a unique solution $g_v\in C^{1}([0,T_0],\MM^{m+1})$ of \eqref{eq:metric_lemma} which is defined and so that $g_v(t)\in\UU$, and hence in particular 
    \beq\label{est:inj-below}
        \inj(\Si,g_v(t))\geq \half \inj(\Si,g_0)\geq \iota_0,
    \eeq
    at least on the interval $I=[0,T_0]$ for $T_0:=\min(T,\delta/\Eupp)$, $\Eupp$ chosen so that  
    \beq\label{ass:Ev-apriori} 
        \sup_{t\in [0,T]}E_{\half}(v(t),g_0)\leq \Eupp.
    \eeq
    Moreover, for every $t,\tilde t\in [0,T_0]$ we have
    \beq\label{est:ptg_apriori_1}
        \norm{\pt g_v(t)}_{H^{m+1}(\Si,g_0)}\leq C \Eh(v(t),g_0),
    \eeq
    and 
    \beq\label{est:ptg_apriori_2}
        \norm{\pt g_v(t)-\pt g_v(\tilde t)}_{H^{m+1}(\Si,g_0)}\leq C\abs{t-\tilde t}\big[ \Eupp^\half \norm{\pt v_{g_0}}_{L^\infty(I;H^1(\Si,g_0))}+\Eupp \big].
    \eeq
    Moreover, the map $v\mapsto g_v$ is Lipschitz the sense that the estimates 
    \beq\label{est:g_diff_apriori}
        \dist_{H^{m+1}}(g_v(t),g_{\tilde v}(t))\leq C \Eupp^\half t \norm{(v-\tilde v)_{g_0}}_{L^\infty(I;H^1(\Si,g_0))}
    \eeq
    and 
    \beqa\label{est:ptg_diff_apriori}
        \norm{\pt (g_v-g_{\tilde v})(t)}_{H^{m+1}(\Si,g_0)}&\leq C \Eupp^\half \norm{(v-\tilde v)_{g_0}(t)}_{H^1(\Si,g_0)}\\
        &\quad+ C\Eupp^\frac{3}{2} t \norm{(v-\tilde v)_{g_0}}_{L^\infty(I;H^1( \Si,g_0))}
    \eeqa 
    hold for all $t\in [0,T_0]$ and all for all $v$ and $\tilde v$ which satisfy \eqref{ass:Ev-apriori} for a given $\Eupp$.
\end{prop}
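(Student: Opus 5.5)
We treat \eqref{eq:metric_lemma} as an ODE on the finite-dimensional manifold $\UU\subset\MM^{m+1}$ (after quotienting by diffeomorphisms the horizontal space $H(g)$ is finite dimensional), with a time-dependent vector field $F(t,g):=\half P^H_g(k(v_g(t),g))$. The plan is to use the Picard--Lindel\"of theorem on $\UU$. This requires three properties of $F$. First, $F$ is bounded in $H^{m+1}$ by $C E_\half(v(t),g_0)$. Second, $F$ is Lipschitz in $g$ with constant $C\Eupp$. Third, $F$ is Lipschitz in $v$ in the $H^1$ norm of the harmonic extension, with constant $C\Eupp^{\half}$. The existence time $T_0=\min(T,\delta/\Eupp)$ then comes from requiring that the curve does not leave $\UU$. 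By the first bound, $\dist_{H^{m+1}}(g(t),g_0)\le C\Eupp t\le C\delta$, which is less than the radius of $\UU$ if $\delta$ is small. The lower bound \eqref{est:inj-below} then follows from the definition of $\UU$ in Lemma \ref{lemma:U-basics}.

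The key structural input is that $H(g)$ is finite dimensional, of dimension $6\gamma-6+3k$, and spanned by smooth tensors depending smoothly on $g$. Concretely, I would take an $L^2(\Si,g)$-orthonormal basis $\{\Psi_i(g)\}$ of $H(g)$ (for instance via holomorphic quadratic differentials on the Schottky double $\hat\Si$ that are real on $\pSi$). Then $P^H_g(k)=\sum_i\langle k,\Psi_i(g)\rangle_{L^2}\Psi_i(g)$. On $\UU$ we have uniform bounds $\norm{\Psi_i(g)}_{H^{m+1}}+\norm{\Psi_i(g)}_{L^\infty}\le C$, and the map $g\mapsto\Psi_i(g)$ is Lipschitz with respect to $\dist_{H^{m+1}}$. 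This is where $\inj\ge\iota_0$ enters: it gives compactness of $\UU$ modulo diffeomorphisms and rules out degeneration of the basis.

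With this, the coefficients only require $L^1$ control of $k$. We have $|\langle k(v_g,g),\Psi_i\rangle|\le C\norm{k}_{L^1}\le CE_g(v_g)$, and by Lemma \ref{lemma:metric_variations} (equivalence of harmonic extensions and norms on $\UU$) this is at most $C\Eh(v,g_0)$. This gives \eqref{est:ptg_apriori_1}. For \eqref{est:ptg_apriori_2} we use that $k$ is quadratic in $v$:
\[
|k(v_g(t))-k(v_g(\tilde t))|\lesssim |\na(v_g(t)-v_g(\tilde t))|\,(|\na v_g(t)|+|\na v_g(\tilde t)|).
\]
Hence the difference of the coefficients is bounded by $C\Eupp^\half|t-\tilde t|\,\norm{\pt v_{g}}_{L^\infty H^1}$. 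The remaining change of $g$ along the flow contributes, through the Lipschitz dependence of $\Psi_i$, of $v_g$ and of $dv_g$ on $g$, a term $C\Eupp\,|g(t)-g(\tilde t)|\le C\Eupp\,|t-\tilde t|\cdot\Eupp$. I would have to absorb this into $C|t-\tilde t|\,\Eupp$ using $\Eupp t\le\delta$. We also need to compare $\pt(v_{g(t)})$ with $\pt(v_{g_0})$, which again uses Lemma \ref{lemma:metric_variations}.

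For the Lipschitz estimates in $v$, set $G(t)=\dist_{H^{m+1}}(g_v(t),g_{\tilde v}(t))$. Using the Lipschitz properties in $g$ and in $v$,
\[
\norm{\pt g_v-\pt g_{\tilde v}}_{H^{m+1}}\le C\Eupp^\half\norm{(v-\tilde v)_{g_0}(t)}_{H^1}+C\Eupp\, G(t),
\]
and $G'\le$ the right-hand side. Gronwall on $[0,T_0]$ with $\Eupp T_0\le\delta$ gives \eqref{est:g_diff_apriori}. Substituting this back yields \eqref{est:ptg_diff_apriori}. Uniqueness follows from the same Gronwall argument with $v=\tilde v$.

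I expect the main obstacle to be the uniform Lipschitz dependence on $g\in\UU$ of the projection $P^H_g$ and of the harmonic extension $v_g$ in the required norms. This is where we must work on $\MM^{m+1}$ with fixed charts. One also has to handle the fact that the tangent space contains the $L_Xg$ directions: the flow only moves horizontally, so the distance is controlled by the $H^{m+1}$ norm of the velocity. I would first try to prove smooth dependence of the $\Psi_i$ via the implicit function theorem for the elliptic system defining $H(g)$ on $\hat\Si$.
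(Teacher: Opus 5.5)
Your strategy is in substance the paper's. Existence, uniqueness and all the estimates come from a Picard/contraction argument fed by three inputs: the bound $\norm{P^H_g h}_{H^{m+1}}\le C\norm{h}_{L^1}$, the Lipschitz dependence of $P^H_g$ on $g$ (Lemma \ref{lemma:projection}), and the Lipschitz bounds \eqref{est:basic_3} for $k(v_g,g)$. The paper splits your single Picard step into two contractions: first Lemma \ref{lemma:simple-ode} for a fixed tensor $h$, then the map $g\mapsto G_{v,g}$. It also obtains Lipschitz dependence on the data by interpolating $h_s=(2-s)h_1+(s-1)h_2$ and absorbing, rather than by differentiating $\dist_{H^{m+1}}(g_v,g_{\tilde v})$ and applying Gronwall.

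\textbf{The finite-dimensional premise is false.} Your opening premise has to go. The set $\UU\subset\MM^{m+1}$ is not finite-dimensional: it contains all pull-backs $f^*g_0$ by diffeomorphisms close to the identity. Quotienting by diffeomorphisms does not rescue this either, for two reasons, both of which the paper stresses.
\begin{itemize}
\item Since $v$ is a fixed map on $\pSi$, the field $\half P^H_g(k(v_g,g))$ depends on the representative $g$, not only on its Teichm\"uller class, so \eqref{eq:metric_lemma} is not an ODE on a finite-dimensional quotient.
\item Since $g\mapsto H(g)$ is not integrable (Tromba), horizontal curves from $g_0$ are not confined to any finite-dimensional submanifold.
\end{itemize}
What is finite-dimensional is $H(g)$ itself, and that is all your formula $P^H_g k=\sum_i\langle k,\Psi_i(g)\rangle\Psi_i(g)$ needs. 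The fixed-point argument must therefore be run in the infinite-dimensional complete metric space $(\UU,\dist_{H^{m+1}})$, i.e.\ via the Banach-space version of Picard--Lindel\"of. Fortunately your three required properties are already phrased in $H^{m+1}$ and $\dist_{H^{m+1}}$, so they are exactly the right inputs for that version.

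\textbf{The absorption for \eqref{est:ptg_apriori_2} fails.} The motion of $g$ contributes $C\Eupp\,\dist_{H^{m+1}}(g(t),g(\tilde t))\le C\Eupp^2|t-\tilde t|$. The condition $\Eupp T_0\le\delta$ only bounds this by $C\delta\Eupp$, not by $C\Eupp|t-\tilde t|$. No argument can do better: for time-independent $v$ one has $g_{\lambda v}(t)=g_v(\lambda^2 t)$ while $\Eupp$ scales by $\lambda^2$. Hence the time-Lipschitz constant of $\pt g_v$ scales like $\Eupp^2$, and a bound $C\Eupp|t-\tilde t|$ with $C=C(m,\iota_0)$ cannot hold for large $\Eupp$. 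The paper's own proof produces exactly this $C\Eupp^2|t-\tilde t|$ term, so the correct form of \eqref{est:ptg_apriori_2} has $\Eupp^2$ in place of $\Eupp$. This is harmless for Proposition \ref{prop:ep_system}, which only needs an $E_0$-dependent $C^{1,1}$ bound.

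\textbf{The deferred step.} The step you postpone is where the real work lies. The paper does not prove it by hand:
\begin{itemize}
\item it extends $h$ evenly to the Schottky double $\hat\Si$;
\item it shows $P^H_g(h)=P^{\hat H}_{\hat g}(\hat h)|_\Si$ via the even/odd splitting of $\hat H(\hat g)$;
\item it imports the required bounds for $P^{\hat H}$ from \cite{R-existence};
\item it gets uniformity in $g_0$ from compactness of the $\inj\ge\iota_0$ part of moduli space.
\end{itemize}
Your plan, using a smoothly varying basis of even holomorphic quadratic differentials on $\hat\Si$, is consistent with this.
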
 

\begin{rmk}\label{rmk:def_UU}
	The neighbourhood $\UU=\UU^{m+1}(g_0)$ in Lemma \ref{lemma:U-basics} is given by a ball in $\MM^{m+1}$ around $g_0$ whose radius only depends on $\iota_0$ and which is chosen so that metrics in $\UU$ are uniformly equivalent and induce uniformly equivalent $H^{m+1}$-norms. Hence the above estimates all remain valid also if some (or all) of the norms are computed with respect to another metric of $\UU$, such as $g(t)$, instead of $g_0$. 
	
	We can apply the above lemma iteratively to see that unless the injectivity radius tends to zero as $t$ approaches some time $T_1<T$,  the solution of \eqref{eq:metric_lemma} is guaranteed to exist on the whole interval $[0,T]$ where $v$ is defined. Moreover, the above lemma shows that  that the estimate
	\beq\label{est:velocity-by-energy}
        \norm{\pt g_v(t)}_{H^{m+1}(\Si,g(t))}\leq C \Eh(v(t),g(t)) 
	\eeq   
	holds true with a constant $C$ that only depends on a lower bound on the injectivity radius of $(\Si,g(t))$ at that specific time $t$ and the exponent $m$.
\end{rmk}
We note that for the proof of short-time existence of a solution of our coupled flow \eqref{eq:flow}, a weaker version of the above lemma, in which the constants (and the size of the neighbourhood) are allowed to depend on the initial metric $g_0$ itself, instead of just on $\iota_0$, would suffice, but that we choose to formulate Proposition \ref{prop:step1} and \eqref{est:velocity-by-energy} in the above more precise form to make them applicable also for the long-term analysis of the flow. Namely we will use that the above proposition immediately implies the following. 

\begin{rmk}\label{rmk:uniform_metric_time}
    Let $(u,g)(t)$, $t\in [0,T]$, be any weak solution of our flow \eqref{eq:flow} with non-increasing energy. Then the metric component remains in the neighbourhood $\UU(g(0))$ described in Lemma \ref{lemma:U-basics} at least on  a time interval of the form $[0,\min(T,T_0)]$, for a number $T_0=T_0(\hat E,\iota_0)>0$ that only depends on an upper bound on the initial energy and a lower bound $2\iota_0$ on the injectivity radius of the initial metric.  
\end{rmk}

\subsection{Regularized equation for the map component}
Instead of directly proving a result similar to Proposition \ref{prop:step1} also for the map component, we follow the approach of the second author from \cite[Lemma 5.2]{S}, and first consider the simpler, and in particular linear, problem of determining a solution $u=u_{\ep,g,v}$ of the regularized equation 
\begin{equation}\label{eq:ep_v_map}
	\p_t u = -(\ep + P_{v})\p_{\nu_{g}}u_{g}\ \hbox{ on }\pSi
\end{equation}
for given $\ep>0$ and given curves of maps $v:\pSi\to N_\eta$ and metrics $g$. 

\begin{rmk}\label{rmk:extension-projection}
    Here and in the following we use fixed extensions $P_\cdot, P_\cdot^\perp \in C_b^\infty(\R^n;\R^{n\times n}) $ of the orthogonal projections $P_p:\R^n\to T_pN$, $P_p^\perp:\R^n\to T_p^\perp N$ which are chosen so that 
    \beq\label{def:ext-nbhd}
        P_p=P_{\pi(p)} \text{ and } P_p^\perp=P_{\pi(p)}^\perp \text{ for all } p\in N_\eta
    \eeq 
    on a small tubular neighbourhood $N_\eta$ where $\eta>0$ is chosen so that the nearest point projection is well defined and smooth on the closure $\bar N_\eta$ of $N_\eta$. We note that this choice of extension in particular ensures that $P_v$ and $P_v^\perp$ are orthogonal projections with $P_v+P_v^\perp=\textit{Id}$ for maps $v$ whose image is contained in this tubular neighbourhood $N_\eta$.
\end{rmk}

We note that we should not expect solutions of the above equation \eqref{eq:ep_v_map} to remain in $N$ even if $v$ maps into $N$, since we no longer evaluate the projection $P$ at the points $u(x,t)$ and since the term $\ep\p_{\nu_g} u_g$ in general will not be tangential to $N$. Thus, in order to later be able to apply fixed point arguments, we consider \eqref{eq:ep_v_map} not only for maps whose image is contained in $N$, but allow the maps $v$ to take values in this (fixed) tubular neighbourhood $N_{\eta}$.

Since the term $-\ep\p_{\nu_{g}}u_{g}$ compensates for the degeneracy introduced by the projection operator, we can use a Galerkin approximation to establish local existence for \eqref{eq:ep_v_map}. While this proof of existence of solutions to the regularised equation \eqref{eq:ep_v_map} crucially uses that $\eps>0$, we will be able to obtain $H^m$-bounds for these solutions which are independent of $\eps$. This will be essential in later arguments where we will send $\eps$ to zero in order to obtain a solution of our original equation \eqref{def:flow-abstract}.

The relevant results on the regularised equation \eqref{eq:ep_v_map}, which will be proven in Section  \ref{subsec:map-lin-reg}, can be summarised by the following proposition. 

\begin{prop}\label{prop:ep_map_solve}
    Let $\ep\in (0,\hf]$, $m\ge 4$ and $T>0$. Then for any given $u_0\in H^{m-\hf}(\p\Si;\R^n)$, any $g_0\in\MM^{m+1}$, any $g\in C^1([0,T];\MM^{m+1})$ with $g(0)=g_0$, and any $v:[0,T]\times\p\Si\rightarrow N_\eta$ with $v\in X_m(T)$, there exists a unique solution $u=u_{\ep,g,v}\in X_m(T)$ of \eqref{eq:ep_v_map} with $u(0) = u_0$, and this solution is furthermore so that $u\in L^2([0,T];H^{m}(\pSi,g))$ 
    and so that the estimates
    \beqa\label{est:Hm_u_epsilon_v_eq}
        \norm{u_{g}}_{L^\infty([0,T];H^m(\Si,g)))}^2 \leq e^{CT}\norm{(u_0)_{g_0}}_{H^m(\Si,g_0)}^2
    \eeqa
    and 
    \beqa\label{est:Hm_u_epsilon_v_eq-extra}
        \eps \norm{\p_\nu \nabla^{m-1} u_g}_{L^2([0,T];L^2(\p\Si,g))}^2 \leq 2 e^{CT}\norm{(u_0)_{g_0}}_{H^m(\Si,g_0)}^2
    \eeqa
    hold for a constant $C$ that only depends on $m$ and numbers $0 < \iota_0, M, \La < \infty$, which are chosen so that for all $t\in [0,T]$,
    \beq\label{ass:const_lemma_eps} 
        \inj(g(t))\geq \iota_0,\quad \norm{\pt g(t)}_{H^{m}(\Si,g(t))}\leq M,\quad \norm{(v(t))_{g(t)}}_{H^m(\Si,g(t))}\leq \La.
    \eeq
    We stress in particular that $C$ is \emph{independent} of $\eps$. 
\end{prop}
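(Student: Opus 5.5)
We prove Proposition \ref{prop:ep_map_solve} in two stages: first existence by a Galerkin scheme with $\eps$-dependent bounds, then $\eps$-independent $H^m$ energy estimates that upgrade the regularity and give \eqref{est:Hm_u_epsilon_v_eq}--\eqref{est:Hm_u_epsilon_v_eq-extra}. Uniqueness is immediate from linearity. If $w$ is the difference of two solutions, then
\[
\tfrac{\dd}{\dd t}\tfrac12\norm{w}_{L^2(\pSi,g)}^2=-\int_{\pSi} w\cdot(\eps+P_v)\png w_g\,\dd s_g+\text{(terms from }\pt g).
\]
This lower-order computation should be done with the extension energy instead: $\tfrac{\dd}{\dd t}E_g(w_g)=\int_{\pSi}\pt w\cdot \png w_g\,\dd s_g+O(\norm{\pt g}_{C^0}E_g(w_g))$. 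By \eqref{eq:ep_v_map} this equals $-\int_{\pSi}\png w_g\cdot(\eps+P_v)\png w_g+O(ME_g(w_g))\le CME_g(w_g)$, since $P_v$ is a nonnegative symmetric matrix on $N_\eta$. Combined with control of the mean of $w$, Gronwall gives $w\equiv0$.

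\emph{Existence.} Fix a basis of $L^2(\pSi)$, for instance eigenfunctions of the Dirichlet-to-Neumann operator $\Lambda_{g_0}$, and project \eqref{eq:ep_v_map} onto the span of the first $k$ modes. This gives a linear ODE system with coefficients continuous in $t$, so it has solutions on all of $[0,T]$. The identity above applied to the Galerkin solution yields
\[
\tfrac{\dd}{\dd t}E_g(u_g)+\eps\norm{\png u_g}_{L^2(\pSi)}^2\le CE_g(u_g),
\]
which is uniform in $k$. Higher norms follow by also testing with powers of $\Lambda$, where $\eps\Lambda$ is coercive. This gives $k$-uniform (but $\eps$-dependent) bounds, and we pass to the limit to obtain a solution. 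Parabolic smoothing from the $\eps\Lambda$ term gives $u\in L^2H^m(\pSi)$ at the $H^{m-\frac12}$ level, which is enough regularity to justify the computations below.

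\emph{$\eps$-independent estimate.} We differentiate $\norm{u_g}_{H^m(\Si,g)}^2$ in time. Because $u_g$ is harmonic, its $H^m(\Si)$-norm is equivalent to $\norm{u}_{H^{m-\frac12}(\pSi)}$ plus lower-order terms. We compute tangentially at the boundary: write $\ptaug^{m-1}$ for tangential derivatives and use the boundary identity $\png u_g=\Lambda_g u$. The leading term is
\[
\int_{\pSi}\ptaug^{m-1}\pt u\cdot\png(\ptaug^{m-1}u)_g\,\dd s_g
=-\int_{\pSi}\ptaug^{m-1}\big((\eps+P_v)\Lambda_g u\big)\cdot\Lambda_g\ptaug^{m-1}u .
\]
Splitting $\ptaug^{m-1}(P_v\Lambda_g u)=P_v\ptaug^{m-1}\Lambda_g u+[\ptaug^{m-1},P_v]\Lambda_g u$, the top-order piece is
\[
-\int\Lambda\ptaug^{m-1}u\cdot(\eps+P_v)\Lambda\ptaug^{m-1}u+\text{commutator}[\Lambda,\ptaug^{m-1}],
\]
which has a good sign. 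Its $\eps$-part also produces the dissipative term in \eqref{est:Hm_u_epsilon_v_eq-extra}. The remaining terms are of three kinds:
\begin{enumerate}
\item the commutator $[\ptaug^{m-1},P_v]\Lambda u$, which involves $\ptaug^{j}P_v\cdot\ptaug^{m-1-j}\Lambda u$;
\item commutators of $\Lambda_g$ with tangential derivatives, which are order $m-1$ terms controlled via the regularity of $g\in\MM^{m+1}$;
\item time derivatives of the metric, bounded by $M$.
\end{enumerate}
The main obstacle is the $j=1$ term, $\ptaug P_v\,\ptaug^{m-2}\Lambda u$ paired against $\Lambda\ptaug^{m-1}u$. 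Both factors carry $m-1$ derivatives' worth of $u$ in $H^{-\frac12}$-type norms, so there is no slack and it cannot be absorbed using $\eps$. To handle it, first use that $P_v$ is a projection, $P_v=P_v^2$, so $\ptaug P_v=\ptaug P_v\,P_v+P_v\,\ptaug P_v$, and split into tangential and normal parts. The $P_v(\cdot)$ parts can be absorbed into the good term $\int|P_v\Lambda\ptaug^{m-1}u|^2$ via Young's inequality. The remaining parts become $\int\ptaug P_v\,X\cdot P_v^\perp\Lambda\ptaug^{m-1}u$, which is handled by moving one tangential derivative across, using $\Lambda$ symmetric and $\Lambda\approx|\ptau|$ modulo smoothing; this leaves terms bounded by $C(\La)\norm{u_g}_{H^m}^2$. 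Here $m\ge4$ ensures $v_g\in H^4$, so $P_v\in C^2$ on $\pSi$ and the lower-$j$ products are estimated by Sobolev embedding.

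Adding the lower-order norms, we obtain
\[
\tfrac{\dd}{\dd t}\norm{u_g}_{H^m}^2+\eps\norm{\p_\nu\nabla^{m-1}u_g}_{L^2(\pSi)}^2\le C\norm{u_g}_{H^m}^2 .
\]
Gronwall then gives both \eqref{est:Hm_u_epsilon_v_eq} and \eqref{est:Hm_u_epsilon_v_eq-extra}. Finally, the bound $\pt(u_g)\in L^\infty H^1$ follows from the equation itself, since $\pt u$ is controlled in $H^{m-\frac32}(\pSi)$ and the variation of $g$ is controlled by Lemma \ref{lemma:metric_variations}. This shows $u\in X_m(T)$.
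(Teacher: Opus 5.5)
Your proposal has a genuine gap in the existence step. For time-dependent $g$ and $v$ you never produce a solution with $u\in L^2([0,T];H^m(\pSi))$ and $u_g\in L^\infty([0,T];H^m(\Si))$, and that regularity is exactly what your $\eps$-independent computation needs in order to be justified.

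Your Galerkin space $V_k$ is spanned by eigenfunctions of $\Lambda_{g_0}$. The equation, however, involves $\partial_{\nu_{g(t)}}(u_k)_{g(t)}=\Lambda_{g(t)}u_k$, which does not lie in $V_k$ once $g(t)\neq g_0$. So you cannot test with it, and the inequality $\frac{\dd}{\dd t}E_{\half}(u_k,g)+\eps\norm{\png (u_k)_g}_{L^2(\pSi)}^2\le CE_{\half}(u_k,g)$ you claim for the approximants does not follow.

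Testing with $\Lambda_{g_0}u_k\in V_k$ instead produces $-\inner{(\eps+P_v)\Lambda_{g(t)}u_k}{\Lambda_{g_0}u_k}$. Its $P_v$-part has three problems:
\begin{itemize}
\item it is of the same order as the dissipation;
\item it carries coefficient one rather than $\eps$;
\item it is not manifestly sign-definite.
\end{itemize}

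Your higher-order bounds have the same defect, in worse form. ``Testing with powers of $\Lambda$, where $\eps\Lambda$ is coercive'' cannot work as stated: an $\eps$-small coercive term cannot absorb a top-order term like $\inner{P_v\Lambda u_k}{\Lambda^{2s}u_k}$ with coefficient one. Controlling it would need the projection structure of $P_v$, commutator estimates between $P_v$ and the nonlocal operators $\Lambda_{g_0}^{s}$, and control of $\Lambda_{g(t)}-\Lambda_{g_0}$. None of this is supplied, and ``parabolic smoothing from $\eps\Lambda$'' does not replace it.

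The paper avoids these issues in three steps.
\begin{enumerate}
\item Galerkin is used only with $g$ and $v$ frozen in time (Lemma \ref{lemma:ep_time_ind_weak_exist}). There $\png (u_k)_g\in V_k$, so the energy identity is exact.
\item For frozen coefficients, higher regularity comes from the equation being linear and autonomous, not from Galerkin bounds. $\pt u$ solves the same problem with datum $Q\png(u_0)_g$, so uniqueness gives $Q\png u_g=\pt u\in L^\infty H^{1/2}\cap L^2H^1$. Invertibility of $\eps+P_v$, elliptic regularity and iteration then finish (Lemma \ref{lemma:ep_new_sol}).
\item Time dependence is handled by discretisation. One freezes $(g,v)$ on the intervals of a partition and applies the $\eps$-independent estimate of Lemma \ref{lemma:ep_Hm_apriori} on each piece. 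The jumps of $\norm{u_{\tilde g}}_{H^m}$ at the partition points are controlled by \eqref{est:harm-ext-timedep-gen}, and the mesh size is then sent to zero.
\end{enumerate}

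Your $\eps$-independent estimate uses the same mechanism as the paper and is essentially fine, with two comments.
\begin{itemize}
\item The $j=1$ term you single out is not borderline. It is an $H^{1/2}$--$H^{-1/2}$ pairing, bounded directly by $\norm{u}_{H^{m-1/2}(\pSi)}^2$; the paper writes it as an interior integral via the divergence theorem and uses the $H^1(\Si)$ commutator bound \eqref{est:comm-weaker-sect3}. So the $P_v=P_v^2$ splitting is unnecessary.
\item To get \eqref{est:Hm_u_epsilon_v_eq} without an equivalence constant in front of $e^{CT}$, differentiate $\norm{u_g}_{H^m(\Si,g)}^2$ itself rather than an equivalent boundary quantity.
\end{itemize}
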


In addition to this result, we will also show that for every $\eps\in (0,\half]$, the map $(v,g)\mapsto u_{\ep,v,g}$ is Lipschitz in the sense described in Lemma \ref{lemma:ep_Lip_est}. 

Combining these results on the map and metric component will then allow us to prove that for suitably small $T=T(\eps,u_0,g_0,m)>0$, the map
$
    \Psi=\Psi_{\ep,u_0,g_0}: v\mapsto u_{\ep,v,g_v}
$ 
is a contraction on a suitably chosen subset of $X_m(T)$, see Lemma \ref{lemma:FPT} for details. We can hence deduce that for each $\eps\in (0,\half]$, the regularised system 
\begin{equation}\label{eq:flow-ep_coupled} 
	\p_t u_\eps = -(\ep + P_{u_\eps})\p_{\nu_{g_\eps}}u_{g_\eps}, \quad \p_t g_\eps = \hf P_{g_\eps}^H(k(u_{g_\eps},g_\eps))
\end{equation}
can be solved at least on a small, a priori $\eps$-\textit{dependent}, time interval interval $[0,T_\eps]$. 

The fact that the estimate \eqref{est:Hm_u_epsilon_v_eq} on the evolution of the $H^m$ norm of the map component is $\eps$-independent, combined with the uniform control on the metric component obtained in Proposition \ref{prop:step1}, will then allow us to establish that these solutions of the regularised system \eqref{eq:flow-ep_coupled} indeed exist, and remain well controlled, on an $\eps$-\textit{independent} time interval. To be more precise, in Section \ref{subsec:reg-coupled} we will show the following proposition. 

\begin{prop}\label{prop:ep_system}
    For any $m\ge 4$, any $\iota_0>0$ and any $\La_0<\infty$, there exists a time $T>0$ and a constant $C$ so that for any initial metric $g_0\in\MM^{m+1}$ with $\inj(\Si,g_0)\geq 2\iota_0$ and any initial map $u_0 \in H^{m-\hf}((\p\Si,g_0);N)$ with $\norm{(u_0)_{g_0}}_{H^m(\Si,g_0)}\leq \La_0$ the following holds true. 

    For every $0<\ep\le\hf$ there exists a unique solution $(u_\eps,g_\eps)\in X_m(T)\times C^{1}([0,T];\M^{m+1})$ of the regularised system \eqref{eq:flow-ep_coupled} with $u_\eps(0) = u_0$, $g_\eps(0)=g_0$ and on this $\eps$-\textit{independent} interval $[0,T]$ the map component remains bounded by 
    \beq\label{est:Hm-uniform-prop}
        \norm{(u_\eps)_{g_\eps}}_{H^m(\Si,g_\eps)}\leq 2 \La_0 
    \eeq
    while the metric component $g_\eps$ 
    remains in the neighbourhood $\UU\subset\MM^{m+1}$ of $g_0$ obtained in Lemma \ref{lemma:U-basics} and is so that 
    \beq\label{est:C11-uniform-prop} 
        \norm{g_\eps}_{C^{1,1}([0,T];\MM^{m+1})}\leq C. 
    \eeq 
\end{prop}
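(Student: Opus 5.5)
The plan is a continuation argument: use the $\eps$-dependent local solutions and show they extend to an $\eps$-independent interval, with the $H^m$ bound $2\La_0$ preserved. Fix $\eps\in(0,\half]$. By Lemma \ref{lemma:FPT}, the map $\Psi_{\eps,u_0,g_0}\colon v\mapsto u_{\eps,v,g_v}$ is a contraction on a suitable closed subset of $X_m(T_\eps)$. Its fixed point gives a unique solution $(u_\eps,g_\eps)$ of \eqref{eq:flow-ep_coupled} on some $[0,T_\eps]$. Let $T^*_\eps$ be the supremum of times $T'$ up to which a solution exists with three properties: $g_\eps(t)\in\UU$, $\norm{(u_\eps)_{g_\eps}(t)}_{H^m(\Si,g_\eps(t))}\le 2\La_0$, and $u_\eps(t)$ takes values in $N_\eta$. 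The goal is to produce a time $T=T(m,\iota_0,\La_0)>0$ with $T^*_\eps>T$ for every $\eps$.

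\textbf{Step 1: uniform constants.} On $[0,T^*_\eps)$ all quantities are controlled independently of $\eps$ and $g_0$.
\begin{itemize}
\item The energy satisfies $E_\half(u_\eps(t),g_0)\le C\norm{(u_\eps)_{g_\eps}}_{H^m}^2\le C\La_0^2=:\Eupp$, using the equivalence of norms on $\UU$ from Lemma \ref{lemma:U-basics}.
\item Proposition \ref{prop:step1}, applied with $v=u_\eps$, shows that $g_\eps=g_{u_\eps}$ stays in $\UU$ as long as $t\le\delta/\Eupp$. It also gives $\norm{\pt g_\eps}_{H^{m+1}}\le C\Eupp=:M$, so $\inj(g_\eps)\ge\iota_0$.
\item The Lipschitz bound \eqref{est:ptg_apriori_2} gives \eqref{est:C11-uniform-prop}. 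For this we need $\norm{\pt (u_\eps)_{g_0}}_{L^\infty H^1}$ bounded independently of $\eps$. That follows from the equation: $\pt u_\eps=-(\eps+P_{u_\eps})\png u_{g_\eps}$ is bounded in $H^{m-3/2}(\pSi)\subset H^{1/2}(\pSi)$ by $C(\La_0)$ when $m\ge4$. Taking harmonic extensions and adding the term coming from the time derivative of the metric then controls $\pt(u_\eps)_{g_0}$ in $H^1$.
\end{itemize}

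\textbf{Step 2: closing the bootstrap.} The hypotheses \eqref{ass:const_lemma_eps} hold with the constants $\iota_0$, $M$ and $\La=2\La_0$, all independent of $\eps$. Proposition \ref{prop:ep_map_solve} applies with $v=u_\eps$ and $g=g_\eps$, since uniqueness identifies $u_{\eps,u_\eps,g_\eps}=u_\eps$. It yields $\norm{(u_\eps)_{g_\eps}}^2_{H^m}\le e^{Ct}\La_0^2$, which is strictly less than $4\La_0^2$ for $t\le T_1:=\log 2/C$.

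It remains to keep the image inside $N_\eta$. Since $u_0$ maps into $N$ and $\norm{\pt u_\eps}_{L^\infty(\pSi)}\le C(\La_0)$ by Sobolev embedding, we get $\dist(u_\eps(t),N)\le C(\La_0)t<\eta$ for $t\le T_2$.

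Set $T:=\half\min(T_1,T_2,\delta/\Eupp)$. Suppose $T^*_\eps\le T$. Then at times $t<T^*_\eps$ all three defining conditions hold with strict margins, uniformly in $t$. We can therefore restart the fixed point argument of Lemma \ref{lemma:FPT} at a time close to $T^*_\eps$ and extend the solution beyond $T^*_\eps$. The restart interval has a positive length depending only on $\eps$, $\La_0$ and $\iota_0$. This contradicts the definition of $T^*_\eps$.

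\textbf{Uniqueness and the main difficulty.} Uniqueness on $[0,T]$ follows from the contraction property on short intervals together with a standard continuation argument.

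I expect the main obstacle to be making the restart step rigorous. The existence time from Lemma \ref{lemma:FPT} must depend only on $\eps$ and on bounds for the data, namely the $H^m$ norm, the distance to $\partial N_\eta$ and the injectivity radius, and not on finer features of the data. One also has to check that $u_\eps(t)\in H^{m-\half}$ at intermediate times. This follows because $(u_\eps)_{g_\eps}\in H^m$, so its trace lies in $H^{m-1/2}$. Finally, the neighbourhood $\UU$ is centred at $g_0$, not at the restart metric, so the constants on restart must be taken relative to $g_0$.
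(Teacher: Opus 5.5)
Your proposal is correct and follows essentially the same route as the paper. Both arguments are continuation arguments in which an $\eps$-dependent restart time is combined with the $\eps$-independent bounds of Proposition \ref{prop:step1} and Proposition \ref{prop:ep_map_solve} to rule out, before a uniform time $T$, that the metric leaves $\UU$, that the $H^m$ norm exceeds $2\La_0$, or that the map leaves the $\eta$-neighbourhood of $N$; the uniform restart time you flag as the main obstacle is exactly what the paper records as Corollary \ref{cor:new-FPT}. The only differences are cosmetic: you bound the energy by $C\La_0^2$ via the $H^m$ bootstrap instead of using the energy monotonicity \eqref{est:energy_decreas_eps}, and you control $\dist(u_\eps,N)$ linearly in $t$ through an $L^\infty$ bound on $\pt u_\eps$ rather than through Lemma \ref{lemma:Xm}.
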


As neither $T>0$ nor $C$ depend on $\eps \in (0,\hf]$, we can exploit the above uniform bounds on the solutions $(u_\eps,g_\eps)$ of the regularised flow \eqref{eq:flow-ep_coupled} and pass to the limit along suitable $\ep\downarrow 0$ to obtain a solution of our original problem on the interval $[0,T]$ for $T=T(\La_0,\iota_0,m)>0$ as in the above Proposition \ref{prop:ep_system}. Iteratively applying this short-time existence result which, 
in view of Remark \ref{rmk:uniform_metric_time} and the fact that the energy is non-increasing along regular solutions of the flow, 
yields a solution for an additional time interval whose length is bounded away from zero 
unless the injectivity radius goes to zero or the $H^m$-norm goes to infinity,
we can hence deduce

\begin{prop}\label{prop:Hm_plateau_existence}
	For every initial data $(u_0,g_0)\in H^{m-\hf}(\p\Si;N)\times \MM^{m+1}$, $m\geq 4$, there exists a solution $(u,g) \in X_{m,loc}(T_m^*)\times C_{loc}^1([0,T_m^*);\MM^{m+1})$ to the coupled flow \eqref{eq:flow} on a maximal time interval $[0,T_m^*)$, where $T_m^*=\infty$ unless either 
	\beq\label{def:Tm*}
		\lim_{t\uparrow T_m^*}\inj(g(t)) = 0 \text{ or } \lim_{t\uparrow T_m^*}\norm{u(t)}_{H^m(\Si,g(t))}=\infty.
	\eeq
\end{prop}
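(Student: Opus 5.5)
We obtain Proposition \ref{prop:Hm_plateau_existence} in two steps: a limit $\eps\downarrow 0$ in Proposition \ref{prop:ep_system}, which gives short-time existence with a time depending only on $\La_0$ and $\iota_0$, and then a continuation argument.

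\textbf{Step 1: passing to the limit.} Fix $(u_0,g_0)$ with $\inj(\Si,g_0)\geq 2\iota_0$ and $\norm{(u_0)_{g_0}}_{H^m(\Si,g_0)}\leq\La_0$. Let $(u_\eps,g_\eps)$ be the solutions on $[0,T]$ given by Proposition \ref{prop:ep_system}. Its bounds \eqref{est:Hm-uniform-prop} and \eqref{est:C11-uniform-prop} are uniform in $\eps$. By Lemma \ref{lemma:U-basics} all norms may be computed with respect to $g_0$. Reading the equation \eqref{eq:flow-ep_coupled} on $\pSi$, and using the trace bound $\norm{\png u_g}_{H^{m-3/2}(\pSi)}\lesssim\norm{u_g}_{H^m}$, we get a uniform bound on $\pt u_\eps$ in $L^\infty H^{m-3/2}(\pSi)$. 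This gives uniform Lipschitz-in-time control of $u_\eps$ in that space.

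Arzel\`a--Ascoli together with Aubin--Lions then yields a subsequence along which:
\begin{itemize}
\item $g_\eps\to g$ in $C^1([0,T];\MM^{m})$, and $g\in C^{1,1}([0,T];\MM^{m+1})$ by weak-$*$ lower semicontinuity;
\item $u_\eps\to u$ strongly in $C^0([0,T];H^{m-1}(\pSi))$;
\item $(u_\eps)_{g_\eps}\rightharpoonup^* u_g$ in $L^\infty H^m$.
\end{itemize}
The strong convergence is enough to pass to the limit in the nonlinear terms $P_{u_\eps}\png u_{g_\eps}$ and $P^H_{g_\eps}(k(u_{g_\eps},g_\eps))$. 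For the second term we use the continuity of $g\mapsto P^H_g$ and of harmonic extensions in $g$, from Lemma \ref{lemma:metric_variations}. The term $\eps\png u_g$ tends to $0$. Hence $(u,g)$ solves \eqref{eq:flow}, and $u\in X_m(T)$.

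\textbf{The map stays in $N$ (main obstacle).} We also need $u(t)\in N$, which is not automatic for the regularised flow. For the limit we compute
\[
\pt\,\tfrac12\dist(u,N)^2=\langle u-\pi(u),\pt u\rangle=-\langle u-\pi(u),P_u\png u_g\rangle=0,
\]
since $u-\pi(u)$ is normal while $P_u\png u_g$ is tangent at $\pi(u)$, by \eqref{def:ext-nbhd}. This is valid as long as $u$ stays in $N_\eta$, which holds for short times by continuity. Since $u(0)\in N$, we get $u(t)\in N$ for all $t$. This step needs enough regularity for the chain rule, which the $H^m$ bounds with $m\geq4$ provide.

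\textbf{Step 2: continuation.} Along the solution the energy is non-increasing, by the computation leading to \eqref{eq:energy-decay}. So Remark \ref{rmk:uniform_metric_time} and \eqref{est:inj-below} apply: as long as $\inj(g(t))\geq 2\iota_0$ and $\norm{u(t)}_{H^m}\leq\La_0$, we can restart at time $t$ and extend the solution by a time $T(\La_0,\iota_0,m)>0$. Here the $H^{m-1/2}$ trace regularity of $u(t)$ and $g(t)\in\MM^{m+1}$ provide valid initial data.

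Define $T_m^*$ as the supremum of existence times. Suppose $T_m^*<\infty$ while neither alternative in \eqref{def:Tm*} holds. Then along some $t_k\uparrow T_m^*$ both $\inj(g(t_k))$ is bounded below and $\norm{u(t_k)}_{H^m}$ is bounded above. Restarting at $t_k$ close to $T_m^*$ extends the solution past $T_m^*$, a contradiction. So along any sequence one of the two quantities degenerates, and \eqref{def:Tm*} holds in the $\liminf/\limsup$ sense.

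Upgrading this to genuine limits requires some continuity argument. For the injectivity radius, monotone control comes from the Lipschitz bound \eqref{est:velocity-by-energy} on the metric. For the $H^m$ norm, \eqref{est:Hm_u_epsilon_v_eq} passes to the limit and gives a Gronwall bound on short intervals. Together these exclude oscillation and yield the stated limits.
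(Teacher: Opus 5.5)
Your proposal is correct and follows the paper's route. You pass to the limit $\eps\downarrow 0$ in the solutions of Proposition~\ref{prop:ep_system}, using its $\eps$-independent $H^m$ and $C^{1,1}$ bounds together with compactness. You then restart iteratively, using that the existence time depends only on $\La_0$, a lower bound on $\inj$, and the non-increasing energy.

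Two of your steps supply details the paper leaves implicit. The first is your check that $\dist(u,N)$ is conserved by the limiting equation, which makes $u(t)$ admissible initial data for the restart. The second is your passage from $\liminf$ to genuine limits. For that step no Gronwall bound is needed. Remark~\ref{rmk:uniform_metric_time} already turns $\liminf_{t\uparrow T_m^*}\inj(g(t))=0$ into a genuine limit. And once $\inj$ is bounded below near $T_m^*$, any sequence $t_k\uparrow T_m^*$ with $\norm{u(t_k)}_{H^m}$ bounded would allow a restart past $T_m^*$.
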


In Section \ref{sect:higher_reg}  we will establish that the $H^m$-norm remains indeed controlled for as long as there is no concentration of energy. Combined with the analysis of potential finite time singularities of the map component that is carried out in Section \ref{sect:bubble}, following \cite{S}, this will complete the proof of our main Theorem \ref{thm:1}.

\section{Analysis of the metric component}\label{sect:metric}
We will first carry out the analysis of the metric component in detail in the more difficult case where our (orientable) domain surface $\Si$ is neither a cylinder nor a disc. We will discuss the adaptions, and the significant simplifications, of this analysis for the case where $\Si$ is cylinder later in Section \ref{subsec:cylinder} and again recall that in the case where $\Si$ is a disc, the flow reduces to the Plateau flow with fixed metric analysed by the second author in \cite{S}.  

\subsection{Basic properties of set $\MM(\Si)$ of hyperbolic metrics}\label{subsec:metric-basic} 
Let $\Si$ be an orientable surface of general type and let $\MM(\Si)$ be the set of hyperbolic metrics for which the boundary curves are geodesics as considered above. In this section we collect some useful properties of these metrics which will be used not only in the analysis of horizontal curves as considered in Proposition \ref{prop:step1}, but also later on in the paper. To lighten the notation we will often use the short-hand 
\beq\label{def:Miota}
    \MMiota:= \{g\in \MM: \inj(\Si, g)\geq \iota_0\} \text{ for }\iota_0>0 
\eeq
as well as $\MMiota^{m+1}:= \{g\in \MM^{m+1}: \inj(\Si, g)\geq \iota_0\} $ and note the following.

\begin{rmk}\label{rmk:compact-moduli}
    It is easy to see that the subset of moduli space which corresponds to metrics with $\inj(g)\geq \iota_0$, $\iota_0>0$ any fixed number, is compact. Hence the usual Sobolev embedding theorems (for both functions and tensors) are all applicable on surfaces $(\Si,g)$, $g\in \MMiota$, with constants that do not depend on the specific choice of metric $g\in \MMiota$, but only on $\iota_0>0$ (and of course the exponents of the involved Sobolev spaces and as usual the topology of $\Si$). We also stress that all Sobolev spaces and norms are to be understood with respect to the corresponding metric $g$ (at the relevant time if $g$ is time dependent) unless specified otherwise. We also note that while Sobolev space $H^s(\pSi,g)$ and $H^s(\Si,g)$ of any order $s$ are defined for all metrics $g\in \MM^{m+1}$, we will only ever consider such spaces for exponents $s\leq m$ to ensure that the spaces that we obtain are independent of the choice of metric $g\in \MM^{m+1}$ and note that the dependence on $g$ of the corresponding norms can be controlled as described in Lemma \ref{lemma:metric_variations}.
\end{rmk}
We furthermore recall that for such hyperbolic metrics on $\Si$ a neighbourhood $\Col(\si_i,g)$ of each boundary curve $\si_i$ of $\Si$ is isometric to the hyperbolic cylinder
\beq\label{eq:collar}
    \big([0,X(\ell))\times S^1,\rho_{\ell}^2 (\dd s^2+\dd \theta^2)\big), 
\eeq
for $\ell=L_{g}(\si_i)$ the length of $\si_i$ and with $\si_i$ corresponding to $\{0\}\times S^1$, where
\beq \label{eq:collar-X-rho}
    X(\ell)=\tfrac{2\pi}{\ell}[\tfrac\pi2-\arctan(\sinh\tfrac{\ell}{2})], \qquad \rho_\ell(s)=\tfrac{\ell}{2\pi} [ \cos(\tfrac{\ell s}{2\pi})]^{-1}.
\eeq
By passing to the double $\hat\Si$ and using the corresponding version of the collar lemma for all simple closed geodesics, or exploiting the above Remark \ref{rmk:compact-moduli}, it is furthermore easy to see that the lengths $L_{g}(\si_i)$ of the boundary curves are bounded both away from zero and from above uniformly for all metrics $g\in\MMiota$. 

As a result, we obtain the following uniform control on the behaviour of the metric on the neighbourhood $\Col(\pSi,g) := \bigcup_i\Col(\si_i,g)$ of the boundary of such surfaces $(\Si,g)$, $g\in \MMiota$.

\begin{rmk}\label{collar:uniform} 
    For any $g\in \MMiota$, $\iota_0>0$ any given number, estimates of the form 
    \beq
        0<c\leq X(\ell_i)\leq C \text{ and } 0<c \leq \rho_{\ell_i}\leq C \text{ on } [0,X(\ell_i)), \quad  \ell_i:= L_g(\si_i)
    \eeq
    are valid for constants $c>0$ and $C>0$ that only depend on $\iota_0$ (and as usual the topology of $\Si$), while for any $k\in \N$ the estimate $\norm{\rho_{\ell_i}}_{C^k(\Col(\si_i,g))} \leq C$ holds true for a constant that additionally depends on $k$. 
    \\
    For each such surface, we can hence in particular choose a cut-off function $\varphi\in C_c^\infty(\Col(\pSi,g))$ with $\varphi\equiv 1$ in a neighbourhood of $\pSi$ which is so that $\norm{\varphi}_{C^k(\Col(\pSi,g))}\leq C=C(\iota_0,k)$ and so that $\dist_g(\supp(\na \varphi), \pSi)\geq c(\iota_0)>0$. 
    \\
    On occasion we will also want to extend the unit tangent $\tau_g$ and the unit normal $\nu_g$ from $(\pSi, g)$ to this collar neighbourhood and will always do this by choosing the vector fields which are given in collar coordinates as the fixed rescalings of the standard coordinate vector fields
    \beq\label{def:extension-tau-nu}
        \tau:=\rho_{\ell_i}(0)^{-1} \frac{\partial}{\partial \theta} \text{ and } \nu:= - \rho_{\ell_i}(0)^{-1} \frac{\partial}{\partial s}.
    \eeq
    We note that while this does not yield vector fields with unit length, this choice of extension is convenient since it implies $\pnu^2U+\ptau^2 U=0$ for any function $U$ which is harmonic with respect to $g$.  
\end{rmk}

We note that for (orientable) surfaces $\Si$ different from the disc or the cylinder, it has already been pointed out by Tromba in \cite{Tromba} that the map $g\mapsto H(g)$ is not integrable in the sense of Frobenius's theorem; that is, there exists no submanifold $\MM_H$ of $\MM(\Si)$ (or of $\MMiota^{m+1}(\Si)$ for any finite $m\ge 2$) whose tangent spaces $T_g\MM_H$ coincide with the corresponding horizontal spaces 
\beq\label{def:H(g)} 
    H(g):= \{h\in \Gamma^{sym}(T^*\Si\otimes T^*\Si): \text{tr}_g(h)=0, \quad \Div_g(k)=0, \quad h(\nu_g,\tau_g)\vert_\pSi=0\}.
\eeq

In practice this means that curves which are horizontal, i.e. whose velocity $\pt g$ at each time is given by an element of the corresponding horizontal space $H(g(t))$ and which start at a given initial metric $g_0$ will not be constrained to a finite dimensional manifold. By contrast, in the case where  $\Si$ is a cylinder such horizontal curves are constrained to an explicit $1$-dimensional submanifold of $\MM$, compare Section \ref{subsec:cylinder} below. 

We also stress that the equation for  the metric component of our coupled flow \eqref{eq:flow} cannot be viewed as an equation on Teichm\"uller space but that it is important to keep track of the metric $g(t)$ itself as different representatives $\tilde g(t) =f_t^*g(t)$ of the same curve in Teichm\"uller space lead to different PDEs for the map component.

For the proof of short-time existence we will be working with metrics which are contained in a small neighbourhood of a given $g_0\in \M^{m+1}$ which is chosen as in the following lemma. 
\begin{lemma}\label{lemma:U-basics}
    For every $\iota_0>0$ and $m\geq 2$ there exists $r_0=r_0(\iota_0,m)>0$ so that for each $g_0\in \MMtwoiota^{m+1}$ 
    all metrics in the neighbourhood
    \beq\label{def:U}
        \UU=\UU_{g_0,r_0}^{m+1}:=\{g\in  \MM^{m+1}: \dist_{H^{m+1}}(g_0,g)\leq r_0\}
    \eeq
    of $g_0$ are uniformly equivalent in the sense that 
    \beq\label{est:U-equiv} 
        \frac14 g\leq \tilde g\leq 4 g \text{ for all } g,\tilde g\in \UU
    \eeq
    and hence in particular so that 
    \beq\label{est:U-inj-bound} 
        \inj(\Si,g)\geq \half \inj(\Si,g_0) \geq \iota_0 \qquad\text{ for all } g\in \UU.
    \eeq
    Furthermore, all metrics in $\UU$ induce uniformly equivalent Sobolev norms in the sense that there exists a constant $C=C(m,\iota_0)$ so that the estimates 
    \beq\label{est:equiv-Hm-new}
        \norm{v}_{H^k(\Si,g)}\leq C\norm{v}_{H^k(\Si,\tilde g)} \qquad \text{ and }\qquad  \norm{h}_{H^k(\Si,g)}\leq C\norm{h}_{H^k(\Si,\tilde g)} 
    \eeq
    hold true for all $g,\tilde g\in \UU$, all $0\leq k\leq m+1$, all functions $v\in H^k(\Si,\R^n)$ and all $(0,2)$-tensors $h$ on $\Si$ whose coefficients are in $H^k$. 
\end{lemma}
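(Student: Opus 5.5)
The plan is to control everything along $C^1$-paths $g(s)$ in $\MM^{m+1}$ that join $g_0$ to $g\in\UU$ and have $\int\norm{\partial_s g(s)}_{H^{m+1}(\Si,g(s))}\dd s\le 2r_0$. Such paths exist by the definition of $\dist_{H^{m+1}}$. The first step is the pointwise equivalence \eqref{est:U-equiv}. Fix a unit vector $w$ and look at $f(s)=g(s)(w,w)$. Its derivative satisfies $\abs{f'(s)}\le \abs{\partial_s g}_{g(s)} f(s)$. The $H^{m+1}$-norm with $m+1\ge 3$ controls the $C^0$-norm, $\norm{h}_{C^0(\Si,g(s))}\le C_S\norm{h}_{H^{m+1}(\Si,g(s))}$. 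So Gronwall gives $e^{-2C_Sr_0}g_0\le g\le e^{2C_Sr_0}g_0$. Choosing $r_0$ small then yields $\tfrac12 g_0\le g\le 2g_0$ for every $g\in\UU$, and \eqref{est:U-equiv} follows.

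The delicate point is that $C_S$ must be uniform along the path. I would handle this with a continuity (bootstrap) argument. Let $s^*$ be the first parameter at which $\inj(\Si,g(s))$ drops to $\iota_0$. On $[1,s^*]$ every metric lies in $\MMiota$, so Remark \ref{rmk:compact-moduli} supplies a Sobolev constant $C_S=C_S(\iota_0)$. The estimate above then gives $\tfrac12 g_0\le g(s)\le 2g_0$ on that interval. Since $\inj$ changes by at most a factor $2$ under metrics that are equivalent with factor $2$, we get $\inj(g(s))\ge \tfrac12\cdot 2\iota_0\cdot$ (a factor close to $1$). With $r_0$ small this is strictly larger than $\iota_0$, so $s^*$ cannot occur and \eqref{est:U-inj-bound} holds. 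I would state this more carefully as follows: the equivalence factor $e^{2C_Sr_0}$ is close to $1$, and the injectivity radius is Lipschitz in the metric under such quasi-isometries, so $\inj(g)\ge e^{-C_Sr_0}\inj(g_0)\ge\frac12\inj(g_0)$. If directly bounding $\inj$ through the quasi-isometry is awkward because of the boundary, I would pass to the doubled hyperbolic metric $\hat g$ on $\hat\Si$. There the injectivity radius is governed by the length of the shortest closed geodesic (of which boundary lengths are a part), and lengths of curves change by at most the factor $e^{C_Sr_0}$.

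For the Sobolev norm equivalence \eqref{est:equiv-Hm-new}, I would again differentiate along the path. The key quantity is $\partial_s \norm{v}^2_{H^k(\Si,g(s))}$, which involves $\partial_s g$, $\partial_s \dd v_{g(s)}$ and the variation of the Christoffel symbols, $\partial_s\Gamma\sim g^{-1}*\na\partial_s g$. For a fixed $v$ one gets terms like $\na^j\partial_s\Gamma * \na^{k-1-j}v$ with $j\le k-1\le m$, so $\na^j\partial_s g$ up to order $m+1$ appears. I would use Sobolev multiplication (valid for $m+1\ge3$ in dimension two, again with uniform constants from $\inj\ge\iota_0$) and put the top derivatives of $\partial_s g$ in $L^2$ or $L^\infty$ as appropriate. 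This gives
\[
\Big|\tfrac{\dd}{\dd s}\norm{v}^2_{H^k(\Si,g(s))}\Big|\le C\norm{\partial_s g}_{H^{m+1}(\Si,g(s))}\norm{v}^2_{H^k(\Si,g(s))}
\]
for $k\le m+1$. Gronwall then yields the equivalence with constant $e^{2Cr_0}$. The same computation applies to $(0,2)$-tensors $h$, with extra terms from raising indices using $g^{-1}$.

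I expect the main obstacle to be the bookkeeping in this commutator estimate at the top order $k=m+1$. There the term $\na^{m}\partial_s\Gamma$ contains $\na^{m+1}\partial_s g$, which is only in $L^2$. It has to be paired with $v$ itself, and $\norm{v}_{C^0}$ needs $k\ge2$, which holds. The intermediate terms are handled by Gagliardo–Nirenberg/Sobolev products, which need uniform constants. That uniformity comes from the bootstrap on $\inj$ above, so the bootstrap must be closed first.
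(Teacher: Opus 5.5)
Your proposal is correct and is essentially the paper's proof: a connecting path with $\int\norm{\partial_s g}_{H^{m+1}}\,\dd s<2r_0$, a continuity argument on the injectivity radius so that $H^{m+1}\hookrightarrow L^\infty$ holds with a constant depending only on $\iota_0$, Gronwall for $g(s)(w,w)$, a comparison of lengths of essential curves giving $\inj(\Si,g)\geq\frac12\inj(\Si,g_0)$, and Gronwall for $\tfrac{\dd}{\dd s}\norm{v}_{H^k(\Si,g(s))}^2$ via the variation of the connection. The only differences are cosmetic: the paper bootstraps on $\inj\geq\frac12\inj(\Si,g_0)$ rather than $\inj\geq\iota_0$, and it reads off $\inj$ directly from the shortest closed or boundary-to-boundary curves on $\Si$ instead of passing to the double.
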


\begin{rmk}\label{rmk:half-energy-uniform}
    As explained in Section \ref{subsec:metric-useful}, such metrics also yield uniformly equivalent half-energies, namely are so that the estimate 
    \beq
        E_\hf(v,g) \leq CE_\hf(v,\tilde{g}) \text{ for all } g,\tilde g\in \UU_{g_0,r_0}^{m+1}
    \eeq
    holds true for all $v\in H^\half(\pSi;\R^n)$ for a constant $C=C(\iota_0)$.
\end{rmk}

\begin{proof}[Proof of Lemma \ref{lemma:U-basics}]
    Let $\UU$ be as defined above for a number $r_0>0$ that we determine below. Given any $g_1\in \UU$ we can consider a curve of metrics $g(t), t\in [0,1]$, with $g(0)=g_0$ and $g(1)=g_1$ so that $\int_0^1\norm{\pt g(t)}_{H^{m+1}(\Si,g(t))}\dd t<2r_0$ and we let $t_0\in (0,1]$ be the maximal time so that \eqref{est:U-inj-bound} holds for all metrics $g(t)$ with $t\in [0,t_0]$. 

    As we have pointwise bounds of 
    \begin{equation}
        \abs{\tfrac{\dd}{\dd t} \abs{X}_{g}^2}=\abs{(\pt g)(X,X)}\leq \abs{\pt g}_{g} \abs{X}_{g}^2\leq \norm{\pt g}_{L^\infty(\Si,g)} \abs{X}_{g}^2
    \end{equation}
    for any vector field $X$ on $\Si$, and as the Sobolev-embedding theorem $H^2(\Si,g)\hookrightarrow L^\infty(\Si,g)$ is valid with the same constant $C_0=C_0(\iota_0)$ for all metrics $g(t)$ with $t\in [0,t_0]$, we can integrate the resulting estimate of $\abs{\tfrac{\dd}{\dd t} \log(\abs{X}_{g}^2)}\leq  \norm{\pt g}_{L^\infty(\Si,g)}\leq C_0 \norm{\pt g}_{H^{m+1}(\Si,g)}$ over any subinterval of $[0,t_0]$ and use that $\int_0^1\norm{\pt g}_{H^{m+1}(\Si,g)}<2r_0$ to deduce that 
    \begin{equation}
        g(t)\leq e^{2C_0 r_0} g(\tilde t) \leq 2 g(\tilde t)\text{ for all }t,\tilde t\in [0,t_0],
    \end{equation}
    where the last inequality holds provided $r_0>0$ is small enough. 
    
    We conclude that the lengths of any curve $\si$ in $\Si$ with respect to two such metrics are related by $L_{g(t)}(\si)\leq \sqrt{2} L_{g(\tilde t)}(\si)$. Since the injectivity radius of hyperbolic surfaces $(\Si,g)$ is given by half of the length of the shortest curve which is either closed and non-contractible or so that it connects two points of $\pSi$, we can hence deduce that $\inj(\Si,g(t))\leq \sqrt{2} \inj(\Si,g(\tilde t))$ for all $t,\tilde t\in [0,t_0]$. In particular, 
    $$\inj(\Si,g(t_0))\geq \tfrac{1}{\sqrt 2} \inj(\Si,g_0) > \tfrac{1}{ 2} \inj(\Si,g_0),$$
    so as $t_0$ was chosen as the maximal number $t_0\leq 1$ so that \eqref{est:U-inj-bound} holds on $[0,t_0]$ we must have $t_0=1$. 
    
    The estimates on $g(t)$ and $\inj(\Si,g(t))$ obtained above are hence in particular applicable for $g_1=g(1)$ which immediately yields the first two claims \eqref{est:U-equiv} and \eqref{est:U-inj-bound} of the lemma. 
    
    We can then use that pointwise estimates of the form 
    \begin{equation}\label{est:ddt_grad_k}
    	\abs{\tfrac{\dd}{\dd t} \na_{g(t)}^k v}_{g(t)}\leq  C\sum_{i+j\leq k} \abs{\na_{g(t)}^i \pt g(t)}_{g(t)}\cdot \abs{ \na_{g(t)}^j v}_{g(t)}, \qquad C=C(k)
    \end{equation}
    hold true for every $k\in \{0,\ldots, m\}$, for all sufficiently smooth $v:\Si\to \R^n$ and every curve of hyperbolic metrics. As \eqref{est:U-inj-bound} and Remark \ref{rmk:compact-moduli} ensure that the standard Sobolev embeddings $H^2(\Si,g)\hookrightarrow L^\infty(\Si,g)$ and $H^1(\Si,g)\hookrightarrow L^4(\Si,g)$ are applicable with the same constant for all $g\in \UU$, and as the change in the volume element is controlled by $\norm{\pt g}_{L^\infty(\Si,g)}$, we can hence bound
    \beq\label{est:ddt-Hm-new}
        \abs{\tfrac{\dd}{\dd t} \norm{v}_{H^k(\Si,g))}^2} \leq C \norm{\pt g}_{H^{m+1}(\Si,g)}^2 \norm{v}_{H^k(\Si,g)}^2 \text{ for some } C=C(m,\iota_0,\Si)
    \eeq
    for all $k\in\{0,\ldots,m+1\}$ and all functions $v\in H^k(\Si;\R^n)$ as claimed. Integrated over $t$ this immediately yields the claimed equivalence of norms of functions, and we note that the same argument also applies for tensors. 
\end{proof}

\subsection{Analysis of horizontal curves of hyperbolic metrics}
The key tool needed for the proof of Proposition \ref{prop:step1} is the following Lipschitz property of the projection $P^H$.

\begin{lemma}\label{lemma:projection}
    For any $\iota_0>0$ and $m\geq 2$, there exist constants $C<\infty$ and $r_1\in (0,r_0)$, $r_0>0$ as in the above Lemma \ref{lemma:U-basics}, so that for any $g_0\in\MMtwoiota^{m+1}$ the following assertions hold true for every metric $g$ in the neighbourhood $\UU=\UU_{g_0,r_1}^{m+1}:=\{g\in  \MM^{m+1}: \dist_{H^{m+1}}(g_0,g)\leq r_1\}$ of $g_0$ in $\MM^{m+1}$.

    Let $P_g^H$ be the $L^2(\Si,g)$-orthogonal projection from the space $\Gamma^{sym}_{L^2(\Si,g)}(T^*\Si\otimes T^*\Si)$  of symmetric $(0,2)$-tensors on $\Si$ with finite $L^2(\Si,g)$-norm to the horizontal space $H(g)$. Then for every $h\in\Gamma^{sym}_{L^2(\Si,g)}(T^*\Si\otimes T^*\Si)$ and all $g,\tilde g\in \UU$  we can bound 
    \beq\label{est:PH_bounded} 
        \norm{P_g^H(h)}_{H^{m+1}(\Si,g_0)}\leq C \norm{h}_{L^1(\Si,g_0)} 
    \eeq
    as well as
    \beq\label{est:PH_Lip}
        \norm{P_{ g}^H(h)-P_{\tilde g}^H(h)}_{H^{m+1}(\Si,g_0)}\leq C \dist_{H^{m+1}}(g,\tilde g) \norm{h}_{L^1(\Si,g_0)},
    \eeq
    and for any $C^1$-curve  of metrics $g_\eps$ in $\UU$, we have
    \beq\label{est:PH_diff}
        \norm{\peps P_{g_\eps}^H(h)}_{H^{m+1}(\Si,g_0)}\le C\norm{\peps g_\eps}_{H^{m+1}(\Si,g_0)} \norm{h}_{L^1(\Si,g_0)}.
     \eeq
\end{lemma}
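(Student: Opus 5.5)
The plan is to exploit that $H(g)$ is finite dimensional, of dimension $d=\dim$ of Teichmüller space (for fixed topology), and to work with a well-controlled basis. For each $g\in\UU$ I will choose a basis $\{\Psi_1(g),\dots,\Psi_d(g)\}$ of $H(g)$ that depends smoothly on $g$. Then I write
\[
P^H_g(h)=\sum_{i,j}\big(G(g)^{-1}\big)_{ij}\,\langle h,\Psi_j(g)\rangle_{L^2(\Si,g)}\,\Psi_i(g),
\qquad G_{ij}(g)=\langle\Psi_i(g),\Psi_j(g)\rangle_{L^2(\Si,g)} .
\]
All three estimates then reduce to a few facts about this basis, uniformly for $g\in\UU$:
\begin{itemize}
\item the bound $\norm{\Psi_i(g)}_{H^{m+1}}+\norm{\Psi_i(g)}_{L^\infty}\le C$;
\item a lower bound on $G(g)$, i.e. $G(g)\ge c\,\Id$;
\item the Lipschitz bound $\norm{\partial_\eps \Psi_i(g_\eps)}_{H^{m+1}}\le C\norm{\partial_\eps g_\eps}_{H^{m+1}}$.
\end{itemize}
Granting these, \eqref{est:PH_bounded} follows because $|\langle h,\Psi_j\rangle_{L^2(\Si,g)}|\le \norm{\Psi_j}_{L^\infty}\norm{h}_{L^1(\Si,g)}$, and by \eqref{est:U-equiv} the $L^1$ norms and volume forms for $g$ and $g_0$ are comparable. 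Differentiating the formula in $\eps$ gives \eqref{est:PH_diff}: the derivatives of $\Psi_i$, of $G^{-1}$, of the inner product and of $\dd v_g$ are each controlled by $\norm{\partial_\eps g_\eps}_{H^{m+1}}$, using $H^{m+1}\hookrightarrow L^\infty$. Integrating \eqref{est:PH_diff} along a nearly length-minimising path from $\tilde g$ to $g$ yields \eqref{est:PH_Lip}. That path stays in $\UU_{g_0,r_0}$ if $r_1\le r_0/3$, so Lemma \ref{lemma:U-basics} still applies along it.

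To construct the basis I will pass to the Schottky double. Elements of $H(g)$ are exactly restrictions of symmetric (under the reflection) holomorphic quadratic differentials $\Re(\phi\,dz^2)$ on $(\hat\Si,\hat g)$: trace- and divergence-free means holomorphic, and the condition $h(\nu,\tau)=0$ is the reflection symmetry. The natural choice is to transport a fixed basis $\{\Psi_i(g_0)\}$ by setting
\[
\Psi_i(g):=\Pi_g\big(\Psi_i(g_0)\big),
\]
where $\Pi_g$ is the $L^2(\Si,g)$-orthogonal projection onto $H(g)$. Because the equations are not linear in $g$, I will realise $\Pi_g$ via elliptic theory. The trace-free part of a tensor decomposes as
\[
\mathring{h}=\mathring{L}_Xg+\Psi, \qquad \Psi\in H(g),
\]
with $X$ tangential on $\pSi$. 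Here $X$ solves the elliptic boundary value problem $\Div_g(\mathring L_Xg)=\Div_g \mathring h$ with mixed boundary conditions ($X\cdot\nu=0$ and the $(\nu,\tau)$-component of the equation as Neumann-type condition). Its kernel consists of conformal Killing fields tangent to $\pSi$, and this kernel is trivial for general type.

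The main obstacle is establishing uniform elliptic estimates for this operator with coefficients in $H^{m+1}$, and their $C^1$-dependence on $g$. My first attempt is to obtain the estimates at $g_0$ from the compactness of $\MMiota$ in Remark \ref{rmk:compact-moduli}, together with standard Schauder/Sobolev theory on the double. For $g$ near $g_0$ I will then use a perturbation argument, shrinking $r_1$ so that the perturbed operator stays invertible. The $\eps$-derivative of $X_\eps$ is obtained by differentiating the equation. Its right-hand side involves $\partial_\eps g$ times at most two derivatives of the data. I will apply this only to $\mathring h=\Psi_i(g_0)$, which is smooth in the $H^{m+1}$ scale, so no loss of derivatives occurs. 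Finally, $G(g_0)=\Id$ after orthonormalising, and $G(g)$ is Lipschitz in $g$, so $G\ge\half\Id$ on $\UU$ for $r_1$ small.
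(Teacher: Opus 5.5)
Your reduction is sound up to the construction of the basis. The Gram-matrix formula, the duality bound $|\langle h,\Psi_j\rangle|\le \norm{\Psi_j}_{L^\infty}\norm{h}_{L^1}$, and obtaining \eqref{est:PH_Lip} by integrating \eqref{est:PH_diff} along a short path are all fine. This is also a genuinely different route from the paper's.

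\textbf{The gap: your route loses one derivative.} The problem sits in the step you flag as the main obstacle. For hyperbolic $g$ your operator is $\Div_g(\mathring L_Xg)=\Delta_gX+K_gX$ (rough Laplacian). In fixed charts its zeroth order coefficient contains $\partial\Gamma_g\sim\partial^2 g\in H^{m-1}$.
\begin{itemize}
\item \emph{Perturbation step.} $A_g-A_{g_0}$ is small only as a map into $H^{m-1}$. Perturbing from $g_0$ therefore controls $X$ only in $H^{m+1}$, i.e.\ $\mathring L_Xg$ only in $H^m$. For your first bullet this can be repaired by finite dimensionality of $H(g)$, but not for the third.
\item \emph{Derivative step.} When you differentiate the equation for $X_\eps$, the right-hand side is not ``$\peps g_\eps$ times derivatives of the data''. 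The variation of the operator contains $\nabla\dot\Gamma * X\sim\nabla^2(\peps g_\eps)*X$, which is only in $H^{m-1}$. So $\peps X_\eps$ is only in $H^{m+1}$.
\item \emph{Consequence.} Both $\mathring L_{\peps X_\eps}g$ and $\peps(\mathring L_Xg_\eps)$ (which contains the term $\nabla_X(\peps g_\eps)$) lie only in $H^m$. Their combination $\peps\Psi_i(g_\eps)$ is in fact in $H^{m+1}$; for $\peps g_\eps=L_Yg$ it equals $L_Y\Psi_i+P^H_g(-L_Yh)$. But this holds only through a cancellation your argument does not capture.
\end{itemize}
Smoothness of $h=\Psi_i(g_0)$ does not help, since the loss comes from $\peps g_\eps$. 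As written you would obtain \eqref{est:PH_diff} with $H^m$ on the left. That is not enough for the fixed-point argument on $\MM^{m+1}$ in Lemma \ref{lemma:simple-ode}.

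\textbf{What is missing.} You need to avoid differentiating second order operators in the metric, and to use that $H(g)$ is smooth in the intrinsic geometry of $g$.
\begin{itemize}
\item \emph{First bullet.} Each $g\in\UU$ is hyperbolic, so after pull-back by a diffeomorphism it lies in the compact thick part. Hence $\norm{\Psi}_{C^k(\Si,g)}\le C(k,\iota_0)\norm{\Psi}_{L^2(\Si,g)}$ for $\Psi\in H(g)$. Together with Lemma \ref{lemma:U-basics} this gives your first bullet with no perturbation.
\item \emph{Derivative bound.} Differentiate the defining conditions $\mathrm{tr}_{g_\eps}\Psi_\eps=0$, $\Div_{g_\eps}\Psi_\eps=0$, $\Psi_\eps(\nu_{g_\eps},\tau_{g_\eps})=0$ instead of the equation for $X_\eps$. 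These involve only $\nabla(\peps g_\eps)$ multiplied by the $g$-smooth tensor $\Psi$. So the non-horizontal part of $\peps\Psi_\eps$ is a trace term plus $\mathring L_Zg$, where $Z$ solves your boundary value problem for the fixed metric $g$ with data in $H^m(\Si,g)$.
\item \emph{Closing the estimate.} Geometric elliptic estimates (constant curvature, geodesic boundary) give $\mathring L_Zg\in H^{m+1}(\Si,g)$. The horizontal part lies in the finite-dimensional space $H(g)$ and only needs an $L^2$ bound, which follows by differentiating the orthogonality relations. Finally, transfer all norms to $g_0$ via Lemma \ref{lemma:U-basics}.
\end{itemize}

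\textbf{The paper's route.} The paper sidesteps this analysis entirely. It extends $h$ evenly to the Schottky double and proves $P^H_g(h)=P^{\hat H}_{\hat g}(\hat h)\vert_\Si$ via the splitting $\hat H=\hat H_{even}\oplus\hat H_{odd}$. It then imports the three closed-surface estimates from Lemma 2.9 of \cite{R-existence}, made uniform in $g_0$ by diffeomorphism invariance and compactness of the thick part of moduli space.
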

We note that \eqref{est:PH_bounded} in particular ensures that the projection has a unique continuous extension to the space $\Gamma^{sym}_{L^1(\Si,g)}(T^*\Si\otimes T^*\Si)$ of tensors with finite $L^1$ norm, and remark that the above estimates remain valid if we compute some or all of the norms with respect to $g$ since Lemma \ref{lemma:U-basics} ensures that the metrics in $\UU$ and the induced $H^{m+1}$ norms are uniformly equivalent.

\begin{proof}[Proof of Lemma \ref{lemma:projection}]
    As in the introduction we let $\hat \Si$ be the Schottky double of $\Si$ and let $\hat \MM^{m+1}$ be the manifold of all hyperbolic metrics on $\hat\Si$ whose coefficients are in the Sobolev space $H^{m+1}$ with respect to the fixed coordinate charts on $\hat \Si$. Given $\hat g\in \hat \MM^{m+1}$ we let $\hat H(\hat g)=\hat H(\hat\Si,\hat g)$ be the space of all symmetric $(0,2)$-tensors on $\hat\Si$ which are trace free and divergence free and let $P_{\hat g}^{\hat H}$ be the $L^2(\hat\Si,\hat g)$-orthogonal projection from the space of $(0,2)$-tensors to $\hat H$. 
    
    We now make use of Lemma 2.9 from \cite{R-existence}, which ensures that for every $\hat g\in \hat \MM^{m+1}$ there exists a neighbourhood $\hat \UU$ of $\hat g$ in $\hat\MM^{m+1}$ and a constant $C>0$ so that $P_{\hat g}^{\hat H}$ satisfies the analogues of the claims on $P_g^H$ made in Lemma \ref{lemma:projection}. While the results in \cite{R-existence} are stated using a different notion of $H^{m+1}$ norm, there computed using a fixed set of coordinate chart, we note that for each \text{fixed} $g_0$ this norm is equivalent to the $H^{m+1}(\Si,g_0)$ norm which is computed using the Levi-Civit\`a connection we use in the present paper. The results of \cite{R-existence} hence guarantee that there exists a neighbourhood $\hat \UU$ of $\hat g_0$ and a constant $C=C(\hat g_0,m)$ so that for any $L^2$-tensor $\hat h$ on $\hat\Si$, any $\hat g_1, \hat g_2\in \hat \UU$ and any $C^1$-curve of metrics $\hat g_\eps$ in $\hat\UU$ we have
    \begin{align}\label{est:proj_hat_1}
        \norm{P_{\hat g_1}^{\hat H}(\hat h)}_{H^{m+1}(\hat\Si,\hat g_0)} &\leq C \norm{\hat h}_{L^1(\hat \Si,\hat g_0)},\\
        \norm{P_{\hat g_1}^{\hat H}(\hat h)-P_{\hat g_2}^{\hat H}(\hat h)}_{H^{m+1}(\hat\Si,\hat g_0)} &\leq C \dist_{H^{m+1}}(\hat g_1,\hat g_2) \norm{\hat h}_{L^1(\hat \Si,\hat g_0)},\label{est:proj_hat_2}
    \end{align}
    as well as 
    \beq\label{est:proj_hat_3}
        \norm{\peps P_{\hat g_\eps}^{\hat H}(\hat h)}_{H^{m+1}(\hat\Si,g_0)}\le C\norm{\peps\hat g_\eps}_{H^{m+1}(\hat\Si,g_0)}\norm{\hat h}_{L^1(\hat \Si,\hat g_0)};
    \eeq
    compare also Proposition 2.1 of \cite{R-existence}. We now observe that since these claims are invariant under pull-back by diffeomorphism, and since the subset of moduli space that corresponds to metrics with $\inj(\hat \Si,g)\geq 2\hat \iota_0>0$ is compact, we indeed obtain that the above estimates \eqref{est:proj_hat_1}-\eqref{est:proj_hat_3} hold true on a ball $\hat \UU_{\hat g_0,\hat r_1} := \{\hat g\in \hat \MM^{m}: \dist_{H^{m+1}}(\hat g,\hat g_0) \leq \hat r_1\}$ whose radius $\hat r_1>0$ only depends on $m$ and a lower bound $2\hat \iota_0$ on $\inj(\hat\Si, \hat g_0)$ and with a constant $C=C(\hat \iota_0,m)$.  

    To deduce the claims of Lemma \ref{lemma:projection} from these facts we can now argue as follows: given any $h\in \Gamma^{sym}(T^*\Si\otimes T^*\Si)$ we extend $h$ to the disjoint union $\hat\Si=\Si\times \{-1,1\}$ by simply setting $\hat h(p,\pm 1)=h(p)$ for each $p\in\Si$. Away from $\partial \Si$, and hence almost everywhere, this yields a well defined tensor $\hat h$ on the double $\hat\Si$ which is even with respect to $\pSi$ in the sense that 
    \beq\label{eq:even}
        \hat h(p,1)(w_1,w_2)=\hat h(p,-1)(w_1,w_2) \text{ for } p\in \Si\setminus \pSi \text{ and } w_{1,2}\in T_p\Si{\hat =}T_{(p,\pm 1)}\hat\Si.
    \eeq
    In this way we can extend any $L^2$-tensor field on $\Si$ to an $L^2$-tensor field on the double with even symmetry. We do not claim that for smooth $h$ this extension always yields a smooth, or even just continuous, tensor field $\hat h$ as some of the components of $\hat h$ can jump across $\pSi$ in $\hat\Si$.

    To see this, we can use the description and coordinates of the collar neighbourhoods $\Col(\si_i)$ of the boundary curves $\si_i$ which we recalled in Section \ref{subsec:metric-basic}. On the corresponding neighbourhood $\Col(\si_i)\times\{-1,+1\}/\sim$ of $ \si_i$ in the double $(\hat\Si,\hat g)$ we can then work in the coordinates that we obtain by identifying $((s,\theta),\pm 1)$ with $(\pm s,\theta)$. The components of the extended tensor $\hat h$ in these coordinates at $(\pm s,\theta)$ are so that $\hat h_{ss}(\pm s,\theta)=h_{ss}(s,\theta)$ and $\hat h_{\theta\theta}(\pm s,\theta)=h_{\theta\theta}( s,\theta)$ are continuous across $\pSi$ whenever $h$ is continuous, but so that $\hat h_{s\theta}(\pm s,\theta)=\pm h_{s\theta}(s,\theta)$ jumps unless $h_{s\theta}$ vanishes on $\partial \Si$, that is, unless $h(\nu_g,\tau_g)\vert_\pSi\equiv 0$. 
    
    In the special case where we extend the metric tensor $g$ itself in this way, the resulting tensor $\hat g$ is not only continuous but indeed smooth across $\pSi$ since $\rho_\ell(s)=\rho_\ell(-s)$.
    
    Similarly, the extension of any tensor $h\in H(g)$ yields an element of $\hat H(\hat g)$, so in particular a smooth tensor. To see this, we first recall that such a tensor can be written as   $h=\Rea(\Psi)$ for a  holomorphic quadratic differential $\Psi$ on $\Si$ which is real on the boundary in the sense that in collar coordinates $(s,\theta)$ near any $\si_i$ we have $\Psi=\psi (\dd s+i \dd\theta)^2$ for a holomorphic $\psi:[0,X(\ell))\times S^1$ which is real on the circle $\{0\}\times S^1$. The extended tensor $\hat h$ is then given by $\Rea(\hat\psi (\dd s+i\dd\theta)^2)$ for $\hat\psi$ defined by $\Rea(\hat\psi(\pm s,\theta))=\Rea(\psi(s,\theta))$ and $\Ima(\hat\psi(\pm s,\theta))=\pm\Ima(\psi(s,\theta))$. As $\Ima(\psi(0,\theta))=0$ on $\pSi$ this function $\hat\psi$ is holomorphic on $\hat\Si$, and hence in particular smooth across $\partial \Si$, and the extended tensor $\hat h=\Rea(\hat\psi \dd z^2)$ is an element of $\hat H(\hat g)$ as elements of this space can be equivalently characterised as real parts of holomorphic quadratic differentials. 
    
    We now claim that to project a given $h\in \Gamma^{sym}_{L^2}(T^*\Si\otimes T^*\Si)$ onto $H(g)$ we can equivalently first extend $h$ to the even $L^2$-tensor $\hat h$ on the double as described above, then project this tensor onto $\hat H$, and, finally, again restrict $\hat h$ to $\Si$. That is, we claim that
    \beq\label{eq:relation_proj}
        P_g^H(h)=P_{\hat g}^{\hat H}(\hat h)\vert _\Si.
    \eeq
    To see that this holds true, we split $\hat H=\hat H_{even}\oplus\hat H_{odd}$ into the tensors with even or odd symmetry with respect to $\partial \Si$, respectively, that is, into tensors on $\hat\Si$ corresponding to tensors $h$ on $\Si\times\{-1,1\}$ with $h(p,1)=h(p,-1)$ for all $p$ or $h(p,1)=-h(p,-1)$ for all $p$, respectively. As discussed above, the extension of any element of $H(g)$ yields an element of $\hat H_{even}$ and conversely the restriction of elements of $\hat H_{even}$ to $\Si$ is contained in $H(g)$ since the $(s,\theta)$ component of such smooth even tensors on $\hat \Si$ must vanish on $\pSi$. As the extension $\hat h$ of any $h\in  \Gamma^{sym}_{L^2}(T^*\Si\otimes T^*\Si)$ is even we hence deduce that $P_g^H(h)=P_{\hat g}^{\hat H_{even}}(\hat h)\vert_{\Si}$. Combined with the fact that being even forces the extended tensor $\hat h$ to be $L^2(\hat\Si,\hat g)$-orthogonal to $\hat H_{odd}$, this gives \eqref{eq:relation_proj}.
    
    As choosing $r_1:=\frac{1}{\sqrt{2}}\hat r_1$ ensures that the extension of metrics $g\in \UU_{g_0,r_1}$ is contained in the neighbourhood $\hat \UU_{\hat g_0,\hat r_1}$ of $\hat g_0$ on which \eqref{est:proj_hat_1}, \eqref{est:proj_hat_2} and \eqref{est:proj_hat_3} hold, we can hence immediately deduce the claims of Lemma \ref{lemma:projection} from these properties of the projection $P^{\hat H}$ that were proven in \cite{R-existence}. 
\end{proof}

Based on these Lipschitz estimates we can now deduce

\begin{lemma}\label{lemma:simple-ode}
    For any $m\ge 2$ and $\iota_0>0$  there exist constants $C,\delta_1>0$ so that the following holds true. Let $g_0\in \MM^{m+1}$ be any metric with $\inj(\Si,g_0)\geq 2\iota_0$, let $h\in C^1([0,T]; \Gamma^{sym}_{L^1(\Si,g_0)}(T^*\Si\otimes T^*\Si))$ for some $T>0$. Then there is a unique solution $g \in C^{1}([0,T_1];\MM^{m+1})$ of 
    \beq\label{eq:simple-ode}
        \pt g=P_g^H(h) \text{ with } g(0)=g_0
    \eeq
    which is defined and remains in the neighbourhood $\UU=\UU_{g_0,r_1}$ of $g_0$ where both Lemmas \ref{lemma:U-basics} and \ref{lemma:projection} are applicable at least until $T_1:=\min(T, \de_1/M_1)$ for $M_1$ chosen so that $\sup_{[0,T]} \norm{h}_{L^1(\Si,g_0)}\leq M_1$. Furthermore, this solution satisfies 
    \beq\label{est:basic-ode-est}
        \norm{\pt g(t)}_{H^{m+1}(\Si,g_0)}\leq C \norm{h(t)}_{L^1(\Si,g_0)} \text{ for all } t\in [0,T_1]
    \eeq
    and the map $h\mapsto g$ is Lipschitz, in the sense that the solutions $g_{1,2}$ of \eqref{eq:simple-ode} to tensors $h_{1,2}$ with $\sup_{[0,T]} \norm{h_{1,2}}_{L^1(\Si,g_0)}\leq M_1$ for some given $M_1$ satisfy 
    \beq\label{est:lip-g-simple}
        \dist_{H^{m+1}}(g_1(t),g_2(t))\leq C t \norm{h_1-h_2}_{L^\infty([0,T_1];L^1(\Si,g_0))} \text{ for all } 0\leq t\leq T_1.
    \eeq
\end{lemma}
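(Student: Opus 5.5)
We treat \eqref{eq:simple-ode} as an ODE on the Hilbert manifold $\MM^{m+1}$, whose right-hand side $F(t,g):=P_g^H(h(t))$ is Lipschitz in $g$ on $\UU=\UU_{g_0,r_1}$ by Lemma \ref{lemma:projection}. To make this concrete we work in a chart: near $g_0$ we identify $\MM^{m+1}$ with a neighbourhood of $0$ in a closed subspace of $H^{m+1}$-tensors (e.g. via the Tromba chart $g\mapsto$ slice coordinates), or, more simply, we regard \eqref{eq:simple-ode} as an ODE in the Banach space of $H^{m+1}$ symmetric tensors (with respect to $g_0$) and check a posteriori that solutions stay in $\MM^{m+1}$. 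The latter holds because $P_g^H(h)\in H(g)\subset T_g\MM$. Concretely, we run a Picard iteration $g^{(k+1)}(t)=g_0+\int_0^t P^H_{g^{(k)}(s)}(h(s))\,\dd s$, with each iterate projected, or alternatively taken as the flow in the manifold. The cleanest option is to apply the Picard--Lindel\"of theorem for ODEs on Banach manifolds, using the vector field $g\mapsto P_g^H(h(t))\in T_g\MM^{m+1}$. This field is continuous in $t$ because $h\in C^1([0,T];L^1)$ and \eqref{est:PH_bounded} holds, and it is locally Lipschitz in $g$ by \eqref{est:PH_Lip}. This yields a unique local $C^1$ solution.

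The key step is the time of existence. As long as $g(t)\in\UU$, \eqref{est:PH_bounded} gives $\norm{\pt g}_{H^{m+1}(\Si,g_0)}\le C\norm{h(t)}_{L^1(\Si,g_0)}\le CM_1$, which is \eqref{est:basic-ode-est}. Hence $\dist_{H^{m+1}}(g_0,g(t))\le \int_0^t\norm{\pt g}_{H^{m+1}(\Si,g(s))}\le C'M_1t$, using the equivalence of norms from Lemma \ref{lemma:U-basics} to pass from $g_0$-norms to $g(s)$-norms. Choosing $\de_1:=r_1/(2C')$, a continuity argument shows that for $t\le \de_1/M_1$ the solution stays in the ball of radius $r_1/2$. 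It therefore cannot leave $\UU$, and by the standard extension criterion it exists on $[0,T_1]$. Uniqueness follows from the same Lipschitz bound via Gronwall.

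For the Lipschitz estimate \eqref{est:lip-g-simple} we compare $g_1,g_2$. The main obstacle is that $\dist_{H^{m+1}}$ is a path distance on a curved manifold, so the difference $g_1-g_2$ cannot simply be differentiated. We will work with $D(t):=\norm{g_1(t)-g_2(t)}_{H^{m+1}(\Si,g_0)}$ in the ambient linear space. We first note that within $\UU$ (for $r_1$ small) this quantity is comparable to $\dist_{H^{m+1}}$; this follows from the proof of Lemma \ref{lemma:projection}, which passes to the double, combined with the local chart structure of $\MM^{m+1}$ from \cite{Tromba}, and if necessary $r_1$ is shrunk. We then estimate
$$\pt(g_1-g_2)=\big(P^H_{g_1}-P^H_{g_2}\big)(h_1)+P^H_{g_2}(h_1-h_2).$$
By \eqref{est:PH_Lip} and \eqref{est:PH_bounded} this gives $D'\le CM_1 D+C\norm{h_1-h_2}_{L^\infty L^1}$. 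Since $D(0)=0$ and $CM_1t\le C\de_1$ is bounded on $[0,T_1]$, Gronwall yields $D(t)\le Ct\norm{h_1-h_2}_{L^\infty([0,T_1];L^1)}$. Comparability with $\dist_{H^{m+1}}$ then gives \eqref{est:lip-g-simple}. If this comparability proves awkward, we will instead estimate $\dist$ directly by connecting $g_1(t),g_2(t)$ through the solutions of the ODE with the interpolated data $h_\lambda=\lambda h_1+(1-\lambda)h_2$, $\lambda\in[0,1]$. Differentiating in $\lambda$ and using \eqref{est:PH_diff} gives a linear Gronwall inequality for $\norm{\partial_\lambda g_\lambda(t)}_{H^{m+1}}$, and integrating over $\lambda$ yields the path-length bound.
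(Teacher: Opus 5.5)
Your proposal is correct, but it is organised differently from the paper's proof.

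\emph{Existence.} The paper does not use Picard--Lindel\"of plus a continuation argument. It shows directly that $g\mapsto G_g$, $G_g(t)=g_0+\int_0^tP^H_{g}(h(t'))\,\dd t'$, is a contraction of $C^0([0,T_1];\UU)$ with respect to $\sup_t\dist_{H^{m+1}}$. It does this by joining two curves $g_1,g_2$ through a family $g_s$ and applying \eqref{est:PH_diff} to $\ps G_{g_s}$. As $M_1T_1\le\de_1$, the contraction factor is $C\de_1\le\frac12$, so all of $[0,T_1]$ is obtained at once. Your route has the merit of making explicit that the equation must be solved on $\MM^{m+1}$ itself (in a chart). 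The paper leaves this implicit, since $G_g(t)$ is a priori only an ambient tensor.

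\emph{Lipschitz estimate.} Your main argument for \eqref{est:lip-g-simple}, via $D(t)=\norm{g_1(t)-g_2(t)}_{H^{m+1}(\Si,g_0)}$, needs an ingredient the paper never uses: the bound $\dist_{H^{m+1}}\le C D$ on $\UU$ with $C=C(\iota_0,m)$. It enters twice, to turn \eqref{est:PH_Lip} into a bound in terms of $D$, and to return to $\dist_{H^{m+1}}$ at the end. The opposite inequality is essentially Lemma \ref{lemma:U-basics}. This direction, however, requires $\MM^{m+1}$ to be an embedded submanifold of the $H^{m+1}$-tensors with uniformly controlled local graph structure. That would come from Tromba's slice theory together with the compactness and diffeomorphism-invariance argument in the proof of Lemma \ref{lemma:projection}; the doubling construction you cite does not provide it.

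Your fallback is precisely the paper's argument:
\begin{itemize}
\item take the solutions $g_s$ for $h_s=(2-s)h_1+(s-1)h_2$;
\item estimate $\pt\ps g_s$ using \eqref{est:PH_diff} for the dependence on $g_s$, and \eqref{est:PH_bounded} for the term $P^H_{g_s}(h_1-h_2)$;
\item absorb the resulting term $C\de_1\sup_{[0,t]}\norm{\ps g_s}_{H^{m+1}(\Si,g_0)}$ (or use Gronwall);
\item integrate over $s$.
\end{itemize}
It bounds path lengths directly and never compares $\dist_{H^{m+1}}$ with the linear norm, so that is the version you should write up.
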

\begin{proof}[Proof of Lemma \ref{lemma:simple-ode}]
    Let $g_0$, $\UU=\UU_{g_0,r_1}^{m+1}$, $h$ and $M_1$ be as in the lemma and let $T_1:=\min(T, \de_1/M_1)$ for $\de_1>0$ to be determined below. 
    
    Given any curve of metrics $g\in C^0([0,T_1];\UU)$ we define
    \beq\label{eq:tilde-g-ODE}
        G_g(t)=g_0+\int_0^t P^H_{g}(h(t'))\dd t'
    \eeq
    and note that a sufficiently small choice of  $\de_1=\de_1(\iota_0,m)>0$  ensures that $G_g(t)\in \UU$ for all  $t\in [0,T_1]$ as the estimate \eqref{est:PH_bounded} of Lemma \ref{lemma:projection} allows us to bound 
    \beq\label{est:dist-simple-ode}
        \dist_{H^{m+1}}(g_0,G_g(t))\leq C \int_0^t\norm{h(t')}_{L^1(\Si,g_0)} \dd t'\leq CM_1t
    \eeq
    for as long as $G_g(t)$ is in $\UU$, hence ensuring that $\dist_{H^{m+1}}(g_0,G_g(t))$ remains strictly less than $r_1$ on all of $[0,T_1]$ if $\de_1<C^{-1}r_1$. 

    To obtain the desired solution of \eqref{eq:simple-ode} we now want to argue that a sufficiently small choice of $\de_1$ ensures that this map $g\mapsto G_g$, which we have just established is mapping  $C^0([0,T_1];\UU)$ to itself, is a contraction. 
      
    Given $g_{1,2}\in C^0([0,T_1],\UU)$  we can always choose metrics $g_s(t)\in \UU, t\in [0,T_1], s\in [1,2]$, so that for each $t$ the curve $s\mapsto g_s(t)$ is continuously differentiable,  interpolates between  $g_1(t)$ and $g_2(t)$ and is so that $\int_1^2\norm{\p_s g_s(t)}_{H^{m+1}(\Si,g_s(t))} \dd s\leq 2\sup_{[0,T_1] }\dist_{H^{m+1}}(g_1,g_2)$,  while for each $s$ the function $t\mapsto g_s(t)$ is continuous away from a finite set of ($s$ independent) times $t_i$.
     
    As the estimate \eqref{est:dist-simple-ode} is applicable also for such piecewise continuous curves of metrics $t\mapsto g(t)\in \UU$, the curves of metrics $t\mapsto G_s(t):=G_{g_s}(t)$, $s\in[1,2]$, which satisfy \eqref{eq:tilde-g-ODE} for $g_s$ instead of $g$ are again contained in $\UU$ for all $t\in [0,T_1]$. We can hence exploit the equivalence of the induced $H^{m+1}$ and $L^1$ norms and apply \eqref{est:PH_diff} to bound 
    \beqas
        \norm{\ps G_s(t)}_{H^{m+1}(\Si,G_s(t))}&\leq  C\norm{\ps G_s(t)}_{H^{m+1}(\Si,g_0)}\leq C\int_0^t \norm{\ps \pt G_s(t')}_{H^{m+1}(\Si,g_0)} \dd t'\\
        &=C\int_0^t \norm{\ps P_{g_s(t')}(h(t'))}_{H^{m+1}(\Si,g_0)} \dd t'\\
        &\leq CM_1\int_0^t\norm{\ps g_s(t')}_{H^{m+1}(\Si,g_0)}\dd t' 
    \eeqas
    for every $t\in [0,T_1]$ and $s\in [1,2]$ and for a constant $C=C(\iota_0,m)$. 
    Upon integrating over $s\in [1,2]$ we hence deduce that 
    $$\sup_{[0,T_1]} \dist_{H^{m+1}}(G_{g_1},G_{g_2})\leq
    C\de_1\sup_{[0,T_1]} \dist_{H^{m+1}}(g_1,g_2)\leq     
    \half  \sup_{[0,T_1]} \dist_{H^{m+1}}(g_1,g_2),$$
    where the last estimate holds 
    after reducing $\de_1=\de_1(\iota_0,m)>0$ if necessary. Thus $g\mapsto G_g$ is indeed a contraction from $C^0([0,T_1],\UU)$ to itself. As $\UU$ is defined as a closed ball of a complete metric space, we hence obtain the existence of a unique solution $g\in C^1([0,T_1],\UU)$ of \eqref{eq:simple-ode} from Banach's fixed point theorem. We furthermore observe that the claimed estimate \eqref{est:basic-ode-est} is an immediate consequence of \eqref{est:PH_bounded}. 

    To prove the second part of the theorem we can argue similarly, except that we now consider the family $g_s$ of curves that solve \eqref{eq:simple-ode} for the family $h_s:=(2-s) h_1+(s-1)h_2$, $s\in [1,2]$, of tensors that interpolates between the two given tensors  $h_{1,2}$. The estimates \eqref{est:PH_bounded} and \eqref{est:PH_diff} from Lemma \ref{lemma:projection} are again applicable and now allow us to bound 
    \beqas
        \norm{\partial_t \partial_s g_s}_{H^{m+1}(\Si,g_0)} &= \norm{\partial_s P_{g_s}(h_s)}_{H^{m+1}(\Si,g_0)}\\
        &\leq C \norm{\partial_s g_s(t)}_{H^{m+1}(\Si,g_0)} M_1+C\norm{h_1-h_2}_{L^1(\Si,g_0)}.
    \eeqas
    This allows us to deduce that 
    $$\sup_{[0,t]}\norm{\ps g_s}_{H^{m+1}(\Si,g_0)}\leq C  \de_1\sup_{[0,t]}\norm{\ps g_s}_{H^{m+1}(\Si,g_0)}+Ct\norm{h_1-h_2}_{L^\infty([0,T];L^1(\Si,g_0))},$$
    for a constant $C=C(\iota_0,m)$. After reducing $\de_1 = \de_1(\iota_0,m) > 0$ further if necessary, we can absorb the first term of the right hand side into the left hand side and integrate the resulting estimate over $s\in [1,2]$ to obtain the final claim \eqref{est:lip-g-simple} of the lemma. 
\end{proof}

For the proof of Proposition \ref{prop:step1}, we can additionally use that the harmonic extension depends continuously on the domain metric as described in detail in Lemma \ref{lemma:metric_variations}. This immediately implies that the stress-energy tensors $k(v_g,g)$ satisfy the following Lipschitz estimates.

\begin{lemma}\label{lemma:extension-basic}
    Let $g_0\in \MMtwoiota^{m+1}$ for some $\iota_0>0$ and let $\UU=\UU^{m+1}_{g_0,r_0}\subset\MM^{m+1} $ be the neighbourhood of $g_0$ considered in Lemma \ref{lemma:U-basics} above. Then for any $v,\tilde v\in H^{\half}(\pSi,g_0)$ and any $g,\tilde g\in \UU$, we can bound 
    \beq\label{est:k-L1-energy}
        \norm{k(v_g,g)}_{L^1(\Si,g_0)}\leq C \Eh(v,g_0)
    \eeq
    and control the difference between the corresponding stress-energy tensors by
    \beqa\label{est:basic_3}
        \norm{k(v_g,g)-k(\tilde v_{\tilde g},\tilde g)}_{L^1(\Si,g_0)} &\leq C\norm{(v-\tilde v)_{g_0}}_{H^1(\Si,g_0)}\big(\Eh(v,g_0)+ \Eh(\tilde v,g_0)\big)^\half\\
        &\quad +C\dist_{H^{m+1}}(g,\tilde g)\cdot \Eh(v,g_0)
    \eeqa
    for a constant $C$ that only depends on $\iota_0$. 
\end{lemma}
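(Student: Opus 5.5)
The plan is to reduce both estimates to two facts: the half-energy is comparable for all metrics in $\UU$, and the harmonic extension depends in a Lipschitz way on the metric (Lemma \ref{lemma:metric_variations}). Throughout, Lemma \ref{lemma:U-basics} lets us compare $L^1$, $L^2$ and $H^1$ norms taken with respect to any two metrics of $\UU$, with constants depending only on $\iota_0$.

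For \eqref{est:k-L1-energy}, the pointwise bound $\abs{k(w,g)}_g\leq C\abs{\dd w}_g^2$ follows directly from the definition \eqref{def:stress}. Integrating and comparing norms via \eqref{est:U-equiv} gives
\[
\norm{k(v_g,g)}_{L^1(\Si,g_0)}\leq C\int_\Si\abs{\dd v_g}_g^2\,\dd v_g= 2C\,\Eh(v,g).
\]
Remark \ref{rmk:half-energy-uniform} then replaces $\Eh(v,g)$ by $C\Eh(v,g_0)$. To justify that remark, use the Dirichlet principle: $v_g$ minimises $E_g$ among extensions of $v$. Testing with $v_{g_0}$ and using conformal-type equivalence of energies for metrics with $\frac14 g\le\tilde g\le 4g$ gives $E_g(v_g)\le E_g(v_{g_0})\le C E_{g_0}(v_{g_0})$.

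For \eqref{est:basic_3}, insert the intermediate term $k(\tilde v_g,g)$ and split the difference as
\[
\big[k(v_g,g)-k(\tilde v_g,g)\big]+\big[k(\tilde v_g,g)-k(\tilde v_{\tilde g},\tilde g)\big].
\]
Since $k$ is quadratic in $\dd w$, the first bracket is bounded pointwise by $C\abs{\dd(v-\tilde v)_g}\,(\abs{\dd v_g}+\abs{\dd\tilde v_g})$. By Cauchy--Schwarz its $L^1$ norm is at most
\[
C\norm{(v-\tilde v)_g}_{H^1(\Si,g)}\big(\Eh(v,g)+\Eh(\tilde v,g)\big)^\half.
\]
It remains to replace $\norm{(v-\tilde v)_g}_{H^1(\Si,g)}$ by $\norm{(v-\tilde v)_{g_0}}_{H^1(\Si,g_0)}$. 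Harmonic extension with respect to $g$ and with respect to $g_0$ are both bounded from $H^\half(\pSi)$ into $H^1$. Their difference is the $g$-harmonic extension problem with zero boundary data and source $\Delta_g w_{g_0}=(\Delta_g-\Delta_{g_0})w_{g_0}$, which is controlled in $H^1$ by the $H^1$ norm of $w_{g_0}$. This is what Lemma \ref{lemma:metric_variations} provides.

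The second bracket carries the metric dependence. I would connect $g$ and $\tilde g$ by a path $g_s$ in $\UU$ of length at most $2\dist_{H^{m+1}}(g,\tilde g)$ and differentiate $s\mapsto k(\tilde v_{g_s},g_s)$. Differentiating the coefficients of $k$, at fixed map, produces terms bounded by $\abs{\ps g_s}\,\abs{\dd \tilde v_{g_s}}^2$. The derivative of the extension contributes $\abs{\dd\,\ps(\tilde v_{g_s})}\,\abs{\dd\tilde v_{g_s}}$. Here $\ps(\tilde v_{g_s})$ vanishes on $\pSi$ and solves
\[
\Delta_{g_s}\ps(\tilde v_{g_s})=-(\ps\Delta_{g_s})\tilde v_{g_s},
\]
whose right-hand side is a divergence-form term of size $\norm{\ps g_s}_{C^1}\abs{\dd\tilde v_{g_s}}$. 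Elliptic $H^1$ estimates then give
\[
\norm{\ps(\tilde v_{g_s})}_{H^1}\le C\norm{\ps g_s}_{H^{m+1}}\,\Eh(\tilde v,g_s)^\half,
\]
using $H^{m+1}\hookrightarrow C^1$ for $m\ge2$. Integrating in $s$ bounds the second bracket by $C\dist_{H^{m+1}}(g,\tilde g)\,\Eh(\tilde v,g_0)$.

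The main obstacle is the energy factor in this last bound: the statement has $\Eh(v,g_0)$, but the computation naturally produces $\Eh(\tilde v,g_0)$. One fix is to choose the intermediate term the other way, as $k(v_{\tilde g},\tilde g)$, so that the metric variation acts on $v$. The other route is the inequality $\Eh(\tilde v)\le 2\Eh(v)+C\norm{(v-\tilde v)_{g_0}}_{H^1}^2$, followed by absorbing the cross term. The ordering via $k(v_{\tilde g},\tilde g)$ is cleaner, and that is the one I would use. I also expect the careful uniformity of the elliptic constants in $\iota_0$ to be the delicate technical point; it is handled by Remark \ref{rmk:compact-moduli} and Lemma \ref{lemma:metric_variations}.
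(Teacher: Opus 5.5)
Your proof is correct and takes essentially the approach the paper intends. The paper gives no separate proof and only notes that the bounds follow from the quadratic structure of $k$ together with the dependence of harmonic extensions on the metric (Lemma \ref{lemma:metric_variations}), which is exactly your map/metric splitting. Your choice of intermediate term $k(v_{\tilde g},\tilde g)$ correctly yields the factor $\Eh(v,g_0)$ in the metric term. Your Dirichlet-principle justification of Remark \ref{rmk:half-energy-uniform} and the $H^1$/$H^{-1}$ version of \eqref{est:he_ddt}, which is what $H^{\half}$ boundary data requires, are both sound.
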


We are now finally in a position to complete the proof of Proposition \ref{prop:step1} for surfaces $\Si$ of general type.

\begin{proof}[Proof of Proposition \ref{prop:step1}]
    Let $\iota_0>0$ and $m\geq 2$  be any fixed numbers, let $g_0\in \MMtwoiota^{m+1}$ and let $\UU=\UU_{g_0,r_1}^{m+1}$ be the neighbourhood of $g_0$ obtained in Lemma \ref{lemma:projection}, which we recall is so that Lemmas \ref{lemma:U-basics} and \ref{lemma:extension-basic} are also applicable. 
    
    Given $v\in X_2(T)$, for $X_2(T)$ as defined in \eqref{def:Xm}, we let $\Eupp$ be so that \eqref{ass:Ev-apriori} holds and note that  \eqref{est:k-L1-energy} ensures that 
    $$\norm{k(v(t)_g,g)}_{L^1(\Si,g_0)}\leq C \Eupp\text{ for every } t\in [0,T] \text{ and every }g\in \UU,$$
    where here and in the following constants $C$ only depend on $\iota_0$ and $m$.

    Setting $\de:=\de_1/C$ for this constant $C$ and the number $\de_1>0$ obtained in Lemma \ref{lemma:simple-ode}, we deduce from this lemma that for every  $g\in C^0([0,T_0];\UU)$, $T_0:=\min(T,\de/\Eupp)$, the solution $G_{v,g}$ of 
    \beq\label{def:tilde-g-proof-of-lemma} 
        \pt G_{v,g}=P_{G_{v,g}}(k(v_g,g)) \text{ with } \tilde g(0)=g_0
    \eeq
    is defined and remains in $\UU$ on the whole interval $[0,T_0]$. The Lipschitz estimate \eqref{est:lip-g-simple} obtained in this lemma, combined with \eqref{est:basic_3}, furthermore allows us to see that curves of metrics $G_{1,2}=G_{v,g_{1,2}}$, obtained from \eqref{def:tilde-g-proof-of-lemma} for the fixed map $v$ and for curves of metrics $g_{1,2}\in C^0([0,T_0];\UU)$, satisfy 
    \beqas
        \sup_{[0,T_0]} \dist_{H^{m+1}}(G_1,G_2)&\leq CT_0  \norm{k(v_{g_1},g_1)-k(v_{g_2},g_2)}_{L^\infty([0,T_0];L^1(\Si,g_0))}\\
        &\leq  CT_0 \Eupp\sup_{[0,T_0]} \dist_{H^{m+1}}(g_1,g_2)\leq C\de \sup_{[0,T_0]} \dist_{H^{m+1}}(g_1, g_2)\\
        &\leq \half \sup_{[0,T_0]} \dist_{H^{m+1}}(g_1,g_2)
    \eeqas
    where the last estimate holds after reducing $\de=\de(\iota_0)>0$ if necessary. Banach's Fixed point theorem hence yields the existence of a unique solution $g_v\in C^1([0,T_0];\UU)$ of \eqref{eq:metric_lemma}. 
    
    Thanks to \eqref{est:PH_bounded} and \eqref{est:k-L1-energy}, we can furthermore bound
    $$\norm{\pt g_v(t)}_{H^{m+1}(\Si,g_0)}\leq C \norm{k(v_{g(t)},g(t))}_{L^1(\Si,g_0)}\leq C\Eh(v(t),g_0)$$ 
    as claimed in \eqref{est:ptg_apriori_1}, while a combination of \eqref{est:PH_bounded}, \eqref{est:PH_diff}, \eqref{est:k-L1-energy} and \eqref{est:basic_3} and  \eqref{ass:Ev-apriori} yields
    \beqas
        \norm{\pt g_v(t)-\pt g_v(\tilde t)}_{H^{m+1}(\Si,g_0)} &\leq \norm{\big(P_{g_v(t)}-P_{g_v(\tilde t)}\big)(k(v_{g_v},g_v)(t))}_{H^{m+1}(\Si,g_0)}\\
        &\quad +\norm{P_{g_v(\tilde t)}\big((k(v_{g_v},g_v)(t)-k(v_{g_v},g_v)(\tilde t)\big)}_{H^{m+1}(\Si,g_0)}\\
        &\leq  C \dist_{H^{m+1}}(g_v(t),g_v(\tilde t)) \Eupp +C \Eupp^\half \norm{(v(t)-v(\tilde t))_{g_0}}_{H^1(\Si,g_0)}
    \eeqas 
    for all $t,\tilde t\in [0,T_0]$. As the estimate \eqref{est:ptg_apriori_1}, which we have proven above, ensures that 
    \begin{equation*}
        \dist_{H^{m+1}}(g_v(t),g_v(\tilde t))\leq C \Eupp \abs{t-\tilde t},
    \end{equation*}
    this immediately yields the second a priori bound \eqref{est:ptg_apriori_2} claimed in the lemma. 

    Given $v$ and $\tilde v$ which satisfy \eqref{ass:Ev-apriori}  for some given $\hat E$, we can then combine \eqref{est:lip-g-simple} with \eqref{est:basic_3} to see that for any $t\in [0,T_0]$,
    \beqas
        \sup_{[0,t]}\dist(g_v,g_{\tilde v})&\leq Ct \norm{k(v_{g_v},g_v)-k(\tilde v_{g_{\tilde v}},g_{\tilde v})}_{L^\infty([0,t];L^1(\Si,g_0))}\\
        &\leq Ct\Eupp^\half \norm{(v-\tilde v)_{g_0}}_{L^\infty([0,T];H^1(\Si,g_0))} +Ct \Eupp \sup_{[0,t]}\dist_{H^{m+1}}(g_v,g_{\tilde v}).
    \eeqas
    After reducing $\de>0$ if necessary to ensure that $CT_0\Eupp \leq C\de\leq \half$, we can absorb the last term into the left hand side resulting in the claimed Lipschitz estimate \eqref{est:g_diff_apriori}. 

    Similarly, \eqref{est:PH_Lip}, \eqref{est:PH_bounded} and \eqref{est:basic_3} allow us to bound
    \beqas
        \norm{\pt(g_v-g_{\tilde v})}_{H^{m+1}(\Si,g_0)} &\leq  \norm{(P_{g_v}-P_{g_{\tilde v}})(k(v_{g_v},g_v))}_{H^{m+1}(\Si,g_0)}\\
        &\quad+\norm{P_{g_{\tilde v}}(k(v_{g_v},g_v)-k(\tilde v_{g_{\tilde v}},g_{\tilde v}))}_{H^{m+1}(\Si,g_0)}\\
        &\leq C\dist_{H^{m+1}}(g_v,g_{\tilde v}) \Eupp+C\Eupp^\half \norm{(v-\tilde v)_{g_0}}_{H^1(\Si,g_0)},
    \eeqas
    which, when combined with \eqref{est:g_diff_apriori}, yields the final claim \eqref{est:ptg_diff_apriori} of the lemma. 
\end{proof}

\subsection{Simplified analysis of the metric components for the cylinder}\label{subsec:cylinder}

We finally turn to the case where $\Si$ is a cylinder where the above arguments simplify significantly since the horizontal space $H(g)$ is not only one-dimensional, but also so that $g\mapsto H(g)$ is an \emph{integrable} distribution on the space $\MM(\Si)$ of flat unit area metrics with geodesic boundary curves. This ensures that every horizontal curve that starts at an initial metric $g_0\in \MM$ evolves in a one-dimensional integral manifold $\MM_H(g_0)\subset \MM(\Si)$. We can hence  either prove Proposition \ref{prop:step1} directly, using that the evolution of the metric reduces to an ODE, or simply observe that the above proof still applies but that it can be simplified in the following way. 

Given a flat unit cylinder $(\Si,g_0)$ with geodesic boundary curves, we can always introduce coordinates $(x,\theta)$ in which 
\beq\label{def:cyl-simple} 
    \Si= [-\pi,\pi]\times S^1 \text{ and } g_0= g_{a_0}:=(2\pi)^{-2} (a_0^{-1} \dd x^2+ a_0 \dd\theta^2),
\eeq
where $a_0$ is determined by the length $\ell_0=L_{g_0}(\si_1)=L_{g_0}(\si_2)$ of the boundary curves via $a_0=\ell_0^2$ and related to the injectivity radius by $\inj(g_0)=\frac12\min(a_0,a_0^{-1})^{1/2}$. It can then be easily checked that the one-dimensional submanifold 
\begin{equation*}
    \MM_H = \{g_a := (2\pi)^{-2} (a^{-1} \dd x^2+ a \dd\theta^2): a > 0\} 
\end{equation*}
of $\MM$ is an integral manifold of $g\mapsto H(g)$, i.e. that $T_g\MM_H=H(g)$ for all $g\in \MM_H$. One way to see this is to set $s:= a^{-1}x$ to obtain coordinates $(s,\theta)$ in which $g_a$ is conformal to the standard cylindrical metric, namely given by $a(2\pi)^{-2} (\dd s^2+\dd\theta^2)$, and use that in these coordinates each horizontal tensor can be written as $h=\Rea(\psi(s+i\theta)(\dd s+i\dd\theta)^2)$ for a function $\psi$ which is holomorphic and real on the boundary curves and hence given by $\psi\equiv c$ for some $c\in \R$. Thus, each such $h$ is given by 
\begin{equation*}
    h=c(\dd s^2-\dd\theta^2)=c(a^{-2} \dd x^2-\dd\theta^2)= -c(2\pi)^2\p_a g_a.
\end{equation*} 

Instead of working with metrics $g$ in a full $H^{m+1}$ neighbourhood of $g_0$ in $\MM^{m+1}$, the existence of such an integral manifold means that it suffices to consider metrics in a neighbourhood $\UU(g_{a_0})=\{g_a, \half a_0\leq a\leq 2 a_0\}$ of $g_0=g_{a_0}$ in this explicit one-dimensional set. These metrics trivially satisfy the equivalence properties stated in Lemma \ref{lemma:U-basics} while the properties of the projection onto $H(g)$ for metrics $g\in \UU(g_0)$ are an immediate consequence of the explicit formula 
\beqas
    P_{g_a}^H(h) &= \norm{\partial_a g_a}_{L^2(\Si,g_a)}^{-2} \inner{h}{\partial_a g_a}_{L^2(\Si,g_a)} \partial_a g_a\\
    &= \frac{a^2}{2(2\pi)^4}\inner{h}{a^{-2}\dd x^2 - \dd \theta^2}_{L^2(\Si,g_a)}\left(a^{-2}\dd x^2 - \dd \theta^2\right).
\eeqas
The proof of Proposition \ref{prop:step1} in this simpler case can hence again be obtained based on a fixed point argument as carried out above.

\subsection{Further properties of metrics in $\MM$ and the associated norms and harmonic extensions}\label{subsec:metric-useful}
In the next section we will often need to switch back and forth between norms computed with respect to different metrics. For this, it is helpful to record that while a change of the metric $g$ leads to a change of the harmonic extensions and of the Sobolev norms of maps, these changes are well controlled along horizontal curves of metrics as considered here, as we can combine the estimates on the $H^{m+1}$ norm for the velocity of such curves obtained above with the following basic lemma.  

\begin{lemma}\label{lemma:metric_variations}
	For any integer $m \geq 2$ and any $\iota_0>0$, there exists a constant $C$ so that the following holds true for any curve $g\in C^1([0,T],\MMiota^{m+1})$. 
    
    For any $j\in \{0,\ldots, m\}$ and any $v\in H^j(\Si)$ we can bound \begin{equation}\label{est:metric_variation_sobolev_norm}
        \abs{\tfrac{\dd}{\dd t}\norm{\nabla_{g}^j v}_{L^2(\Si,g(t))}^2} \leq C\norm{\p_t g}_{H^{m}(\Si,g(t))}\norm{v}_{H^j(\Si,g(t))}^2
	\end{equation}
    while for maps $v\in H^{j+\half}(\Si)$ we can also bound 
    \begin{equation}\label{est:boundary_norm_variation}
        \abs{\tfrac{\dd}{\dd t}\norm{\nabla_{g}^j v}_{L^2(\p\Si,g(t))}^2} \leq C\norm{\p_t g}_{H^{m+1}(\Si,g(t))}\sum_{i=0}^j\norm{\nabla_{g}^i v}_{L^2(\p\Si,g(t))}^2.
	\end{equation}
    Furthermore, the change of the harmonic extension of any $f \in H^{m-\hf}(\p\Si)$ is bounded by 
    \begin{equation}\label{est:he_ddt}
        \norm{\p_t [f_{g(t)}]}_{H^m(\Si,g(t))} \leq C\norm{\p_t g(t)}_{H^{m}(\Si,g(t))}\norm{f_{g(t)}}_{H^m(\Si,g(t))},
	\end{equation}
    and the change of the half-energy by
    \begin{equation}\label{est:g_change_E}
        \abs{\tfrac{\dd}{\dd t}E_\hf(f,g(t))}\leq C\norm{\p_t g}_{L^\infty(\Si,g(t))} E_\hf(f,g(t)).
	\end{equation} 
    In particular, if $M$ is chosen so that $\norm{\p_t g }_{H^{m}(\Si,g)}\leq M$ on $[0,T]$ then we can bound 
    \begin{equation}\label{est:harm-ext-timedep-gen} 
		\norm{f_{g(t_0)}}_{H^m(\Si,g(t_0))}^2
		\le e^{C\abs{t_1-t_0}M}\norm{f_{g(t_1)}}_{H^m(\Si,g(t_1))}^2 \text{ for all }t_{0,1}\in [0,T]
	\end{equation} 
	as well as \begin{equation}\label{est:metric_variation_sobolev_norm_integrated}
        \norm{v}_{H^m(\Si,g(t_0))}^2\le e^{C\abs{t_0-t_1}M}\norm{v}_{H^m(\Si,g(t_1))}^2 \text{ for all }t_{0,1}\in [0,T].
	\end{equation}
\end{lemma}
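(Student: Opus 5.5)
The plan is to differentiate the relevant quantities in time, express the derivatives through $\pt g$ and its covariant derivatives, and then use the uniform Sobolev embeddings on $\MMiota$ from Remark \ref{rmk:compact-moduli} to close the estimates.

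\textbf{Interior Sobolev norms, \eqref{est:metric_variation_sobolev_norm}.} I would write $\tfrac{\dd}{\dd t}\nabla_g^j v$ as a sum of terms $\nabla^{i}\pt g * \nabla^{j-i} v$ with $1\le i\le j$. This uses that $\pt\Gamma = g^{-1}*\nabla\pt g$, and that $\pt$ of the metric contractions and of $\dd v_g$ gives terms $\pt g * (\cdot)$. Then:
\begin{itemize}
\item the $i=0$ term is bounded by $\norm{\pt g}_{L^\infty}\le C\norm{\pt g}_{H^2}$;
\item for $i\le m-2$ we have $\nabla^i\pt g\in H^{m-i}\embed L^\infty$;
\item for $i=m-1$ we pair $\nabla^{m-1}\pt g\in H^1\embed L^4$ with $\nabla^{j-i}v\in L^4$, since $j-i\le 1$;
\item for $i=m$ we have $j=m$ and $\nabla^0 v\in L^\infty$, because $j=m\ge2$.
\end{itemize}
In every case Hölder's inequality gives $C\norm{\pt g}_{H^m}\norm{v}_{H^j}^2$. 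Integrating in $t$ then yields \eqref{est:metric_variation_sobolev_norm_integrated} via Gronwall.

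\textbf{Boundary norms, \eqref{est:boundary_norm_variation}.} The same expansion applies on $\pSi$, where the boundary length element changes by $\pt g(\tau,\tau)$. The derivatives of $\pt g$ now have to be evaluated on the boundary. I would use the trace embedding $H^{m+1}(\Si)\embed C^{m-1}(\pSi)$ for $\nabla^i\pt g$ with $i\le m-1$. The top term $\nabla^m\pt g$ only multiplies $v$ itself and lies in $H^{\half}(\pSi)\subset L^4(\pSi)$; it is controlled by $\norm{v}_{L^4(\pSi)}$ together with the one-dimensional embedding $H^1(\pSi)\embed L^\infty$, using $j\ge 1$ in that case. This is where the extra derivative $H^{m+1}$ is consumed.

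\textbf{Harmonic extension, \eqref{est:he_ddt} and \eqref{est:harm-ext-timedep-gen}.} I would set $w=\pt[f_{g(t)}]$. Differentiating $\De_{g}f_g=0$ gives
\[
\De_g w = -(\pt\De_g) f_g \quad\text{with}\quad w|_{\pSi}=0,
\]
where $\pt\De_g$ is a second-order operator whose coefficients are built from $\pt g$ and $\nabla\pt g$. Elliptic regularity for the Dirichlet problem, with constants uniform over $\MMiota$ by compactness, gives
\[
\norm{w}_{H^m}\le C\norm{(\pt\De_g)f_g}_{H^{m-2}}.
\]
The product estimates above bound the right-hand side by $C\norm{\pt g}_{H^m}\norm{f_g}_{H^m}$; the worst term, $\nabla^{m-1}\pt g*\nabla f_g$, again uses $L^4\times L^4$ or $L^2\times L^\infty$. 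Combining \eqref{est:he_ddt} with \eqref{est:metric_variation_sobolev_norm} for $v=f_g$ gives
\[
\tfrac{\dd}{\dd t}\norm{f_g}_{H^m}^2\le CM\norm{f_g}_{H^m}^2,
\]
and Gronwall then yields \eqref{est:harm-ext-timedep-gen}.

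\textbf{Half-energy, \eqref{est:g_change_E}.} By the first-variation computation in the introduction, the term coming from the variation of the extension vanishes. Hence
\[
\tfrac{\dd}{\dd t}E_\hf=-\half\int_\Si\inner{\pt g}{k(f_g,g)}_g\,\dd v_g,
\]
which is at most $\norm{\pt g}_{L^\infty}\norm{k}_{L^1}\le C\norm{\pt g}_{L^\infty}E_\hf$.

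\textbf{Main obstacle.} I expect the hardest part to be the elliptic estimate with constants depending only on $\iota_0$ when the metric has only $H^{m+1}$ regularity. For this I would invoke the compactness from Remark \ref{rmk:compact-moduli} and Lemma \ref{lemma:U-basics} to work locally near a fixed smooth reference metric. The bookkeeping of the boundary estimate is the second delicate point.
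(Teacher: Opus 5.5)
Your proposal is correct and follows essentially the paper's own argument. The interior and boundary norm estimates \eqref{est:metric_variation_sobolev_norm} and \eqref{est:boundary_norm_variation} come from the pointwise formula for $\pt\nabla_g^j v$ together with the uniform Sobolev embeddings on $\MMiota$. The harmonic-extension bound comes from the Dirichlet elliptic estimate $\norm{\pt[f_{g(t)}]}_{H^m}\le C\norm{(\pt\De_g)f_g}_{H^{m-2}}$, using $\pt[f_{g(t)}]|_{\pSi}=0$. The half-energy bound comes from the first-variation formula \eqref{eq:energy_var_metric}.

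One minor remark: since $\nabla v=\dd v$ does not depend on the metric, $\pt\nabla_g^j v$ only involves $\nabla^i\pt g$ with $i\le j-1$. So the top-order terms you treat, where $\nabla^j\pt g$ or $\nabla^m\pt g$ multiplies $v$ itself, never actually occur. This overcounting is harmless.
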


\begin{proof}[Proof of Lemma \ref{lemma:metric_variations}]
    We first note that the estimates \eqref{est:metric_variation_sobolev_norm} and \eqref{est:boundary_norm_variation} follow from the pointwise estimate \eqref{est:ddt_grad_k}, the fact that the change of the volume form $ \p_t \dd v_{g(t)} = \hf\mathrm{tr}(\p_t g)\dd v_g$  is controlled by $\norm{\pt g}_{L^\infty(\Si)}$ and that $H^m(\Si,g)$ embeds into both $W^{m-1,4}(\Si,g)$ and $L^\infty(\Si,g)$. 
   
    The estimate \eqref{est:g_change_E} for the half-energy is a direct consequence of the  formula \eqref{eq:energy_var_metric} for the variation of the half-energy and the estimate  \eqref{est:k-L1-energy} on the stress-energy tensor. It thus remains to control the evolution of the harmonic extension. For this, we can use that $\p_t (f_{g(t)}) = 0$ on $\p\Si$ to apply standard $L^2$ elliptic estimates and so bound
    \beqa
		\norm{\p_t (f_{g(t)})}_{H^m(\Si,g(t))}& \leq C\norm{\Delta_{g(t)} \p_t (f_{g(t)})}_{H^{m-2}(\Si,g(t))}\\
        &= C\norm{-\p_\eps \Delta_{g(t+\eps)} f_{g(t)} \big|_{\eps=0}}_{H^{m-2}(\Si,g(t))}\\
        & \leq  C\norm{\p_t g}_{H^{m}(\Si)}\norm{f_{g(t)}}_{H^{m}(\Si,g(t))}
	\eeqa
    where the second step follows since $\Delta_{g(t)} f_{g(t)} = 0$ for all $t$.
\end{proof}

\section{Analysis of the map component}\label{sect:map}
In this section we complete the analysis of the map component that is required to establish the existence and properties of solutions of our coupled flow \eqref{def:flow-abstract} for initial maps $u_0\in H^{m-\half}(\pSi;N)$, $m\geq 4$, claimed in Proposition \ref{prop:ep_system}. As outlined in Section \ref{sect:main} we will proceed in several steps, first considering the linearised and regularised equation \eqref{eq:ep_v_map} for given curves of maps $v$ and metrics $g$, see Section \ref{subsec:map-lin-reg}, then the non-linear, but still regularised, coupled system of equations \eqref{eq:flow-ep_coupled}, see Section \ref{subsec:reg-coupled},  before using the obtained uniform control on the solutions of these auxiliary problems to deduce the desired short-time existence results for our flow \eqref{eq:flow} in Section \ref{subsec:Hm-loc-exist}.

\subsection{Analysis of the linearised and regularised equation \eqref{eq:ep_v_map} for $u$} \label{subsec:map-lin-reg}
The main goal of this section is to establish the claims made in Proposition \ref{prop:ep_map_solve} about the existence and properties of solutions of \eqref{eq:ep_v_map} for given curves of maps $v:[0,T]\times \pSi\to N_\eta$ and metrics $g$ as described in this proposition. For this it will be convenient to write equation \eqref{eq:ep_v_map} for short as 
\beqs
    \p_t u = Q_{\ep,v} \p_{\nu_{g}}u_{g} \text{ for }  Q_{\ep,v} = -(\ep \Id+ P_{v}),
\eeqs
where we recall that the chosen extension of the projection is so that $P_\cdot=P_{\pi(\cdot)}$ on the closure of $N_\eta$. This in particular ensures that $P_\cdot$ is an orthogonal projection on $\bar N_\eta$ and hence allows us to view $x\mapsto (\eps \Id+P_x)$ as a smooth map from $\bar N_\eta$ into the set of invertible matrices and to use that also $x\mapsto (\eps \Id+P_x)^{-1}$ is smooth on $\bar N_\eta. $   

If it is clear from the context what $\eps$ and $v$ are we will  drop the indices and simply write  $Q=Q_{\ep,v}$ to lighten the notation and similarly, we will often drop references to the metric for geometric quantities and operators, such as volume forms $\dd v_g = \dd v$, $\dd s_g = \dd s$ and gradients $\nabla_g = \nabla$, in situations where we only work with one, possibly time dependent, curve of metrics and continue to use the convention that all Sobolev spaces of maps from $\Si$ (or $\pSi$) and their norms are to be considered with respect to $g$ at the relevant time unless indicated otherwise. 

As in \cite{S}, we want to use Galerkin's method with Steklov eigenfunctions to establish the existence of solutions to this linear equation \eqref{eq:ep_v_map}. To use this method, we first consider the problem for a time-independent metric $g$, to allow us to work with time-independent finite dimensional spaces of functions. To simplify the proof of the regularity of weak solutions, compare Lemma \ref{lemma:ep_new_sol}, we will furthermore initially also only consider maps $v$ which are independent of time, and will later remove both of these restrictions by using a time-discretisation argument. 

\begin{lemma}\label{lemma:ep_time_ind_weak_exist}
    Let $\ep\in (0, 1/2]$, $T>0$,  $g\in\MM^{3}$ and let $v\in H^{1/2}(\pSi;N_\eta)$. Then for any $u_0 \in  H^{\hf}(\p\Si;\R^n)$ there exists a unique weak solution $u:[0,T]\times \pSi\to \R^n$ of \eqref{eq:ep_v_map} with 
    $$u\in H^1([0,T];L^2(\p\Si))\cap L^2([0,T];H^1(\p\Si)) \text{ and } u_g\in  L^\infty([0,T];H^1(\Si)) 
    $$
    whose $L^2(\pSi)$ trace at $t=0$ is given by $u_0$. Furthermore, along any weak solution with this regularity we have  
    \beq\label{eq:energy-dec_ep_approx}
        \frac{\dd}{\dd t}E_\half(u,g)\leq -\half\norm{\p_tu}_{L^2(\p\Si)}^2.
    \eeq
\end{lemma}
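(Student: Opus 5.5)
We will construct the solution by a Galerkin scheme based on Steklov eigenfunctions for the fixed metric $g$, as in \cite{S}. Let $(\varphi_k)_{k\ge 0}$ be the $L^2(\pSi,g)$-orthonormal basis of Steklov eigenfunctions. Their harmonic extensions $\Phi_k=(\varphi_k)_g$ satisfy $\png \Phi_k=\lambda_k\varphi_k$ on $\pSi$, with $0=\lambda_0<\lambda_1\le\dots\to\infty$. Since $v$ is time independent and takes values in $\bar N_\eta$, the matrix field $A:=\eps\Id+P_v$ is a fixed, bounded and measurable field of symmetric matrices. It is uniformly positive definite, with $\eps\Id\le A\le (1+\eps)\Id$, so $A^{-1}$ is bounded by $\eps^{-1}$.

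\textbf{Galerkin approximation.} It is convenient to rewrite \eqref{eq:ep_v_map} as $A^{-1}\pt u=-\png u_g$. Let $V_K$ be the span of $\varphi_0,\dots,\varphi_K$. We seek $u^K(t)\in V_K\otimes\R^n$ satisfying, for all $w\in V_K\otimes\R^n$,
\begin{equation*}
\int_{\pSi} A^{-1}\pt u^K\cdot w\,\dd s+\int_\Si \na u^K_g\cdot\na w_g\,\dd v=0,
\qquad u^K(0)=\Pi_K u_0 ,
\end{equation*}
where $\Pi_K$ denotes the $L^2(\pSi)$-orthogonal projection onto $V_K\otimes\R^n$. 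This is a linear ODE system with a constant, symmetric and positive definite mass matrix, so it has a unique global solution.

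\textbf{Uniform estimates.} Testing with $w=\pt u^K$ gives
\begin{equation*}
\tfrac{\dd}{\dd t}E_g(u^K_g)=-\int_{\pSi}A^{-1}\pt u^K\cdot\pt u^K\,\dd s\le -(1+\eps)^{-1}\norm{\pt u^K}_{L^2(\pSi)}^2 .
\end{equation*}
Since $\eps\le\half$, the right-hand side is at most $-\tfrac23\norm{\pt u^K}_{L^2(\pSi)}^2$. Testing with $w=u^K$ and using $A^{-1}\le\eps^{-1}$ controls the $L^2(\pSi)$-norm of $u^K$. Together these give uniform bounds on $u^K_g$ in $L^\infty H^1(\Si)$ and on $\pt u^K$ in $L^2L^2(\pSi)$; the latter uses that $E_\half(\Pi_Ku_0)\le E_\half(u_0)$.

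The additional $L^2H^1(\pSi)$ bound is where $\eps>0$ is genuinely used. I would test with $w=\Lambda u^K$, where $\Lambda$ is the Dirichlet-to-Neumann operator; it acts diagonally on $V_K$, so this is an admissible test function. The elliptic term then equals $\norm{\Lambda u^K}_{L^2(\pSi)}^2$, which is comparable to $\norm{u^K}^2_{H^1(\pSi)}$ up to lower-order terms. The time-derivative term is $\int A^{-1}\pt u^K\cdot\Lambda u^K$. Since $A$ depends on $x$, this is not an exact derivative, and the trouble lies here. The clean route is to use the ODE itself: $\pt u^K=-\Pi_K(A\,\Lambda u^K)$ in a suitable sense. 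Then testing with $\Lambda u^K$ against the $A^{-1}$-weighted pairing gives $\int A\Lambda u^K\cdot\Lambda u^K\ge\eps\norm{\Lambda u^K}^2$. This yields $\eps\int_0^T\norm{\Lambda u^K}_{L^2}^2\le C(E_\half(u_0)+\dots)$.

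\textbf{Passing to the limit.} Weak compactness lets us pass to the limit in the linear equation. Aubin--Lions then gives strong $L^2$ convergence, so the initial trace is attained. The energy inequality \eqref{eq:energy-dec_ep_approx} passes to the limit by lower semicontinuity, in integrated form.

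\textbf{Energy inequality for an arbitrary weak solution.} Any weak solution with the stated regularity satisfies $\png u_g=-A^{-1}\pt u\in L^2L^2(\pSi)$. Hence \eqref{eq:energy_var-map}-type differentiation is justified, giving
\begin{equation*}
\tfrac{\dd}{\dd t}E_\half(u,g)=\int_{\pSi}\pt u\cdot\png u_g\,\dd s=-\int_{\pSi} A^{-1}\pt u\cdot\pt u\,\dd s\le-\tfrac23\norm{\pt u}_{L^2(\pSi)}^2 ,
\end{equation*}
which is stronger than claimed. To justify the chain rule I would mollify in time.

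\textbf{Uniqueness.} The difference $w$ of two solutions solves the same linear equation with zero data. Testing with $A^{-1}w$ is not quite right; testing the form $A^{-1}\pt w=-\png w_g$ with $w$ instead gives $\tfrac{\dd}{\dd t}\tfrac12\int A^{-1}w\cdot w=-2E_g(w_g)\le0$, since $A$ is time independent. Hence $w=0$.

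The main obstacle I expect is the $L^2H^1(\pSi)$ bound, i.e.\ making the $\Lambda$-testing rigorous given that $A$ is only in $H^{1/2}$ and is not diagonal in the Steklov basis.
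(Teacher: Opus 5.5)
Your overall plan is the paper's: a Steklov--Galerkin approximation for the fixed metric, the energy estimate, weak limits, and uniqueness from linearity. Your Galerkin formulation differs from the paper's but is legitimate. You use the $A^{-1}$-weighted mass matrix, whereas the paper takes $\pt u_\ell=\Pi_\ell\big(-(\eps+P_v)\png U_\ell\big)$. The following parts are all fine:
\begin{itemize}
\item testing with $\pt u^K$, including the use of $E_\half(\Pi_Ku_0,g)\le E_\half(u_0,g)$;
\item the $L^2$ bound;
\item the limit passage;
\item the energy inequality for arbitrary weak solutions;
\item your uniqueness argument via $\tfrac{\dd}{\dd t}\tfrac12\int A^{-1}w\cdot w=-2E_g(w_g)$.
\end{itemize}

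The step you single out as the main obstacle is, however, argued incorrectly. In your scheme the identity $\pt u^K=-\Pi_K(A\Lambda u^K)$ is false. Since $\Lambda$ preserves $V_K\otimes\R^n$, your Galerkin equation says exactly $\Pi_K(A^{-1}\pt u^K)=-\Lambda u^K$. The compression of $A^{-1}$ to $V_K\otimes\R^n$ is not the inverse of the compression of $A$. The relation $\tfrac{\dd}{\dd t}E=-\int A\Lambda u\cdot\Lambda u$ that you then invoke belongs to the paper's scheme, not yours. There, testing with $w=\png U_\ell=\Lambda u_\ell\in V_\ell$ gives $\tfrac{\dd}{\dd t}E_g(U_\ell)=-\eps\norm{\png U_\ell}^2_{L^2(\pSi)}-\norm{P_v\png U_\ell}^2_{L^2(\pSi)}$. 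That single identity yields the energy decay, the bound on $\pt u_\ell$, and the bound $\eps\int_0^T\norm{\png U_\ell}_{L^2(\pSi)}^2\,\dd t\le E_\half(u_0,g)$ at once.

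In your own scheme you need no such identity. From $\Lambda u^K=-\Pi_K(A^{-1}\pt u^K)$ and $|A^{-1}|\le\eps^{-1}$, you get $\norm{\Lambda u^K}_{L^2(\pSi)}\le\eps^{-1}\norm{\pt u^K}_{L^2(\pSi)}$ pointwise in time. Equivalently, test with $\Lambda u^K$ and use Cauchy--Schwarz rather than looking for an exact time derivative. So the $L^2([0,T];L^2(\pSi))$ bound on $\png u^K_g$ follows directly from the energy estimate you already have. The constant is $\eps^{-2}$, which is harmless because $\eps$ is fixed here. Elliptic estimates for the Dirichlet-to-Neumann map ($g\in\MM^3$) then give $u\in L^2([0,T];H^1(\pSi))$.

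If you want the sharper $\eps^{-1}$ dependence in your scheme, argue as follows. Write $\pt u^K=-B_K\Lambda u^K$ with $B_K:=\big(\Pi_KA^{-1}|_{V_K\otimes\R^n}\big)^{-1}$. Since $\Pi_KA^{-1}|_{V_K\otimes\R^n}\le\eps^{-1}$, you have $B_K\ge\eps$, and hence $\tfrac{\dd}{\dd t}E_g(u^K_g)=\int_{\pSi}\pt u^K\cdot\Lambda u^K\le-\eps\norm{\Lambda u^K}^2_{L^2(\pSi)}$.

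Finally, the regularity of $A$ plays no role anywhere. Only its boundedness, measurability and uniform ellipticity are used.
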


\begin{proof}[Proof of Lemma \ref{lemma:ep_time_ind_weak_exist}]
    As $g$ is independent of time, we can consider a fixed orthonormal basis $\vph_0,\vph_1,\ldots$ of $L^2(\pSi)$ which consists of Steklov eigenfunctions corresponding to non-decreasing eigenvalues $0 = \la_0 < \la_1 \leq \la_2 \leq \cdots $, i.e. functions $\vph_j:\pSi\to \R$ with
    \begin{align*}
    	\p_{\nu}\big[ (\vph_j)_g\big] = \la_j \vph_j \text{ on }\p\Si,
    \end{align*}
    and work with the $\R^n$-valued spans
    \beqs
        V_\ell := \big\{u:\pSi\to \R^n: u=\sum_{j=0}^\ell\varphi_j b_j \text{ for some } b_j\in\R^n\big\},\quad \ell \in \N.
    \eeqs
    For any $\ell\in\N$, we consider the functions $u_\ell(t)(x)= \sum_{j=0}^\ell a_{j,\ell}(t)\vph_j(x) \in C^\infty([0,T];V_\ell)$, and their harmonic extensions $U_\ell=(u_\ell)_g$, which are chosen so that  $u_\ell(0)$ is given as the $L^2(\pSi)$-orthonormal projection of $u_0$ onto $V_\ell$ and which are so that
    \beq\label{eq:ep_galerkin_b}
        \int_{\p\Si}\p_t u_\ell\cdot w\dd s = \int_{\p\Si}Q(\p_{\nu} U_\ell) \cdot w\dd s\ \text{ for all }w\in V_\ell
    \eeq
    holds at each time in $[0,T]$. As $\p_{\nu}U_\ell(t)\vert_{\pSi} = \sum_{j=0}^\ell a_{j,\ell}(t)\la_j \vph_j \in V_\ell$ at each such time, and as the $\vph_j$ are orthonormal, we can obtain these functions $u_\ell$ simply by solving the corresponding system of ODEs 
    \beqs
        \p_ta_{j,\ell} = \sum_{i=0}^\ell \la_i\int_{\p\Si}Q(a_{i,\ell})\vph_i\vph_j\dd s,\ 0\le j\le\ell,
    \eeqs
    with the initial condition
    \beqs
        a_{j,\ell}(0) =a_j(u_0):= \int_{\p\Si}u_0\vph_j \dd s\in \R^n,\ 0\le j\le\ell.
    \eeqs
    As \eqref{eq:ep_galerkin_b} is in particular applicable for $v=\p_\nu u_\ell(t)\in V_\ell$ and as $\De U_\ell=0$ we note that the energy of these Galerkin approximates decays according to     
    \begin{align}\label{est:galerkin_uniform_bound}
        \hf\frac{\dd}{\dd t}\norm{\nabla U_\ell}_{L^2(\Si)}^2 &= \int_\Si\inner{\nabla\p_t U_\ell}{\nabla U_\ell}\dd v = \int_{\p\Si}\p_tu_\ell\cdot\p_{\nu}U_\ell\dd s\nonumber\\
        & = \int_{\p\Si}Q(\p_{\nu}U_\ell)\cdot\p_{\nu}U_\ell\dd s = -\ep\norm{\p_{\nu}U_\ell}_{L^2(\p\Si)}^2-\norm{P_{v}\p_{\nu}U_\ell}_{L^2(\p\Si)}^2\\
        & \le -\half\norm{\p_t u_\ell}_{L^2(\p\Si)}^2,\nonumber
    \end{align}
    where the last step follows since $ P_{\cdot}$ is an orthogonal projection on $N_\eta$, and hence 
    \begin{align*}
        \norm{\p_t u_\ell}_{L^2(\p\Si)}^2&=\norm{(\ep + P_{v})\p_{\nu}U_\ell}_{L^2(\p\Si)}^2
        = \ep^2\norm{\p_{\nu} U_\ell}_{L^2(\p\Si)}^2 + (1 + 2\ep)\norm{P_{v}\p_{\nu}U_\ell}_{L^2(\p\Si)}^2,
    \end{align*} 
    and since $\ep\le \hf$. We can hence deduce that 
    \begin{equation}\label{est:galerkin_uniform}
        \sup_{0\le t\le T}\norm{\nabla U_\ell(t)}_{L^2(\Si)}^2 + \norm{\p_t u_\ell}_{L^2([0,T]\times\p\Si)}^2 + \ep\norm{\p_{\nu}U_\ell}_{L^2([0,T]\times\p\Si)}^2\leq 3\norm{\nabla U_\ell(0)}_{L^2(\Si)}^2
    \end{equation} 
    which also ensures that
    \begin{equation*}
        \begin{split}
            \norm{U_\ell(t)}_{L^2(\Si)} &\le C\norm{u_\ell(t)}_{L^2(\p\Si)} \le C\big(\norm{u_\ell(0)}_{L^2(\p\Si)} + T^{\half}\norm{\p_t u_\ell}_{L^2([0,T]\times\p\Si)}\big)\\
            &\le C\norm{U_\ell(0)}_{H^1(\Si)}
        \end{split}
    \end{equation*}
    for every $t\in [0,T]$, for a constant that is allowed to depend on $T$ and a lower bound $\iota_0$ on $\inj(g)$, but that is independent of $\ell$. 
    Hence we have that
    \beq\label{est:H1-Galerkin}
        \norm{ U_\ell}_{L^\infty([0,T];H^1(\Si))}^2+\norm{\p_t u_\ell}_{L^2([0,T]\times \p\Si)}^2 + \ep\norm{\p_{\nu} U_\ell}_{L^2([0,T]\times\p\Si)}^2\le C\norm{U_\ell(0)}_{H^1(\Si)}^2,
    \eeq
    again with a constant $C=C(T,\iota_0)$.  
    
    By construction, $\norm{u_\ell(0)}_{L^2(\p\Si)}\leq\norm{u_0}_{L^2(\p\Si)}$ since $u_\ell(0)$ is the $L^2$-orthogonal projection of $u_0$ onto $V_\ell$. Importantly, the analogous $\dot H^1(\Si)$-estimate $\norm{\nabla U_\ell(0)}_{L^2(\Si)}\leq \norm{\na (u_0)_g}_{L^2(\Si)}$ also holds, since the relation 
    \beqs
        \int_\Si\langle\nabla (\vph_j)_g,\nabla w_g\rangle\dd v = \int_{\p\Si}\p_{\nu}(\vph_j)_g w\,\dd s = \lambda_j\int_{\p\Si}\vph_j w \, \dd s \text{ for every } w\in H^\half(\pSi)
    \eeqs
    ensures that the $\la_j^{-\hf} (\vph_j)_g$, $j \geq 1$, are $\dot H^1(\Si)$-orthonormal and that $U_\ell(0)-a_0(u_0)$ can alternatively be obtained as the $\dot H^1(\Si)$-orthogonal projection of $(u_0)_g$ onto the span of $(\vphi_1)_g,\dots,(\vphi_\ell)_g$.
 
    Combining the resulting uniform bounds 
    \beqas
        \norm{U_\ell(0)}_{H^1(\Si)}&\leq C(\norm{U_\ell(0)}_{L^2(\pSi)}+\norm{\na U_\ell(0)}_{L^2(\Si)})\\
        &\leq C(\norm{u_0}_{L^2(\pSi)}+\norm{\na (u_0)_g}_{L^2(\Si)})\leq C\norm{(u_0)_g}_{H^1(\Si)}
    \eeqas
    on the initial maps $U_\ell(0)$ with the $\ell$-independent estimate \eqref{est:H1-Galerkin} thus allows us to conclude that the functions $U_\ell=(u_\ell)_g$ are uniformly bounded in $L^\infty([0,T];H^1(\Si))$ and, as $\eps>0$ is fixed, that the functions $\p_t u_\ell$ and $\p_{\nu_g}U_\ell$ are uniformly bounded in $L^2([0,T]\times\p\Si)$. This allows us to pass to a subsequence, still indexed by $\ell$, along which $U_\ell$ converges $\text{weak-}*$ in $L^\infty([0,T];H^1(\Si))$ to a function $U=(u)_g$ and along which $\p_t u_\ell$ as well as $\p_{\nu_g}U_\ell$ converge weakly in $L^2([0,T]\times\p\Si)$ to $\p_t u$ and $\p_{\nu_g}U$, respectively.

    This function hence has the regularity asked for in the lemma and since $u_\ell$ satisfies 
    \begin{equation}\label{eq:weak_PDE_const}
        \int_0^T \int_{\p\Si}\p_t u_\ell \cdot w \dd s_g \dd t = \int_0^T \int_{\p\Si} Q(\p_\nu (u_\ell)_g) \cdot w\dd s_g \dd t
    \end{equation}
    for all $w \in C^{\infty}([0,T];V_\ell)$, we find that $u$ satisfies \eqref{eq:weak_PDE_const} for all $w \in \bigcup_{j=1}^\infty C^{\infty}([0,T];V_j)$ and is hence a weak solution of \eqref{eq:ep_v_map} as required.

    We furthermore observe that for solutions $u$ of this regularity, the computation carried out in \eqref{est:galerkin_uniform_bound} yields the claimed estimate \eqref{eq:energy-dec_ep_approx} on the decay of the energy. Finally we note that as our equation is \textit{linear} this in turn ensures the uniqueness of the solution in the considered class of functions.
\end{proof}

We recall that in the case of $\Si = D$ discussed in \cite{S}, improved regularity of these solutions for initial maps satisfying stronger regularity assumptions was obtained by establishing uniform estimates on the Galerkin approximates in higher order Sobolev spaces. In the present setting this argument is no longer applicable since for general surfaces we cannot expect derivatives of $\varphi_k$ to be contained in $V_k$ and hence cannot use these functions as test-functions in \eqref{eq:ep_galerkin_b} above.

However, we can easily circumvent this difficulty by  differentiating \eqref{eq:ep_v_map} in time. Since $v$ and $g$ are time independent, this leaves the equation unchanged allowing us to use a simple iterative argument that combines Lemma \ref{lemma:ep_time_ind_weak_exist} with the fundamental theorem of calculus. 

\begin{lemma}\label{lemma:ep_new_sol}
    Let $\ep\in (0, 1/2]$, $T>0$, $m\geq 2$. Then for any $g \in \MM^{m+1}$, any $v\in H^{m-\hf}((\p\Si,g);N_\eta )$ and any $u_0 \in H^{m-\hf}((\p\Si,g);\R^n)$ the corresponding solution $u$ of \eqref{eq:ep_v_map} obtained in Lemma \ref{lemma:ep_time_ind_weak_exist} is so that 
    $$u\in  L^2([0,T];H^m(\p\Si)) \text{ and 
    } u_g\in  L^\infty([0,T];H^m(\Si)). 
    $$ 
\end{lemma}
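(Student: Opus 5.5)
The plan is to bootstrap regularity in time and convert it into spatial regularity through the equation itself, as the paper indicates right before the statement. Since $g$ and $v$ do not depend on time, the operator $Q=Q_{\ep,v}$ and the Dirichlet-to-Neumann map $\Lambda_g: f\mapsto \pnug f_g$ are time independent. Hence, at least formally, $w:=\pt u$ solves the same linear equation $\pt w = Q\Lambda_g w$ with initial datum $w(0)=Q\Lambda_g u_0$. For $u_0\in H^{m-\hf}(\pSi)$ we have $\Lambda_g u_0\in H^{m-\frac32}(\pSi)$. Because $v\in H^{m-\hf}\hookrightarrow C^{m-2}$ for $m\ge 2$ (boundary dimension one) and $P_\cdot$ is smooth, the product $Q\Lambda_g u_0$ lies in $H^{m-\frac32}$; for $m=2$ this reduces to $H^{\hf}$ multiplied by $H^{\frac32}$, which is fine. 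Applying Lemma \ref{lemma:ep_time_ind_weak_exist} to this datum gives a weak solution $w$ with $w_g\in L^\infty H^1(\Si)$. By the fundamental theorem of calculus, $\tilde u(t):=u_0+\int_0^t w$ satisfies $\pt\tilde u = w = Q\Lambda_g u_0+\int_0^t Q\Lambda_g w = Q\Lambda_g\tilde u$ weakly, with $\tilde u(0)=u_0$. The uniqueness part of Lemma \ref{lemma:ep_time_ind_weak_exist} then gives $\tilde u = u$, and therefore $\pt u = w$ with $(\pt u)_g\in L^\infty([0,T];H^1(\Si))$.

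Next we recover spatial regularity. Since $Q=-(\ep\,\Id+P_v)$ is pointwise invertible with $Q^{-1}$ depending smoothly on $v$, the equation gives $\Lambda_g u = Q^{-1}\pt u \in L^\infty([0,T];H^{\hf}(\pSi))$, where we again multiply by a function of class $H^{m-\hf}\subset C^{m-2}\cap H^{\frac32}$. Elliptic regularity for the Neumann problem, namely that $\Delta_g u_g=0$ with $\pnug u_g\in H^{s-\frac32}$ implies $u_g\in H^{s}$, then yields $u_g\in L^\infty H^2(\Si)$. This settles the case $m=2$. The $L^2_tH^m(\pSi)$ bound comes from the $\ep$-dissipation term in Lemma \ref{lemma:ep_time_ind_weak_exist}: applied to $w$, it gives $\Lambda_g w\in L^2_tL^2(\pSi)$, i.e. $w\in L^2H^1(\pSi)$, and so $\Lambda_g u\in L^2H^1(\pSi)$ and $u\in L^2H^2(\pSi)$.

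For general $m$ we iterate $k=m-1$ times: take $w_j:=\pt^j u$ with data $(Q\Lambda_g)^j u_0\in H^{m-j-\hf}$ for $j\le m-1$, which is at least $H^{\hf}$. Each step gives $(\pt^j u)_g\in L^\infty H^1$ and $\pt^j u\in L^2H^1(\pSi)$. We then descend by the same elliptic argument: from $\Lambda_g \pt^{j-1}u = Q^{-1}\pt^j u$ we get $(\pt^{j-1}u)_g\in L^\infty H^{2}$ and $\pt^{j-1} u\in L^2H^2(\pSi)$. Repeating raises the order by one per step, until $u_g\in L^\infty H^m$ and $u\in L^2H^m(\pSi)$.

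The main obstacle is the multiplication estimates for $Q^{-1}$ at the top order. We need $Q^{-1}(v)\cdot f\in H^{s}(\pSi)$ for $f\in H^s$ and $s\le m-\frac32$ in the $L^\infty$ scale, and $s\le m-1$ in the $L^2$ scale. With only $v\in H^{m-\hf}$, the top-order case $s=m-1$ requires the algebra property of $H^{m-\hf}$ on curves together with the Moser estimate $\|f\|_{H^{m-1}}\|Q^{-1}(v)\|_{L^\infty}+\|f\|_{L^\infty}\|Q^{-1}(v)\|_{H^{m-1}}$. I would verify this carefully, and if necessary I would weaken the $L^2H^m(\pSi)$ claim into an interpolation argument between the two scales. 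One must also justify that the initial data $(Q\Lambda_g)^ju_0$ have exactly the stated fractional regularity.
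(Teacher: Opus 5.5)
Your proposal is correct and follows essentially the same route as the paper. Both arguments solve the equation for $w$ with data $Q\pnu (u_0)_g$, identify $u_0+\int_0^t w$ with $u$ via the fundamental theorem of calculus and the uniqueness part of Lemma \ref{lemma:ep_time_ind_weak_exist}, invert $Q$, apply elliptic regularity, and then iterate; your tower $w_j=\pt^j u$ is just that induction on $m$ written out. The multiplication concern in your last paragraph is harmless. On the one-dimensional $\pSi$, $Q^{-1}(v)\in H^{m-\hf}$ with $m-\hf>\hf$, and such a function multiplies $H^s$ into itself for all $0\le s\le m-\hf$, so the $L^2H^m(\pSi)$ claim does not need to be weakened.
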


We note that this claim on the regularity could be equivalently stated as 
$$u\in L^2([0,T];H^m(\p\Si))\cap X_m(T,g)$$
since $\pt u_g=(Q \pnu u)_g\in L^\infty([0,T];H^1(\Si,g))$ if $u_g\in  L^\infty([0,T];H^2(\Si))$.  

\begin{proof}[Proof of Lemma \ref{lemma:ep_new_sol}]
    We first consider the case $m=2$. We let $w$ be the solution to \eqref{eq:ep_v_map} with initial data $w_0:= Q_{v,\eps}\p_{\nu}(u_0)_{g}\in H^{\hf}(\p\Si;\R^n)$ obtained in Lemma \ref{lemma:ep_time_ind_weak_exist}.
    We then note that 
    \begin{equation*}
        \tilde{u}(x,t) := u_0(x) + \int_0^tw(x,t')\dd t'
    \end{equation*}
    not only satisfies the initial condition $\tilde{u}(0)=u_0$ but also solves \eqref{eq:ep_v_map}. Indeed, since $v$ and $g$, and hence also $Q$ and $\nu$, are independent of time, the harmonic extension $\tilde{u}_g$ satisfies
    \begin{equation*}
        Q\p_{\nu}\tilde{u}_g (t)=Q\p_{\nu}(u_0)_{g}+ \int_0^tQ\p_{\nu}w_g(t')\dd t' = Q\p_{\nu}(u_0)_g+\int_0^t\p_tw(t')\dd t'=w(t)=\p_t\tilde{u}(t).
    \end{equation*}
    By the uniqueness of solutions of \eqref{eq:ep_v_map} established in Lemma \ref{lemma:ep_time_ind_weak_exist}, we thus have $u=\tilde{u}$ which implies that $Q\p_{\nu}u_g=\p_tu=w\in L^\infty([0,T];H^{\hf}(\pSi))\cap L^2([0,T]; H^{1}(\pSi))$. As $x\mapsto (\eps \Id+P_{x})^{-1}$ is smooth and as $v\in H^{3/2}(\pSi)$ we thus find that both $\p_{\nu}u_g$ and $\pt u$ are in $L^\infty([0,T];H^{\frac12}(\pSi))\cap L^2([0,T]; H^{1}(\pSi))$. Elliptic regularity hence allows us to deduce that $u_g\in L^\infty([0,T];H^2(\Si))\cap L^2([0,T]; H^{2}(\pSi))$, and that $\p_t u_g\in L^\infty([0,T];H^1(\Si;\R^n))$, which gives that $u\in X_2(T,g)\cap L^2([0,T]; H^{2}(\pSi))$ as claimed. 
    
    The claim for general $m\geq 3$ can now be obtained by iterating this argument.
\end{proof}

We next establish the following a priori estimates on the $H^m(\Si)$-norm of solutions of equation \eqref{eq:ep_v_map}, where we note that we now also allow $g$ and $v$ to depend on time. 

\begin{lemma}\label{lemma:ep_Hm_apriori}
    Let $m \geq 3$, $T>0$, $g \in C^1([0,T];\MM^{m+1})$ and $v:[0,T]\times\p\Si \rightarrow N_\eta$ with $v \in X_m(T,g)$. Then for any $0\leq \ep \leq \hf$ and any solution $u$ of \eqref{eq:ep_v_map} with 
    \beq\label{ass:reg-for-est}
        u\in  L^2([0,T];H^m(\p\Si)) \text{ and } u_g\in  L^\infty([0,T];H^m(\Si)), 
    \eeq
    we can bound
    \beqa\label{est:time_ind_Hm}\norm{u_g(t)}_{H^m(\Si,g(t))}^2 + \tfrac{\eps}{2} \norm{\p_\nu \nabla^{m-1} u_g}_{L^2([0,t]; L^2(\p\Si,g))}^2 \leq e^{C_{1} t}\norm{(u(0))_{g(0)}}_{H^m(\Si,g(0))}^2
    \eeqa
    for every $t\in [0,T]$ and for a constant $C_{1}=C(\iota_0,m,M)\La^{m}$ where $\La\geq 1$, $M$ and $\iota_0$ are as in \eqref{ass:const_lemma_eps}. 
\end{lemma}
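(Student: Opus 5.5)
We will derive a differential inequality for the quantity $\norm{U}_{H^m(\Si,g(t))}^2$ with $U:=u_g$. Since $U$ is harmonic, elliptic estimates reduce this norm to the top-order quantity $\norm{\na^{m-1}\pnu U}$ on a collar plus lower-order terms. Concretely, we localise with the cut-off $\varphi$ from Remark \ref{collar:uniform}. In the interior, $H^m$ is controlled by $\norm{U}_{L^2}$ via interior elliptic estimates. Near $\pSi$ we use collar coordinates and the extended fields $\tau,\nu$, for which $\pnu^2 U=-\ptau^2U$. Every $m$-th derivative of $U$ can then be rewritten, modulo lower-order terms, as $\ptau^{m}U$ or $\ptau^{m-1}\pnu U$. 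We therefore set
$$\mathcal E(t):=\norm{\na (\ptau^{m-1} U)}_{L^2(\Si)}^2 + \norm{U}_{H^{m-1}(\Si)}^2,$$
with $\varphi$-localisation understood. Equivalently, $\mathcal E$ is built from the Dirichlet energy of the harmonic function $W:=\ptau^{m-1}U$ after correcting for the fact that $\ptau^{m-1}$ does not commute exactly with harmonic extension. We expect $\mathcal E$ to be equivalent to $\norm{U}^2_{H^m}$ with constants depending only on $\iota_0,m$.

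The main computation differentiates $\half\norm{\na W}^2_{L^2}$ in time and integrates by parts, as in \eqref{est:galerkin_uniform_bound}. This gives
$$\int_{\pSi}\pt W\cdot \pnu W\,\dd s+\text{(commutator terms)}+\text{(terms from }\pt g).$$
On $\pSi$ we have $\pt W=\ptau^{m-1}\pt u=-\ptau^{m-1}\big((\eps+P_v)\pnu U\big)$. We then distribute derivatives by Leibniz. The top-order term is $-\int (\eps+P_v)\ptau^{m-1}\pnu U\cdot\ptau^{m-1}\pnu U$, which is at most $-\eps\norm{\ptau^{m-1}\pnu U}^2_{L^2(\pSi)}$ because $P_v$ is an orthogonal projection. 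This produces the favourable $\eps$-term in \eqref{est:time_ind_Hm}. The relation $\pnu W=\ptau^{m-1}\pnu U$ holds up to lower order by commuting $\pnu$ with $\ptau$ in the collar. The $\pt g$ contributions are handled by Lemma \ref{lemma:metric_variations}: we use \eqref{est:metric_variation_sobolev_norm} and \eqref{est:he_ddt}, and bound them by $CM\norm{U}^2_{H^m}$.

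The main obstacle is the remaining Leibniz terms of the form $\int_{\pSi}\ptau^{j}(P_v)\,\ptau^{m-1-j}\pnu U\cdot\ptau^{m-1}\pnu U$ for $j\ge1$. These involve the top-order boundary quantity $\ptau^{m-1}\pnu U\in H^{-1/2}$, which is not controlled by $\norm{U}_{H^m}$, and we may not absorb it with the $\eps$-term because the constant must be $\eps$-independent. We will exploit the projection structure. Write $\ptau^{m-1}\pnu U=P_v(\cdot)+P_v^\perp(\cdot)$. The tangential part pairs with $P_v$-terms, and the normal part can be re-expressed by differentiating the identity $P_v^\perp\pt u = -\eps P_v^\perp \pnu U$. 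Failing that, we trade a tangential derivative by integrating by parts along $\pSi$ and pass to $\Si$ via $\int_{\pSi} f\,\pnu W=\int_\Si \na f_g\cdot\na W$. This transfers the bad factor to $\norm{\na W}_{L^2(\Si)}\le C\norm{U}_{H^m}$ at the cost of $\norm{(\ptau^j P_v\,\ptau^{m-1-j}\pnu U)_g}_{H^1}$. The latter we bound using $m\ge3$, the bound $\norm{v_g}_{H^m}\le\La$ and Sobolev embeddings on $\pSi$; this is the source of the factor $\La^m$.

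Collecting the terms yields $\frac{\dd}{\dd t}\mathcal E+\eps\norm{\ptau^{m-1}\pnu U}^2_{L^2(\pSi)}\le C\La^m\mathcal E$. The lower-order part $\norm{U}_{H^{m-1}}$ is controlled through the same inequality at level $m-1$ or directly through $\pt u$. Gronwall's inequality and the equivalence of $\mathcal E$ with the $H^m$ norm then give \eqref{est:time_ind_Hm}. To make the computation rigorous under the assumed regularity \eqref{ass:reg-for-est}, we perform it in integrated form, or on the time-discretised and mollified problem.
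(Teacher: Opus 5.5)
Your treatment of what you call the main obstacle is right once you go to your fallback. You pair $f=\ptau^j(P_v)\,\ptau^{m-1-j}\pnu U$, $j\ge 1$, with $\pnu W$ on $\pSi$. You convert this to $\int_\Si \na F\cdot \na W$, using that $W$ is harmonic on the collar, and bound $\norm{f}_{H^{1/2}(\pSi)}\le C\La^m\norm{U}_{H^m}$. This is exactly the paper's treatment of $\err_2$. There these Leibniz terms form the commutator $C_{m-1}(v_g,u_g)=P_v\na^{m-1}(\pnu u_g)_g-\na^{m-1}(P_v\pnu u_g)_g$, which \eqref{est:comm-weaker-sect3} bounds in $H^1(\Si)$. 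Your first suggested route should be dropped. Here $u$ does not map into $N$, so $P_v^\perp\ptau^{m-1}\pnu U$ has no improved regularity, and extracting it from $P_v^\perp\pt u=-\eps P_v^\perp \pnu U$ costs $\eps^{-1}$.

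The genuine gap is in your last step. You run Gronwall for a functional $\mathcal E$ that is only \emph{equivalent} to $\norm{u_g}_{H^m}^2$. This gives only $\norm{u_g(t)}_{H^m(\Si,g(t))}^2\le C e^{C_1 t}\norm{(u(0))_{g(0)}}_{H^m(\Si,g(0))}^2$ with some $C=C(\iota_0,m)>1$. Likewise, you get an $\eps$-term in $\ptau^{m-1}\pnu U$ rather than $\pnu\na^{m-1}u_g$. That is weaker than \eqref{est:time_ind_Hm}, whose prefactor is $1$, and the difference matters. In the proof of Proposition \ref{prop:ep_map_solve} the estimate is chained over the $K$ subintervals of a time discretisation whose mesh tends to zero, see \eqref{est:pv_const_apriori0} and \eqref{est:norm_jump}. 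A factor $C$ per step would give $C^K\to\infty$.

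The missing idea is to differentiate the exact norm $\norm{u_g}^2_{H^m(\Si,g)}$. The metric variation and the lower-order pieces are controlled by Lemma \ref{lemma:metric_variations} and \eqref{est:Hj-velocity}. For $J_m=\int_\Si\langle\na^m u_g,\na^m(\pt u)_g\rangle$ the paper integrates by parts once:
\begin{itemize}
\item $\Delta\na^{m-1}u_g$ is lower order because $u_g$ is harmonic;
\item by \eqref{est:grad_normal_commute}, $\pnu\na^{m-1}u_g-\na^{m-1}(\pnu u_g)_g$ is bounded in $L^2(\pSi)$ by $C\norm{u_g}_{H^m}$;
\item the boundary term is therefore $-\eps\norm{\pnu\na^{m-1}u_g}^2-\norm{P_v\pnu\na^{m-1}u_g}^2$, plus cross terms absorbed by Young's inequality (using $\eps^2\le\eps/2$), plus $\err_2$.
\end{itemize}
This yields $\frac{\dd}{\dd t}\norm{u_g}_{H^m}^2+\eps\norm{\pnu\na^{m-1}u_g}_{L^2(\pSi)}^2\le C_1\norm{u_g}_{H^m}^2$ for the exact norm, and Gronwall then gives the constant-free bound.

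Working with covariant derivatives also removes a second weakness of your set-up. Your $\mathcal E$ depends on $g(t)$ through the collar coordinates, through $\tau$ and through $\varphi$. Lemma \ref{lemma:metric_variations} does not control how these vary in time, contrary to what you assert. It does control the quantities $\norm{\na_g^j u_g}_{L^2(\Si,g(t))}$ that the paper differentiates.
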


We stress that the constant $C_1$ is \emph{independent} of $\ep$, and it is this key feature of the above lemma which will allow us to obtain a solution of the original problem from solutions of the regularised equation as  $\eps\to 0$ later in Section \ref{subsec:Hm-loc-exist}.

We note that as our domain surface is in general not flat, we cannot expect higher derivatives to commute, which results in many additional error terms when compared with the case of $\Si=D$ treated in \cite{S}, though these will all be of lower order. 

Throughout this proof, and also later on in Section \ref{sect:higher_reg}, we will also often want to exchange the order in which operations such as derivatives, projections and harmonic extensions are applied. To do this efficiently, we have collected the relevant estimates on the resulting commutator terms in Appendix \ref{sect:app}, see in particular Lemma \ref{lemma:commutator-app} and Remark \ref{rmk:extra-commutator-app}.

\begin{proof}[Proof of Lemma \ref{lemma:ep_Hm_apriori}]	
    As \eqref{ass:reg-for-est} ensures that $\p_\nu u_g \in L^2([0,T];H^{m-1}(\pSi,g))$ 
    we have that also $\p_t u=Q_{\eps,v}\p_\nu u_g \in L^2([0,T];H^{m-1}(\pSi,g))$, which gives us sufficient regularity to compute 
    \begin{align*}
		\frac{\dd}{\dd t}\norm{u_g}_{H^m(\Si,g)}^2 &= \frac{\dd}{\dd \ep}\Big|_{\ep=0} \norm{u_{g(\cdot+\eps)}}_{H^{m}(\Si,g(\cdot+\eps))}^2 +2 \sum_{j=0}^m\int_{\Si}\inner{\nabla^j u_g}{\nabla^j(\p_t u)_g}\dd v \\
		&\leq C \norm{\pt g}_{H^m(\Si,g)} \norm{u_g}_{H^m(\Si,g)}^2+2 \sum_{j=0}^m\int_{\Si}\inner{\nabla^j u_g}{\nabla^j(\p_t u)_g}\dd v,
	\end{align*}
    where we use Lemma \ref{lemma:metric_variations} in the second step and recall that $ \norm{\pt g}_{H^m(\Si,g)} \leq M$. The claimed estimate \eqref{est:time_ind_Hm} hence follows once we establish that $J_j:=\int_{\Si}\inner{\nabla^j u_g}{\nabla^j(\p_t u_g)}\dd v $ satisfies
	\begin{equation}\label{est:Hm_a_priori_goal-lot}
		J_j \leq C\big(1 + \norm{v_g}_{H^{m}(\Si)}^m\big)\norm{u_g}_{H^m(\Si)}^2 \text{ for }j=0,1,\dots,m-1
	\end{equation}
    while
    \begin{align}\label{est:Hm_a_priori_goal}
		J_m \leq - \tfrac{\eps}{2} \norm{\p_\nu \nabla^{m-1} u_g}_{L^2(\p\Si)}^2 + C\big(1 + \norm{v_g}_{H^{m}(\Si)}^m\big)\norm{u_g}_{H^m(\Si)}^2 
	\end{align}
    where here and in the following $C$ denotes a constant that is allowed to depend on $\iota_0$, $m$ and as usual our setting, but that is \textit{independent} of $\eps$.  

    The required bound \eqref{est:Hm_a_priori_goal-lot} on the lower order terms immediately follows from the estimate 
    \beqa\label{est:Hj-velocity}
        \norm{\na^j (\pt u)_g}_{L^2(\Si)} &= \norm{\na^j (\eps \pnu u_g+ P_{v}\pnu u_g)_g}_{L^2(\Si)}\\
        &\leq C (1 + \norm{v_g}_{H^{m-1}(\Si)}^{m-1})\norm{u_g}_{H^{m-1}(\Si)}+C\norm{u_g}_{H^m(\Si)}
    \eeqa
    which we can e.g. obtain by observing that $\norm{P_v \na^j\pnu u}_{L^2(\Si)} \leq \norm{\na^j\pnu u}_{L^2(\Si)}\leq \norm{u_g}_{H^m(\Si)}$, compare \eqref{est:grad_normal_commute}, and that each $P_v \na^j\pnu u_g-\na^j (P_{v}\pnu u_g)_g$ can be thought of as a commutator term $C_{j}(v_g,u_g)$ of the form \eqref{def:comm-rmk-2} for which Lemma \ref{lemma:commutator-app} is applicable.

	To estimate the leading order term $J_m$, we integrate by parts to rewrite
	\begin{align*}
		J_m &= \int_{\p\Si}\inner{\p_\nu\nabla^{m-1}u_g}{\nabla^{m-1}(\p_tu)_g}\dd s-\int_{\Si}\langle \Delta\na^{m-1} u_g, \nabla^{m-1}(\pt u)_g\rangle \dd v\\
		&= -\ep\int_{\p\Si}\inner{\p_\nu\nabla^{m-1}u_g}{\nabla^{m-1}(\p_\nu u_g)_g}\dd s - \int_{\p\Si}\inner{\p_\nu\nabla^{m-1}u_g}{\nabla^{m-1}(P_{v}\p_\nu u_g)_g}\dd s\\
        &\quad+ \err_1
	\end{align*}
    and exploit that $u_g$ is harmonic, and hence $\norm{\Delta\na^{m-1} u_g}_{L^2(\Si)}\leq C\norm{u_g}_{H^m(\Si)}$, as well as \eqref{est:Hj-velocity} to bound the resulting error $\err_1:= \int_{\Si}\langle \Delta\na^{m-1} u_g, \nabla^{m-1}(\pt u)_g\rangle \dd v $ by 
    \begin{equation}\label{est:Hm_apriori_err1_1}
		\err_1 \leq C\norm{u_g}_{H^m(\Si)}\norm{\nabla^{m-1}(\p_t u)_g}_{L^2(\Si)} \leq C(1 + \norm{v_g}_{H^{m-1}(\Si)}^{m-1})\norm{u_g}_{H^m(\Si)}^2.
	\end{equation}
	We now further rewrite 
	\beq \label{est:Hm_a_priori_Im}
		J_m = -\ep\norm{\p_\nu \nabla^{m-1}u_g}_{L^2(\p\Si)}^2 - \norm{P_{v}\p_\nu \nabla^{m-1}u_g}_{L^2(\p\Si)}^2+ \err_1 + \err_2 + \err_3 
	\eeq
    for 
    \beqas
		\err_2 &:= \int_{\p\Si}\inner{\p_\nu\nabla^{m-1}u_g}{P_{v}\nabla^{m-1}(\p_\nu u_g)_g - \nabla^{m-1}(P_{v}\p_\nu u_g)_g}\dd s
	\eeqas
    and 
	\beqas
        \err_3 &:= \ep\int_{\p\Si}\inner{\p_\nu\nabla^{m-1}u_g}{\p_\nu\nabla^{m-1}u_g - \nabla^{m-1}(\p_\nu u_g)_g}\dd s \\
        & \quad + \int_{\p\Si}\inner{\p_\nu\nabla^{m-1}u_g}{P_{v}\left(\p_\nu\nabla^{m-1}u_g - \nabla^{m-1}(\p_\nu u_g)_g\right)}\dd s\\
        &= \int_{\p\Si}\inner{\ep \p_\nu\nabla^{m-1}u_g + P_{v}\p_\nu\nabla^{m-1}u_g}{\p_\nu\nabla^{m-1}u_g - \nabla^{m-1}(\p_\nu u_g)_g}\dd s.
    \eeqas
    Thanks to \eqref{est:grad_normal_commute} we can bound 
    \beqas
        \err_3 &\leq C\ep\norm{\p_\nu \nabla^{m-1}u_g}_{L^2(\p\Si)} \norm{u_g}_{H^m(\Si)} + C\norm{P_{v}\p_\nu \nabla^{m-1}u_g}_{L^2(\p\Si)}\norm{u_g}_{H^m(\Si)}\\
        &\leq \eps^2 \norm{\p_\nu \nabla^{m-1}u_g}_{L^2(\p\Si)}^2+\half \norm{P_{v}\p_\nu \nabla^{m-1}u_g}_{L^2(\p\Si)}^2+C \norm{u_g}_{H^m(\Si)}^2
    \eeqas
    where we stress that $C$ is independent of $\eps\in [0,\half]$. To bound $\err_2$, we note that $C_{m-1}(v_g,u_g)= P_{v_g}\nabla^{m-1}(\p_\nu u_g)_g - \nabla^{m-1}(P_{v}\p_\nu u_g)_g$ and $\na C_{m-1}(v_g,u_g)$ are commutator terms of the form \eqref{def:comm-rmk-2}  (for $j=m-1$ and $j=m$, respectively) and that Lemma \ref{lemma:commutator-app} hence in particular implies that
    \beq\label{est:comm-weaker-sect3}
        \norm{C_{m-1}(v_g,u_g)}_{H^1(\Si)}\leq  C(1 + \norm{v_g}_{H^{m}(\Si)}^m)\norm{u_g}_{H^m(\Si)}  . 
    \eeq
    Rewriting $\err_2$ as an integral over the surface and using again that $\De \na^{m-1} u_g$ is of lower order we hence obtain that
    \beqas
        \err_2 & = \int_{\p\Si}\inner{\p_\nu\nabla^{m-1}u_g}{C_{m-1}(v_g,u_g)}\dd s\\
        &\leq \int_{\Si}\inner{\nabla^m u_g}{\nabla C_{m-1}(v_g,u_g)}\dd v + C\norm{\De \na^{m-1} u_g}_{L^2(\Si)}\norm{C_{m-1}(v_g,u_g)}_{L^2(\Si)}\\
		&\leq C\left(1 + \norm{v_g}_{H^{m}(\Si)}^m\right)\norm{u_g}_{H^m(\Si)}^2.
	\eeqas
    Inserting these error estimates into \eqref{est:Hm_a_priori_Im} yields the claimed bound \eqref{est:Hm_a_priori_goal} on $J_m$ and hence completes the proof of the lemma. 
\end{proof}

We can combine the above lemmas with a time-discretisation argument to establish the existence of solutions of \eqref{eq:ep_v_map} of this regularity for time-dependent $g$ and $v$ as claimed in Proposition \ref{prop:ep_map_solve}.

\begin{proof}[Proof of Proposition \ref{prop:ep_map_solve}.]
    As we have already established the claimed a priori estimates in Lemma \ref{lemma:ep_Hm_apriori} above, it remains to show that to any $\eps>0$, $T>0$, $g$, $v$ and $u_0$ as in the proposition there exists a unique solution $u\in X_m(T)\cap L^2([0,T];H^{m}(\pSi)) $ of \eqref{eq:ep_v_map} with $u(0) = u_0$.

    To prove this we want to apply a time-discretisation argument in which we solve \eqref{eq:ep_v_map} for maps and metrics $(\tilde v,\tilde g)$ which are piecewise constant (in time) in place of $(v,g)$. To this end, given any partition  $t_0=0<t_1<\ldots<t_K<t_{K+1}=T$ of the time interval, we can first use that $u_0\in H^{m-\half}(\pSi,g(t_0)=g_0)$ to apply Lemmas \ref{lemma:ep_time_ind_weak_exist} and \ref{lemma:ep_new_sol} on $[0,t_1]$ to obtain $\tilde u\in X_{m}([0,t_1],g(t_0)) \cap L^2([0,t_1];H^{m}(\pSi,g(t_0)))$ which solves \eqref{eq:ep_v_map} with $v(t_0)$ and $g(t_0)$ in place of the time dependent $v$ and $g$. As the spaces $H^{m-\half}(\pSi,g(t))$ all coincide, compare Lemma \ref{lemma:metric_variations}, we obtain that $\tilde u(t_1)\in H^{m-\half}(\pSi,g(t_1))$, so can iterate this argument to obtain $\tilde u\in X_{m}(T,\tilde g) \cap L^2([0,T];H^{m}(\pSi,\tilde g))$ which solves \eqref{eq:ep_v_map} for the piecewise constant in time metrics and maps $(\tilde g,\tilde v)\vert_{[t_{k-1},t_k)}=(g,v)(t_{k-1})$. 

    On each individual interval $[t_{k-1},t_k)$, the assumptions of Lemma \ref{lemma:ep_Hm_apriori} are satisfied so we can control the evolution of $\norm{\tilde u(t)}_{H^{m}(\Si,\tilde g)}$ by the estimate \eqref{est:time_ind_Hm} obtained in this Lemma \ref{lemma:ep_Hm_apriori}. Since the bounds on the $H^m$-norm and injectivity radius in \eqref{ass:const_lemma_eps} hold for the same numbers $\La$ and $\iota_0$ also for $\tilde v$ and $\tilde g$, we hence obtain that 
    \beqa\label{est:pv_const_apriori0}
        \norm{(\tilde u(t))_{\tilde g(t)}}_{H^m(\Si,\tilde g(t))}^2 +\tfrac{\eps}{2} \norm{\p_\nu \nabla^{m-1} u_g}_{L^2([t_{k-1},t]; L^2(\p\Si,\tilde{g}(t_{k-1}))}^2\\
        \le e^{C_1(t-t_{k-1})}\norm{(\tilde u(t_{k-1}))_{\tilde g(t_{k-1})}}_{H^m(\Si, \tilde g(t_{k-1}))}^2
    \eeqa
    for every $t\in [t_{k-1},t_k)$ and for a constant $C_1=C(\iota_0,m,\La)$. 
    
    We note that while the map $\tilde u(t):\p\Si\to \R^n$ is continuous across the times $t_j$, since the metric jumps at $t_j$, its harmonic extensions $(\tilde u(t))_{\tilde g(t)}$ and the corresponding $H^m(\Si,\tilde g(t))$-norms may jump. This jump is however controlled by the estimate \eqref{est:harm-ext-timedep-gen} from Lemma \ref{lemma:metric_variations} which ensures that 
    \begin{equation}\label{est:norm_jump} 
      \norm{(\tilde u(t_k))_{\tilde g(t_{k})}}_{H^m(\Si,\tilde g(t_k))}^2 \le e^{C M\abs{t_k-t_{k-1}}}\lim_{t \nearrow t_{k}}\norm{(\tilde u(t))_{\tilde g(t)}}_{H^m(\Si,\tilde g(t))}^2
    \end{equation} 
    for a constant $C=C(\iota_0,m)$.   
    
    Combined with \eqref{est:time_ind_Hm} we hence deduce that the harmonic extension $\tilde U=(\tilde{u})_{\tilde g}$ obtained from any such  $\tilde u$ is so that 
    \beq\label{est:pv_const_apriori1}
        \norm{\tilde U(t)}_{H^m(\Si,\tilde g(t))}^2 + \tfrac{\ep}{2}\norm{\p_\nu \nabla_{\tilde g}^{m-1}\tilde U}_{L^2([0,t];L^2(\p\Si,\tilde{g}))}^2 \le e^{Ct}\norm{(u_0)_{g_0}}_{H^m(\Si, g_0)}^2
    \eeq
    for every $t\in[0,T]$ and a constant $C=C(\iota_0,m,\La,M)$. Combined with standard elliptic estimates and Lemma \ref{lemma:metric_variations} this hence yields uniform bounds of 
    \beq\label{est:pv_const_apriori1-g0}
        \norm{\tilde U}_{L^\infty([0,T];H^m(\Si,g_0))}^2 + \ep \norm{\nabla^{m} \tilde{U}}_{L^2([0,T]; L^2(\p\Si,g_0))}^2 \le C\norm{(u_0)_{g_0}}_{H^m(\Si, g_0)}^2,
    \eeq
    $C=C(\iota_0,T,m)$ for all $\tilde U=(\tilde u)_{\tilde g}$ obtained by such a time discretisation.     

    Applying this argument for a sequence  $(\tilde g_\ell,\tilde v_\ell)$ which corresponds to partitions whose mesh-size tends to zero, we can hence pass to a subsequence so that the resulting maps $\tilde U_\ell=(\tilde u_\ell)_{\tilde g_\ell}$ converges weak-$*$ in $L^\infty([0,T];H^m(\Si,g_0))$  and weakly in $L^2([0,T];H^{m+\hf}(\Si,g_0))$ to a limiting map $U \in L^\infty([0,T];H^m(\Si,g_0))\cap L^2([0,T];H^{m+\hf}(\Si,g_0))$. Since $\Delta_{\tilde g_{\ell}} U_{\ell}=0$ for every $\ell$ and since $\tilde g_\ell\to g$ in $L^\infty([0,T];\MM^{m+1}(\Si))$ we must have $\Delta_g U=0$, and can hence view $U=u_g$ as  the harmonic extension of the weak limit $u$ of the traces $\tilde{u}_\ell$.
    
    As maps in $X_m(T)$ are continuous on $\pSi\times [0,T]$, compare Lemma \ref{lemma:Xm} below,  we can additionally use that $\tilde v_\ell$ converge to $v$ uniformly on $[0,T]\times \Si$ to pass to the limit in $\p_t u_\ell=-(\eps+P_{\tilde v_\ell})\p_{\nu_{\tilde g_\ell}} U_\ell$ to obtain that $u$ is the desired solution of 
    \beqs
        \p_t u=-(\eps+P_{v})\p_{\nu_g} U= -(\eps+P_{v})\p_{\nu_g} u_g \text{ with } u(0)=u_0. 
    \eeqs
    Finally, to establish the uniqueness of solutions of the given regularity to this linear equation it suffices to exclude the possibility that there is a non-trivial solution that evolves from $u_0=0$ and this follows from Gronwall's inequality as any  solution of \eqref{eq:ep_v_map} satisfies
    \beq\label{est:energy-decay_ep_v_flow}
        \tfrac{\dd}{\dd t} E_g(u) = \tfrac{\dd}{\dd \eps}E_{g(\cdot+\eps)}(u)-\ep\norm{\p_{\nu}u_g}_{L^2(\p\Si)}^2-\norm{P_{v}\p_{\nu}u_g}_{L^2(\p\Si)}^2\leq C(\iota_0) M E_g(u).
    \eeq 
\end{proof}
In addition to the properties of the solutions $u=u_{\eps,v,g}$ of \eqref{eq:ep_v_map} that we have just established, we also show that the dependence of this solution on the curves of maps $v$ and  metrics $g$ is Lipschitz in the following sense. 
\begin{lemma}\label{lemma:ep_Lip_est}
    For any $\iota_0>0$ and $M,\La<\infty$, there exists a constant $C$ so that the following holds true for every $0<\ep\le\hf$ and any $0<T\le 1$. Let $m=3$, let $(u_0,g_0)\in H^{m-\hf}(\p\Si;\R^n)\times\MMtwoiota^{m+1}$, let $\UU=\UU^{m+1}$ be the neighbourhood of $g_0$ from Proposition \ref{prop:step1}, and let $v_{1,2}\in X_m(T)$ and $g_{1,2}\in C^1([0,T];\UU)$ be any pair of maps and metrics such that $v_{1,2}(\p\Si\times[0,T])\subset N_{\eta}$, $(v_{1,2}(0),g_{1,2}(0)) = (u_0,g_0)$ and for which \eqref{ass:const_lemma_eps} holds for this $m=3$, $M$ and $\La$. 
    \\
    Then the corresponding solutions $u_{i}=u_{\eps,v_i,g_i}$, $i=1,2$, of \eqref{eq:ep_v_map} with initial conditions $u_i(0)=u_0$ satisfy
    \beqas
        \sup\limits_{[0,t]}\norm{\nabla_{g_1}(u_2-u_1)_{g_1}}_{L^2(\Si,g_1)}^2 & + \int_0^t\norm{\p_t (u_2-u_1)}_{L^2(\p\Si,g_1)}^2 \dd t\\
        &\le C\ep^{-1}t\sup\limits_{[0,t]} \left( \dist ^2_{H^{3}}(g_1,g_2)+\norm{(v_2-v_1)_{{g_1}}}^2_{H^1(\Si,g_1)}\right)
    \eeqas
    for all $0\le t\le\min\{T,\delta/M\}$, where $\de>0$ is as in Proposition \ref{prop:step1}. 
\end{lemma}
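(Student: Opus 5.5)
We will prove the estimate by an energy argument for the difference $w:=u_2-u_1$, measured with respect to the fixed curve of metrics $g_1$. Writing $W:=w_{g_1}$ for its $g_1$-harmonic extension, we subtract the two equations and rearrange the result as an equation for $w$ with forcing:
\beqs
\pt w = -(\eps+P_{v_1})\p_{\nu_{g_1}}W + F,\qquad F:=-(\eps+P_{v_2})\p_{\nu_{g_2}}(u_2)_{g_2}+(\eps+P_{v_1})\p_{\nu_{g_1}}(u_2)_{g_1}.
\eeqs
We split the forcing as $F=F_1+F_2$ with
\beqs
F_1:=(P_{v_1}-P_{v_2})\p_{\nu_{g_2}}(u_2)_{g_2}, \qquad F_2:=(\eps+P_{v_1})\big(\p_{\nu_{g_1}}(u_2)_{g_1}-\p_{\nu_{g_2}}(u_2)_{g_2}\big).
\eeqs
The first term $F_1$ is controlled pointwise by $C\abs{v_1-v_2}\abs{\na (u_2)_{g_2}}$ on $\pSi$. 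The second term $F_2$ measures the change of the normal derivative of the harmonic extension under a change of metric. Its size is bounded by $\dist_{H^3}(g_1,g_2)\norm{(u_2)_{g}}_{H^3}$: we integrate \eqref{est:he_ddt} along a path joining $g_1$ and $g_2$, and we also use the change of $\nu_g$ under such a path. Throughout we use the trace embedding $H^2(\Si)\to L^2(\pSi)$ together with the uniform $H^3$ bound on $(u_2)_{g_2}$ from Proposition \ref{prop:ep_map_solve}, which holds for $m=3$ with constants depending on $\iota_0,M,\La$ since $T\le1$. Using in addition $\norm{(v_2-v_1)}_{L^2(\pSi)}\le C\norm{(v_2-v_1)_{g_1}}_{H^1(\Si,g_1)}$ and $\sup\abs{\na(u_2)_{g_2}}\le C$ on $\pSi$ (from $H^3\hookrightarrow C^1$), we obtain
\beqs
\norm{F}_{L^2(\pSi,g_1)}\le C\big(\dist_{H^3}(g_1,g_2)+\norm{(v_2-v_1)_{g_1}}_{H^1(\Si,g_1)}\big).
\eeqs

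Next we carry out the energy estimate, as in \eqref{est:galerkin_uniform_bound} and \eqref{est:energy-decay_ep_v_flow}. We test with $\p_{\nu_{g_1}}W$ and also account for the time dependence of $g_1$ through Lemma \ref{lemma:metric_variations}. This gives
\beqs
\tfrac{\dd}{\dd t}\tfrac12\norm{\na W}_{L^2}^2\le CM\norm{\na W}_{L^2}^2-\eps\norm{\p_\nu W}^2_{L^2(\pSi)}-\norm{P_{v_1}\p_\nu W}^2_{L^2(\pSi)}+\int_{\pSi}F\cdot\p_\nu W.
\eeqs
By Young's inequality, the forcing term is at most $\tfrac\eps2\norm{\p_\nu W}^2+\tfrac{1}{2\eps}\norm{F}^2$; this is where the factor $\eps^{-1}$ enters. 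The equation for $w$ then gives $\norm{\pt w}^2\le 2\norm{(\eps+P_{v_1})\p_\nu W}^2+2\norm{F}^2$. The first of these terms is bounded by a multiple of the good terms $\eps\norm{\p_\nu W}^2+\norm{P_{v_1}\p_\nu W}^2$, because $\eps\le\frac12$. Adding a small multiple of this inequality to the energy inequality, applying Gronwall on $[0,t]$ with $t\le\delta/M$ (so that $e^{CMt}\le C$), and using $W(0)=0$, we arrive at
\beqs
\sup_{[0,t]}\norm{\na W}^2+\int_0^t\norm{\pt w}^2\le C\eps^{-1}\int_0^t\norm{F}^2\le C\eps^{-1}t\sup_{[0,t]}(\dist^2_{H^3}(g_1,g_2)+\norm{(v_2-v_1)_{g_1}}^2_{H^1}),
\eeqs
which is the claim.

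The main obstacle is the rigorous bound on $F_2$, that is, on the difference of normal derivatives for two distinct metrics evaluated on the same boundary map. First we would interpolate between $g_1(t)$ and $g_2(t)$ by a curve $g_s$ in $\UU$ of $H^4$-length at most $2\dist_{H^4}$. Then we would apply \eqref{est:he_ddt} in the $s$-variable with $m=2$ only, which costs $\norm{\p_s g_s}_{H^2}\le\norm{\p_s g_s}_{H^3}$, followed by the trace theorem. Since $\UU$ is the $H^{m+1}=H^4$ ball, we take care that only an $H^3$ distance is needed. If this turns out to be problematic, we would instead bound $\norm{\p_s g_s}_{H^2}$ by the $H^3$ distance directly through the norm equivalence of Lemma \ref{lemma:U-basics}. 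The variation of the unit normal $\nu_{g_s}$ is handled pointwise by $\abs{\p_s g_s}$ in the same way. The regularity required to justify the energy identity is provided by Proposition \ref{prop:ep_map_solve}.
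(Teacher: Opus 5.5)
Your proposal is correct and follows essentially the same route as the paper: an energy estimate for $E_\hf(u_2-u_1,g_1)$ whose forcing splits into the projection difference $(P_{v_1}-P_{v_2})\p_{\nu_2}(u_2)_{g_2}$ and the metric-change terms (which the paper keeps separate as $f_1,f_2$ and you group as $F_2$). This is followed, as in the paper, by Young's inequality, which produces the factor $\eps^{-1}$, and a Gronwall argument. One small point on $F_2$: to get $\dist_{H^3}$ rather than $\dist_{H^4}$, interpolate along a path of $H^3$-length at most $2\dist_{H^3}(g_1,g_2)$ (not an $H^4$-short one) and apply \eqref{est:he_ddt} with $m=2$, which is what your fallback describes.
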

We note that we can of course apply this lemma also for any other choice of $m\geq 3$ since we can assume that the neighbourhoods $\UU^k=\UU^k(g_0)$ are chosen so that $\UU^{k+1}\subset \UU^k$ and since increasing the exponent $m$ simply strengthens the assumptions while leaving the conclusions invariant.
\begin{proof}[Proof of Lemma \ref{lemma:ep_Lip_est}]
    We first note that $w = u_2-u_1$ satisfies an inhomogenous version of equation \eqref{eq:ep_v_map}, namely 
    \begin{equation}
    	\p_t w = -(\ep + P_{v_1})\p_{\nu_1}w_{g_1} + f,
    \end{equation}
    where $\nu_1=\nu_{g_1}$ and where $f = f_1 + f_2 + f_3$ for
    \begin{align*}
    	f_1 &= (\ep + P_{v_1})\p_{\nu_1}\left((u_2)_{g_1} - (u_2)_{g_2}\right),\\
    	f_2 &= (\ep + P_{v_1})\left(\p_{\nu_1} - \p_{\nu_2}\right)(u_2)_{g_2},\\
    	f_3 &= \left(P_{v_1} - P_{v_2}\right)\p_{\nu_2}(u_2)_{g_2}.
    \end{align*}
    We furthermore note that $(u_1)_{g_1}$ and $(u_2)_{g_1}$ are uniformly bounded in $H^m(\Si,g_1)$ since Proposition \ref{prop:ep_map_solve} yields uniform $H^m(\Si,g_i)$ bounds on $(u_i)_{g_i}$ and since the dependence of the norms and the harmonic extension on the metric are controlled by Lemma \ref{lemma:metric_variations}. We can thus apply Lemma \ref{lemma:metric_variations} to bound the error terms by 
    \beqa\label{est:f-lip}
        \norm{f_1}_{L^2(\p\Si,g_1)}&\le 2\norm{\p_{\nu_1}\left((u_2)_{g_1}-(u_2)_{g_2}\right)}_{L^2(\p\Si,g_1)} \le C\dist_{H^{m}}(g_1,g_2)\\
        \norm{f_2}_{L^2(\p\Si,g_1)}&\le 2\norm{\left(\p_{\nu_1}-\p_{\nu_2}\right)(u_2)_{g_2}}_{L^2(\p\Si,g_1)} \le C\dist_{H^{m}}(g_1,g_2)\\
        \norm{f_3}_{L^2(\p\Si,g_1)} &= \norm{\left(P_{v_1}-P_{v_2}\right)\p_{\nu_2}(u_2)_{g_2}}_{L^2(\p\Si,g_1)} \le C\norm{v_2-v_1}_{L^2(\p\Si,g_1)}
    \eeqa
    at any time in $[0,T]$ and for $C=C(\iota_0,\La)$. We can furthermore compute
    \beqa\label{est:ddE-w}
        \tfrac{\dd}{\dd t}E_{\hf}(w,g_1) &=\int_{\Si}\inner{\nabla_{g_1}w_{g_1}}{\nabla_{g_1}\p_tw_{g_1}}_{g_1}\dd v_{g_1}
        +\tfrac{\dd}{\dd \ep}\big|_{\eps=0} E_{\hf}(w,g_1(t+\ep))\\
        &=-\ep\norm{\p_{\nu_1}w_{g_1}}_{L^2(\p\Si,g_1)}^2-\norm{P_{v_1}\p_{\nu_1}w_{g_1}}_{L^2(\p\Si,g_1)}^2\\
        &\quad+\int_{\p\Si}\p_{\nu_1}w_{g_1}\cdot f\dd s_{g_1}+\tfrac{\dd}{\dd \ep}\big|_{\eps=0} E_{\hf}(w,g_1(t+\ep)). 
    \eeqa
    and note that \eqref{est:g_change_E} allows us to bound $\frac{\dd}{\dd \ep}\big|_{\eps=0} E_{\hf}(w,g_1(t+\ep)) \le CE_{\hf}(w,g_1)$ for $C=C(M,\iota_0)$. We can hence apply Young's inequality and use the above estimates \eqref{est:f-lip} on $f=f_1+f_2+f_3$ to deduce that  
    \begin{align*}
        \frac{\dd}{\dd t}E_{\hf}(w,g_1) &\leq -\hf \ep\norm{\p_{\nu_1}w_{g_1}}_{L^2(\p\Si,g_1)}^2 -\norm{P_{v_1}\p_{\nu_1}w_{g_1}}_{L^2(\p\Si,g_1)}^2+ CE_{\hf}(w,g_1)\\
        &\quad+ C\ep^{-1} \left(\dist^2_{C^{3}}(g_1,g_2)+\norm{v_2-v_1}^2_{L^2(\p\Si,g_1)}\right),
    \end{align*}
    so, as $w(0)=0$ and $T\leq 1$, that
    \begin{align*}
        \norm{(\nabla_{g_1}&w_{g_1})(t)}_{L^2(\Si,g_1(t))}^2 + \int_0^t \ep\norm{\p_{\nu_1}w_{g_1}}_{L^2(\p\Si,g_1)}^2 + \norm{P_{v_1}\p_{\nu_1}w_{g_1}}_{L^2(\p\Si,g_1)}^2\dd t'\\
        &\le C\ep^{-1}t\sup\limits_{[0,t]}\left(\dist_{C^{3}}(g_1,g_2)^2 + \norm{v_2-v_1}_{L^2(\p\Si,g_1)}^2\right).
    \end{align*}
    This immediately implies the claim of the lemma since $\abs{\p_t w} \leq \eps\abs{\p_{\nu_1}w_{g_1}}+ \abs{P_{v_1}\p_{\nu_1}w_{g_1}}+ \abs{f}$, for $f$ controlled by \eqref{est:f-lip}, and since $\norm{v_2-v_1}_{L^2(\p\Si,g_1)}\leq C \norm{(v_2-v_1)_{g_1}}_{H^1(\Si,g_1)}$.
\end{proof}

\subsection{Analysis of the regularised coupled flow}\label{subsec:reg-coupled}
We now want to establish the existence of a solution of the non-linear, but still regularised, system \eqref{eq:flow-ep_coupled} based on the results on the metric obtained in Section \ref{sect:metric} and the properties of the solutions $u_{\ep,v,g}$ of the regularised and linearised equation \eqref{eq:ep_v_map}. To this end we show

\begin{lemma}\label{lemma:FPT}
    For any $\ep \in (0,\hf]$,  $m \geq 3$, $\iota_0,d_0>0$ and $E_0,\La_0<\infty$ there exists $T>0$ and $C_0\geq 2$ so that the following holds true for any $g_0\in \MMtwoiota^{m+1}$, the corresponding neighbourhood $\UU$ from Proposition \ref{prop:step1} and any $u_0\in H^{m-\hf}(\p\Si;N_\eta)$ with $\norm{(u_0)_{g_0}}_{H^m(\Si,g_0)}\leq \La_0$, $E_{\hf}(v_0,g_0)\leq E_0$ and $\dist(u_0(\pSi),\partial N_\eta)>d_0$. 
    
    For any $v$ in the set 
    \beqs
        S=\{v\in X_m(T) :  v(0)=u_0, \norm{v_{g_0}}_{X_m(T,g_0)}\le\La:=C_0\La_0, E_{\hf}(v(t),g_0)\le 2E_0\}
    \eeqs
    the unique solution $g_v$ of \eqref{eq:metric_lemma} with $g_v(0)=g_0$ and the unique solution $u_{\ep,g_v,v}$ of \eqref{eq:ep_v_map} with $u(0)=u_0$ are defined on all of $[0,T]$ with $g_v(t)\in\UU$ and $u(t)(\pSi)\subset N_{\eta}$ for $0<t\le T$, and the map 
    $$\Psi=\Psi_{\ep,u_0,g_0}\colon v\mapsto u_{\ep,g_v,v}$$
    maps the set $S$ into itself and is a contraction with respect to the norm
    \begin{equation*}
        \norm{v}_S^2:=\sup\limits_{0\le t\le T}\norm{(v(t))_{g_0}}_{H^1(\Si,g_0)}^2 + \int_0^T\int_{\p\Si}\abs{\p_tv}^2\dd s_{g_0}\dd t.
    \end{equation*}
\end{lemma}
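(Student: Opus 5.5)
The plan is a standard Banach fixed point argument on $S$, with $T$ chosen small depending on $\eps,m,\iota_0,d_0,E_0,\La_0$. We take $C_0\geq 2$ large enough to absorb the constants from the equivalence of norms in Lemma \ref{lemma:U-basics} and Lemma \ref{lemma:metric_variations}. The proof has three steps: show that $\Psi$ is well defined, show that $\Psi(S)\subset S$, and show that $\Psi$ is a contraction in $\norm{\cdot}_S$.

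\emph{Well-definedness.} Let $v\in S$. By Remark \ref{rmk:half-energy-uniform} the half-energy of $v$ is comparable for all metrics in $\UU$, so \eqref{ass:Ev-apriori} holds with $\Eupp=CE_0$. Proposition \ref{prop:step1} then gives $g_v\in C^1([0,T],\UU)$ provided $T\le \de/\Eupp$. It also gives the velocity bound $\norm{\pt g_v}_{H^{m+1}}\le CE_0=:M$ from \eqref{est:ptg_apriori_1}. Since $\norm{v_{g_0}}_{X_m}\le\La$, \eqref{est:harm-ext-timedep-gen} yields $\norm{v_{g_v(t)}}_{H^m(\Si,g_v(t))}\le C\La$. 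Hence \eqref{ass:const_lemma_eps} holds with constants depending only on the data, and Proposition \ref{prop:ep_map_solve} provides $u=u_{\eps,g_v,v}\in X_m(T)$ with
\[
\norm{u_{g_v}}_{L^\infty H^m}^2\le e^{CT}\La_0^2 .
\]
The constant $C$ here depends on $\La$, which is harmless because $\La$ is fixed before $T$ is chosen.

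\emph{Invariance of $S$.} Converting back to $g_0$ via Lemma \ref{lemma:metric_variations}, we get $\norm{u_{g_0}}_{L^\infty H^m}\le C'\La_0$, with $C'$ independent of $C_0$ once $T\le1$, say. Moreover $\pt u_{g_0}=(Q\pnu u_{g_v})_{g_0}$ is bounded in $H^1$ by $C'(1+\La^{m})\La_0$ through the commutator estimate \eqref{est:Hj-velocity}. This is the delicate point, since this bound grows with $\La=C_0\La_0$. We handle it as follows. We write $\pt u(t)=\pt u(0)+\int_0^t\pt^2u$, where $\pt u(0)=Q_{\eps,u_0}\pnu (u_0)_{g_0}$ is controlled by $C''\La_0$ with $C''$ independent of $C_0$. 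The time-integral term is $O(T)$ times a constant depending on $\La$. This uses that $\pt u$ is Lipschitz in time in a weaker norm, which we will use through interpolation with the $H^m$ bound, since $m\ge3$. Choosing $C_0$ larger than these $C_0$-independent constants and then $T$ small gives $\norm{u_{g_0}}_{X_m}\le\La$.

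The remaining conditions for membership in $S$ are short. The energy bound $E_\hf(u(t),g_0)\le 2E_0$ follows from the energy decay \eqref{est:energy-decay_ep_v_flow}, combined with Gronwall and \eqref{est:g_change_E}, for $T$ small. Finally, $u(t)(\pSi)\subset N_\eta$ follows because $\norm{u(t)-u_0}_{L^\infty}\le Ct\,\sup\norm{\pt u}_{H^1(\Si)}$ is small via the embedding $H^2\hookrightarrow C^0$ of $\int_0^t\pt u_{g_0}$ in time (using $L^\infty_tH^1$ of $\pt u$ together with interpolation with $H^m$), and this distance is $<d_0$ for $T$ small.

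\emph{Contraction.} For $v_1,v_2\in S$, Lemma \ref{lemma:ep_Lip_est} applied with $g_i=g_{v_i}$ gives
\[
\norm{\Psi v_1-\Psi v_2}_S^2\le C\eps^{-1}T\sup\bigl(\dist^2_{H^3}(g_{v_1},g_{v_2})+\norm{(v_1-v_2)_{g_0}}_{H^1}^2\bigr),
\]
after converting the $g_1$-norms to $g_0$-norms via Lemma \ref{lemma:U-basics}. The estimate \eqref{est:g_diff_apriori} bounds $\dist_{H^{m+1}}(g_{v_1},g_{v_2})\le CT\norm{(v_1-v_2)_{g_0}}_{L^\infty H^1}$. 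Combining the two, the right-hand side is at most $C\eps^{-1}T\norm{v_1-v_2}_S^2$, which is $\le\frac14\norm{v_1-v_2}_S^2$ for $T$ small depending on $\eps$.

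The main obstacle is the $X_m$ bound on the time derivative in the invariance step. There the a priori constant depends on $\La$ itself, so we must split off the initial-time value $\pt u(0)$, which only depends on $u_0$, from a small-in-$T$ remainder in order to close the self-map property.
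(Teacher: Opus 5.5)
Your argument is correct, and it matches the paper in the well-definedness, energy, $N_\eta$ and contraction steps. The difference is in how you close the bound on $\norm{\pt u_{g_0}}_{L^\infty H^1}$.

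\textbf{The paper's route.} The paper never passes through \eqref{est:Hj-velocity}; it uses the product estimate \eqref{est:proj_for_CMT} instead. The factor $\pnu u_g\in H^{3/2}(\pSi)$ is already controlled by the $\La$-independent bound \eqref{est:u_H^m-bound}, which is where $m\ge 3$ enters. So the multiplier $P_v$ is only needed in $H^{1/2}(\pSi)$, and $\norm{P_v}_{H^{1/2}(\pSi)}\le C(1+E_\hf(v,g)^{1/2})$ is controlled by the energy constraint built into $S$. This gives \eqref{est:pt_for_CMT}, i.e. $\norm{\pt u_{g_0}}_{L^\infty H^1}\le C(1+E_0^{1/2})\La_0$ at every time. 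Hence $C_0=C_0(E_0,\iota_0)$ can be fixed at once. The energy bound in $S$ thus does double duty: it controls the metric through Proposition \ref{prop:step1}, and it closes the velocity bound.

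\textbf{Your route.} Your splitting $\pt u(t)=\pt u(0)+(\pt u(t)-\pt u(0))$ also closes. The term $\pt u(0)$ depends only on $u_0$. The remainder is bounded in $H^{m-3/2}(\pSi)$ by a $\La$-dependent constant. It is also $O(t)$ in a weaker norm: differentiate $Q_{\eps,v}\pnu u_g$ in time, using $\pt v_{g_0}, \pt u_{g_0}\in L^\infty H^1$ and Lemma \ref{lemma:metric_variations}. Interpolation then makes it $\le K(\La)t^{\theta}$ in $H^{1/2}(\pSi)$, which is absorbed by choosing $T$ after $C_0$. This is a generic ``freeze at $t=0$'' device. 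It costs an extra interpolation argument that you only sketch. The paper's observation is structural and shorter.

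\textbf{Two small corrections.}
\begin{enumerate}
\item The sup-norm bound should read $\norm{u(t)-u_0}_{C^0(\pSi)}\le Ct^{1/2}\norm{u}_{X_m}$, as in \eqref{est:C0-norm-bdry}, not $Ct\sup\norm{\pt u}_{H^1}$. Once $u\in S$ is established, this is just Remark \ref{rmk:v-in-Nde}.
\item Lemma \ref{lemma:ep_Lip_est} only controls the gradient part of $\norm{w_{g_1}}_{H^1}$. You still need the paper's extra step bounding $\norm{w_{g_1}}_{L^2(\Si)}$ via the trace inequality, $w(0)=0$ and $\int_0^t\norm{\pt w}_{L^2(\pSi)}$.
\end{enumerate}
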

For the proof of this lemma, and in the next section, we will use that the spaces $X_m$ have the following useful properties.
\begin{lemma}\label{lemma:Xm}
    Let $g_0\in \MMiota^{m+1}$ for some $m\geq 2$ and $\iota_0>0$,  and let $X_m(T,g_0)$ be as defined in \eqref{def:Xm}. Then the harmonic extension $v\mapsto v_{g_0}$ is a compact operator from $(X_m(T),\norm{\cdot}_{(X_m(T),g_0)})$ to $C^{0}([0,T];H^{m-1}(\Si,g_0))$ and there exists a constant $C=C(\iota_0)$ so that
    \beq\label{est:C0-norm-bdry}
        \norm{v(t)-v(s)}_{C^0(\p \Si)}\leq C \norm{v}_{X_m}\abs{t-s}^\half
    \eeq
     for any $v\in X_m(T,g_0)$ and for all $t,s\in [0,T]$.
\end{lemma}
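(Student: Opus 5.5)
The plan is an Aubin--Lions type compactness argument for the first claim and a one-dimensional Sobolev interpolation on $\pSi$ for the Hölder estimate. Throughout, write $V=v_{g_0}$. By definition of $X_m(T,g_0)$ we have $V\in L^\infty([0,T];H^m(\Si,g_0))$ and $\pt V\in L^\infty([0,T];H^1(\Si,g_0))$, both bounded by $\norm{v}_{X_m}$. Since $g_0$ is fixed, the harmonic extension commutes with $\pt$, so $\pt V=(\pt v)_{g_0}$.

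\emph{Step 1: Lipschitz continuity in $H^1$.} The bound on $\pt V$ gives
\[
\norm{V(t)-V(s)}_{H^1(\Si,g_0)}\leq \norm{v}_{X_m}\abs{t-s}.
\]
Hence $V\in C^{0,1}([0,T];H^1(\Si,g_0))$, and in particular $V$ has a continuous representative.

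\emph{Step 2: continuity and compactness in $H^{m-1}$.} Interpolating between $H^1$ and $H^m$ gives, for $\theta=\frac{m-2}{m-1}$,
\[
\norm{V(t)-V(s)}_{H^{m-1}}\leq C\norm{V(t)-V(s)}_{H^1}^{1-\theta}\norm{V(t)-V(s)}_{H^m}^{\theta}\leq C\norm{v}_{X_m}\abs{t-s}^{1/(m-1)}.
\]
So the image of a bounded set in $X_m$ is equicontinuous in $C^0([0,T];H^{m-1})$. For each fixed $t$ this image is bounded in $H^m$, hence precompact in $H^{m-1}$ by Rellich's theorem; here we use that $V(t)$ is defined for every $t$ by Step 1, and that the $H^m$ bound persists for every $t$ by weak lower semicontinuity. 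Arzelà--Ascoli in the Banach space $H^{m-1}(\Si,g_0)$ then yields compactness of $v\mapsto v_{g_0}$ into $C^0([0,T];H^{m-1}(\Si,g_0))$. The constants depend only on $\iota_0$ by Remark \ref{rmk:compact-moduli}.

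\emph{Step 3: the Hölder estimate \eqref{est:C0-norm-bdry}.} Put $w=v(t)-v(s)$ on the one-dimensional boundary $\pSi$, whose total length is bounded in terms of $\iota_0$. By the trace theorem,
\[
\norm{w}_{L^2(\pSi)}\leq C\norm{V(t)-V(s)}_{H^1(\Si)}\leq C\norm{v}_{X_m}\abs{t-s}
\]
and
\[
\norm{w}_{H^1(\pSi)}\leq C\norm{V(t)-V(s)}_{H^2(\Si)}\leq C\norm{v}_{X_m},
\]
where the second uses $m\geq 2$. The one-dimensional Gagliardo--Nirenberg inequality $\norm{w}_{L^\infty}\leq C\norm{w}_{L^2}^{1/2}\norm{w}_{H^1}^{1/2}$ then gives $\norm{w}_{C^0(\pSi)}\leq C\norm{v}_{X_m}\abs{t-s}^{1/2}$. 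If $T\geq 1$ we may simply restrict to $\abs{t-s}\leq 1$; for $\abs{t-s}>1$ the bound $\norm{w}_{C^0}\leq C\norm{w}_{H^1}\leq C\norm{v}_{X_m}$ suffices.

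The main subtlety is not really an obstacle: the only points needing care are the pointwise-in-time meaning of $V(t)$ and the uniformity of the constants in $\iota_0$. Both are handled above.
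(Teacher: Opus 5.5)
Your proof is correct and follows essentially the same route as the paper: you combine Lipschitz-in-time control in $H^1$ with the uniform $H^m$ bound via interpolation, and get compactness from an Arzel\`a--Ascoli type argument. The paper instead uses the compact embedding $W^{1,\infty}([0,T];H^1)\hookrightarrow C^0([0,T];L^2)$ and then interpolates between $L^2$ and $H^m$, whereas you apply Arzel\`a--Ascoli directly in $H^{m-1}$.

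For the H\"older bound, the paper interpolates in the interior, $\norm{V(t)-V(s)}_{H^{3/2}(\Si)}^2\leq C\norm{V(t)-V(s)}_{H^2(\Si)}\norm{V(t)-V(s)}_{H^1(\Si)}$, and then takes the trace $H^{3/2}(\Si)\to C^0(\pSi)$. You take traces first and interpolate on the one-dimensional boundary, which is equivalent. Your case distinction for $\abs{t-s}>1$ is unnecessary, since your Gagliardo--Nirenberg bound already holds for all $t,s$.
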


\begin{rmk}\label{rmk:v-in-Nde}
    We note in particular that if $v\in X_m(T,g_0)$ is so that $v_0:=v(t=0)$ maps $\p\Si$ into $N_\eta$ and if $\La$ is so that $\norm{v}_{X_m}\leq \La$, then we are guaranteed that $v(t)(\p \Si) \subset N_\eta$ at least for times $t\in [0,T_2]$, where $T_2:= c_0 \La^{-2} \dist_{\R^n}^2(v_0(\p\Si),\p N_\eta)$ for a constant $c_0=c_0(\iota_0)>0$.
\end{rmk}

\begin{proof}[Proof of Lemma \ref{lemma:Xm}]
    The first claim follows by combining the fact that  $W^{1,\infty}([0,T];H^1(\Si,g_0))$ embeds compactly into $ C^0([0,T];L^2(\Si,g_0))$ with the interpolation inequality
    \begin{equation*}
    	\norm{f}_{H^{m-1}(\Si,g_0)}^m \leq C\norm{f}_{L^2(\Si,g_0)}\norm{f}_{H^m(\Si,g_0)}^{m-1}, \quad C=C(\iota_0,m)
    \end{equation*}
    which follows inductively from $\norm{f}_{H^k}^2\leq C \norm{f}_{H^{k-1}}\norm{f}_{H^{k+1}}$, compare also \eqref{app:Hs-interpol}.
  
    To obtain the second claim it suffices to note that the trace operator is continuous from $H^\frac{3}{2}(\Si)$ into $C^0(\p \Si)$ and that for any $0\leq s\leq t\leq T$, we can use \eqref{app:Hs-interpol} to bound 
    \begin{align*}
    	\norm{v(s) - v(t)}_{H^\frac{3}{2}(\Si,g_0)}^2 &\leq C\norm{v(s) - v(t)}_{H^2(\Si,g_0)}\norm{v(t) - v(s)}_{H^1(\Si,g_0)}\\
    	&\leq C\norm{v}_{X_2}\int_s^t\norm{\p_{t} v(t')}_{H^1(\Si,g_0)} \dd t'\leq C\norm{v}_{X_m}^2 \abs{s-t}.
    \end{align*}
\end{proof}
For the proof of Lemma \ref{lemma:FPT} we will furthermore use that
\begin{equation}\label{est:proj_for_CMT}
    \norm{P_{v}\p_\nu u_g}_{H^{1/2}(\pSi)} \leq C(1 + E_\hf(v,g)^\hf)\norm{u_g}_{H^3(\Si)}, 
\end{equation}
and hence that 
\beqa\label{est:pt_for_CMT}
    \norm{(\pt u)_g}_{H^{1}(\Si)} &\leq  C\norm{\pt u}_{H^{1/2}(\pSi)}\\
    &\leq C\eps  \norm{\pnu u_g}_{H^{1/2}(\pSi)}+ C \norm{P_{v}\p_\nu u_g}_{H^{1/2}(\pSi)}\\
    & \leq C(1 + E_\hf(v,g)^\hf)\norm{u_g}_{H^3(\Si)}
\eeqa
along any solution $u=\Psi(v)$ of \eqref{eq:ep_v_map} as considered in Lemma \ref{lemma:FPT}.

We note that to obtain  \eqref{est:proj_for_CMT} we can use that if $f_1\in H^{\half}(\pSi)$ and $f_2\in H^{3/2}(\pSi)$ then the product $f_1 f_2$ is of course in $H^{1/2}(\pSi)$ with 
$$\norm{f_1f_2}_{H^{1/2}(\pSi)}\leq C \norm{f_1}_{H^{1/2}(\pSi)}\norm{f_2}_{H^{3/2}(\pSi)}$$
and that $\norm{P_v}_{H^{1/2}(\pSi)}\leq C(1+\norm{v}_{H^{1/2}(\pSi)})$ since $x\mapsto P_{x}$ is smooth, hence allowing us to bound
\beqas
    \norm{P_{v}\pnu u_g}_{H^{1/2}(\p\Si)} & \leq C(1+\norm{v}_{H^{1/2}(\pSi)}) \norm{\p_{\nu}u_g}_{H^{3/2}(\p\Si)} \\
    & \leq C(1 + E_\half(v,g)^{\half}) \norm{u_g}_{H^3(\Si)}.
\eeqas

\begin{proof}[Proof of Lemma \ref{lemma:FPT}]
    We first recall from Remark \ref{rmk:v-in-Nde} that if we choose
    \beqs
        T\le c_0\La^{-2}d_0^2,
    \eeqs
    then $v([0,T]\times\pSi)\subset N_{\eta}$ for all $v\in S$. 
    
    We then note that Proposition \ref{prop:step1} ensures that if $T>0$ is chosen to be no more than $\delta/(4E_0)$, for $\de$ as in that proposition, then the curve of metrics $g=g_v$ exists and satisfies $g_v(t)\in\UU$ for all $t\in [0,T]$ as well as 
    \beq\label{est:FPT_g}
        \norm{\pt g}_{H^{m+1}(\Si,g)}\le CE_0= :M
    \eeq
    for a constant $M$ that only depends on $E_0$ and $\iota_0$. 
    
    We furthermore recall that Lemma \ref{lemma:metric_variations} ensures that for any $f\in H^{m-\hf}(\pSi)$,
    \begin{equation}\label{est:harm-ext-timedep} 
        \norm{f_{g(t_0)}}_{H^j(\Si,g(t_0))}^2 \le e^{C\abs{t_1-t_0}M}\norm{f_{g(t_1)}}_{H^j(\Si,g(t_1))}^2 \le 2\norm{f_{g(t_1)}}_{H^j(\Si,g(t_1))}^2
    \end{equation} 
    for all $0\le t_{0,1}\le T$ and all $j=1,\ldots, m$ where the last inequality holds after possibly reducing $\de>0$ and ensuring that $T\le\delta/M$. In particular, we have
    \beq\label{est:v_H^m-bound}
        \norm{(v(t))_{g(t)}}_{H^m(\Si,g(t))}\le 2 \La \text{ for all } v\in S \text{ and all } t\in [0,T].
    \eeq
    
    Hence, for such $0<T<\de/M$ and any $v\in S$ we can apply Proposition \ref{prop:ep_map_solve}, with $\La$ replaced by $2\La$, to obtain a unique weak solution $u=u_{\ep,g,v}$ of the linear equation \eqref{eq:ep_v_map} which, after possibly further reducing $\de$, satisfies
    \beqa\label{est:u_H^m-bound}
        \norm{(u(t))_{g_0}}_{H^m(\Si,g_0)} &\leq 2\norm{(u(t))_{g(t)}}_{H^m(\Si,g(t))} \le 2e^{CT}\norm{(u_0)_{g_0}}_{H^m(\Si,g_0)}\\
        &\le 4\norm{(u_0)_{g_0}}_{H^m(\Si,g_0)}\le 4\La_0
    \eeqa
    for all $0<t\le T$.

    From \eqref{est:harm-ext-timedep}, \eqref{est:pt_for_CMT} and \eqref{est:u_H^m-bound} we furthermore obtain
    \beq\label{est:u_t-bound}
        \begin{split}
            \norm{\p_t(u(t))_{g_0}}_{H^1(\Si,g_0)}&=\norm{(\p_t u(t))_{g_0}}_{H^1(\Si,g_0)} \le 2\norm{(\p_t u(t))_{g(t)}}_{H^1(\Si,g(t))}\\
            &
            \le C(1+E_\hf(v(t),g(t))^\hf)\norm{u_{g(t)}}_{H^3(\Si,g(t))}\\
            &\le C(1+E_0^\hf)\La_0.
        \end{split}
    \eeq 
    Combined with \eqref{est:u_H^m-bound}, we thus obtain 
    \beqs
        \norm{u}_{X_m(T,g_0)}\le C(1+E_0^\half)\La_0\leq C_0\La_0=\La,
    \eeqs
    if $C_0=C_0(E_0,\iota_0)>0$ is chosen sufficiently large.
    
    To obtain the desired bound on the energy we can then apply the estimate \eqref{est:energy-decay_ep_v_flow} on the evolution of the energy obtained above which, for sufficiently small $\delta>0$ and $T\le\delta/M$, implies the bound
    \beqs
        E_{\hf}(u(t),g_0)\leq e^{CMT} E_{\hf}(u(t),g(t))\le e^{2CMT}E_{\hf}(u_0,g_0)\le 2E_0.
    \eeqs
    This completes the proof that $u\in S$, which as already observed also ensures that $u([0,T]\times \pSi))\subset N_\eta$.
    
    It remains to show that $\Psi\colon S\to S$ is a contraction with respect to the norm $\norm{\cdot}_{S}$. So we let $v_{1,2}\in S$, and let $g_{1,2}=g_{v_{1,2}}$ as well as $u_{1,2}=\Psi(v_{1,2})$ be the corresponding solutions of \eqref{eq:metric_lemma} and \eqref{eq:ep_v_map}.
    
    Since $w:=u_2-u_1$ is so that $w(0)=0$, and since we can assume that $T\leq 1$, we can bound 
    \begin{align*}
        \norm{w_{g_1}}^2_{L^2(\Si,g_1)}\le C\norm{w}^2_{L^2(\p\Si,g_1)} \le C\bigg(\int_0^t\norm{\p_tw}_{L^2(\p\Si,g_1)}\dd t\bigg)^2 \le C\norm{\p_tw}_{L^2([0,T];L^2(\p\Si,g_1))}^2
    \end{align*}
    at any $0\le t\le T$. As the curves of metrics $g_{1,2}$ induce uniformly equivalent Sobolev norms and harmonic extensions as recalled above we can hence bound 
    \beqas
        \norm{w}^2_S&=\sup\limits_{t \in [0,T]}\norm{w_{g_0}}^2_{H^1(\Si,g_0)} + \norm{\p_t w}^2_{L^2([0,T]\times\p\Si,g_0)}\\
        &\le 2\sup\limits_{t \in [0,T]}\norm{w_{g_1}}^2_{H^1(\Si,g_1)} + 2\norm{\p_t w}^2_{L^2([0,T]\times\p\Si,g_1)}\\
        &\leq 2 \sup\limits_{t \in [0,T]}\norm{\na_{g_1}(u_2-u_1)_{g_1}}^2_{L^2(\Si,g_1)} +C \norm{\p_t w}^2_{L^2([0,T]\times\p\Si,g_1)}\\
        &\leq C\ep^{-1}T\sup\limits_{[0,T]} \left( \dist ^2_{H^{3}}(g_1,g_2)+\norm{(v_2-v_1)_{{g_1}}}^2_{H^1(\Si,g_1)}\right)
    \eeqas
    where the last step follows from the Lipschitz estimate proven in Lemma \ref{lemma:ep_Lip_est}.

    We note that  \eqref{est:g_diff_apriori} implies 
    \begin{equation}\label{est:contraction_term_2}
        \dist_{H^3}(g_1(t),g_2(t)) \leq CT \sup\limits_{0 \leq t \leq T} \norm{(v_2(t)-v_1(t))_{g_0}}_{H^1(\Si,g_0)} \leq CT \norm{v_2-v_1}_{S},
    \end{equation}
    while \eqref{est:harm-ext-timedep} ensures that $\sup\limits_{[0,T]}\norm{(v_2-v_1)_{{g_1}}}^2_{H^1(\Si,g_1)} \leq 2\norm{v_2-v_1}_{S}^2$. Combined we hence deduce that 
    \beqas
        \norm{u_2-u_1}^2_S & \leq C\eps^{-1} T \norm{v_2-v_1}_S^2\leq \half \norm{v_2-v_1}_S^2
    \eeqas
    where the last inequality holds after possibly further reducing $T>0$ to ensure that $C\eps^{-1}T<\half$ for the constant $C=C(\iota_0, E_0, \La_0)$ obtained in the above argument. 
\end{proof}

Since $(S,\norm{\cdot}_{S})$ is a complete metric space, Lemma \ref{lemma:FPT} ensures the existence of a solution of the regularised system \eqref{eq:flow-ep_coupled} for any $\eps>0$ at least on a short time interval as made precise in the following corollary.

\begin{cor}\label{cor:new-FPT}
For every $\eps>0$ and every $\La_0, E_0, \iota_0,\eta>0$ there exists a number $T_\eps>0$ that only depends on $\eps>0$ and these four numbers  $\La_0, E_0, \iota_0, \eta$ so that the solution of \eqref{eq:flow-ep_coupled} is guaranteed to exist at least on the interval $[0,T_\eps]$ whenever we consider initial metrics $\hat g_0\in \MMiota^{m+1}$ and initial maps $\hat u_0$ with 
 $$E(\hat u_0,\hat g_0) \leq E_0, \quad \norm{(\hat u_0)_{\hat g_0}}_{H^m(\Si,\hat{g}_0)}\leq 2\La_0 \text{ and }\dist(\hat u_0(\pSi),N)\leq \thalf \eta.$$
 \end{cor}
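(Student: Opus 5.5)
The plan is to apply Lemma \ref{lemma:FPT} with the initial metric $g_0:=\hat g_0$ and map $u_0:=\hat u_0$, and to check that every constant there can be chosen depending only on $\eps,\La_0,E_0,\iota_0,\eta$. The resulting fixed point of $\Psi$, together with its metric $g_{\hat u}$, then solves \eqref{eq:flow-ep_coupled}.

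\textbf{Step 1: translate the hypotheses.} Set $d_0:=\tfrac12\eta$. Since $\dist(\hat u_0(\pSi),N)\leq\tfrac12\eta$ and $\partial N_\eta=\{y:\dist(y,N)=\eta\}$, the triangle inequality gives $\dist(\hat u_0(\pSi),\partial N_\eta)\geq \tfrac12\eta$. Applying the lemma with, say, $d_0=\tfrac14\eta$ gives the strict inequality required there. Lemma \ref{lemma:FPT} also requires $g_0\in\MMtwoiota^{m+1}$, whereas here only $\inj(\hat g_0)\geq\iota_0$ is assumed. I handle this by applying the lemma with $\iota_0/2$ in place of $\iota_0$, so that $\inj(\hat g_0)\geq 2(\iota_0/2)$. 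The bounds $\norm{(\hat u_0)_{\hat g_0}}_{H^m}\leq 2\La_0$ and $E_\half(\hat u_0,\hat g_0)\leq E_0$ are used with $2\La_0$ and $E_0$ in the lemma.

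\textbf{Step 2: nonemptiness of $S$ and the fixed point.} To run Banach's theorem I need $S\neq\emptyset$ and complete in $\norm{\cdot}_S$. For nonemptiness I take the constant curve $v(t)\equiv\hat u_0$. It satisfies $v(0)=\hat u_0$ and $\pt v_{g_0}=0$, hence $\norm{v}_{X_m(T,g_0)}=\norm{(\hat u_0)_{g_0}}_{H^m}\leq 2\La_0\leq C_0\cdot 2\La_0$ since $C_0\geq 2$, and its energy stays at most $E_0\leq 2E_0$.

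For completeness, the constraints in $S$ are closed under $\norm{\cdot}_S$-limits. The bound on $\norm{v_{g_0}}_{X_m}$ passes to weak-$*$ limits by lower semicontinuity, and the energy bound passes to the limit because $\norm{\cdot}_S$ controls $H^1$. This closedness is the step needing the most care, but it is routine.

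Lemma \ref{lemma:FPT} then yields $T=T(\eps,m,\iota_0/2,\eta/4,E_0,2\La_0)>0$ and a unique $u\in S$ with $\Psi(u)=u$. Hence $u=u_{\eps,g_u,u}$ solves $\pt u=-(\eps+P_u)\p_{\nu_{g_u}}u_{g_u}$, and $g_u$ solves $\pt g_u=\half P^H_{g_u}(k(u_{g_u},g_u))$. This is exactly \eqref{eq:flow-ep_coupled}.

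\textbf{Step 3: the time depends only on the stated data.} I set $T_\eps$ to be this $T$. The only point to stress is that in the proof of Lemma \ref{lemma:FPT} every restriction on $T$ is expressed in terms of $\eps$, $\La_0$, $E_0$, $\iota_0$, $d_0$ and $\delta=\delta(\iota_0)$, with $m$ fixed. These restrictions are $T\leq c_0\La^{-2}d_0^2$, $T\leq\delta/(4E_0)$, $T\leq\delta/M$ with $M=CE_0$, and $C\eps^{-1}T<\tfrac12$. None of them depends on the particular $\hat g_0$ or $\hat u_0$. Uniqueness of the solution on $[0,T_\eps]$, if needed, follows from uniqueness of the fixed point in $S$.
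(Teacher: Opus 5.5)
Your proposal is correct and follows the paper's argument: apply Lemma \ref{lemma:FPT} with $g_0=\hat g_0$ and $u_0=\hat u_0$, then use completeness of $(S,\norm{\cdot}_S)$ and Banach's fixed point theorem, with $T$ depending only on the data because of the order of quantifiers in that lemma. The adjustments you make ($\iota_0/2$ for the injectivity radius, $d_0=\eta/4$, $2\La_0$ in place of $\La_0$) and your checks that $S$ is nonempty and closed fill in details the paper leaves implicit.
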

 
Thanks to the $\eps$-independent a priori bounds on the evolution of the map and the metric obtained in Proposition \ref{prop:step1} and Lemma \ref{lemma:ep_Hm_apriori}, and the fact that, similar to \eqref{eq:energy-decay}, along solutions $(u_\eps,g_\eps)$ of \eqref{eq:flow-ep_coupled}, the energy decreases according to 
\beq\label{est:energy_decreas_eps}
    \frac{\dd}{\dd t} E_{g_\eps}(u_\eps)=-\ep\norm{\p_{\nu_{g_\eps}}u_\eps}_{L^2(\p\Si,g_\eps)}^2-\norm{P_{u_\eps}\p_{\nu_g}u_\eps}_{L^2(\p\Si)}^2-\norm{\pt g_\eps}_{L^2(\Si,g_\eps)}^2
\eeq
we can now apply the above corollary iteratively to obtain the uniform control on all these approximate solutions claimed in Proposition \ref{prop:ep_system}.

\begin{proof}[Proof of Proposition \ref{prop:ep_system}]
    Let $u_0$ and $g_0$ be as in the Proposition and let $T>0$ be an ($\eps$-independent) number that is fixed below. 
   
    Given any $\eps>0$, we can use that Corollary \ref{cor:new-FPT}, and the fact that $\inj(g)\geq \iota_0$ for 
   all $g\in  \UU$, $\UU=\UU_{g_0,r_0}^{m+1}$ as in Lemma \ref{lemma:U-basics},
   allows us to extend the solution $(u_\eps,g_\eps)$ of \eqref{eq:flow-ep_coupled} to this initial map and metric by a fixed amount of time $T_\eps>0$ for as long as 
    \beq\label{stuff-we-need}
        \norm{(u_\eps)_{g_\eps}}_{H^m(\Si,g_\eps)}\leq 2\La_0, \quad \dist(u_{\ep}(t)(\pSi), N)\leq \half\eta \text{ and } g_\eps(t)\in \UU.
    \eeq
    As $\norm{(u_0)_{g_0}}_{H^m(\Si,g_0)}\leq \La_0$, $\dist(u_{0}(\pSi), N)=0$ and as  $g_0$ is the centre of the ball $\UU=\UU_{g_0,r_0}^{m+1}$, these estimates are of course satisfied (with strict inequalities) at $t=0$. In order to prove that this solution $(u_\eps,g_\eps)$ indeed exists on the ($\eps$-independent) interval $[0,T]$, it hence suffices to exclude the possibility that the metric reaches $\partial \UU$ or that one of the above estimates on $u_\ep$ holds with equality at some time before $T$.

    For the former, we can use that the energy decreases along the flow, compare \eqref{est:energy_decreas_eps}, and apply \eqref{est:velocity-by-energy} to obtain that the estimate
    \beqa\label{est:H-is-nice}
        \norm{\pt g_\eps}_{H^{m+1}(\Si,g_\eps)}\leq C E_0 \text{ for some }C=C(\iota_0)
    \eeqa
    is valid for as long as $g_\eps$ remains in $\UU$.  For $T$ chosen so that $T\leq r_0 (CE_0)^{-1}$, this hence ensures $g_\eps$ cannot reach  the boundary of $\UU=\UU_{g_0,r_0}^{m+1} $ before such a time $T>0$.
    
    Combining these $\eps$-independent  bounds on $\norm{\pt g}_{H^{m+1}(\Si,g)}$ with the $\eps$-independent estimates on the evolution of the $H^m$-norms of $(u_\eps)_{g_\eps}$ obtained in Proposition \ref{prop:ep_map_solve} furthermore allows us to deduce that for suitably chosen  $T=T(E_0,\iota_0,\La_0)>0$ we can bound  
    \beqas
        \norm{(u_\eps)_{g_\eps}}_{L^\infty([0,t];H^m(\Si,g_\eps))}^2 \leq e^{Ct}\norm{(u_0)_{g_0}}_{H^m(\Si,g_0)}^2\leq \tfrac32 \La_0
    \eeqas
    on any interval $[0,t]$, $t\leq T$, on which $(u_\eps,g_\eps)$ exists. 
    
    Combining the resulting uniform $X_m(t,g_\eps)$-bounds with Lemma \ref{lemma:Xm} then allows us to deduce that for suitably small $T=T(E_0,\iota_0,\La_0)>0$ also 
    \begin{equation*}
        \dist(u_\eps(\pSi),N)\leq Ct^{\half} \norm{u_\eps}_{X_m(t,g_\eps)}\leq \tfrac14 \eta
    \end{equation*}
    for any such $t$. 

    For this $\eps$-independent $T=T(E_0,\iota_0,\La_0)$ we hence conclude that none of the solutions $(u_\eps,g_\eps)$, $\eps\in (0,\half)$ can violate \eqref{stuff-we-need} at any time $t\leq T$, and hence that all these solutions are defined and so that \eqref{stuff-we-need} holds on the whole interval $[0,T]$ as claimed in the proposition. 

    We finally remark that the claimed $C^{1,1}$ estimate for the metric component is an immediate consequence of the a priori estimates \eqref{est:ptg_apriori_1} and \eqref{est:ptg_apriori_2} obtained in Proposition \ref{prop:step1} as the estimate $E_{\half}(u_\eps(t),g_0)\leq CE_{\half}(u_\eps(t),g_\eps(t))\leq C E_0$, $C=C(\iota_0)$, remains valid for as long as $g_\eps$ is in $\UU$. 
\end{proof}

\subsection{Local existence of solutions of the original flow \eqref{eq:flow}}\label{subsec:Hm-loc-exist}
We are now finally in a position to establish the existence of solutions of our original flow \eqref{eq:flow} for sufficiently regular initial data based on the $\ep$-independent control on the solutions $(u_\eps,g_\eps)$ of the regularised flow \eqref{eq:flow-ep_coupled} that we have just established. 
\begin{proof}[Proof of Proposition \ref{prop:Hm_plateau_existence}]
    Let $u_0,g_0$ be as in the proposition and let $(u_\eps,g_\eps)$, $\eps\in (0,\half]$ be the corresponding solutions of the regularised system \eqref{eq:flow-ep_coupled}. Proposition \ref{prop:ep_system} ensures that these solutions all  exist at least on the $\eps$-independent interval $[0,T]$, for $T>0$ as in that proposition, that the metric components are uniformly bounded in $C^{1,1}([0,T];\MM^{m+1})$ and contained in $\UU$, which, combined with \eqref{est:Hm-uniform-prop}, ensures that  the maps $U_\ep := (u_\ep)_{g_\ep}$ are uniformly bounded in $L^\infty([0,T];H^m(\Si,g_0))$. As $u_\eps$ solves \eqref{eq:flow-ep_coupled}, these estimates of course also provide uniform bounds on $\pt (u_\eps)_{g_0}$ in $L^\infty([0,T];H^1(\Si,g_0))$, so $u_\eps$ is bounded uniformly in the space $(X_m(T),\norm{\cdot}_{X_m(T,g_0)})$, which embeds compactly into $C^0([0,T];H^{m-1}(\Si,g_0))$, compare Lemma \ref{lemma:Xm}.
     
    We can hence choose $\eps_n\to 0$ so that $g_{\ep_n}$ converges strongly in $C^1([0,T];\MM^{m})$ to a limiting curve of metrics $g\in C^1([0,T], \UU)$ while the maps $u_{\eps_n}$ converge weak-$*$ in $ X_m(T,g_0)$ to a limit $u\in  X_m(T,g_0)$ which is furthermore so that  harmonic extensions $U_{\eps_n}=(u_{\eps_n})_{g_{\eps_n}}$ converge strongly in $C^0([0,T];H^{m-1}(\Si))$ to $u_g$.  
    This allows us to pass to the limit in \eqref{eq:flow-ep_coupled} to conclude that $(u,g)$ solves the original equation \eqref{eq:flow} on this interval $[0,T]$.
    
    We stress that the length of the interval on which this argument applies does not depend on the specific choice of $u_0$ and $g_0$, but only on upper bounds on the initial energy and the initial $H^m(\Si)$ norm, and on a lower bound for the injectivity radius. As the energy is decreasing along the flow, 
we can hence apply this argument iteratively to establish that the solution indeed exists for as long as $\inj(\Si,g)\nrightarrow 0$ and $\norm{u_g}_{H^m(\Si,g)}\nrightarrow \infty$. 
\end{proof}
\section{Higher Regularity and long-time existence of solutions}\label{sect:higher_reg}
In the previous section, we constructed a solution $(u,g)$ to the flow \eqref{eq:flow} which remains well controlled until either $\inj(\Si,g)\rightarrow 0$ or $\norm{u_g}_{H^m(\Si,g)}\rightarrow \infty$.

In this section, we will complete the proof of our first main theorem, Theorem \ref{thm:1}. In pursuit of this, we establish bounds on the $H^m(\Si,g)$ norm of $u_g$ which remain valid for as long both $\inj(g(t))$ and the radius of balls on which a certain (small) amount of energy is concentrated remain bounded away from zero.

To this end, we will in particular use the following lemma, which extends Proposition 3.3 from \cite{S} to maps from general surfaces.

\begin{lemma}\label{lemma:loc_reg}
    For any $\iota_0>0$, there are constants $\de_1>0$,  $c_1\in (0,\half \iota_0)$ and $C>0$ such that the following holds true for any metric $g \in \MM_{\iota_0}$ and any map $u \in H^1(\p\Si;N)$. If $x_0 \in \p\Si$ and $0 < r < c_1$ are so that
    \begin{equation}\label{ass:E_small_f}
        E_g(u_g;B_{2r}^g(x_0)) \leq \de_1
    \end{equation}
    then we can bound 
    \beqa\label{est:H1_boundary}
        \norm{\nabla u_g}_{L^2(\p\Si\cap B_{r}^g(x_0),g)} &\leq C\norm{P_u \p_\nu u_g}_{L^2(\p\Si\cap B_{2r}^g(x_0),g)} \\
        &+ C r^{-\hf} \norm{\nabla u_g}_{L^2(\Si\cap B_{2r}^g(x_0),g)}+ Cr^{-\frac{3}{2}}\norm{u_g}_{L^2(\Si\cap B_{2r}^g(x_0),g)}.
    \eeqa
\end{lemma}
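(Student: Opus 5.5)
The estimate is local, so I would first reduce it to the flat situation treated in \cite[Proposition 3.3]{S}. I choose $c_1=c_1(\iota_0)$ small, using Remark \ref{collar:uniform}. Then every ball $B^g_{2r}(x_0)$ with $x_0\in\pSi$ and $r<c_1$ lies in the collar $\Col(\pSi,g)$. In the collar coordinates $(s,\theta)$ the metric is $\rho_\ell^2(\dd s^2+\dd\theta^2)$, with $\rho_\ell$ bounded above and below and in $C^k$, uniformly in $g\in\MM_{\iota_0}$. Harmonicity, $\pnu u_g$ (up to the bounded factor $\rho_\ell(0)^{-1}$), and all the $L^2$ norms in \eqref{est:H1_boundary} are conformally invariant or change only by uniformly controlled factors. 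The problem therefore becomes one for a Euclidean harmonic function $U=u_g$ on a half-disc $\{s\ge0\}\cap B_{2r'}$ with $r'\sim r$, whose trace $u$ on $\{s=0\}$ maps into $N$.

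Next I localise. I take a cutoff $\varphi\in C_c^\infty(B_{2r})$ with $\varphi\equiv1$ on $B_r$ and $|\nabla\varphi|\le C/r$, $|\nabla^2\varphi|\le C/r^2$. On the boundary I use the pointwise identity valid for harmonic functions with the vector fields \eqref{def:extension-tau-nu}: the Rellich/Pohozaev identity. Concretely, I integrate $\Div(\varphi^2(2\langle\p_\tau U,\nabla U\rangle\tau-|\nabla U|^2\tau))$, or equivalently the identity $\pnu^2U+\ptau^2U=0$ tested against $\varphi^2\ptau U$. This gives
\[
\int_{\pSi}\varphi^2|\ptau u|^2\,\dd s\le\int_{\pSi}\varphi^2|\pnu U|^2\,\dd s+\frac Cr\int_{B_{2r}}|\nabla U|^2\,\dd v.
\]
So tangential and normal derivatives are comparable up to the interior term $Cr^{-1}\norm{\nabla U}^2_{L^2}$. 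It then remains to bound the normal part $P^\perp_u\pnu U$, since $|\pnu U|^2=|P_u\pnu U|^2+|P_u^\perp\pnu U|^2$.

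The main obstacle is controlling $\norm{\varphi P^\perp_u\pnu U}_{L^2(\pSi)}$ by a small multiple of $\norm{\varphi\nabla u}_{L^2(\pSi)}$ plus admissible terms. I would follow \cite{S}. Writing $P^\perp_u=\sum\nu_k(u)\otimes\nu_k(u)$ and using that $\ptau u\perp\nu_k(u)$, one has $\ptau u\cdot\nu_k(u)=0$. The quantity $\nu_k(u)\cdot\pnu U$ is the boundary value of the conjugate-type expression $\nu_k(U')\cdot\nabla U$, which is roughly a Jacobian/div-curl structure. I extend $\nu_k(u)$ by the harmonic extension $(\nu_k(u))_g$, which is controlled in $H^1$ by $\norm{\nabla U}_{L^2}$ on the $2r$-ball after localisation. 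Then
\[
\int\varphi^2|P_u^\perp\pnu U|^2=\int_{B_{2r}}\Div(\ldots)
\]
becomes an interior integral quadratic in $\nabla U$, multiplied by $\nabla\nu(u)_g\sim\nabla U$. This is cubic, and the Sobolev/trace embedding $H^1(\Si)\to L^4$ on half-balls bounds it by
\[
C\,E_g(U;B_{2r})^{1/2}\bigl(\norm{\varphi\nabla u}_{L^2(\pSi)}^2+\norm{\nabla(\varphi U)}^2_{H^{1/2}\text{-type}}\bigr)
\]
plus cutoff error terms. The smallness \eqref{ass:E_small_f}, $\de_1$ small, then lets me absorb this into the left side. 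This is the step I expect to require the most care, because the cubic term has to be estimated at the sharp $H^{1/2}$ boundary level.

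Finally I collect terms. The cutoff derivatives produce $r^{-1}\norm{\nabla U}_{L^2(B_{2r})}^2$. Mean-value corrections, where I subtract the average of $U$ to use Poincaré, together with the terms where $\nabla^2\varphi$ hits $U$, produce $r^{-3}\norm{U}^2_{L^2(B_{2r})}$. Taking square roots gives \eqref{est:H1_boundary}, with constants depending only on $\iota_0$ through Remark \ref{collar:uniform} and Remark \ref{rmk:compact-moduli}.
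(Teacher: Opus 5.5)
Your architecture is the paper's: collar reduction to Lemma \ref{lemma:loc_reg_euc}, a cut-off $\varphi$, comparing tangential and normal boundary derivatives, splitting $\pnu U$ into its $P_u$ and $P_u^\perp$ parts, a cubic interior error, an $L^4$ bound at the $H^{\half}$ level, and absorption by small energy. Your Rellich identity is a simpler substitute for the paper's first step, which uses the Dirichlet-to-Neumann bound on $\Si_0$ plus elliptic estimates for $\varphi w-(\varphi w)_{\Si_0}$. However, the multiplier must be $\varphi^2\pnu U$, i.e.\ the vector field $\nu$. With $\tau$, as you wrote it, the boundary term is only $\int\varphi^2\ptau U\cdot\pnu U$.

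The gap is in the step you flagged, and the route you sketch does not close. The problem is the harmonic extension $(\nu_k(u))_g$. It is nonlocal, and composition with $\nu_k$ does not commute with harmonic extension. So $\|\nabla(\nu_k(u))_g\|_{L^2(B_{2r})}$ depends on $u$ on all of $\pSi$ and is not controlled by $E_g(u_g;B_{2r}^g(x_0))$. Your cubic term $\int\varphi^2|\nabla(\nu_k(u))_g|\,|\nabla U|^2$ therefore has no small factor, and it is not of the local form allowed on the right of \eqref{est:H1_boundary}. Localising $\nu_k(u)$ with a second cut-off instead leaves boundary gradient terms where $\varphi\not\equiv 1$, which cannot be absorbed.

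The missing idea is to compose rather than extend. Use $P^\perp(U)$, with the fixed extension $P^\perp\in C_b^\infty(\R^n;\R^{n\times n})$ of Remark \ref{rmk:extension-projection}. Then $|\nabla(P^\perp(U))|\le C|\nabla U|$ pointwise, no normal frame is needed, and everything stays under the single cut-off $\varphi$. In collar coordinates set $X=\varphi^2P^\perp(U)\ptheta U$ and $Y=\varphi^2P^\perp(U)\ps U$. Harmonicity gives $|\ptheta Y-\ps X|+|\ps Y+\ptheta X|\le C(\varphi^2|\nabla U|^2+\varphi|\nabla\varphi||\nabla U|)$. Now write $\int_{\pSi}\varphi^2|P^\perp_u\ps U|^2\,\dd\theta=-\int_\Si(\ps U\cdot\ps Y+\ptheta U\cdot\ptheta Y)$ and replace $\ps Y$ by $-\ptheta X$ and $\ptheta Y$ by $\ps X$. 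The resulting Jacobian-type integral integrates by parts to $\int_{\pSi}\varphi^2\,\ptheta u\cdot P^\perp_u\ptheta u\,\dd\theta=0$. This is just your Rellich identity applied to the field $P^\perp(U)\nabla U$, whose divergence and curl are $O(|\nabla U|^2)$ and whose tangential component vanishes on $\pSi$.

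What remains is $C\int\varphi^2|\nabla U|^3+Cr^{-1}\int_{B_{2r}}|\nabla U|^2$. To absorb it you need $\|\varphi\nabla U\|_{L^4}\le C\|\varphi\nabla U\|_{L^2(\pSi)}+Cr^{-1/2}\|\nabla U\|_{L^2(B_{2r})}+Cr^{-3/2}\|U\|_{L^2(B_{2r})}$, with the same $\varphi$. This does not come from $H^1\hookrightarrow L^4$. It comes from $H^{\half}\hookrightarrow L^4$ and the $H^{\half}$ bound for harmonic extensions of $L^2$ boundary data, applied to the boundary values of $\nabla(\varphi U)$. You also need an elliptic estimate for the difference between $\nabla(\varphi U)$ and that extension, whose Laplacian only involves derivatives of $\varphi$.
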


In addition, we will use 
\begin{lemma}\label{lemma:bit-more-regular}
    Let $m\geq 3$ and let $(u,g)\in X_{m}(T)\times C^1_{loc}([0,T];\MM^{m+1})$ be any solution of the coupled flow \eqref{eq:flow}. Then $u_g\in L^2([0,T]; H^{m+\half}(\Si))$.
\end{lemma}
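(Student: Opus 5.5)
We need to show $u_g\in L^2([0,T];H^{m+\half}(\Si))$. By elliptic theory for harmonic functions, $\norm{u_g}_{H^{m+\half}(\Si,g)}\le C\norm{u}_{H^{m}(\pSi,g)}$, with constants uniform in $t$ because $g(t)$ stays in a compact family of metrics with $\inj\ge\iota_0$ (Remark \ref{rmk:compact-moduli}, Lemma \ref{lemma:metric_variations}). So it suffices to show $u\in L^2([0,T];H^m(\pSi))$, or equivalently that $\pnu\na^{m-1}u_g$ (the full $m$-th boundary derivatives) lies in $L^2([0,T];L^2(\pSi))$.

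\textbf{Energy identity at level $m$.} We redo the computation from the proof of Lemma \ref{lemma:ep_Hm_apriori} with $\eps=0$ and $v=u$. There the dissipation term $-\norm{P_u\pnu\na^{m-1}u_g}_{L^2(\pSi)}^2$ appears in \eqref{est:Hm_a_priori_Im}. Integrating in time and using $u\in X_m(T)$ together with the commutator bounds of Lemma \ref{lemma:commutator-app} then gives
\[
\int_0^T\norm{P_u\pnu\na^{m-1}u_g}_{L^2(\pSi)}^2\dd t\le C(\La,M,\iota_0,T).
\]
The computation needs a priori regularity one notch above what we have. We would justify it by passing to the limit in the regularised solutions $u_\eps$ of Section \ref{subsec:Hm-loc-exist}: for those, \eqref{est:Hm_u_epsilon_v_eq-extra} and its analogue give $\eps$-uniform bounds on $\int_0^T\norm{P_{u_\eps}\pnu\na^{m-1}(u_\eps)_{g_\eps}}^2_{L^2(\pSi)}$, and these pass to the weak limit. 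Alternatively, one can use difference quotients in time.

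\textbf{Recovering the normal component.} We still need to control $P_u^\perp\pnu\na^{m-1}u_g$. Since $u$ maps into $N$, all tangential derivatives $\ptau^k u$ satisfy the constraint that $P^\perp_u\ptau^{k}u$ is a polynomial in lower-order tangential derivatives weighted by the second fundamental form. Using the extended fields \eqref{def:extension-tau-nu}, harmonicity gives $\pnu^2=-\ptau^2$, so every component of $\na^{m-1}\pnu u_g$ on $\pSi$ is, up to lower-order commutators, either $\ptau^{m}u$ or $\ptau^{m-1}\pnu u_g$.
\begin{itemize}
\item For $\ptau^{m}u$, the normal part is of lower order by the constraint.
\item For $\ptau^{m-1}\pnu u_g$, the tangential part is controlled by the dissipation bound above, up to commutators $[\ptau^{m-1},P_u]$, which are lower order in $X_m$.
\end{itemize}
The normal part $P_u^\perp\ptau^{m-1}\pnu u_g$ is the real issue. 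Here we would use the equation itself: $P_u\pnu u_g=-\pt u$, so $\pnu u_g=-\pt u+P^\perp_u\pnu u_g$. We expect to control $P^\perp_u\pnu u_g$ in $H^{m-1}(\pSi)$ only via the tangential-derivative description. The trick, following \cite{S}, is to note that $\ptau u$ lies in $H^{m-1}(\pSi)$ by the constraint and the tangential dissipation. The Dirichlet-to-Neumann operator is, up to smoothing errors on a compact family of metrics, the Hilbert transform applied to $\ptau$. Hence $\norm{\pnu u_g}_{H^{m-1}(\pSi)}\le C\norm{\ptau u}_{H^{m-1}(\pSi)}+C\norm{u_g}_{H^m(\Si)}$.

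Combining these steps, the full $\norm{u}_{H^m(\pSi)}$ is bounded by $\norm{P_u\pnu\na^{m-1}u_g}_{L^2(\pSi)}$ plus lower-order terms controlled in $L^\infty_t$ by the $X_m$ norm, and so it is square integrable in time.

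\textbf{Main obstacle.} The main difficulty is the rigorous justification of the level-$m$ energy identity for the unregularised flow, i.e. the approximation step. We would try first to inherit the $\eps$-uniform dissipation bound from the approximating solutions $u_\eps$ via lower semicontinuity.
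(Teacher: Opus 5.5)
Your formal computation is sound, but it is organised differently from the paper's. The level-$m$ estimate for the degenerate equation $\pt u=-P_u\pnu u_g$ only dissipates $\int_0^T\norm{P_u\pnu\na^{m-1}u_g}_{L^2(\pSi)}^2\,\dd t$. Via $\pnu^2=-\ptau^2$ this contains $P_u\ptau^m u$. The constraint makes $P_u^\perp\ptau^m u$ a sum of products of lower tangential derivatives, bounded in $L^2(\pSi)$ by a power of $\norm{u_g}_{H^m(\Si)}$. Hence $\ptau^m u\in L^2([0,T];L^2(\pSi))$, and elliptic regularity finishes. Your separate discussion of $P_u^\perp\ptau^{m-1}\pnu u_g$, including the detour through $\pnu u_g=-\pt u+P_u^\perp\pnu u_g$, is not needed.

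The genuine gap is the justification, which you flag yourself. You propose passing to the limit from the regularised solutions $u_\eps$, retaining the term $-\tfrac12\norm{P_{u_\eps}\pnu\na^{m-1}(u_\eps)_{g_\eps}}^2_{L^2(\pSi)}$ that appears in the proof of Lemma \ref{lemma:ep_Hm_apriori}; note that \eqref{est:Hm_u_epsilon_v_eq-extra} itself only controls the $\eps$-weighted full term. This proves the claim only for the particular solution constructed in Section \ref{subsec:Hm-loc-exist}. The lemma concerns an arbitrary solution in $X_m(T)$. Neither the paper nor your proposal provides uniqueness for \eqref{eq:flow} in that class, which would be needed to identify such a solution with that limit. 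The degeneracy of $P_u$ is exactly what makes this delicate; compare the factor $\eps^{-1}$ in Lemma \ref{lemma:ep_Lip_est}. Time difference quotients do not obviously help either. Since $P_u$ is not invertible, time regularity only controls $P_u\pnu u_g$, and $X_m(T)$ only gives $\pt u_g\in L^\infty([0,T];H^1(\Si))$.

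The missing idea is to move the normal part to the right-hand side and write the flow as $\pt u+\pnu u_g=f$ with $f:=P_u^\perp\pnu u_g$. By Lemma \ref{lemma:orth-proj-lot}, in the form \eqref{est:proj-improved-est-all-k}, $\na^{m-1}f_g$ lies in $L^2(\pSi)$ with a bound depending only on $\norm{u_g}_{H^m(\Si)}$. This is a half-derivative gain coming from $(P_u^\perp\ptau u)_g\equiv 0$. Regarding $f$ as a fixed forcing term, the equation is linear and non-degenerate. Its level-$m$ energy estimate then gives the \emph{full} dissipation $\int_0^T\norm{\pnu\na^{m-1}u_g}_{L^2(\pSi)}^2\,\dd t$ directly. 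The estimate transfers to any solution by smoothing $f$ and $u(0)$ and using uniqueness for the linear problem.

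This yields the lemma in the stated generality and makes recovering the normal components unnecessary. Your route needs only the elementary one-dimensional constraint instead of Lemma \ref{lemma:orth-proj-lot}. It pays for this with a nonlinear approximation step that does not cover arbitrary solutions.
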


For such solutions, we can hence consider the  quantities 
\begin{align}\label{def:I_k}
	I_k(u,g) := 1 + \hf\int_{\Si}\abs{\nabla^k u_g}^2 \dd v,\quad  I_{k+\hf}(u,g) := 1 + \hf\int_{\p\Si}\abs{\nabla^k u_g}^2 \dd s
\end{align}
for $k=1,\ldots,m$, recalling the convention that quantities are computed with respect to $g$ unless specified otherwise. 

In Section \ref{subsec:higher_reg_est} we will prove the following proposition, which provides the necessary generalisations of Propositions 4.6 and 4.11 of \cite{S} to the present setting of maps from general surfaces.

\begin{prop}\label{prop:higher_reg} 
    For any $\iota_0>0$ and any $\bar E >0$, there exists $\de_0=\de_0(\iota_0,\bar E)\in (0,\de_1]$, $\de_1=\de_1(\iota_0)$ as in Lemma \ref{lemma:loc_reg}, so that the following holds true. Let $(u,g)$ be a solution to the flow \eqref{eq:flow} which satisfies $g(t) \in \MMiota$, $E_\hf(u,g)\leq \bar{E}$  on a time interval $[t_1,t_2]$ and is so that $u_g \in L^\infty([t_1,t_2]; H^s(\Si)) \cap L^2([t_1,t_2];H^{s+\hf}(\Si))$ for some $s \in \hf \N_{\geq 3}$. Let $\La\geq 1$ be so that 
	\begin{equation}\label{ass:prop-higher-reg-Lambda}
        \norm{u_g(t)}_{H^{s-\half}(\Si,g(t))}^2\leq \La \text{ for } t\in [t_1,t_2].
	\end{equation}
	Then the estimate 
	\begin{align}\label{est:higher_reg_est}
		\sup\limits_{t \in [t_1,t_2]}I_{s}(t) + \int_{t_1}^{t_2}I_{s+\hf}(t)\dd t \leq e^{C\abs{t_2-t_1}\La^{\lceil s\rceil }} I_{s}(t_1)
        \end{align}
    holds true for a constant $C$ which for $s>\frac32$ only depends on $\iota_0, s$, while for $s=\frac32$ we have $C=C_0(\iota_0,\bar E)r_0^{-2}$ for $r_0\in (0,\iota_0)$ chosen so that
    \begin{equation}\label{ass:I-evolv}
        E_{g(t)}(u(t); B_{r_0}^{g(t)}(x))\leq \de_0 \text{ for all } x\in \pSi \text{ and all } t\in [t_1,t_2].
    \end{equation}  
\end{prop}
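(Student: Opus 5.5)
We argue by differentiating $I_s$ in time and closing a Gronwall-type inequality, in the spirit of Lemma \ref{lemma:ep_Hm_apriori} but now with $\eps=0$ and $v=u$, so that the good boundary term is only the projected normal derivative. The half-integer quantity $I_{s+\hf}$ has to be recovered from it, using the structure $P_u^\perp\pnu u_g$ and, in the borderline case $s=\frac32$, Lemma \ref{lemma:loc_reg}. First treat integer $s=k$. As in the proof of Lemma \ref{lemma:ep_Hm_apriori} we write
\[
\tfrac{\dd}{\dd t}I_k=\tfrac{\dd}{\dd\eps}\big|_{\eps=0}\thalf\norm{\na^k u_{g(\cdot+\eps)}}_{L^2(\Si,g(\cdot+\eps))}^2+\int_\Si\inner{\na^k u_g}{\na^k(\pt u)_g}\dd v .
\]
The metric contribution is bounded by $C\norm{\pt g}_{H^{m}}I_k\le C\bar E I_k$ via Lemma \ref{lemma:metric_variations} and \eqref{est:velocity-by-energy}. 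Integrating by parts and inserting $\pt u=-P_u\pnu u_g$ gives the main term $-\norm{P_u\pnu\na^{k-1}u_g}_{L^2(\pSi)}^2$. The remaining terms are an interior error involving $\De\na^{k-1}u_g$, which is lower order, and commutator errors $C_{k-1}(u_g,u_g)$, which we control by Lemma \ref{lemma:commutator-app}, Remark \ref{rmk:extra-commutator-app} and \eqref{est:grad_normal_commute}. These errors are bounded by $\norm{u_g}_{H^{k-\hf}}^{k}$ times products of $I_k$ and $I_{k+\hf}^{\hf}$, and hence by $\delta I_{k+\hf}+C\La^{k}I_k$.

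The key step is the coercivity bound $I_{k+\hf}\le C\norm{P_u\pnu\na^{k-1}u_g}^2_{L^2(\pSi)}+C\La^{\lceil k\rceil}I_k$. To prove it we split $\na^k u_g$ on $\pSi$ into tangential derivatives $\ptau^k u$ and terms with normal derivatives. Harmonicity ($\pnu^2U=-\ptau^2U$ with the extension \eqref{def:extension-tau-nu}) reduces every term to either $\ptau^{k}u$ or $\ptau^{k-1}\pnu u_g$. The tangential term $\ptau^k u$ is handled by the Dirichlet-to-Neumann relation $\norm{\ptau^k u}_{L^2(\pSi)}\approx\norm{\pnu\ptau^{k-1}u_g}_{L^2(\pSi)}$ up to lower order terms (Rellich/Pohozaev identity on the collar). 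For $\ptau^{k-1}\pnu u_g$ we decompose $\pnu u_g=P_u\pnu u_g+P^\perp_u\pnu u_g$. Since $u$ maps into $N$, the identity $P_u^\perp\ptau u=0$ differentiated gives $P^\perp_u\ptau^{j}u=$ lower order products, and the normal part of $\pnu u_g$ is controlled through the second fundamental form in the form $P^\perp_u\pnu u_g = (P_u^\perp\pnu u_g)$ with $\ptau$-derivatives falling on $P^\perp_u$. The resulting lower order terms are again absorbed using \eqref{ass:prop-higher-reg-Lambda} and interpolation. Once the coercivity bound holds, absorbing $\delta I_{k+\hf}$ gives $\frac{\dd}{\dd t}I_k+cI_{k+\hf}\le C\La^{k}I_k$, and Gronwall yields \eqref{est:higher_reg_est}. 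Half-integer $s$ is treated the same way, with boundary integrals $\int_{\pSi}\abs{\na^{k}u_g}^2$ differentiated in place of $I_k$, now via \eqref{est:boundary_norm_variation}. Lemma \ref{lemma:bit-more-regular} and the assumed $L^2H^{s+\hf}$ regularity justify all the computations.

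The main obstacle is the base case $s=\frac32$. There $\La$ only controls $H^1$, i.e. the energy, so the lower order terms cannot be absorbed by Sobolev bounds alone. We would first try to use Lemma \ref{lemma:loc_reg} under the smallness hypothesis \eqref{ass:I-evolv}. Covering $\pSi$ by $O(r_0^{-1})$ balls of radius $r_0/2$ and summing \eqref{est:H1_boundary} gives $I_2\le C\norm{P_u\pnu u_g}^2_{L^2(\pSi)}+Cr_0^{-1}\bar E+Cr_0^{-3}\norm{u_g}_{L^2}^2$. Combined with the quadratic nonlinearity estimated by a localized Ladyzhenskaya-type inequality with small energy $\de_0$, this should give the constant $C_0(\iota_0,\bar E)r_0^{-2}$. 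The delicate part is making the smallness of $\de_0$, rather than a global bound, absorb the cubic boundary terms coming from the second fundamental form.
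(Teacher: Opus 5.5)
Your outline matches the paper's: differentiate $I_s$, control the metric variation by Lemma \ref{lemma:metric_variations} and \eqref{est:velocity-by-energy}, extract a coercive boundary term, show the remainder is lower order, and close with Gronwall, using a small-energy $L^4$ bound at $s=\frac32$. The gap is in the one genuinely nontrivial step, your coercivity bound: it is not proved, and the mechanism you offer for it does not work.

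Since $\norm{\pnu\na^{k-1}u_g}^2_{L^2(\pSi)}=\norm{P_u\pnu\na^{k-1}u_g}^2_{L^2(\pSi)}+\norm{P_u^\perp\pnu\na^{k-1}u_g}^2_{L^2(\pSi)}$, the bound is equivalent to showing that the normal component $P_u^\perp\pnu\na^{k-1}u_g$ (equivalently $\na^{k-1}(P_u^\perp\pnu u_g)_g$) is of lower order on $\pSi$.
\begin{itemize}
\item Differentiating $P_u^\perp\ptau u=0$ along $\pSi$ only controls purely tangential derivatives $P_u^\perp\ptau^j u$.
\item The normal derivative satisfies no such identity: for a half-harmonic map, $P_u^\perp\pnu u_g$ is all of $\pnu u_g$.
\item Letting $\ptau$-derivatives fall on $P_u^\perp$ only trades $P_u^\perp\ptau^{k-1}\pnu u_g$ for $\ptau^{k-1}(P_u^\perp\pnu u_g)$ up to lower-order terms. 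This is circular.
\end{itemize}
The missing idea is Lemma \ref{lemma:orth-proj-lot}. Since $P_u^\perp\ptau u\equiv 0$ on $\pSi$, its harmonic extension vanishes identically. Hence in collar coordinates, $P_u^\perp\ps^{k-j}\ptheta^{j}u_g$ with $j\geq 1$ equals $\ps^{k-j}\ptheta^{j-1}\big[P_{u}^\perp\ptheta u_g-(P_u^\perp\ptheta u)_g\big]$ up to commutator terms. This difference is lower order: because $\De\ptheta u_g=0$, the function $\De_g(P^\perp_{u_g}\ptheta u_g)$ consists only of lower-order product terms, and one can then apply \eqref{est:diff-extension-sigma} and Lemma \ref{lemma:commutator-app}. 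The case $j=0$ reduces to $j=2$ via $\ps^2u_g=-\ptheta^2u_g$. In your language, you must commute multiplication by $P_u^\perp$ past the nonlocal Dirichlet-to-Neumann operator. It is this commutator, not the second fundamental form, that gains a derivative. With that lemma in hand your integer-$k$ argument goes through; the paper instead splits $P_u=\Id-P_u^\perp$ directly in the evolution.

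The same estimate is what your other two cases need, and your sketch of them has further problems.
\begin{itemize}
\item \textbf{Half-integer $s=k-\frac12$.} The time derivative of $I_s$ is $-\int_{\pSi}\inner{\na^{k-1}u_g}{\na^{k-1}(P_u\pnu u_g)_g}\dd s$, which is not a square, so ``the same way'' does not apply. One writes $P_u=\Id-P_u^\perp$ and gets $-I_k$ from the identity part via $\frac12\int_{\pSi}\pnu\abs{\na^{k-1}u_g}^2\dd s=\frac12\int_\Si\De\abs{\na^{k-1}u_g}^2\dd v$. The $P_u^\perp$ part is then bounded with Lemma \ref{lemma:orth-proj-lot}.
\item \textbf{Base case $s=\frac32$.} Lemma \ref{lemma:loc_reg} bounds $\norm{\na u_g}^2_{L^2(\pSi)}$, i.e.\ $I_{3/2}$ and not $I_2$. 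It does so only in terms of $\norm{P_u\pnu u_g}^2_{L^2(\pSi)}=\norm{\pt u}^2_{L^2(\pSi)}$, which is merely integrable in time. So it cannot give a differential inequality with constant $C_0r_0^{-2}$; the paper uses it only in Proposition \ref{prop:smooth_existence}.
\end{itemize}
What closes the base case is a bound on $\int_{\pSi}\inner{\na u_g}{\na(P_u^\perp\pnu u_g)_g}\dd s$ built from three estimates:
\begin{itemize}
\item \eqref{est:proj-improved-est-L2}, namely $\norm{\na(P_u^\perp\pnu u_g)_g}_{L^2(\pSi)}\lesssim(1+\norm{\na u_g}_{L^4})\norm{\na^2u_g}_{L^2}+(1+\norm{\na u_g}_{L^4})^3$;
\item $\norm{\na u_g}_{L^4}\leq C\de_0^{1/4}I_2^{1/4}+Cr_0^{-1/2}$, from \eqref{est:L4-improved};
\item the interpolation $\norm{\na u_g}_{L^2(\pSi)}\lesssim I_2^{1/4}$.
\end{itemize}
Together these give the bound $(C\de_0^{1/4}+\frac18)I_2+Cr_0^{-2}$, and choosing $\de_0$ small absorbs the first term. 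The ``delicate absorption'' you leave open is exactly this computation, and it again rests on Lemma \ref{lemma:orth-proj-lot}.
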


Combined with a standard iteration argument, these a priori estimates allow us to obtain the desired $H^m$ control on the map component as described in the following proposition. 

\begin{prop}\label{prop:smooth_existence}
    For any solution $(u,g) \in X_{m}(T)\times C^1([0,T];\MM(\Si))$, $m\geq 4$, of the flow  \eqref{eq:flow} we can bound   
	\begin{equation}\label{est:higher_reg_Hm}
		\sup\limits_{t \in [0,T]} \norm{u_g(t)}_{H^m(\Si,g)}\leq C(m,\norm{(u_0)_{g_0}}_{H^m(\Si,g_0)},\iota_0, r_0,\bar E),
	\end{equation}
    and 
    \begin{equation}\label{est:higher_reg_smoothing}
		\sup\limits_{t \in [\tau,T]} \norm{u_g(t)}_{H^m(\Si,g)}\leq C(m,\tau,\iota_0, r_0, \bar E) \text{ for any } 0<\tau<T, 
	\end{equation}
    for $\iota_0>0$ and $r_0>0$ chosen so that $\inj(g)\geq \iota_0$ and so that \eqref{ass:I-evolv} holds on $[0,T]$ for $\de_0$ as in Proposition \ref{prop:higher_reg} and $\bar E$  an upper bound on $E(u_0,g_0)$.
\end{prop}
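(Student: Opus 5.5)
The plan is to iterate Proposition \ref{prop:higher_reg} upward in half-integer steps $s=\frac32,2,\frac52,\dots,m$, starting from energy control, and to use Lemma \ref{lemma:bit-more-regular} to justify each application. Throughout, $\inj(g(t))\geq\iota_0$ and the energy is non-increasing, so $E_\hf(u,g)\leq \bar E$. The velocity bound \eqref{est:velocity-by-energy} then gives $\norm{\pt g}_{H^{m+1}}\leq C(\iota_0)\bar E$. Via Lemma \ref{lemma:metric_variations}, all norms at different times are therefore comparable on bounded time intervals. As a preliminary step I would subdivide $[0,T]$ into intervals of length at most $1$. The constants then stay uniform, and nothing depends on $T$ beyond what is allowed.

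\textbf{Step 1 (base case $s=\frac32$).} Here \eqref{ass:prop-higher-reg-Lambda} asks for an $H^1$ bound on $u_g$, and the energy provides it: $\norm{u_g}_{H^1}^2\leq C(\bar E+\norm{u}_{L^2(\pSi)}^2)$, and $\norm{u}_{L^2(\pSi)}$ is bounded because $N$ is compact. So $\La=\La(\bar E,\iota_0)$. The smallness assumption \eqref{ass:I-evolv} is exactly what lets the $s=\frac32$ case run, with $C=C_0r_0^{-2}$. On each unit interval this gives $\sup I_{3/2}+\int I_2\leq C\,I_{3/2}(t_1)$.

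\textbf{Step 2 (induction).} Suppose $\sup_t\norm{u_g}_{H^{s}}$ and $\int\norm{u_g}_{H^{s+1/2}}^2$ are controlled. For the next level we need $\sup_t\norm{u_g}_{H^{s}}^2\leq\La$ to apply the proposition with $s+\frac12$. This yields a bound on $\sup I_{s+1/2}$ in terms of $I_{s+1/2}(t_1)$. The regularity hypothesis $u_g\in L^\infty H^{s+1/2}\cap L^2H^{s+1}$ holds for $s+\frac12\leq m$: the $L^\infty$ part from $u\in X_m$, and the $L^2$ part from Lemma \ref{lemma:bit-more-regular}.

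For \eqref{est:higher_reg_Hm} we use the initial values $I_{s}(0)\leq C\norm{(u_0)_{g_0}}_{H^m}^2$ directly, since $H^s$ norms are dominated by $H^m$ for $s\le m$.

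For the smoothing estimate \eqref{est:higher_reg_smoothing} I would use the standard trick instead. Since $\int_{t_1}^{t_2}I_{s+1/2}\,\dd t$ is bounded, there is a time $t^*\in[t_1,t_1+\tau/(2m)]$ with $I_{s+1/2}(t^*)\leq C\tau^{-1}$. Apply the proposition from $t^*$ at level $s+\frac12$. After $2m$ such steps, each consuming time $\tau/(2m)$, we reach $H^m$ on $[\tau,T]$. To cover all of $[\tau,T]$ rather than just a neighbourhood of $\tau$, repeat the argument on the unit subintervals $[t-\tau,t]$, so that $\tau$ never compounds.

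\textbf{Main obstacle.} The delicate point is the base step and the passage to time-uniform constants. The $s=\frac32$ constant depends on $r_0$, and since $\La$ enters exponentially, one must make sure that on long intervals we only ever use bounds over unit-length windows and restart. Otherwise $e^{CT\La}$ would creep in and \eqref{est:higher_reg_smoothing} would fail to be uniform in $T$.

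For \eqref{est:higher_reg_Hm} on long intervals, this restarting would be done at the level of the lower norms first. The $H^1$ bound is uniform, and on each window the bound at time $t_1$ is supplied by the smoothing estimate applied from $t_1-1$ whenever $t_1\geq 1$. On $[0,1]$ it comes instead from the initial data. Combining the two gives a constant depending only on $m$, $\norm{(u_0)_{g_0}}_{H^m}$, $\iota_0$, $r_0$ and $\bar E$.
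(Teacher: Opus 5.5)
Your overall scheme is the same as the paper's: iterate Proposition \ref{prop:higher_reg} upward from $s=\frac32$, justify each step with Lemma \ref{lemma:bit-more-regular}, work on windows of length at most $1$, and select good times for the smoothing estimate. Your argument for \eqref{est:higher_reg_Hm} on $[0,1]$, started from $I_{3/2}(0)\le C\norm{(u_0)_{g_0}}_{H^m}^2$, is fine.

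The gap is at the bottom of the smoothing iteration. To start \eqref{est:higher_reg_smoothing} you need a time $t_1\in[0,\tau/2]$ at which $I_{3/2}(t_1)$ is bounded in terms of $\tau,\iota_0,r_0,\bar E$ only. Your selection trick produces such a time only if $\int_0^{\tau/2}I_{3/2}\,\dd t$ is already controlled. Equivalently, your inductive hypothesis at the lowest level needs $\int\norm{u_g}_{H^{3/2}(\Si)}^2\,\dd t$, and you never establish it. The energy gives only $\sup_t I_1$ and, via \eqref{eq:energy-decay}, $\int_0^T\norm{P_u\pnu u_g}_{L^2(\pSi)}^2\,\dd t\le\bar E$. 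Proposition \ref{prop:higher_reg} only starts at $s=\frac32$, takes $I_{3/2}(t_1)$ as input, and outputs $\int I_2$, not $\int I_{3/2}$. Without an integral bound on $\norm{\na u_g}_{L^2(\pSi)}^2$ that is independent of the initial data, the smoothing induction cannot start. Your long-time version of \eqref{est:higher_reg_Hm}, which you derive from the smoothing estimate on later windows, inherits the gap. Restarting Step 1 on unit windows alone would compound the constant.

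The missing ingredient is the boundary $\eps$-regularity Lemma \ref{lemma:loc_reg}. Since $\de_0\le\de_1$, assumption \eqref{ass:I-evolv} lets you apply it on a cover of $\pSi$ by balls of radius comparable to $\min(r_0,c_1)$. This gives, at every time,
\[
\norm{u_g}_{H^{3/2}(\Si)}^2+\norm{\na u_g}_{L^2(\pSi)}^2\le C(\iota_0,r_0)\big(\norm{P_u\pnu u_g}_{L^2(\pSi)}^2+\norm{u_g}_{H^1(\Si)}^2\big).
\]
Integrating in time and using $\int_0^T\norm{P_u\pnu u_g}_{L^2(\pSi)}^2\,\dd t=\int_0^T\norm{\pt u}_{L^2(\pSi)}^2\,\dd t\le\bar E$ yields $\int_0^T I_{3/2}\,\dd t\le C(\iota_0,r_0,\bar E)(1+T)$. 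Only then can you pick $t_1\in[0,\tau/2]$ with $I_{3/2}(t_1)\le C\tau^{-1}$ and run your iteration.

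This is a second, independent use of the small-energy hypothesis. In Proposition \ref{prop:higher_reg} it enters through the $L^4$ bound \eqref{est:L4-improved}. Here it is needed to turn control of the tangential part $P_u\pnu u_g$, which is all the energy identity provides, into control of the full gradient on $\pSi$.
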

Combined with local energy estimates which are stated and proven in Section \ref{subsec:loc-energy}, this proposition then allows us to deduce the following proposition. 

\begin{prop}\label{prop:existence-all-initial-data}
    To every initial data $(u_0,g_0)\in H^{\half}(\pSi)\times \MM(\Si)$, there exists a weak solution $(u,g)$ of the coupled flow \eqref{eq:flow} which is smooth on an interval $(0,T_*)$ for a maximal time $T_*>0$. Furthermore, if $T_*$ is finite, then we either have that   $\inj(g(t))\to 0$ as $t\upto T^*$ or that $\iota_0:= \inf_{[0,T_*)} \inj(g)>0$ and $\liminf_{t\upto T_*} \sup_{x\in \pSi} E(u(t);B_r^{g(t)}(x))\geq \half \de_0$ for every $r>0$, $\de_0=\de_0(\iota_0, E_0)>0$ as in Proposition \ref{prop:higher_reg}. 
\end{prop}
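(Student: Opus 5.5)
The plan is to approximate the rough initial data by smooth data, solve the flow for the approximations using the regularity theory above, and pass to the limit using estimates that depend only on the energy, the injectivity radius and the local energy concentration scale.

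\textbf{Step 1: approximation and smooth solutions.} Given $(u_0,g_0)\in H^{\half}(\pSi;N)\times\MM(\Si)$, we first note that $g_0$ can be taken smooth, by working in charts in which $g_0$ is smooth. We then choose smooth maps $u_{0,k}\colon\pSi\to N$ with $u_{0,k}\to u_0$ in $H^\half$. This is possible by mollifying $(u_0)_{g_0}$ and projecting with $\pi$ near $\pSi$, which works because $H^\half$ maps into $N$ have small energy on small boundary balls (VMO). Proposition \ref{prop:Hm_plateau_existence} gives solutions $(u_k,g_k)$ for every $m$. We combine this with Proposition \ref{prop:smooth_existence}, which bounds the $H^m$ norm for as long as $\inj(g_k)\geq\iota_0$ and the small-energy radius stays above some $r_0>0$. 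Hence each $(u_k,g_k)$ is smooth and exists until the injectivity radius degenerates or energy concentrates.

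\textbf{Step 2: uniform lifespan via local energy estimates.} The central step is a local energy inequality, which I would prove in Section \ref{subsec:loc-energy}. It should have the form
\[
E(u(t);B_r(x))\le E(u(s);B_{2r}(x))+C\tfrac{t-s}{r}\,E_0+C(t-s)E_0^2
\quad\text{for } t\ge s.
\]
I would obtain it by testing the map equation with $\varphi^2\pt u$ for a cutoff $\varphi$, using $\pt u\perp$ the normal space and the energy identity \eqref{eq:energy-decay}. The metric contribution is handled by \eqref{est:g_change_E}, \eqref{est:velocity-by-energy} and Lemma \ref{lemma:U-basics}.

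Since $u_{0,k}\to u_0$ strongly, the energy of $u_{0,k}$ on balls of radius $2r$ is at most $\de_0/4$ uniformly in $k$, for some $r$. The local energy inequality then keeps the energy on $r$-balls below $\de_0/2$ up to a $k$-independent time $T_1\sim r$. Remark \ref{rmk:uniform_metric_time} keeps $g_k$ inside $\UU(g_0)$, and hence keeps $\inj\ge\iota_0$, on a uniform interval. On $[\tau,T_1]$, estimate \eqref{est:higher_reg_smoothing} then gives $k$-uniform $H^m$ bounds for all $m$. Together with the $C^{1,1}$ bounds on $g_k$ from Proposition \ref{prop:step1}, this gives smooth compactness on $[\tau,T_1]$. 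Near $t=0$ we use the energy identity: it bounds $\pt u_k$ in $L^2L^2$, so the initial trace is attained.

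\textbf{Step 3: limit and maximal extension.} A diagonal limit gives a weak solution that is smooth on $(0,T_1]$. We then extend it by repeating Step 2 from any positive time, where the data are already smooth, and take the maximal $T_*$. Suppose $T_*<\infty$ and $\inj(g(t))\not\to0$. Because $g$ is Lipschitz in time by \eqref{est:velocity-by-energy}, $\inj$ is continuous in time, and in fact $\iota_0=\inf\inj>0$. If moreover $\liminf_{t\upto T_*}\sup_x E(u(t);B_r(x))<\half\de_0$ for some $r$, then there is a time $t_0$ close to $T_*$ at which the energy on $r$-balls is below $\half\de_0$. The local energy inequality, applied from $t_0$ with halved radii, keeps the energy on balls of radius $r/2$ below $\de_0$ for a time $cr$ that does not depend on $t_0$. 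Proposition \ref{prop:smooth_existence} then gives bounds past $T_*$, a contradiction.

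The main obstacle is the local energy inequality in the coupled setting. The cutoff has to be placed on the harmonic extension, which is nonlocal in $u$, and the time-dependent metric changes both the extension and the balls. I would first try the approach of \cite{S}, working with $\varphi^2|\nabla u_g|^2$ and integrating by parts, and control the nonlocal tail using the global energy bound.
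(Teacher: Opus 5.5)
Your proposal follows the paper's proof essentially step by step: smooth approximation of $u_0$; a $k$-independent lifespan from a localised energy estimate; uniform bounds from \eqref{est:higher_reg_smoothing} on $[\tau,T]$; compactness; and the characterisation of $T_*$ by rerunning the local energy argument near $T_*$. The localised energy estimate is the paper's Lemma \ref{lemma:Ecut} and Lemma \ref{lemma:Tmin}, proved by exactly the cut-off computation you describe, with the nonlocal term handled globally through $\norm{\pt u_g}_{L^4(\Si)}\leq C(\norm{\pt u}_{L^2(\pSi)}+\norm{\pt g}_{L^2(\Si)})$ and the energy identity. The one inaccuracy is your guessed error term: this computation gives $Cr^{-\half}\int_s^t(\norm{\pt u}_{L^2(\pSi)}+\norm{\pt g}_{L^2(\Si)})\,\dd t'$, i.e.\ an error of order $C\big(\tfrac{t-s}{r}\big)^{\half}$ after Cauchy--Schwarz in time, rather than one linear in $t-s$ (only $\int\norm{\pt u}_{L^2(\pSi)}^2\,\dd t$ is controlled by the energy), but this serves your purposes equally well.
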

The results of Section \ref{sect:metric} guarantee that if $\inj(g(t))\nrightarrow 0$ as $t\to T_*$ then the metrics $g(t)$ converge smoothly to a limiting metric $g(T_*)\in \MM(\Si)$ as $t\upto T_*$. As the energy is non-increasing and as $\int_{0}^{T_*}\norm{\pt u_g}_{L^2(\Si,g)}^2 \dd t$ is finite, we hence deduce that the maps $u_{g(t)}(t)$ converge weakly in $H^1(\Si,g(T_*))$ to a limiting map which is harmonic with respect to $g(T_*)$ and can hence be written as $u(T_*)_{g(T_*)}$ for the weak limit  $u(T_*)$ of $u(t)$ in $H^\half(\pSi)$. We can thus apply the above proposition for this new initial data $(u(T_*),g(T_*))$ to restart the flow and hence continue the flow past any such finite time singularities at which the injectivity radius remains bounded away from zero.

Furthermore, we shall see in Section 
\ref{sect:bubble}
that any such singularity must be caused by the bubbling off of a non-constant half-harmonic map from the disc. Since the energy of such maps is bounded from below by a constant $\eps_0>0$ which only depends on $N$, compare Remark \ref{rmk:bubble_energy_lower_bound}, the number of such singular times is a priori bounded in terms of the initial energy. Repeating the above argument a finite number of times, we hence obtain the existence of a weak solution $(u,g)$ for as long as the metric does not degenerate. 

This completes the proof of our main Theorem \ref{thm:1} up to the proof of the auxiliary results that we stated above, which will be carried out in the subsequent Sections \ref{subsect:normal-proj}-\ref{subsec:loc-energy}, and the characterisation of finite-time singularities of the map component, which is discussed in Section \ref{sect:bubble}. 

In these proofs we will always work with metrics $g\in \MMiota$ for a fixed $\iota_0>0$ and with a fixed number $m\geq 3$, so will at times use the following convention to lighten the notation:

\begin{rmk}\label{rmk:less-sim}
    We write for short $a \leqs b$ if $a\leq Cb$ for a number $C$ that only depends on $m$, $\iota_0$ and as always the topology of the surface $\Si$ and the given manifold $N$. 
\end{rmk}

\subsection{Analysis of terms involving the projection 
onto the normal space}\label{subsect:normal-proj}
For the proofs of both Lemma \ref{lemma:bit-more-regular} and Proposition \ref{prop:higher_reg}, we will use that terms that involve the projection onto the normal space have improved regularity properties, as summarized in the following lemma. 

\begin{lemma}\label{lemma:orth-proj-lot}
    Let $m\geq 3$ and $\iota_0>0$ be any fixed numbers and let $g\in \MMiota^{m+1}$ and $u \in H^{m-\hf}(\p\Si;N)$. Then both $P_u^\perp\p_\nu \nabla^{m-1} u_g$ and $\nabla^{m-1} P_u^\perp\p_\nu u_g $ are in $L^2(\p\Si)$ and we can bound 
    \beqa\label{est:proj-improved-est-L2}
        &\norm{P_u^\perp\pnu \nabla u_{g}}_{L^2(\p\Si)}^2 + \norm{\nabla (P_u^\perp\pnu u_{g})_g}_{L^2(\p\Si)}^2\\
        & \qquad \qquad \leqs \norm{\nabla^2 u}_{L^2(\Si)}^2(1+\norm{\nabla u}_{L^4(\Si)}^2) + (1+\norm{\nabla u}_{L^4(\Si)}^2)^3,
    \eeqa
    and, for $k\in \{3,\ldots,m\}$, 
    \beqa\label{est:proj-improved-est-Hk}
        &\norm{P_u^\perp\pnu \nabla^{k-1} u_{g}}_{L^2(\p\Si)}^2+ \norm{\na^{k-1}(P_u^\perp\pnu u_{g})_g}_{L^2(\p\Si)}^2\\
        & \qquad \qquad \leqs I_{3/2}(u) I_{k}(u)+I_{5/2}(u) S_{k-1}(u) + S_{k-1}(u)^{k+1}.
    \eeqa
\end{lemma}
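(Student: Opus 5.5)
The key identity will be that, since $u$ maps into $N$, every tangential derivative of $u$ along $\pSi$ is tangent to $N$. Hence $P_u^\perp \ptau u=0$ and, more generally, the normal components of higher tangential derivatives are lower order. I would combine this with harmonicity, which via the extension \eqref{def:extension-tau-nu} of Remark \ref{collar:uniform} gives $\pnu^2 u_g=-\ptau^2 u_g$ in the collar. This converts normal derivatives into tangential ones, which can then be handled through the second fundamental form of $N$.

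\textbf{Base case \eqref{est:proj-improved-est-L2}.} Write $\nabla \pnu u_g$ on $\pSi$ in terms of $\ptau\pnu u_g$ and $\pnu^2 u_g=-\ptau^2u_g$, up to curvature terms of the fixed metric that are bounded by $|\nabla u|$. Differentiating $P^\perp_u\ptau u=0$ along $\pSi$ gives
\[
P_u^\perp \ptau^2 u=-(\ptau P^\perp_u)\ptau u=-\dd P^\perp(u)(\ptau u)\,\ptau u,
\]
which is quadratic in $\nabla u$. For the mixed term I use instead that
\[
P^\perp_u\ptau\pnu u_g=\ptau(P^\perp_u\pnu u_g)-\dd P^\perp(u)(\ptau u)\pnu u_g .
\]
This is exactly where the second quantity $\nabla(P^\perp_u\pnu u_g)_g$ enters. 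The two quantities in the statement are therefore equivalent up to terms of the form $|\nabla u|^2$ on $\pSi$.

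It then suffices to bound $\norm{\nabla(P^\perp_u\pnu u_g)_g}_{L^2(\pSi)}$ and $\norm{|\nabla u|^2}_{L^2(\pSi)}$. For the first I would use the trace inequality $\norm{f}_{L^2(\pSi)}^2\lesssim \norm{f}_{L^2(\Si)}\norm{f}_{H^1(\Si)}$. For $|\nabla u|^2$ this gives
\[
\norm{|\nabla u|^2}^2_{L^2(\pSi)}\lesssim \norm{\nabla u}_{L^4}^2\bigl(\norm{\nabla u}_{L^4}^2+\norm{\nabla u}_{L^4}\norm{\nabla^2u}_{L^4}\bigr).
\]
I would avoid $L^4$ of $\nabla^2 u$ by instead extending $P^\perp_u\pnu u_g$ as $P^\perp_{u_g}\varphi\nabla u_g$ into the collar and applying the trace estimate to $\nabla$ of that extension. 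This produces terms $\nabla^2u\cdot\nabla u$ and $\nabla u^{\otimes 2}\cdot\nabla u$. The bulk $H^1$ norm of these requires $\nabla^3 u$, so the clean route is the $L^2(\pSi)\lesssim H^{1/2+}$ trace combined with Gagliardo--Nirenberg interpolation between $\norm{\nabla^2u}_{L^2}$ and $\norm{\nabla u}_{L^4}$, followed by Young's inequality. This should yield the right-hand side $\norm{\nabla^2u}^2(1+\norm{\nabla u}_{L^4}^2)+(1+\norm{\nabla u}^2_{L^4})^3$.

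\textbf{Higher order case \eqref{est:proj-improved-est-Hk}.} I would proceed by the same scheme with $k-1$ derivatives. Repeatedly use $\pnu^2=-\ptau^2$ (plus commutators, controlled by Lemma \ref{lemma:commutator-app}) so that $\nabla^{k-1}\pnu u_g$ is reduced to $\ptau^{k}u$ and $\ptau^{k-1}\pnu u_g$, modulo lower order terms. Then differentiate $P^\perp_u\ptau u=0$ a further $k-1$ times. The normal part of $\ptau^k u$ becomes a sum of products $\dd^jP^\perp(u)\,\ptau^{a_1}u\cdots\ptau^{a_{j+1}}u$ with all $a_i\le k-1$. Each such product is estimated on $\pSi$ by Hölder and trace inequalities in terms of $S_{k-1}$, which I take to be the lower-order quantity $\max_{j\le k-1}I_j$ as used in the paper, and of $I_{3/2},I_{5/2}$. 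The top terms, where one factor carries $k-1$ derivatives and the other a single derivative, give $I_{3/2}I_k$ or $I_{5/2}S_{k-1}$, while the fully distributed products give $S_{k-1}^{k+1}$.

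\textbf{Main obstacle.} I expect the hardest part to be the bookkeeping that ensures no product contains a factor with $k$ derivatives on the boundary multiplied by an unbounded factor. The top-order term must appear only linearly and paired with $\nabla u$ in $L^\infty(\pSi)$, which is controlled via $I_{5/2}$ or $I_{3/2}$ through the one-dimensional Sobolev embedding on $\pSi$.
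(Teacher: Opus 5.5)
\textbf{There is a genuine gap.} Your treatment of components with an even number of normal derivatives works: $\pnu^2u_g=-\ptau^2u_g$ turns them into $\ptau^k u$, and differentiating $P_u^\perp\ptau u=0$ along $\pSi$ makes $P_u^\perp\ptau^k u$ lower order.

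The problem is the components with an odd number of normal derivatives, $P_u^\perp\ptau^{k-1}\pnu u_g$. The pointwise constraint $u(\pSi)\subset N$ carries no information about the normal component of $\pnu u_g$. Your identity $P^\perp_u\ptau\pnu u_g=\ptau(P^\perp_u\pnu u_g)-\dd P^\perp(u)(\ptau u)\pnu u_g$ only shows that the two quantities in the statement are equivalent modulo lower order terms. You still have to bound one of them, and the route you propose does not close:
\begin{itemize}
\item Differentiating the extension $\varphi P^\perp_{u_g}\na u_g$ produces the leading term $\varphi P^\perp_{u_g}\na^2u_g$. This term is linear in $\na^2u_g$, not a product $\na^2u\cdot\na u$ as you write.
\item Its trace on $\pSi$ is exactly $P_u^\perp\na\pnu u_g$, the quantity you started with, so the argument is circular.
\item A trace inequality for it needs $\na^2u_g\in H^{1/2+}(\Si)$. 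That is at least $H^{5/2}$-control of $u_g$, half a derivative more than the right-hand side of \eqref{est:proj-improved-est-L2} provides.
\item No Gagliardo--Nirenberg interpolation between $\norm{\na^2u}_{L^2(\Si)}$ and $\norm{\na u}_{L^4(\Si)}$ can supply this, since both are at most $H^2$-level quantities.
\end{itemize}
The lemma asserts a genuine half-derivative gain for the normal component, which must come from a cancellation rather than from product structure. The same gap recurs in your higher order case, where $P^\perp_u\ptau^{k-1}\pnu u_g$ is never addressed.

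\textbf{The missing idea} is to use the boundary identity nonlocally, through the harmonic extension; this is what the paper does.
\begin{itemize}
\item Since $P_u^\perp\ptau u\equiv0$ on $\pSi$, also $(P_u^\perp\ptau u)_g\equiv0$ on $\Si$.
\item Extend $\tau$ as in Remark \ref{collar:uniform} and set $W:=\varphi P^\perp_{u_g}\ptau u_g$. Then $W=\varphi P^\perp_{u_g}\ptau u_g-(P^\perp_u\ptau u)_g$ is of commutator type and vanishes on $\pSi$.
\item Because $\ptau u_g$ is harmonic in the collar, $\Delta W$ contains only terms of the form $\na^2u_g\cdot\na u_g$, $\abs{\na u_g}^3$, and cut-off terms.
\item Dirichlet elliptic estimates in $W^{k,4/3}(\Si)$, combined with $W^{1,4/3}(\Si)\hookrightarrow L^2(\pSi)$, bound $\na^{k-1}W$ in $L^2(\pSi)$ by lower order quantities. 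For $k=2$ this gives $\norm{\na^2u}_{L^2}\norm{\na u}_{L^4}+\norm{\na u}_{L^4}^3$ plus cut-off terms, matching \eqref{est:proj-improved-est-L2}.
\item On $\pSi$, the leading part of $\pnu\ptau^{k-2}W$ is $P_u^\perp\pnu\ptau^{k-1}u_g$, which is the term you could not control.
\end{itemize}
In the paper this appears as follows. For $j\geq1$, $P_u^{\perp}\ps^{k-j}\ptheta^{j}u_g$ is written as $\ps^{k-j}\ptheta^{j-1}[P_u^\perp\ptheta u_g-(P_u^\perp\ptheta u)_g]$ minus a commutator. The case $j=0$ is reduced to $j=2$ via $\ps^2=-\ptheta^2$. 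All pieces are controlled by Lemma \ref{lemma:commutator-app} and Remark \ref{rmk:extra-commutator-app}, and $\na^{k-1}(P_u^\perp\pnu u_g)_g$ differs from the first quantity by a commutator of the form \eqref{def:comm-rmk-1}. Once you add this ingredient, your reduction via $\pnu^2=-\ptau^2$ and your product bookkeeping go through.
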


Here and in the following, we use the shorthand $\leqs$ introduced in Remark \ref{rmk:less-sim} and write for short 
\begin{align}\label{def:S_k} 
    S_s(u,g) :=\sum_{\{\tilde{s} \in \hf \N_{\geq 2}: \tilde s\leq s\}} I_{\tilde{s}}(u,g), \quad \text{ for }I_{\tilde s}(u,g) \text{ defined in \eqref{def:I_k},}
\end{align}
where we drop the reference to the metric if we work with respect to fixed (curves of) metrics $g$.

Since $H^{\half}(\Si)\hookrightarrow L^4(\Si)$, the above lemma in particular ensures that for any $k\in \{2,\ldots, m\}$
\beqa\label{est:proj-improved-est-all-k}
    & \norm{P_u^\perp\pnu \nabla^{k-1} u_{g}}_{L^2(\p\Si)}^2+ \norm{\na^{k-1} (P_u^\perp\pnu u_{g})_g}_{L^2(\p\Si)}^2\\
    &\qquad \qquad \leqs (1+\norm{u_g}_{H^{3/2}(\Si)}^2)\norm{u_g}_{H^k(\Si)}^2 + (1+\norm{u_g}_{H^{k-1/2}(\Si)}^2)^{k+1}.
\eeqa 

\begin{proof}[Proof of Lemma \ref{lemma:orth-proj-lot}]
    The main reason for the improved regularity of the above terms is that the projection of $\p_\tau u$ onto $T_u^\perp N$ vanishes identically on $\pSi$ since $u$ maps $\pSi$ into $N$. This will allow us to obtain the claims made in the lemma by writing all relevant expressions in terms of derivatives of the corresponding harmonic extension $(P_u^\perp \ptau u)_g\equiv 0$ and commutator terms, whose improved regularity properties are discussed in the appendix. 
    
    To make this precise, we recall that  Lemma \ref{lemma:commutator-app} and Remark \ref{rmk:extra-commutator-app} ensure that commutator terms $C_{k-1}(u_g)=C_{k-1}(u_g,u_g)$, $2\leq k\leq m$, as considered in \eqref{def:comm-j},  \eqref{def:comm-rmk-1} and \eqref{def:comm-rmk-2} are in $L^2(\pSi)$ and that their norms $\norm{C_{k-1}(u_g)}_{L^2(\pSi)}^2+\norm{C_{k}(u_g)}_{L^{4/3}(\Si)}^2$ are bounded by the right hand side of the claimed estimates \eqref{est:proj-improved-est-L2} (for $k=2$) respectively \eqref{est:proj-improved-est-Hk} (for $k\geq 3$). 
    
    As $\varphi\big [P_u^\perp\pnu \nabla^{k-1} u_{g}-\na^{k-1} (P_u^\perp\pnu u_{g})_g\big]$, $\varphi$ a cut-off function as in Remark \ref{collar:uniform}, is given by such a commutator term $C_{k-1}(u_g,u_g)$ of the form \eqref{def:comm-rmk-1}, it hence suffices to establish the claims for $P_u^\perp\pnu \nabla^{k-1} u_{g}$.
    
    In collar coordinates $(s,\theta)$ on a neighbourhood of the boundary component $\si_i$, this term can be written as a linear combination of terms of the form $P_u^{\perp }\ps^{i}\ptheta^{j} u_{g}$ with $1\leq i\leq k-j$. As the claimed estimate is trivially true if $i<k-j$, it suffices to establish that all terms of the form $P_u^{\perp }\ps^{k-j}\ptheta^{j} u_{g}$, $j=0,\ldots, k-1$ are controlled in $L^2(\si_i,\dd\theta)$ by the right-hand sides of \eqref{est:proj-improved-est-L2} respectively \eqref{est:proj-improved-est-Hk}. 
    
    For $j\geq 1$ we use that $(P_u^\perp \ptheta u)_g$ vanishes identically since  $P_u^\perp \ptheta u\vert_\pSi\equiv 0$. This allows us to rewrite
    \beqas 
        P_u^{\perp}\ps^{k-j}\ptheta^{j} u_{g} &= \ps^{k-j}\ptheta^{j-1} \big[ P_u^{\perp}\ptheta u_{g} -(P_u^{\perp}\ptheta u)_{g}\big] -\big[\ps^{k-j}\ptheta^{j-1}  (P_u^{\perp}\ptheta u_{g}) -P_u^{\perp}\ps^{k-j}\ptheta^{j} u_{g}]
    \eeqas
    on the collar neighbourhood. As these are again commutator terms of the form \eqref{def:comm-rmk-1} respectively \eqref{def:comm-j}, we immediately obtain the required estimate on $\norm{P_u^{\perp}\ps^{k-j}\ptheta^{j} u_{g}}_{L^2(\pSi)}$ from Lemma \ref{lemma:commutator-app} for any $j\geq 1$.
     
    Finally, for $j=0$ we can use $(\ps^2+\ptheta^2)u_g=0$ to rewrite the corresponding term as  $P_u^{\perp}\ps^{k} u_{g}=-P_u^{\perp}\ps^{k-2}\ptheta^{2} u_{g}$ reducing this final case to the case where $j=2$ discussed above. 
\end{proof}

\subsection{Proof of Lemma \ref{lemma:bit-more-regular} and of 
Propositions \ref{prop:higher_reg} and \ref{prop:smooth_existence}}\label{subsec:higher_reg_est}
We now want to use the above improved estimates on terms involving $P_u^\perp$ to 
prove the key a priori bounds claimed in Proposition \ref{prop:smooth_existence}. To this end, we first establish the 
a priori estimates on the evolution of the quantities $I_{s}$, $s\in \half \N_{\geq 3}$, defined in \eqref{def:I_k} that we claimed in Proposition \ref{prop:higher_reg}, then show that this proposition is applicable for all $s\leq m$ by giving the proof of Lemma \ref{lemma:bit-more-regular} and finally derive Proposition \ref{prop:smooth_existence} from these two results.

\begin{proof}[Proof of Proposition \ref{prop:higher_reg}]
    For $(u,g)$ as in the proposition we will write for short $I_s(t)=I_s(u(t),g(t))$ for $t\in [t_1,t_2]$ as well as $S_s(t):= S_s(u(t),g(t))$ and will often drop the reference to $t$ if there is no room for confusion. As $S_{s-\half}\leq C (1+\norm{u_{g}}_{H^{s-1/2}(\Si,g)}^2 )\leq C \La$ on $[t_1,t_2]$ for $\La$ as in \eqref{ass:prop-higher-reg-Lambda}, it suffices to prove that  
    \beq\label{est:diff-version-prop-higher}
        \mfrac{\dd}{\dd t}I_s+\mfrac14 I_{s+\half}\leq CS_{s-\half}^{ {\lceil s\rceil }}S_s \text{ on }  [t_1,t_2]
    \eeq 
    for a constant $C$ as considered in the proposition. To carry out this proof simultaneously for all $s\in \half \N_{\geq 3}$, we write all such exponents as $s=k-\half \beta$ for $k={\lceil s\rceil } \in \N_{\geq 2}$, i.e. the smallest integer greater or equal to $s$, and $\beta=2({\lceil s\rceil }-s)\in \{0,1\}$.

    We first note that the control on the evolution of the metric component established in Lemma \ref{lemma:metric_variations} and \eqref{est:velocity-by-energy} allows us to bound the contribution of the change of the metric to the evolution of $I_s$ by 
    \beqa \label{est:ddeps-Ik-metric}
    \mfrac{\dd}{\dd \eps}\Big\vert_{\eps=0}I_s(u,g(\cdot+\eps))\leq  C\norm{\p_t g}_{H^k(\Si)} S_s \leq C S_s, \quad C=C(\iota_0,\bar E).\eeqa
    Writing for short
    \beq\label{def:I-hat-k}
        \hat I_{s} := \int_{\p\Si}\inner{\p_\nu^{1-\beta }\nabla^{k-1}u_g}{\nabla^{k-1}(P_u \p_\nu u )_g}\dd s
    \eeq
    and recalling that $\pt u=-P_u \pnu u_g$, we then note that for $s=k-\half$ (and hence $\beta=1$) 
    $$\mfrac{\dd}{\dd \eps}\Big\vert_{\eps=0}I_s(u(\cdot +\eps ),g) = \int_{\p\Si}\inner{\nabla^{k-1}u_g}{\nabla^{k-1}(\p_tu)_g}\dd s=-\hat I_s,$$
    while for $s=k\in \N_{\geq 2}$,
    \beqa \label{est:ddeps-I-k}
    \mfrac{\dd}{\dd \eps}\Big\vert_{\eps=0}I_s(u(\cdot+\eps),g) &= \int_\Si \inner{\nabla^k u_g}{\nabla^k (\p_t u)_g} \dd v = \int_{\p\Si}\inner{\p_\nu\nabla^{k-1}u_g}{\nabla^{k-1}(\p_tu)_g}\dd s + \err_1\\
        &=-\hat I_s+\err_1
    \eeqa 
    for $\err_1=-\int_\Si \inner{\Div(\na^{k} u_g)}{\na^{k-1}(P_u(\p_\nu u_g))_g}\dd v$. 
    
    We recall that
     $\norm{\De\na^{k-1}u_g}_{L^2(\Si)}\leqs\norm{u_g}_{H^k(\Si)} $  since $u_g$ is harmonic and that terms which are supported away from $\pSi$, such as $(1-\varphi) \na^{k-1}(P_u(\p_\nu u_g))_g$ are well controlled, compare Remark \ref{rmk:extra-commutator-app}. 
     Here we recall that $\varphi$ is a cut-off function with $\varphi\equiv 1 $ near $\p\Si$ which is supported in the collar neighbourhood $\Col(\p\Si)$, compare 
     Remark \ref{collar:uniform}. We now extend $\nu$ to  $\Col(\p\Si)$ as described in that remark and use that 
     we can view 
     $\vph \big[ \na^{k-1}(P_u(\p_\nu u_g))_g- P_u(\p_\nu \na^{k-1} u_g)\big]$ as a commutator term $C_{k-1}(u_g,u_g)$ of the form  \eqref{def:comm-rmk-1}. As we can trivially bound $\norm{\vph P_u(\p_\nu \na^{k-1} u_g)}_{L^2(\Si)}\leqs \norm{u_g}_{H^k(\Si)}$, this allows us to bound the error term $\err_1$ by 
    \beqa 
        \abs{\err_1}\leqs  \norm{u_g}_{H^k(\Si)}^2+\norm{C_{k-1}(u_g,u_g)}_{L^2(\Si)}^2\leqs I_k+S_{k-\half}^k\leqs S_{k-\half}^{k-1}S_k.
    \eeqa  

    In both cases we hence obtain that $\tfrac{\dd}{\dd t}I_s\leq- \hat I_s+CS_{s-\half}^{k-1}S_s$, reducing the proof of \eqref{est:diff-version-prop-higher} to the proof that 
    \beq\label{claim:hat-Is}
        -\hat I_s\leq -\mfrac14 I_{s+\half}+C S_{s-\half}^{k} S_{s}.
    \eeq
    To show this we write $P_u=\text{Id}-P_u^\perp$ and split $\hat I_s= T_s-T_s^\perp$ for 
    \beqa\label{est:split-hatIk}
        T_s &:= \int_{\p\Si}\inner{\p_\nu^{1-\beta}\nabla^{k-1} u_g}{\nabla^{k-1} (\p_\nu u_g)_g}\dd s\\ 
        T_s^\perp&:= \int_{\p\Si}\inner{\p_\nu^{1-\beta}\nabla^{k-1} u_g}{\nabla^{k-1}\left(P_u^\perp(\p_\nu u_g)\right)_g}\dd s
    \eeqa

    To bound the first term for $s=k\in \N_{\geq 2}$ (corresponding to $\beta=0$), we can use that $\norm{\pnu\na^{k-1}u_g - \na^{k-1}(\p_\nu u_g)_g}_{L^2(\pSi)}^2 \leqs S_{k-\half}$, compare \eqref{est:grad_normal_commute} and \eqref{est:commute-L12}, as well as that 
    $$\mhalf \int_{\p\Si}\abs{\nabla^{k} u_g}^2\dd s\leq  \int_{\p\Si}\abs{\p_\nu\nabla^{k-1}  u_g}^2\dd s+C S_{k-\half},$$
    which can e.g. be seen by viewing these expressions in collar coordinates and exploiting that $\ps^2 u=-\ptheta^2 u$. Combined, this allows us to bound 
    \beqas
        -T_k &\leq -\int_{\p\Si}\abs{\p_\nu\nabla^{k-1}  u_g}^2\dd s+C\norm{\p_\nu\na^{k-1} u_g}_{L^2(\pSi)} S_{k-\half}^\hf \leq - I_{k+\half}+ C S_{k-\half}
    \eeqas
    for every $k\in \N_{\geq 2}$. Similarly, we have
    \begin{align*}
        -T_{k-\half}&\leq -\int_{\p\Si}\inner{\nabla^{k-1} u_g}{\p_\nu\nabla^{k-1} u_g}\dd s + C\norm{\na^{k-1} u_g}_{L^2(\pSi)} S_{k-\half}^\half\\
        &\leq  - \mhalf\int_{\p\Si}\p_\nu\abs{\nabla^{k-1} u_g}^2\dd s + C S_{k-\half} = -\mhalf\int_{\Si}\Delta\abs{\nabla^{k-1}u_g}^2\dd v + C S_{k-\hf}\\
        &\leq -\int_{\Si}\abs{\nabla^{k}u_g}^2\dd v + C S_k^\half S_{k-1}^\half + C S_{k-\half}\\
        &\leq- 2 I_{k} + CI_{k}^\half S_{k-1}^\half+CS_{k-\half}\leq -  I_{k} + C S_{k-\half}. 
	\end{align*}
    We hence conclude that 
    \beq\label{est:Ts} 
        -T_s\leq - I_{s+\half}+C S_{s} \text{ for all } s\geq \tfrac32.
    \eeq
    To bound $T_s^\perp$ for $s\geq \frac52$ we use that  \eqref{est:proj-improved-est-Hk} ensures that for $k\geq 3$ 
    \beqas 
        \norm{\nabla^{k-1}\left(P_u^\perp\p_\nu u_g\right)_g}_{L^2(\pSi)}^2\leqs S_{3/2}I_k+S_{5/2} S_{k-1}+S_{k-1}^{k+1}.
    \eeqas
    For $s=k\geq 3$ this term is in particular bounded by $CS_{s} S_{s-1}^k$ allowing us to estimate 
    $$\abs{T_s^\perp}\leq I_{k+\half}^\half \norm{\nabla^{k-1}\left(P_u^\perp\p_\nu u_g\right)_g}_{L^2(\pSi)}\leq \mfrac{1}{4} I_{s+\half}+C S_{s} S_{s-1}^s$$
    while for $s=k-\half$, $k\geq 3$, we can use that $\frac52\leq s$ and $\frac32\leq s-1$ to bound 
    \beqa
        \abs{T_s^\perp} &\leq I_{k-\half}^\half \norm{\nabla^{k-1}\left(P_u^\perp\p_\nu u_g\right)_g}_{L^2(\pSi)} \leq C  I_{s}^\half (S_{s-1} I_{s+\hf} +S_{s} S_{s-\hf} +S_{s-\hf}^{k+1})^\half\\
        &\leq \mfrac14 I_{s+\half} +C S_{s-\half}^k S_s.
    \eeqa 
    
    Similarly, applying \eqref{est:proj-improved-est-all-k} for $k=2$ allows us to bound
    \beqa
        \abs{T_2^\perp}&\leq \norm{\p_\nu\nabla u_g}_{L^2(\pSi)}\norm{\nabla\left(P_u^\perp\p_\nu u_g\right)_g}_{L^2(\pSi)}\leq CI_{5/2}^{1/2} \big[I_{3/2} I_2+S_{3/2}^3]^\half\\
        &\leq \mfrac14 I_{5/2} +C S_{3/2}^2 S_2.
    \eeqa
    Combined, these estimates on $T_s$ and $T_s^\perp$, $s\geq 2$, imply that
    $$-\hat I_s=-T_s+T_s^\perp \leq -\mfrac14 I_{s+\half}+C S_{s-\half}^{k} S_{s}$$
    for all such $s\geq 2$ and a constant $C=C(\iota_0,s)$, i.e. establish the estimate \eqref{claim:hat-Is} that was required to prove the claim of the proposition for these $s$. 

    It hence remains to prove this estimate \eqref{claim:hat-Is} for $s=\frac32$, where we recall that in this one case we have claimed that such an estimate holds for a constant of the form $C=C_0(\iota_0) r_0^{-2}$, $r_0$ so that \eqref{ass:I-evolv} holds, rather than for a constant $C=C(\iota_0,s)$. 

    While we can continue to use  \eqref{est:Ts} to bound $-T_s$ if $s=\frac32$, in this case we need to analyse the term $T_s^\perp$ more carefully. For this we use the well known fact that
    \beq\label{est:L4-improved}
        \int_{\Si}\abs{\na U}^4\dd v \leq C \sup_{x\in \Si} E_g(U;B_r^g(x)) \norm{U}_{H^2(\Si)}^2+Cr^{-2} E_g(U), \quad C=C(\iota_0)
    \eeq
    for any $U\in H^2(\Si)$ and any $r\in (0,\inj(g))$, which can e.g. be obtained by applying the bound 
    $$\norm{\phi \abs{\na u}}_{L^4(\Si)}^4=\norm{\phi^2 \abs{\na u}^2}_{L^2(\Si)}^2\leq C \norm{\phi^2 \abs{\na u}^2}_{W^{1,1}(\Si)}^2$$
    for a suitable cover of balls and corresponding cut-off functions $\phi$. 

    Choosing $r=r_0\in (0,\frac12 c_1)$, $c_1>0$ as in Lemma \ref{lemma:loc_reg} so that \eqref{ass:I-evolv} holds for $\de_0=\de_0(\iota_0,\bar E) \in (0,\min(1,\de_1))$ chosen below, this allows us to estimate 
    $$\norm{\na u_g}_{L^4(\Si)}\leq C\de_0^{1/4} I_2^{1/4}+Cr_0^{-1/2} I_1^{1/4}$$
    for a constant $C$ which only depends on $\iota_0$. Combined with \eqref{est:proj-improved-est-L2}, and recalling that $I_1 \leq 1 + \bar{E}$, this allows us to bound 
    \beqas
        \norm{\nabla\left(P_u^\perp(\p_\nu u_g)\right)_g}_{L^2(\pSi)} &\leq C(1+\norm{\na u_g}_{L^4(\Si)}) \norm{\na^2 u}_{L^2(\Si)}+C(1+\norm{\na u_g}_{L^4(\Si)})^3\\
        &\leq C \de_0^{1/4} I_2^{3/4} +Cr_0^{-1/2}I_2^\half+Cr_0^{-3/2}
    \eeqas
    for $C=C(\iota_0,\bar{E})$. As the standard interpolation inequality \eqref{app:Hs-interpol} ensures that 
    $$\norm{\nabla u_g}_{L^2(\pSi)}\leq C \norm{u_g}_{H^{3/2}(\Si)}\leq C\norm{u_g}_{H^1(\Si)}^\hf\norm{u_g}_{H^2(\Si)}^\hf\leq C I_2^\frac{1}{4}, \quad C=C(\iota_0,\bar E)$$
    we hence obtain that $T_{\frac{3}{2}}^\perp = \int_{\p\Si}\inner{\nabla u_g}{\nabla (P_u^\perp \p_\nu u_g)_g}\dd s$ is bounded by
    \beqa
        \abs{T_{3/2}^\perp}&\leq  C \de_0^{1/4} I_2+Cr_0^{-1/2} I_2^{3/4}+C r_0^{-3/2} I_2^{1/4}\\
        &\leq (C \de_0^{1/4}+\mfrac18)  I_2+ C  r_0^{-2} \leq \mfrac14 I_2 +C r_0^{-2}
    \eeqa 
    for a constant $C = C(\iota_0,\bar{E})$ and where the last estimate holds provided $\de_0=\de_0(\iota_0,\bar{E})>0$ is chosen small enough. 

    This completes the proof of the bound \eqref{claim:hat-Is} in this final case where $s=\frac32$ and hence completes the proof of the key Proposition \ref{prop:higher_reg}.
\end{proof}

To prove Lemma \ref{lemma:bit-more-regular}, it is now convenient to 
write the equation for the map component as
\beq\label{est:half-heat-flow}
    \pt u+\pnu u_g=P_u^\perp\pnu u_g
\eeq
to exploit the 
 extra regularity of the right hand side established in Section \ref{subsect:normal-proj}.

\begin{proof}[Proof of Lemma \ref{lemma:bit-more-regular}]
    We will use that if a function $u$ satisfies $\pt u+\pnu u_g=f$ for some $f:[0,T]\times \pSi\to \R^n$ with $\na^{m-1} f_g\in L^2([0,T];L^{2}(\pSi,g))$ and some 
   $g\in C^1_{loc}([0,T];\MM^{m+1})$, then 
    \beqas
        I_m((u,g)(t))
        +\int_0^t\norm{\pnu \na^{m-1} u_g}_{L^2(\pSi)}^2\dd t' &\leq 
            I_m((u,g)(0)) +C\int_0^t \norm{u_g(t)}_{H^m(\Si)}^2 \dd t'\\
        & \quad + C \int_0^t \norm{\na^{m-1}f_g}_{L^{2}(\pSi)}^2+ \norm{\na^{m-1}f_g}_{L^{2}(\Si)}^2\dd t'.
    \eeqas
    We note that it suffices to show this for sufficiently regular $u$ and $f$, as the linearity of the problem means that standard approximation arguments are applicable. For such $u$ and $f$ this estimate follows from a short calculation,  using a simplified version of arguments carried out in detail in the proof of Proposition \ref{prop:higher_reg} above,  noting first that $\mfrac{\dd}{\dd \eps}\vert_{\eps=0}I_m(u,g(\cdot+\eps))\leqs \norm{u_g(t)}_{H^m(\Si)}^2$, compare  \eqref{est:ddeps-Ik-metric}, and then using that
   $$\mfrac{\dd}{\dd \eps}\vert_{\eps=0}I_m(u(\cdot+\eps),g) =\int_\pSi \pnu \na^{m-1} u_g\cdot\na^{m-1}  (-\pnu u_g+f)_g \dd s_g +\err$$ for $$\abs{\err}=\abs{\int_\Si \inner{\Div(\na^{m} u_g)}{\na^{m-1}(-\p_\nu u_g+f)_g}\dd v}\leqs \norm{u}_{H^m(\Si)}\cdot (\norm{u}_{H^m(\Si)}+\norm{\na^{m-1}f_g}_{L^2(\Si)} ),$$ compare \eqref{est:ddeps-I-k}.

    As \eqref{est:proj-improved-est-all-k} ensures that $\na^{m-1}(P_u^\perp\pnu u_g)_g\in L^2([0,T];L^2(\pSi,g)) $ for solutions $(u,g)$ of \eqref{eq:flow} as considered in the lemma, we can apply this to deduce that indeed
$u_g\in L^2([0,T];H^{m+\half}(\Si,g))$.
\end{proof}

\begin{proof}[Proof of Proposition \ref{prop:smooth_existence}]
    We note that it suffices to prove this proposition for $T\leq 1$ as the desired estimates at times $t_0>1 $ can be obtained by considering the solution $(u,g)(t+t_0-1)$ on $[0,1]$ and using that the initial energy of this flow is also bounded by $\bar E$ since the energy is non-increasing along the flow.

    As Lemma \ref{lemma:bit-more-regular} ensures that the map component of the solution has the regularity required in Proposition \ref{prop:higher_reg} for any $s\leq m$, we can obtain the first estimate directly by applying this proposition iteratively on $[0,T]$, starting with $s=\frac32$ for which  \eqref{ass:prop-higher-reg-Lambda} is valid for $\La=C(1+\bar{E})$ as the energy is non-increasing along the flow.

    To prove the second estimate \eqref{est:higher_reg_smoothing} we first apply Lemma \ref{lemma:loc_reg} to conclude that on $[0,T]$ 
    \begin{equation*}
    	\norm{u_g}_{H^{3/2}(\Si)}^2+	\norm{\nabla u_g}_{L^2(\p\Si)}^2 \leq C(r_T,\iota_0) \left(\norm{P_u \p_\nu u_g}_{L^2(\p\Si)}^2 + \norm{u_g}_{H^1(\Si)}^2\right) 
    \end{equation*} 
    where we note that this lemma is applicable as $\de_0\leq \de_1$. 
    As the energy decays according to \eqref{eq:energy-decay}  along the flow, we  hence deduce that 
    \beq\label{est:int-I3/2}
        \int_{0}^{T}\int_{\p\Si}\abs{\na u_g}^2 \dd s \dd t \leq C (1+T)\leq C \text{ for } C=C(\iota_0, r_0,\bar E).
    \eeq
    We can now use this bound to select a time $t_1\in [0,\tau/2]$ with  $I_{\frac{3}{2}}(t_1)\leq C_1=\frac{2}{\tau} C(\iota_0,r_0,\bar E).$ Proposition \ref{prop:higher_reg}, applied for $s = \frac{3}{2}$ on $[t_1,T]$, then ensures that
	$$\sup\limits_{[t_1,T]} I_\frac{3}{2} + \int_{t_1}^T I_{2} \dd t \leq C I_{\frac{3}{2}}(t_1) \leq C_2(\iota_0,r_0,\tau,\bar E),$$
    This not only provides a uniform estimate for $I_\frac{3}{2}$ on  $[\tau,T] \subs [t_1,T]$, but also allows us to select the next $t_2\in (\hf \tau,\frac{3}{4}\tau)$ with which we can continue to iteratively apply Proposition \ref{prop:higher_reg} to establish the claimed bound \eqref{est:higher_reg_smoothing}. 
\end{proof}

\subsection{Proof of Lemma \ref{lemma:loc_reg}}\label{subsec:loc-reg-lemma}
We will derive Lemma \ref{lemma:loc_reg} from the following analogue statement about functions defined on Euclidean half-discs $\DD_r^+=\DD_r\cap \HH$, $\HH:= [0,\infty)\times \R$. 
\begin{lemma}\label{lemma:loc_reg_euc}
    For any $c_0\in (0,1)$ there exist constants $C,\de_2>0$ such that the following holds true. Let $r>0$, let $w:\DD_r^+\to \R^n $ be a harmonic function which maps $\DD_r^+\cap \p\HH$ into $N$ and let $\Omega_1 \subs \Omega_0 \subs \DD_r^+$ be so that 
    $$\dist_{g_0}(\p\Omega_1 \setminus \p\HH,\p\Omega_0\setminus\p\HH) \greq c_0r \text{ and } E(w;\Omega_0) \leq \de_2.$$
    Then we can bound 
    \begin{align}\label{est:euc_loc_reg}
        \norm{\nabla w}_{L^2(\Omega_1\cap\p\HH)}^2 &\leq C(\norm{P_w \p_{\nu_\HH} w}_{L^2(\Omega_0\cap \p\HH)}^2 + r^{-1}\norm{\nabla w}_{L^2(\Omega_0)}^2 + r^{-3}\norm{w}_{L^2(\Omega_0)}^2)
    \end{align}
    where all quantities are computed with respect to the euclidean metric. 
\end{lemma}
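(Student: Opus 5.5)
We follow the strategy of \cite[Proposition 3.3]{S}, which treats the unit disc. By scaling $w_r(x):=w(rx)$ we reduce to $r=1$: all three terms on the right of \eqref{est:euc_loc_reg} then scale exactly like the left hand side, since the energy is scale invariant, boundary $L^2$-norms of gradients scale like $r^{-1}$, and $L^2$-norms over $\Omega_0$ like $r^{-3}$. We localise with a cut-off $\phi\in C_c^\infty(\R^2)$ with $\phi\equiv 1$ on $\Omega_1$, $\supp\phi\cap\HH\subset\Omega_0\cup\p\HH$ and $\abs{\na\phi}+\abs{\na^2\phi}\leq C(c_0)$; such a $\phi$ exists by the separation assumption. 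The basic identity is the Rellich/Pohozaev-type identity for harmonic $w$ on the half plane. We test $\De w=0$ with $\phi^2\p_s w$, where $s$ is the normal coordinate, so $\nu_\HH=-\p_s$. Integrating by parts gives
\begin{equation*}
\int_{\p\HH}\phi^2\big(\abs{\p_s w}^2-\abs{\p_\theta w}^2\big)\dd\theta = O\Big(\int_{\Omega_0}\abs{\na\phi}\,\phi\,\abs{\na w}^2\Big).
\end{equation*}
This is the usual balance of normal and tangential derivatives on the boundary.

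Since $w$ maps $\p\HH$ into $N$, the tangential derivative satisfies $\p_\theta w=P_w\p_\theta w$ there. We split the normal derivative as $\p_s w=P_w\p_s w+P_w^\perp\p_s w$. The tangential part $P_w\p_s w$ is exactly the quantity controlled on the right of \eqref{est:euc_loc_reg}. The difficulty is the normal part $P_w^\perp\p_s w$, which is not directly controlled. The identity above only yields $\int\phi^2\abs{\p_\theta w}^2\leq\int\phi^2\abs{\p_s w}^2+\dots$, which is the wrong direction. We therefore need a second ingredient to control $\int_{\p\HH}\phi^2\abs{P^\perp_w\p_s w}^2$.

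For this we use the structure of half-harmonic type equations, as in \cite{DaLio-Riviere,S}. Writing $P^\perp_w\p_s w=\p_s(P^\perp_w w)$-type commutators, or more concretely integrating $\abs{P^\perp_w\p_s w}^2$ against $\phi^2$ and using $P^\perp_w\p_\theta w=0$ on $\p\HH$, we aim to extend $P^\perp$ harmonically (or via $P_{w}$ evaluated in the interior) and rewrite the boundary integral as a bulk integral of the form $\int_{\Omega_0}\phi^2\,\abs{\na P_w}\,\abs{\na w}\,\abs{\na^2 w}$ plus lower order terms. H\"older's inequality then bounds this bulk term by $\norm{\na w}_{L^2(\Omega_0)}\norm{\phi\na^2 w}_{L^2}\norm{\phi \na w}_{L^\infty}$-like quantities. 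A cleaner route is to use $\norm{\phi\na w}_{L^4}^2\leq C\norm{\na w}_{L^2(\supp\phi)}\norm{\na(\phi\na w)}_{L^2}$ (Ladyzhenskaya) together with the boundary trace estimate $\norm{\na(\phi w)}_{H^{1/2}}\lesssim\norm{\phi\na w}_{L^2(\p\HH)}+\dots$ for harmonic functions. This gives
\begin{equation*}
\int_{\p\HH}\phi^2\abs{P^\perp_w\p_s w}^2\leq C\de_2^{1/2}\int_{\p\HH}\phi^2\abs{\na w}^2 + C\big(\norm{\na w}_{L^2(\Omega_0)}^2+\norm{w}_{L^2(\Omega_0)}^2\big).
\end{equation*}
Here the factor $\de_2^{1/2}$ comes from the smallness of the energy on $\Omega_0$ multiplying a quadratic quantity in $\na w$.

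We then combine the two ingredients. From the Rellich identity and $\abs{\na w}^2=\abs{\p_s w}^2+\abs{\p_\theta w}^2$ we get
\begin{equation*}
\int\phi^2\abs{\na w}^2\leq 2\int\phi^2\abs{\p_s w}^2+C\norm{\na w}_{L^2(\Omega_0)}^2.
\end{equation*}
Splitting $\abs{\p_s w}^2=\abs{P_w\p_s w}^2+\abs{P^\perp_w\p_s w}^2$ and inserting the normal-part bound, we absorb the term $C\de_2^{1/2}\int\phi^2\abs{\na w}^2$ for $\de_2$ small, which yields \eqref{est:euc_loc_reg} at scale $r=1$. The $L^2(\Omega_0)$ term enters through interior $H^2$ estimates for $\phi w$, used to bound $\norm{\na(\phi\na w)}$. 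The main obstacle is the estimate of $P^\perp_w\p_s w$: the $L^4$/trace interpolation must be arranged carefully so that only $\norm{\phi\na w}_{L^2(\p\HH)}$ appears multiplied by the small energy, without requiring $H^2$ bounds up to the boundary that we do not have a priori. If this is delicate, we would first prove the estimate for smooth $w$ and then argue by approximation.
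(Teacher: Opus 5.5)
Your overall architecture is the paper's, but the estimate for the normal component, which is the heart of the lemma, is not proved. The shared parts are: rescale to $r=1$, localise with a cut-off, reduce the full boundary gradient to the normal derivative, split $\partial_s w=P_w\partial_s w+P_w^\perp\partial_s w$, and absorb a term $C\de_2^{1/2}\|\phi\nabla w\|_{L^2(\partial\HH)}^2$. Your Rellich identity is a legitimate substitute for the paper's first step. (The paper instead bounds $\|\nabla(\phi v)_{\Si_0}\|_{L^2(\partial\Si_0)}$ by $C\|\partial_s(\phi v)_{\Si_0}\|_{L^2(\partial\Si_0)}$, for harmonic extensions $v_{\Si_0}$ to the flat cylinder $\Si_0=[0,1]\times S^1$ containing $\DD_1^+$.) The inequality you state for $P^\perp_w\partial_s w$ is exactly the one the paper proves.

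The routes you sketch for that inequality fail. A bulk term $\int\phi^2|\nabla P_w||\nabla w||\nabla^2 w|$, an $L^\infty$ bound on $\phi\nabla w$, and Ladyzhenskaya with $\|\nabla(\phi\nabla w)\|_{L^2}$ all need $H^2$ control of $w$ up to $\partial\HH$. The left-hand side of \eqref{est:euc_loc_reg} only carries $H^{3/2}$-level information. There are no ``interior $H^2$ estimates for $\phi w$'' here, since $\phi\not\equiv 0$ on $\partial\HH$. Controlling $\nabla^2 w$ near $\partial\HH$ would require exactly the boundary regularity you are trying to prove. A naive divergence theorem on $\int_{\partial\HH}\phi^2|P^\perp_w\partial_s w|^2$ also fails: it produces a quadratic term $\phi^2\,\partial_s^2 w\cdot P_w^\perp\partial_s w$, which is neither small nor of lower order.

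The missing idea is a Jacobian (null) structure that removes all second derivatives.
\begin{itemize}
\item Write $\|\phi P_w^\perp\partial_s w\|_{L^2(\partial\HH)}^2$, up to sign, as $\int\nabla w\cdot\nabla(\phi^2P_w^\perp\partial_s w)$.
\item Up to errors bounded pointwise by $C(\phi^2|\nabla w|^2+\phi|\nabla w|)$, replace $\partial_\theta(\phi^2P_w^\perp\partial_s w)$ by $\partial_s(\phi^2P_w^\perp\partial_\theta w)$. The second derivatives cancel, leaving only derivatives of $P_w$ and $\phi$.
\item By harmonicity, replace $\partial_s(\phi^2P_w^\perp\partial_s w)$ by $-\partial_\theta(\phi^2P_w^\perp\partial_\theta w)$, with errors of the same type.
\item With $F:=\phi^2P_w^\perp\partial_\theta w$, the main term is $\int\partial_\theta w\cdot\partial_s F-\partial_s w\cdot\partial_\theta F$. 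Integrating by parts, this equals $-\int_{\partial\HH}\phi^2|P_w^\perp\partial_\theta w|^2\,d\theta$, which vanishes because $w(\partial\HH)\subset N$.
\end{itemize}
This gives
\begin{equation*}
\|\phi P_w^\perp\partial_s w\|_{L^2(\partial\HH)}^2\le C\int\phi^2|\nabla w|^3+C\int\phi|\nabla w|^2\le C\|\nabla w\|_{L^2(\Omega_0)}\|\phi\nabla w\|_{L^4}^2+C\|w\|_{H^1(\Omega_0)}^2 ,
\end{equation*}
which involves only first derivatives.

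It remains to show $\|\phi\nabla w\|_{L^4}\le C\|\phi\nabla w\|_{L^2(\partial\HH)}+C\|w\|_{H^1(\Omega_0)}$. This follows from $H^{1/2}\hookrightarrow L^4$ and $\|v_{\Si_0}\|_{H^{1/2}}\le C\|v\|_{L^2(\partial\Si_0)}$ applied to $v=\nabla(\phi w)$. The difference $\nabla(\phi w)-(\nabla(\phi w))_{\Si_0}$ is bounded in $H^1$ by $C\|\Delta(\phi w)\|_{L^2}\le C\|w\|_{H^1(\Omega_0)}$. Only with this cancellation does your $\de_2^{1/2}$ absorption close.
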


To derive Lemma \ref{lemma:loc_reg} from this statement, we can work in the collar coordinates $(s,\theta)\in [0,X(\ell_i)]\times S^1$ of the boundary curve $\si_i$ that contains $x_0$, noting that $X(\ell_i)\geq c_3(\iota_0)>0$ and that the conformal factor satisfies  $c_2(\iota_0)\leq \rho\leq C_2(\iota_0)$, compare Remark \ref{collar:uniform}. 

For $c_1:=\half \min( \iota_0,c_3 C_2^{-1})$ we hence know that $B_{2r}^g(x_0)$, $r\in (0,c_1),$ is contained in this collar neighbourhood  and that the subsets $\Om_1\subset \Om_0$  of $\R \times S^1$ which represent  $B_r^g(x_0)\subset B_{2r}^g(x_0)$ are so that 
$$\Om_0\subset B_{2C_2r}^{g_0} \text{ and } \dist_{g_0}(\partial \Om_1\setminus \si_i, \partial \Om_0\setminus \si_i)\geq \inf \rho^{-1} r\geq C_2^{-1} r, \text{ for } g_0=\dd s^2 + \dd\theta^2.$$
We can hence apply Lemma \ref{lemma:loc_reg_euc} for $c_0(\iota_0):= \half C_2^{-2}$, set $\de_1(\iota_0):= \de_2(c_0(\iota_0))$ and exploit the conformal invariance of the energy to deduce that the function $w$ that represents $u_g$ in collar coordinates satisfies \eqref{est:euc_loc_reg}. Combined with the uniform bounds on the conformal factor, this yields the claimed estimate \eqref{est:H1_boundary} for $u_g$. 
 
\begin{proof}[Proof of Lemma \ref{lemma:loc_reg_euc}]
    As all terms in \eqref{est:euc_loc_reg} have the same scaling behaviour  under dilations, we can assume without loss of generality that $r=1$.  Given $\Om_1\subset \Om_0$ as in the lemma, we can then choose a  cut-off function $\vph\in C_c^\infty(\Om_0) $ such that $\vph \equiv 1$ on $\Omega_1$ and $\norm{ \vph}_{C^2} \leq C =C(c_0)$ by taking the convolution of the characteristic function of $\{x : \dist_{g_0}(x,\Omega_1) \leq \tfrac{c_0}{2}\}$ with a mollifying kernel that is supported on $\DD_{c_0/2}$.

    For the rest of the proof we will viewing these subsets of $\DD_1^+\subset  [0,1]\times (-\pi,\pi)\subset \HH$ as subsets of the cylinder $(\Si_0:= [0,1]\times S^1, g_0=  \dd s^2 + \dd \theta^2)$ and carry out all computations using these coordinates $(s,\theta)$ and metric $g_0$. We furthermore write for short $\si_0 = \{0\}\times S^1$ for the boundary curve of $\Si_0$ which contains the curve $\DD_1^+\cap \p\HH$ that is mapped to $N$ under $w$ and denote by $v_{\Si_0}$ the harmonic extension of maps $v:\partial \Si_0\to \R^n$ to $(\Si_0,g_0)$. 

    We first recall that for any function $v:\Om_0\to \R^n$  we can bound
    \begin{equation}\label{est:extra-proof-1}
        \norm{\nabla (\vphi v)_{\Si_0}}_{L^2(\p \Si_0)} \leq C\norm{\p_{\nu_{\Si_0}} (\vphi v)_{\Si_0}}_{L^2(\p \Si_0)}=C\norm{\p_{s} (\vphi v)_{\Si_0}}_{L^2(\p \Si_0)}.
    \end{equation}
    As $\norm{\vph}_{C^2}\leq C$ we can furthermore  bound $\norm{v \na \vph}_{L^2(\p\Si_0)} \leq C\norm{v \na \vph}_{H^1(\Si_0)}\leq C\norm{v }_{H^1(\Om_0)}$ and $\norm{\nabla(\vph v - (\vph v)_{\Si_0})}_{L^2(\p\Si_0)} \leq C\norm{\Delta (\vphi v)}_{L^2(\Si_0)}\leq C\norm{v}_{H^1(\Omega_0)}$, where the last estimate holds provided $v$ is harmonic. Combined we hence get  
    \begin{align*}
        \norm{\vph \nabla v}_{L^2(\p\Si_0)} 
        &\leq \norm{\nabla(\vph v)_{\Si_0}}_{L^2(\p\Si_0)} + \norm{\nabla(\vph v - (\vph v)_{\Si_0})}_{L^2(\p\Si_0)} + C\norm{v }_{H^1(\Om_0)}\\
        &\leq C (\norm{\p_s[(\vph v)_{\Si_0}-\vphi v]}_{L^2(\p\Si_0)} + \norm{\p_s[\vphi v]}_{L^2(\p\Si_0)})+ C\norm{v }_{H^1(\Om_0)}\\
        &\leq C\norm{\vph\p_s v}_{L^2(\p\Si_0)} +C\norm{v }_{H^1(\Om_0)}
    \end{align*}
    for any harmonic $v:\supp(\vphi)\cap \Si_0\to \R^n$. Applied for $w$, which maps $\supp(\vph)\cap \pSi=\supp(\vph)\cap \si_0$ into $N$, this yields 
    \begin{equation}\label{est:cambidge-1}
        \norm{\vph \nabla w}_{L^2(\p\Si_0)}^2 \leq C\norm{\vph P_w^\perp\p_s w}_{L^2(\si_0)}^2 + C\norm{\vph P_w\p_s w}_{L^2(\si_0)}^2 + C\norm{w}_{H^1(\Omega_0)}^2.
    \end{equation}
 
    To bound the first term on the right hand side, we recall that  $w$ is harmonic and that $\vphi$ vanishes on $\pSi\setminus \si_0$ and use the divergence theorem to write 
    \beqa\label{est:nearly-zero}
        \norm{\vph P_w^\perp\p_s w}_{L^2(\si_0)}^2 &= \int_{\p\Si_0}\p_s w \cdot \vph^2 P_w^\perp \p_s w \dd \theta = \int_{\Si_0}\nabla w \cdot \nabla (\vph^2 P_w^\perp \p_s w) \dd v\\
        &= \int_{\Si_0}\p_s w \cdot \p_s (\vph^2 P_w^\perp \p_s w) + \p_\theta w \cdot \p_\theta (\vph^2 P_w^\perp \p_s w) \dd v.
    \eeqa
    We can now use that $\abs{\p_\theta (\vph^2 P_w^\perp \p_s w) - \p_s (\vph^2 P_w^\perp \p_\theta w)}\leq C (\vphi^2\abs{\na w}^2 +\vphi\abs{\na w})$ since  error terms that result from commuting a derivative with the projection are bounded by $C\vphi^2\abs{\na w}^2$ while terms that result from a derivative falling onto a cut-off function are bounded by $C\vphi\abs{\na w}$. As $\p_{ss}u+\p_{\theta\theta} w=0$ we can also bound  $  \abs{\p_s (\vph^2 P_w^\perp \p_s w)+\p_\theta (\vph^2 P_w^\perp \p_\theta w)}\leq C (\vphi^2\abs{\na w}^2+\vphi\abs{\na w})$. 
        
    Denoting the error term which results from inserting these bounds into \eqref{est:nearly-zero} by $\err_1$, we now observe that the main term in the resulting formula of 
    \beqas       
        \norm{\vph P_w^\perp\p_s w}_{L^2(\si_0)}^2 
        &= \int_{\Si_0} -\p_s w \cdot  \p_\theta(\vph^2 P_w^\perp \p_{\theta} w ) + \p_\theta w \cdot  \p_s(\vph^2 P_w^\perp \p_{\theta} w) \dd v + \err_1
    \eeqas
    vanishes identically, since integration by parts (with respect to $\theta$ for the first term and with respect to $s$ for the second) results in integrals over $\Si_0$ which cancel out and since the integrand $\ptheta w \cdot \varphi^2 P_w^\perp \p_{\theta} w$ of the resulting boundary terms  vanishes identically on $\pSi$ as $w$ maps $\pSi\cap \supp(\phi)\subset \si_0$ into $N$.

    We hence conclude that 
    \beqa \label{est:phi-P-w}      
        \norm{\vph P_w^\perp\p_s w}_{L^2(\si_0)}^2 &=\err_1 \leq C\int_{\Si_0}\vph^2 \abs{\nabla w}^3 \dd v + \vph \abs{\nabla w}^2 \dd v\\
        &\leq C\norm{\nabla w}_{L^2(\Omega_0)} \norm{\vph \nabla w}_{L^4(\Si_0)}^2+ C\norm{w}_{H^1(\Omega_0)}^2.
    \eeqa
    We now recall that $H^\hf(\Si_0) \embed L^4(\Si_0)$ and that  standard theory for harmonic functions allows us to bound $\norm{v_{\Si_0}}_{H^\hf(\Si_0)} \leq C\norm{v}_{L^2(\p \Si_0)}$ for any $v:\pSi_0\to \R^n$. As 
    \begin{align*}
        \norm{\nabla(\vph w) - (\nabla(\vph w))_{\Si_0}}_{H^\hf(\Si_0)} &\leq \norm{\nabla(\vph w) - (\nabla(\vph w))_{\Si_0}}_{H^1(\Si_0)}\leq C\norm{\Delta \nabla(\vph w) }_{H^{-1}(\Si_0)}\\
        &\leq C\norm{\Delta (\vph w)}_{L^2(\Si_0)} \leq C\norm{w}_{H^1(\Omega_0)},
    \end{align*}
    we can hence bound  
    \begin{align*}
        \norm{\vph \nabla w}_{L^4(\Si_0)} &\leq \norm{\nabla (\vph w)}_{L^4(\Si_0)} + \norm{\nabla(\vph) w}_{L^4(\Si_0)}\\
        &\leq C\norm{(\nabla(\vph w))_{\Si_0}}_{H^\hf(\Si_0)} + C\norm{\nabla(\vph w) - (\nabla(\vph w))_{\Si_0}}_{H^\hf(\Si_0)} + C\norm{w}_{H^1(\Omega_0)}\\
        &\leq C\norm{\nabla(\vph w)}_{L^2(\p \Si_0)} +C\norm{w}_{H^1(\Omega_0)}\\
        &\leq C\norm{\vph \nabla w}_{L^2(\p \Si_0)} + C\norm{w}_{H^1(\Omega_0)}.
    \end{align*}
    Combined with \eqref{est:cambidge-1}, \eqref{est:phi-P-w} and the assumed smallness of the energy, this yields 
    $$\norm{\vph \nabla w}_{L^2(\p\Si_0)}^2 \leq C\de_2^{\half} \norm{\vph \nabla w}_{L^2(\p\Si_0)}^2 + C\norm{\vph P_w\p_s w}_{L^2(\si_0)}^2+C\norm{w}_{H^1(\Omega_0)}^2,$$ 
    which, for suitably small $\de_2=\de_2(c_0)>0$, implies the claimed estimate.
\end{proof}

\begin{rmk}\label{rmk:bubble_energy_lower_bound}
    We note that the same argument can be carried out 
    on the whole semi-infinite cylinder $[0,\infty)\times S^1$ with $\vphi\equiv 1$
 to see that the estimate  
    \begin{equation*}
        \norm{\nabla w}_{L^2(\{0\}\times S^1)} \leq C\norm{P_w \p_s w}_{L^2(\{0\}\times S^1)}
    \end{equation*}
    holds for any map $w:[0,\infty)\times S^1\to \R^n$ with $w(\{0\}\times S^1)\subset N$ whose \textit{total energy} $E(w,[0,\infty)\times S^1)$ is smaller than the constant $\de_2$ obtained in the above lemma. As the right hand side vanishes for half-harmonic maps 
    and as $[0,\infty)\times S^1$ is conformal to a punctured disc this excludes the existence of any non-trivial half-harmonic maps from the disc with energy less than $\de_2$, providing an alternative proof of Corollary 3.2 of \cite{S} that applies irrespective of whether $TN$ is parallelisable. 
\end{rmk}

\subsection{Local energy estimates and proof of Proposition \ref{prop:existence-all-initial-data}}\label{subsec:loc-energy}
Both in the proof of the existence of solutions for general initial data as well as in the analysis of finite time singularities, we will make use of the following local energy estimates. 

\begin{lemma}\label{lemma:Ecut}
    For any fixed function $\vph\in C_c^1(\Si,[0,1])$ and any solution $(u,g)$ of the flow \eqref{eq:flow} we can control the evolution of  the localised energy
    \begin{equation*}       
        E_\vph(t)=E_\vphi(u(t),g(t)):=\mhalf\int_{\Si} \vph^2 \abs{\nabla u_g(t)}^2 \dd v
    \end{equation*}
    by 
	\beqs\label{est:loc_E_evol}
		\bigg| \tfrac{\dd}{\dd t} \Ecut+\int_{\pSi}\varphi^2 \abs{\p_t u}^2 \dd s \bigg| \leq C\norm{\pt g}_{L^2(\Si)}\Ecut + C(\norm{\pt g}_{L^2(\Si)}+\norm{\partial_t u}_{L^2(\pSi)}) \norm{\grad \vph}_{L^4(\supp(\vph))} \Ecut^\half
	\eeqs
	for a constant $C>0$ that only depends on an   upper bound $\bar{E}$ on the total energy $E(u(t),g(t))$ and a lower bound $\iota_0>0$ on the injectivity radius $\inj(g(t))$ at the corresponding time. 
\end{lemma}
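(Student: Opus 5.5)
We split $\tfrac{\dd}{\dd t}\Ecut$ into the part coming from the change of the metric (with $u$ fixed) and the part coming from the change of the boundary map (with $g$ fixed):
\[
\tfrac{\dd}{\dd t}\Ecut=\tfrac{\dd}{\dd \eps}\big|_{\eps=0}\Ecut(u,g(\cdot+\eps))+\tfrac{\dd}{\dd \eps}\big|_{\eps=0}\Ecut(u(\cdot+\eps),g).
\]
Each part is then estimated separately.

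\textbf{Metric variation.} Changing $g$ moves both the integrand and the harmonic extension $u_g$. The variation of $\tfrac12\int\vph^2|\na U|_g^2\dd v_g$ with $U=u_g$ fixed equals $-\tfrac12\int\vph^2\langle \pt g,k(u_g,g)\rangle\dd v$. Since $\pt g\in H(g)$ is smooth, \eqref{est:velocity-by-energy} and Sobolev embedding give $\norm{\pt g}_{L^\infty}\le C\norm{\pt g}_{L^2}$: on the finite-dimensional horizontal space all norms are equivalent with constants depending only on $\iota_0$ (Remark \ref{rmk:compact-moduli}). This part is therefore bounded by $C\norm{\pt g}_{L^2}\Ecut$.

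The change of the extension, $W:=\peps u_{g(\cdot+\eps)}$, vanishes on $\pSi$ and solves $\De_g W=-(\peps\De_{g_\eps})u_g$. The corresponding term is
\[
\int\vph^2\na u_g\cdot\na W=-\int 2\vph\na\vph\cdot\na u_g\,W,
\]
after integrating by parts, using harmonicity of $u_g$ and $W|_{\pSi}=0$. Elliptic estimates give $\norm{W}_{H^1}\le C\norm{\pt g}_{C^1}\norm{\na u_g}_{L^2}\le C\norm{\pt g}_{L^2}\bar E^{1/2}$, hence $\norm{W}_{L^4}$ is bounded by the same quantity. Hölder then bounds this term by $C\norm{\pt g}_{L^2}\norm{\na\vph}_{L^4}\Ecut^{1/2}$.

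\textbf{Map variation.} Set $V=(\pt u)_g$. Integrating by parts using $\De u_g=0$ gives
\[
\int\vph^2\na u_g\cdot\na V=\int_{\pSi}\vph^2\pnu u_g\cdot \pt u\,\dd s+2\int\vph V\,\na\vph\cdot\na u_g .
\]
Because $\pt u=-P_u\pnu u_g$ is tangential, $\pnu u_g\cdot\pt u=-|\pt u|^2$, so the boundary integral is exactly $-\int_{\pSi}\vph^2|\pt u|^2$. For the interior term we use Hölder: $|\ldots|\le 2\norm{\vph\na u_g}_{L^2}\norm{\na\vph}_{L^4}\norm{V}_{L^4}$. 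Harmonic extension together with $H^{1/2}(\Si)\hookrightarrow L^4$ gives $\norm{V}_{L^4}\le C\norm{V}_{H^{1/2}(\Si)}\le C\norm{\pt u}_{L^2(\pSi)}$, which produces the claimed term $C\norm{\pt u}_{L^2(\pSi)}\norm{\na\vph}_{L^4(\supp\vph)}\Ecut^{1/2}$.

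\textbf{Expected main obstacle.} The delicate step is the metric term. One has to check that the elliptic and Sobolev constants depend only on $\iota_0$, and that $\norm{\pt g}_{C^1}\lesssim\norm{\pt g}_{L^2}$ holds uniformly on $\MMiota$. For the latter, the plan is to use the $L^1$ to $H^{m+1}$ smoothing of $P^H$ from Lemma \ref{lemma:projection}, noting that $\pt g=P^H_g(\pt g)$.

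The remaining steps are routine, provided the solution is regular enough to justify the integrations by parts. We assume this on intervals of smoothness and pass to weak solutions by approximation.
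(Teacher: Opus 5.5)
Your proposal is correct and is essentially the paper's proof. The paper does the same integration by parts to get $\tfrac{\dd}{\dd t}E_\varphi+\int_{\partial\Sigma}\varphi^2|\partial_t u|^2\,\dd s=-\tfrac12\int_\Sigma\varphi^2\langle\partial_t g,k(u_g,g)\rangle\,\dd v-2\int_\Sigma\varphi\,\partial_t u_g\cdot\langle\nabla u_g,\nabla\varphi\rangle\,\dd v$, bounds $\|\partial_t g\|_{L^\infty}\le C\|\partial_t g\|_{L^2}$ via \eqref{est:PH_bounded} with $\partial_t g=P^H_g(\partial_t g)$ as in your plan, and only splits $\partial_t u_g=(\partial_t u)_g+\partial_\varepsilon u_{g(\cdot+\varepsilon)}$ when estimating $\|\partial_t u_g\|_{L^4(\Sigma)}$, whereas you split it from the start; two harmless slips are that the interior term in your map-variation identity should carry a minus sign, and that \eqref{est:velocity-by-energy} bounds $\partial_t g$ by the energy, not by $\|\partial_t g\|_{L^2}$, so the $L^\infty$--$L^2$ comparison must rest on \eqref{est:PH_bounded} as you later indicate.
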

We recall that \eqref{est:PH_bounded} implies that $\norm{\pt g}_{L^2(\Si)}\leq C=C(\iota_0,\bar E)$ and  that \eqref{eq:energy_var-map} and \eqref{eq:energy_var_metric} ensure that 
 the total energy along solutions of \eqref{eq:flow} decays according to \eqref{eq:energy-decay}. We can thus use that  $\norm{\pt(u,g)}_{L^2([t',t];L^2(\pSi)\times L^2(\Si))}^2\leq E(t')-E(t)$ for any $t'<t$.
\begin{proof}[Proof of Lemma \ref{lemma:Ecut}]
	From the equation \eqref{eq:flow} of the flow and \eqref{eq:energy_var_metric}, upon integrating by parts, we get that 
	\beqs
	    \tfrac{\dd}{\dd t} \Ecut+\int_{\pSi}\varphi^2 \abs{\partial_t u}^2 \dd s=-\half \int_{\Si}\inner{\pt g}{k(u_g,g)} \vph^2 \dd v - 2\int_{\Si} \vph \p_t u_g \cdot \inner{\nabla u_g}{\nabla \vph} \dd v.
	\eeqs
	As $\abs{k(u_g,g)}_g \leq C\abs{\dd u_g}_g^2$, and as \eqref{est:PH_bounded} in particular ensures that  $\p_tg=P_g(\pt g)$ is controlled by  $\norm{\pt g}_{L^\infty(\Si)}\leq C(\iota_0) \norm{\pt g}_{L^2(\Si)}$, we hence get
	\beqs
	    \bigg| \tfrac{\dd}{\dd t} \Ecut+\int_{\pSi}\varphi^2 \abs{\partial_t u}^2 \dd s \bigg|\leq C  \norm{\pt g}_{L^2(\Si)}\Ecut +C \Ecut^\half  \norm{\nabla \vph}_{L^4(\Si)} \norm{\pt u_g}_{L^4(\Si)}
	\eeqs
	and it remains to check that $\norm{\pt u_g}_{L^4(\Si)}\leq C (\norm{\pt u}_{L^2(\pSi)}+ \norm{\pt g}_{L^2(\Si)})$ for some $C=C(\iota_0,\bar{E})$. 
	
	To show this, we split $\pt u_g = (\pt u)_g + \tfrac{\dd}{\dd \ep} (u)_{g(\cdot+\ep)}$ and use Lemmas \ref{lemma:metric_variations} and \ref{lemma:projection}, together with the Sobolev embedding $H^\hf(\Si) \embed L^4(\Si)$ and the elliptic regularity estimate $\norm{f_g}_{H^\hf(\Si)} \leq C\norm{f}_{L^2(\p\Si)}$, to estimate
	\begin{align*}
        \norm{\p_t u_g}_{L^4(\Si)} &\leq \norm{(\pt u)_g}_{L^4(\Si)} + \norm{\tfrac{\dd}{\dd \ep} u_{g(\cdot+\ep)}}_{L^4(\Si)} \leq C\norm{(\p_t u)_g}_{H^\hf(\Si)} + C\norm{\tfrac{\dd}{\dd \ep} u_{g(\cdot+\ep)}}_{H^\hf(\Si)}\\
        &\leq C\left(\norm{\p_t u}_{L^2(\p\Si)} +  \norm{\pt g}_{L^2(\Si)}\norm{u_g}_{H^1(\Si)}\right)	\leq C\left(\norm{\p_t u}_{L^2(\p\Si)} +  \norm{\pt g}_{L^2(\Si)}\right).
	\end{align*}
\end{proof}

We can now combine these local energy estimates with the control on the evolution of the metric obtained in Section \ref{sect:metric} and with Propositions \ref{prop:Hm_plateau_existence} and \ref{prop:smooth_existence}, to deduce

\begin{lemma}\label{lemma:Tmin}
    For any $\bar E$, $\iota_0>0$ and $r_0>0$ there exists a time $T_{\min}=T_{\min}(\bar E,\iota_0,r_0)>0$ so that for any initial data $(u_0,g_0)\in C^\infty(\pSi,N)\times \MM(Si)$ with 
    \beq
        \inj(g_0)\geq 2\iota_0 \text{ and } \sup_{x\in \pSi}  E_{g_0}(u_0; B_{4r_0}^{g_0}(x))\leq \mhalf \de_0
    \eeq
    we get that a solution $(u,g)$ of the flow \eqref{eq:flow} exists and remains so that 
    \beq\label{claim:exist-Tmin}
        \inj(g(t))\geq \iota_0 \text{ and } \sup_{x\in \pSi} E_{g(t)}(u_0; B_{r_0}^{g(t)}(x))\leq \de_0 
    \eeq
    at least on the interval $[0,T_{\min}]$. Here $\de_0 = \de_0(\iota_0,\bar E)>0$ is as in Proposition \ref{prop:higher_reg}.
\end{lemma}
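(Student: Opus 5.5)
The plan is a continuity argument. By Proposition \ref{prop:Hm_plateau_existence}, smooth data yield a solution on a maximal interval $[0,T_m^*)$, which ends only if the injectivity radius collapses or the $H^m$ norm blows up. Let $T'$ be the supremum of times $t\le\min(T_m^*,T_{\min})$ such that \eqref{claim:exist-Tmin} holds on $[0,t]$; the aim is to show that, for a suitable $T_{\min}$, neither property can fail before $T_{\min}$.

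\emph{Metric component.} By Remark \ref{rmk:uniform_metric_time}, for $t\le T_0(\bar E,\iota_0)$ the metric stays in $\UU(g_0)$. There \eqref{est:U-inj-bound} gives $\inj(g(t))\geq\iota_0$, and \eqref{est:U-equiv} gives $\frac14 g_0\le g(t)\le 4g_0$. Since the energy is non-increasing, \eqref{est:velocity-by-energy} also gives $\norm{\pt g}_{H^{m+1}}\le C\bar E$. Consequently $\frac12 g_0\le g(t)\le 2g_0$ for $t\le c(\iota_0,\bar E)$, and geodesic balls are nested: $B^{g(t)}_{r_0}(x)\subset B^{g_0}_{2r_0}(x)$.

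\emph{Energy concentration.} For each $x\in\pSi$ I will take a cut-off $\vph_x$ with $\vph_x\equiv1$ on $B^{g_0}_{2r_0}(x)$, support in $B^{g_0}_{4r_0}(x)$, and $\norm{\nabla\vph_x}_{L^\infty}\le C/r_0$. Then $\norm{\nabla\vph_x}_{L^4}\le Cr_0^{-1/2}$, with norms taken with respect to $g(t)$ and equivalent to those for $g_0$. Lemma \ref{lemma:Ecut} then gives
$$E_{\vph_x}(t)\le E_{\vph_x}(0)+C\int_0^t\big(\norm{\pt g}_{L^2}\bar E+(\norm{\pt g}_{L^2}+\norm{\pt u}_{L^2(\pSi)})r_0^{-1/2}\bar E^{1/2}\big)\dd t'.$$
By \eqref{eq:energy-decay}, $\int_0^t\norm{\pt u}^2_{L^2(\pSi)}+\norm{\pt g}^2_{L^2}\le C\bar E$. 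Cauchy--Schwarz therefore bounds the integral by $C(\bar E,\iota_0)(t+t^{1/2}r_0^{-1/2})$. Also $E_{\vph_x}(0)\le E_{g_0}(u_0;B^{g_0}_{4r_0}(x))\le\frac12\de_0$. The energy of $u(t)_{g(t)}$ on $B^{g(t)}_{r_0}(x)$ is bounded by $E_{\vph_x}(t)$, using conformal invariance and the comparison of the volume form and norm under $g(t)$ versus $g_0$ via \eqref{est:U-equiv}. Hence it is at most $\frac12\de_0+C(t+t^{1/2}r_0^{-1/2})$, which is strictly below $\de_0$ once $T_{\min}(\bar E,\iota_0,r_0)$ is small. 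The main subtlety here is that the constants in Lemma \ref{lemma:Ecut} depend only on $\bar E$ and $\iota_0$. This holds because we have already confined $g(t)$ to $\UU$ on $[0,T']$.

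\emph{No blow-up.} On $[0,T')$ both conditions hold strictly, with $r_0$ replaced by a comparable radius. Proposition \ref{prop:smooth_existence}, through the first estimate \eqref{est:higher_reg_Hm} with the smooth initial data, bounds $\norm{u_g}_{H^m}$ uniformly on $[0,T')$. So $T'<T_m^*$ unless $T'=T_{\min}$. By continuity, the strict inequalities obtained above extend slightly beyond $T'$, which contradicts maximality unless $T'\ge T_{\min}$. The main obstacle is the bookkeeping of radii under the change of metric, i.e.\ getting the concentration radius needed in \eqref{ass:I-evolv} to be comparable to $r_0$. If necessary I would prove the bound with $\de_0$ on balls of radius $r_0$ directly, using the inclusion $B^{g(t)}_{r_0}\subset B^{g_0}_{2r_0}$ as above.
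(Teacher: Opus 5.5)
Your proposal is correct and follows essentially the same route as the paper. You confine $g(t)$ to $\UU(g_0)$ for $t\le T_0(\bar E,\iota_0)$ via Remark \ref{rmk:uniform_metric_time}. You then apply Lemma \ref{lemma:Ecut} with a cut-off equal to $1$ on $B^{g_0}_{2r_0}(x)\supset B^{g(t)}_{r_0}(x)$ together with the energy identity, which bounds the energy on $r_0$-balls by $\frac12\de_0+C(t+r_0^{-1/2}t^{1/2})$. Finally, Propositions \ref{prop:Hm_plateau_existence} and \ref{prop:smooth_existence} rule out blow-up before $T_{\min}$.

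The only differences are cosmetic. Since $E_{\vph_x}$ is already computed with respect to $g(t)$ and the metric confinement holds unconditionally, the localized energy bound holds directly on $[0,\min(T_0,T_{\max}))$. So the bootstrap, the conformal-invariance remark and the worry about comparable radii are not needed.
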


\begin{proof}[Proof of Lemma \ref{lemma:Tmin}]
    Given any such initial data, we can use that Propositions \ref{prop:Hm_plateau_existence} and \ref{prop:smooth_existence} yield the existence of a smooth solution $(u,g)$ to the flow \eqref{eq:flow} on a maximal time interval $[0,T_{\max}(u_0,g_0))$ which can only be finite if  $\inj(g(t))\to 0$ or if energy concentrates as $t\upto T_{\max}$.  
	
    As Remark \ref{rmk:uniform_metric_time} ensures that $g(t)$ remains in the neighbourhood $\UU(g_0)$  of the metric $g_0$  considered in Lemma \ref{lemma:U-basics} at least until $\min(T_0,T_{\max})$, $T_0=T_0(\bar E,\iota_0)>0$ as in that remark, we can bound
    $$\inj(g(t))\geq  \mhalf\inj(g_0)\geq \iota_0 \text{ and } \mfrac{1}{4}g(t) \leq g_0 \leq 4g(t) \text{ for all } t \in [0,\min(T_0,T_{\max})).$$

    Given any $x_0 \in \p\Si$, we  then consider the localised energy $E_\vph$ for a cut-off function $\vph\in C_c^\infty( B_{4r_0}^{g_0}(x_0))$ which is chosen so that  $\vph\equiv 1$ on $B_{2 r_0}^{g_0}(x_0)$ and so that $\abs{\nabla_{g_0} \vph} \leq \frac{C}{r_0}$ for a universal constant $C$. The above bounds on the metrics $g(t)$ ensure that $B_{r_0}^{g(t)}(x_0) \subs B_{2 r_0}^{g_0}(x_0)$, and hence that $\vph\equiv 1 $ on $B_{r_0}^{g(t)}(x_0)$, at least for $t\in [0,\min(T_{\max},T_0))$ and that $\norm{\nabla_{g(t)} \vph}_{L^4(\supp(\vph),g)} \leq Cr_0^{-\hf}$  for such times. As we furthermore have that $E_\vphi\leq \bar E$ we can hence apply Lemma \ref{lemma:Ecut} to deduce that for $t\in [0, \min(T_{\max},T_0))$
    \begin{align*}
    	E_{g(t)}(u_0; B_{r_0}^{g(t)}(x_0)) &\leq E_{\vph}(t)\leq E_{\vph}(0) + Cr_0^{-\half}\int_0^t ( \norm{\pt g}_{L^2(\Si)} + \norm{\p_t u}_{L^2(\p\Si)}) \dd t'\\
    	&\leq \mhalf \de_0 + Cr_0^{-\half} t^\hf  \norm{\pt (u,g) }_{L^2([0,t];L^2(\pSi)\times L^2(\Si))} 
    	\leq \mhalf \de_0 + C_1 r_0^{-\half} t^\hf ,
    \end{align*}
    for $C_1=C_1(\iota_0,\bar E)$. Defining $T_{\min}=T_{\min}(\iota,\bar E,r_0):=\min(T_0,\half \de_0 C_1^{-1}r_0^{\half})$ we hence conclude that \eqref{claim:exist-Tmin} remains valid at least on $[0,\min(T_{\min},T_{\max}(u_0,g_0))$, which in turn ensures that $T_{\max}(u_0,g_0)>T_{\min}$ and hence completes the proof of the lemma. 
\end{proof}

We are now finally in the position to complete the proof of Proposition \ref{prop:existence-all-initial-data}.
\begin{proof}[Proof of Proposition \ref{prop:existence-all-initial-data}]
    Given $(u_0,g_0)\in H^{\half}(\pSi;N) \times \MM(\Si)$, we set $\bar{E} := 2E_\hf(u_0,g_0)$, $\iota_0:= \half \inj(g_0)$ and fix $r_0>0$ so that $\sup_{x\in \pSi}  E_{g_0}(u_0; B_{4r_0}^{g_0}(x))\leq \frac14 \de_0$ for $\de_0=\de_0(\iota_0,\bar E)$ as in Proposition \ref{prop:higher_reg}.

    Letting $u_0^{k} \in C^\infty(\p\Si;N)$ be a sequence of maps which converges to $u_0$ in $H^\hf(\p\Si)$, we can then use that  the conditions of Lemma \ref{lemma:Tmin} are satisfied for all sufficiently large $k$. After dropping the first few $u_{k,0}$ if necessary, we hence deduce that the corresponding solutions $(u_k,g_k)$ of \eqref{eq:flow} all exist and are so that \eqref{claim:exist-Tmin} is satisfied on the $k$ independent interval $[0,T_{\min}]$. Proposition \ref{prop:smooth_existence} hence ensures that 
	\begin{equation*}\sup\limits_{k}\norm{(u_k(t))_{g_k(t)}}_{L^\infty([\tau,T_{\min}];H^m(\Si,g_k(t)))} <\infty \text{ for every } \tau>0 \text{ and } m\in\N. 
	\end{equation*}
    Combined with the control on the metric established in Section \ref{sect:metric} and the fact that $(u,g)$ satisfy the evolution equations \eqref{eq:flow}, this yields $k$-independent bounds on all space-time derivatives of $u_k$ and $g_k$ on $[\tau,T_{\min}]\times \Si$, $\tau>0$. Hence the $(u_k,g_k)$ subconverge smoothly locally on $(0,T_{\min}]\times \Si$ and weakly in $H^1([0,T_{\min}]\times \Si)$ to a solution
    $(u,g)$ of \eqref{eq:flow} to initial data $(u_0,g_0)$ which is smooth away from time $t=0$.    
    This completes the proof of the proposition since the characterisation of the maximal time $T_*$ until which this solution remains smooth immediately follows from Lemma \ref{lemma:Tmin}.
\end{proof}

\subsection{Analysis of finite time singularities}\label{sect:bubble} 

We finally discuss how the analysis of finite time singularities from the disc case, carried out by the second author in Section 8 of \cite{S}, can be applied to establish that singularities of the map component of our coupled flow must be caused by the bubbling off of half-harmonic discs.

The local nature of the analysis in \cite{S} and the fact that we can view a neighbourhood of the boundary of the disc $\DD$ as a cylinder $\Cyl(c_1)=[0,c_1]\times S^1$, which is equipped with a metric which is conformal to $g_0:=\dd s^2+\dd \theta^2$, means that we can 
extract from \cite{S} the following result about the behaviour of harmonic functions on $(\Cyl(c_1),g_0)$.

\begin{lemma}\label{lemma:seq_cyl_bubble}
    Let $v_k: \Cyl(c_1) \to \R^n$ be a sequence of harmonic functions from a fixed Euclidean cylinder $(\Cyl(c_1),g_0)$, $c_1>0$, which is bounded in $H^1(\Cyl(c_1))\cap L^\infty(\Cyl(c_1))$. Suppose that there exist radii $r_k \to 0$ and points $x_k=(s_k,\theta_k)\in \Cyl(\half c_1)$ such that
    \begin{equation*}
        \inf_k	E_{g_0}(v_k;\DD_{r_k}(x_k)) >0 
    \end{equation*}
    Then $r_k^{-1} s_k$ is bounded.
    
    If the maps $v_k$ are furthermore so that $v_k(\{0\}\times S^1)\subset N$ and     
    \begin{equation}\label{est:seq_bubble_tension}
    	r_k\norm{P_{v_k} \partial_s v_k }_{L^2(\{0\}\times S^1,\dd \theta)}^2 \rightarrow 0,
    \end{equation}
    then there exist $\hat{\theta}_k\in S^1$ and scales $\hat r_k\to 0$ so that the rescaled maps 
    \begin{equation*}
        \hat{v}_k(s,\theta):=v_k(\hat{r}_k(s,\theta-\hat{\theta}_k)), \quad (s,\theta)\in [0,c_1\hat{r}_k^{-1}] \times (-\pi\hat{r}_k^{-1},\pi \hat{r}_k^{-1})\subset \HH
    \end{equation*}
    subconverge weakly in $H^{3/2}_{loc}(\HH)$ to a limiting harmonic function $v_\infty$ which is non-constant, has finite energy, maps $\p\HH$ into $N$ and is so that  
    \begin{equation*}
        P_{v_\infty} \ps v_\infty \equiv 0 \text{ on } \partial \HH=\{0\}\times \R.
    \end{equation*}
    Hence, upon composing with a conformal bijection $\psi:\DD \rightarrow \HH$, we obtain a parametrisation $v_\infty \circ \psi:\DD\to \R^n$ of a free boundary minimal disc.
\end{lemma}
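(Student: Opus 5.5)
The plan has two parts. The first claim, that $r_k^{-1}s_k$ is bounded, uses only interior estimates for harmonic functions. The second part is a blow-up argument in the spirit of Section 8 of \cite{S}, driven by the boundary regularity estimate of Lemma \ref{lemma:loc_reg_euc}.

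\textbf{Boundedness of $r_k^{-1}s_k$.} Argue by contradiction. Suppose that, along a subsequence, $s_k/r_k\to\infty$. The disc $\DD_{s_k}(x_k)$ then lies in the interior of the cylinder. The mean value property for the subharmonic function $\abs{\na v_k}^2$ gives
\[
\sup_{\DD_{s_k/2}(x_k)}\abs{\na v_k}^2\leq C s_k^{-2}E(v_k;\DD_{s_k}(x_k)),
\]
and hence
\[
E(v_k;\DD_{r_k}(x_k))\leq C (r_k/s_k)^2 E(v_k)\to 0,
\]
contradicting the assumed positive lower bound. If $s_k$ stays bounded below, use the interior estimate on a disc of fixed radius instead; this gives $E(v_k;\DD_{r_k}(x_k))\leq Cr_k^2\to0$, again a contradiction. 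Hence $s_k\leq Cr_k$.

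\textbf{Choice of scales and basic bounds.} For the bubbling statement, concentration therefore occurs at distance $O(r_k)$ from the boundary, so energy $\geq\de>0$ sits in the half-disc $\DD^+_{Cr_k}((0,\theta_k))$. Choose the blow-up scale by a concentration-function argument. Fix $\de_3<\min(\de,\de_2)$, with $\de_2$ as in Lemma \ref{lemma:loc_reg_euc}. Let $\hat r_k$ be the smallest radius such that
\[
\sup_{\theta\in S^1} E(v_k;\DD^+_{\hat r_k}((0,\theta)))=\de_3,
\]
attained at some $\hat\theta_k$. Then $\hat r_k\leq Cr_k\to0$. The rescaled maps $\hat v_k$ are harmonic on domains exhausting $\HH$. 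Their energy is uniformly bounded by conformal invariance, and they are bounded in $L^\infty$. They carry energy at most $\de_3$ on every unit boundary half-disc, with equality on $\DD_1^+(0)$.

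Now apply Lemma \ref{lemma:loc_reg_euc} with $r=1$ to the rescaled maps. The tension term scales as
\[
\norm{P_{\hat v_k}\ps\hat v_k}_{L^2(\p\HH\cap\DD_2)}^2=\hat r_k\norm{P_{v_k}\ps v_k}^2_{L^2}\leq Cr_k\norm{P_{v_k}\ps v_k}^2_{L^2}\to0
\]
by \eqref{est:seq_bubble_tension}. This yields uniform bounds on $\norm{\na\hat v_k}_{L^2(\p\HH\cap \DD_R)}$ for every $R$. Together with harmonicity, these give uniform $H^{3/2}_{loc}(\HH)$ bounds. Interior regions are handled by standard estimates for harmonic functions.

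\textbf{Passage to the limit.} Extract a subsequence converging weakly in $H^{3/2}_{loc}$, hence strongly in $H^1_{loc}$ and, via traces, strongly in $L^2_{loc}(\p\HH)$ with $\na$ of the traces converging weakly. The limit $v_\infty$ is harmonic and has finite energy. It maps $\p\HH$ into $N$, since the traces converge a.e. Moreover
\[
P_{v_\infty}\ps v_\infty=0 \quad\text{on } \p\HH,
\]
because $P_{\hat v_k}\to P_{v_\infty}$ strongly, $\ps\hat v_k\rightharpoonup\ps v_\infty$ weakly in $L^2_{loc}(\p\HH)$, and $P_{\hat v_k}\ps \hat v_k\to0$ in $L^2$.

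\textbf{Main obstacle: non-constancy of the limit.} The delicate point is showing that $v_\infty$ is non-constant. Strong $H^1_{loc}$ convergence follows from the compact embedding $H^{3/2}\hookrightarrow H^1$ on bounded domains. It gives
\[
E(v_\infty;\DD_1^+)=\lim_k E(\hat v_k;\DD_1^+(0))=\de_3>0,
\]
so the limit cannot be constant. The key is that the choice of $\de_3<\de_2$ makes Lemma \ref{lemma:loc_reg_euc} applicable uniformly on all unit boundary half-discs. This is what rules out loss of energy at the boundary.

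Finally, compose with a conformal bijection $\DD\to\HH$. Finite energy and the conformal invariance of the energy, combined with removability of the point singularity, give a half-harmonic disc, and hence a free boundary minimal disc by the disc-case conformality result.
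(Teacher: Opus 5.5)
Your argument is correct and is essentially the blow-up analysis of Section 8 of \cite{S} that the paper invokes without reproducing, with Lemma \ref{lemma:loc_reg_euc} playing the role of Proposition 3.3 of \cite{S}. The one point to make explicit is that Lemma \ref{lemma:loc_reg_euc} is an a priori estimate, so you need $\nabla v_k\in L^2(\{0\}\times S^1)$ to start with; this holds for the smooth maps that arise in Lemma \ref{lemma:flow_bubble}.
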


As solutions of our flow for which the injectivity radius is bounded from below remain well controlled for as long as the condition \eqref{ass:I-evolv} on the smallness of energy remains valid on fixed size balls, we obtain the desired claim on the behaviour of our flow at finite singular times by combining the above result from \cite{S} with the following lemma. 

\begin{lemma}\label{lemma:flow_bubble}
    Let $(u,g)$ be a smooth solution of the flow \eqref{eq:flow} on an interval $[0,T_*)$ with $\iota_0:=\inf_{[0,T_*)}\inj(g)>0$. Suppose that energy concentrates near at least one of the boundary curves $\si_i\subset \pSi$ as $t\upto T_*$ in the sense that 
    \beq\label{ass:energy-conc-lemma}
        \sup_{t\in [0,T_*)}\sup_{x\in \si_i} E_{g(t)}(u_{g(t)}(t);B_{r}^{g(t)}(x)) \geq \mfrac34\de_0 \text{ for every } r>0,
    \eeq
    for the constant $\de_0=\de_0(\bar E, \iota_0)>0$, $\bar E:= E(u(0),g(0))$, obtained in Proposition  \ref{prop:higher_reg}. Then there are times  $t_k \nearrow T_*$ and a number $c_1=c_1(\iota_0)>0$ so that the maps $u_k:\Cyl(c_1)\subset \Cyl(X(L_{g(t_k)}(\si_i)))\to \R^n$ which represent $u(t_k)$ in the collar coordinates of $(\Si,g(t_k))$ around $\si_i$ satisfy the assumptions of Lemma \ref{lemma:seq_cyl_bubble} and hence undergo bubbling as described in this lemma. 
\end{lemma}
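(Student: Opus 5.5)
We need to choose times $t_k\upto T_*$ and scales $r_k\to 0$ such that the collar representatives $u_k$ of $u_{g(t_k)}(t_k)$ satisfy the hypotheses of Lemma \ref{lemma:seq_cyl_bubble}. These are uniform $H^1\cap L^\infty$ bounds, energy concentration $\inf_k E(u_k;\DD_{r_k}(x_k))>0$, and the tension condition \eqref{est:seq_bubble_tension}.

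\textbf{Uniform bounds.} Since $\inj(g(t))\geq\iota_0$, Remark \ref{collar:uniform} gives collars $\Col(\si_i,g(t))$ with $X(\ell_i)\geq c_3(\iota_0)$ and conformal factors $\rho$ bounded above and below uniformly. We fix $c_1=c_1(\iota_0)\leq c_3$. Conformal invariance of the Dirichlet energy gives $E_{g_0}(u_k;\Cyl(c_1))\leq\bar E$, since the energy is non-increasing by \eqref{eq:energy-decay}. The $L^\infty$ bound comes from the maximum principle: $u_g$ is harmonic with boundary values in the compact $N$, so $\norm{u_k}_{L^\infty}\leq\sup_{N}\abs{y}$. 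Combined, these give the $H^1$ bound.

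\textbf{Concentration scales.} From \eqref{ass:energy-conc-lemma} and the fact that the flow is smooth on $[0,T_*)$, so that at any fixed time energy does not concentrate, we pick $t_k\upto T_*$, points $y_k\in\si_i$ and radii $\tilde r_k\to 0$ with $E_{g(t_k)}(u_{g(t_k)};B^{g(t_k)}_{\tilde r_k}(y_k))\geq\frac12\de_0$. Translated into collar coordinates with the uniform bounds on $\rho$, this gives Euclidean discs $\DD_{r_k}(x_k)$, $x_k\in\{0\}\times S^1\subset\Cyl(\frac12 c_1)$, $r_k\simeq\tilde r_k\to 0$, with energy at least $\frac12\de_0$.

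\textbf{Tension condition.} This is the main obstacle, because $t_k$ must be chosen so that the tension is small relative to $r_k^{-1}$. The tool is the integrability $\int_0^{T_*}\norm{P_u\pnu u_g}_{L^2(\pSi)}^2\dd t\leq\bar E$ from \eqref{eq:energy-decay}. We first fix a concentration scale $R(t)$ at each time, say the smallest $r$ with $\sup_{x\in\si_i}E(u(t);B_r(x))\geq\frac12\de_0$. This $R(t)$ is positive and tends to $0$ along a sequence. The local energy estimate Lemma \ref{lemma:Ecut}, applied as in the proof of Lemma \ref{lemma:Tmin}, shows energy cannot move much on short time intervals. Concretely, if $R(s)\leq r$ at some $s$, then on $[s-cr,s]$ balls of radius about $2r$ around the same point still carry at least $\frac14\de_0$; here $c$ comes from the $r^{-1/2}t^{1/2}$ bound, using $\norm{\grad\vph}_{L^4}\lesssim r^{-1/2}$. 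On such an interval $I_k$ of length $cr_k^{2}$, or $cr_k$ after adjusting exponents, the integrability gives $\int_{I_k}\norm{P_u\pnu u_g}^2\to 0$, since these intervals accumulate at $T_*$ and the tail integral vanishes. Hence there is some $t_k\in I_k$ with $\norm{P_u\pnu u_g(t_k)}_{L^2}^2\leq\abs{I_k}^{-1}o(1)$. If $\abs{I_k}\gtrsim r_k$, this yields $r_k\norm{P_{u_k}\ps u_k}^2_{L^2(\{0\}\times S^1,\dd\theta)}\to 0$, using $\pnu=-\rho^{-1}\ps$ and the bounds on $\rho$. I expect obtaining an interval length linear in $r_k$ to be delicate. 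If Lemma \ref{lemma:Ecut} only gives $r^2$, I would instead use the standard Struwe-type argument that $\int_{T_*-\tau}^{T_*}$ of the tension is $o(1)$ and select scales adapted to $\tau$, as in Section 8 of \cite{S}.

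With these choices the hypotheses of Lemma \ref{lemma:seq_cyl_bubble} hold, and the $u_k$ bubble as described there.
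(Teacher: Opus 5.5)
Your argument is essentially the paper's. Uniform equivalence of the metrics near $T_*$, combined with Lemma \ref{lemma:Ecut} for a time-independent cut-off, shows that concentration at $(\tilde t_k,\tilde r_k)$ persists backwards on $I_k=[\tilde t_k-c\tilde r_k,\tilde t_k]$. A mean-value choice of $t_k\in I_k$, together with $\int_{I_k}\norm{P_u\pnu u_g}_{L^2(\pSi)}^2\dd t\to 0$, then yields $\tilde r_k\norm{P_{u}\pnu u_g(t_k)}_{L^2(\pSi)}^2\to 0$.

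The step you flag as delicate is in fact immediate, so the Struwe-type fallback is not needed. Going backwards in time, the dissipation term $-\int_{\pSi}\vph^2\abs{\pt u}^2\dd s$ has the favourable sign, and $\norm{\na\vph}_{L^4}\lesssim r^{-1/2}$. Hence Lemma \ref{lemma:Ecut} bounds the loss of localised energy over an interval of length $\tau$ by $C\tau+Cr^{-1/2}\tau^{1/2}$ with $C=C(\iota_0,\bar E)$. This is at most $\frac14\de_0$ for $\tau=cr$ with $c$ small. So $\abs{I_k}$ is $c\tilde r_k$, linear in $\tilde r_k$ as the first-order-in-time scaling of the flow suggests, and not $c\tilde r_k^2$.
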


\begin{proof}[Proof of Lemma \ref{lemma:flow_bubble}]
    We first recall from Remark \ref{rmk:uniform_metric_time} that the metrics are uniformly equivalent on the interval $I_0:=[\max(0,T_*-T_0),T_*)$, $T_0=T_0(\bar E,\half \iota_0)>0$ as in this remark, namely that $g(t) \leq 4g(t')$ for all $t,t'\in I_0$.

    Given any $r\in (0,\iota_0)$ and $x\in \pSi$, we can hence obtain a function $\phi_{x,r}$ which satisfies
    \begin{equation*}
        \phi_{x,r} \equiv 1 \text{ on } B_{\frac14 r}^{g(t)}(x), \quad \supp(\phi_{x,r} ) \subs B_{2 r}^{g(t)}(x) \text{ and } \norm{\na_{g(t)}\phi_{x,r}}_{L^\infty(\Si,g(t))}\leq Cr^{-1} 
    \end{equation*}
    for all  $t\in I_0$ by fixing any $t_0\in I_0$ and taking a standard cut-off function on $(\Si,g(t_0))$ which vanishes outside of the corresponding ball with radius $r$ and is equal to $ 1$ on $B^{g(t_0)}_{\half r}(x)$. 

    For any such $x$ and $r$, we can hence apply the estimates on the evolution of the localised energy $E_{\phi_{x,r}}$ obtained in Lemma \ref{lemma:Ecut} to deduce that  
    \beq 
        E(u(t');B^{g(t')}_{r}(x)) \geq E(u(t);B^{g(t)}_{\frac14 r}(x))-C\norm{\pt(u,g)}_{L^2([t',t];L^2(\p\Si)\times L^2(\Si))}^2-Cr^{-\half} (t-t')
    \eeq
    for any $t'<t$ with $t,t'\in I_0$ and for a constant $C=C(\iota_0,\bar E)$.

    As the assumption \eqref{ass:energy-conc-lemma}, allows us to choose times $\tilde t_k\upto T_*$, points $\tilde x_k\in \si_i$ and radii $\tilde r_k\to 0$ with $ E(u(\tilde t_k);B^{g(\tilde t_k)}_{\frac14 \tilde r_k}( x_k))\geq \frac12\de_1$ we hence deduce that 
    $$\inf_{I_k} E(u( t);B^{g(t)}_{4 \tilde r_k}( x_k))\geq \mfrac12 \de_1 -C\norm{\pt (u,g)}_{L^2(I_k;L^2(\pSi)\times L^2(\Si))}^2-C c_0\geq \mfrac14 \de_1,$$
    on $I_k:=[\tilde t_k-c_0r_k,\tilde t_k]$, $c_0:=\frac18 C^{-1}\de_1$, where the last estimate holds for all sufficiently large $k$ since $\norm{\pt (u,g)}_{L^2(I_k;L^2(\pSi)\times L^2(\Si))}^2\to 0$ as $k\to \infty$.

    Choosing $t_k\in I_k$ so that $\norm{P_{u(t_k)}\p_\nu (u(t_k))_{g(t_k)}}_{L^2(\p\Si)}^2 \leq 2 \abs{I_k}^{-1} \int_{I_k}\norm{P_{u}\p_\nu u_{g}}_{L^2(\p\Si)}^2 \dd t$ and using that $\int_{I_k}\norm{P_{u}\p_\nu u_{g}}_{L^2(\p\Si)}^2 \dd t = \norm{\pt u}_{L^2(I_k;L^2(\pSi))}^2\to 0$, we hence deduce that 
    \begin{equation*}
        E(u( t_k);B^{g(t_k)}_{4 \tilde r_k}( x_k))\geq \mfrac14\de_1>0 \text{ and } r_k\norm{P_{u(t_k)}\p_\nu (u(t_k))_{g(t_k)}}_{L^2(\p\Si)}^2\rightarrow 0.
    \end{equation*}    
    As the conformal factors $\rho_k$ of the metrics which represent $g(t_k)$ in the collar coordinates around $\si_i$ are bounded below by some constant $c=c(\iota_0)>0$, and as $X(L_{g(t_k)}(\si_i))\geq c_1=c_1(\iota_0)$, compare Remark \ref{collar:uniform}, we hence deduce that the maps $u_k:\Cyl(c_1)\to \R^n$ which represent $u(t_k)_{g(t_k)}$ in these coordinates are indeed so that 
    $$ E_{g_0}(u_k;B_{r_k}) \geq \mfrac14 \de_1 \text{ and } r_k\norm{P_{u_k}\p_s u_k}_{L^2(\{0\}\times S^1,\dd \theta)}^2\to 0 \text{ for }r_k=4c^{-1}\tilde r_k\to 0. $$ 
\end{proof}

\section{Asymptotics}\label{sect:asym}
In this final section, we complete the proof of Theorem \ref{thm:2}, which gives a result on the asymptotic convergence of the flow under the assumption that neither the metric, nor the map, develop singularities as $t \rightarrow \infty$.

For this, we prove the following sequential compactness result, which can be readily applied to the solution of the flow studied above.

\begin{prop}\label{prop:sequence_converge}
	Let $(u_n,g_n) \in C^\infty(\p\Si;N)\times \MM(\Si)$ be a sequence with half-energy uniformly bounded from above, $E_\hf(u_n,g_n) \leq \bar{E} < \infty$, and injectivity radius uniformly bounded from below, $\inj(g_n) \geq \iota_0 > 0$. Suppose further that energy does not concentrate, in the sense that there is an $r>0$ such that
	\begin{equation}\label{est:asm_energy_bound}
		\sup\limits_{n}\sup\limits_{x_0 \in \Si}\int_{B_r^{g_n}(x_0)}\abs{\nabla_{g_n} (u_n)_{g_n}}_{g_n}^2 \dd v_{g_n} < \de_1 = \de_1(\iota_0),
	\end{equation}
    where $\de_1$ is the constant from Lemma \ref{lemma:loc_reg}. Suppose further that $(u_n,g_n)$ are an almost critical sequence for the half-energy functional, in the sense that
	\begin{align}
		\norm{P_{u_n}\p_{\nu_{g_n}} (u_n)_{g_n}}_{L^2(\p\Si,g_n)} &\rightarrow 0\label{est:map_almost_crit}\\
		\norm{P_{g_n}^{H}(k((u_n)_{g_n},g_n)}_{L^2(\Si,g_n)} &\rightarrow 0\label{est:metric_almost_crit}.
	\end{align}
	Then there exist diffeomorphisms $f_n:\Si \rightarrow \Si$ such that along a subsequence,
	\begin{align}
		f_n^* g_n &\rightarrow g_\infty\in \MM(\Si)\label{est:asm_metric_conv} \text{ smoothly}\\
		u_n \circ f_n &\rightarrow u_\infty\label{est:asm_map_conv} \text{ strongly in } H^\hf(\p\Si,g_\infty), \text{ weakly in } H^1(\p\Si,g_\infty)
	\end{align}
	where $(u_\infty,g_\infty)$ is a critical point of the half-energy $E_\hf$, i.e.  $u_\infty$ is a $g_\infty$ half-harmonic map for which $(u_\infty)_{g_\infty}$ is conformal.
\end{prop}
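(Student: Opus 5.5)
Since $\inj(g_n)\ge \iota_0$, Remark~\ref{rmk:compact-moduli} (compactness of the thick part of moduli space, via the Schottky double and Mumford compactness) gives diffeomorphisms $f_n$ such that, after passing to a subsequence, $f_n^*g_n\to g_\infty\in\MM(\Si)$ smoothly. All hypotheses are invariant under simultaneous pull-back by $f_n$: the half-energy, the concentration condition \eqref{est:asm_energy_bound}, the map tension $P_{u_n}\p_\nu (u_n)_{g_n}$, and $P^H$ of the stress-energy tensor all transform naturally. We may therefore replace $(u_n,g_n)$ by $(u_n\circ f_n,f_n^*g_n)$ and assume $g_n\to g_\infty$ smoothly. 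In particular, for large $n$ all $g_n$ lie in a neighbourhood $\UU$ of $g_\infty$ as in Lemma~\ref{lemma:U-basics}, so norms, harmonic extensions and half-energies are uniformly equivalent, and Lemma~\ref{lemma:metric_variations} controls $(u_n)_{g_n}-(u_n)_{g_\infty}$ in terms of $\dist(g_n,g_\infty)\to 0$.

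The next step is a uniform $H^1(\pSi)$ bound. We apply Lemma~\ref{lemma:loc_reg} at every $x_0\in\pSi$ with a fixed radius $\min(r/2,c_1/2)$, which is admissible by \eqref{est:asm_energy_bound}, and cover $\pSi$ by finitely many such balls, with the number controlled by $\iota_0$. This gives
\[
\norm{\nabla (u_n)_{g_n}}_{L^2(\pSi)}\le C\big(\norm{P_{u_n}\p_\nu (u_n)_{g_n}}_{L^2(\pSi)}+\norm{(u_n)_{g_n}}_{H^1(\Si)}\big)\le C,
\]
using \eqref{est:map_almost_crit}, the energy bound, and $L^\infty$ bounds on $u_n$ since $N$ is compact. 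Thus $u_n$ is bounded in $H^1(\pSi)$, and $(u_n)_{g_n}$ is bounded in $H^{3/2}(\Si)$. By Rellich, after a further subsequence, $u_n\to u_\infty$ weakly in $H^1(\pSi)$ and strongly in $H^{1/2}(\pSi)$ and in $C^0$, so $u_\infty$ maps into $N$. Strong $H^{1/2}$ convergence together with $g_n\to g_\infty$ gives $(u_n)_{g_n}\to (u_\infty)_{g_\infty}$ strongly in $H^1(\Si)$.

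Then we pass to the limit in the criticality conditions. Since $(u_n)_{g_n}\to(u_\infty)_{g_\infty}$ weakly in $H^{3/2}(\Si)$, the normal derivatives converge weakly in $L^2(\pSi)$, and $P_{u_n}\to P_{u_\infty}$ uniformly. Hence $P_{u_n}\p_{\nu_{g_n}}(u_n)_{g_n}\rightharpoonup P_{u_\infty}\p_{\nu_{g_\infty}}(u_\infty)_{g_\infty}$, which must vanish by \eqref{est:map_almost_crit}. Thus $u_\infty$ is half-harmonic in the sense of \eqref{eq:half-harmonic}. Strong $H^1$ convergence gives $k((u_n)_{g_n},g_n)\to k((u_\infty)_{g_\infty},g_\infty)$ in $L^1(\Si)$. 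By \eqref{est:PH_bounded} and \eqref{est:PH_Lip} of Lemma~\ref{lemma:projection}, $P^H_{g_n}k_n\to P^H_{g_\infty}k_\infty$, so \eqref{est:metric_almost_crit} yields $P^H_{g_\infty}(k((u_\infty)_{g_\infty},g_\infty))=0$. By Remark~\ref{rmk:conformal}, $k$ lies in $H(g_\infty)$ for a half-harmonic map, so $k=0$ and $(u_\infty)_{g_\infty}$ is conformal.

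The main obstacle is the uniform boundary $H^1$ estimate, which is precisely where the non-concentration assumption \eqref{est:asm_energy_bound} enters through Lemma~\ref{lemma:loc_reg}. A second delicate point is the justification of strong $H^1$ convergence of the extensions, which is needed to pass to the limit in the quadratic quantity $k$. A minor point is checking that the concentration hypothesis survives the pull-back, which holds since balls pull back to balls.
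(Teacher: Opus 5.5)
Your proposal is correct and follows essentially the same route as the paper. Both arguments use Mumford compactness and pull-back by $f_n$, then a uniform $H^1(\pSi)$ bound from Lemma \ref{lemma:loc_reg} under the non-concentration hypothesis, then compactness to pass to the limit in $P_{u_n}\p_{\nu}(u_n)_{g_n}$, and finally the estimates of Lemma \ref{lemma:projection} for $P^H$ together with Remark \ref{rmk:conformal}. The only cosmetic difference is that the paper gets the convergence of the stress-energy tensors from \eqref{est:basic_3} and \eqref{est:k-L1-energy}, which is the same $L^1$ convergence you derive from strong $H^1(\Si)$ convergence of the harmonic extensions.
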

\begin{proof}[Proof of Proposition \ref{prop:sequence_converge}]
	Firstly, since $\inj(g_n) \geq \iota_0 > 0$, we can apply Mumford's compactness theorem, compare \cite{Tromba}, to immediately obtain the diffeomorphisms $f_n$ and the convergence of the metrics \eqref{est:asm_metric_conv} along a suitable subsequence. We then define $\tilde{u}_n := u_n \circ f_n$ and $\tilde{g}_n := f_n^* g_n$, and note that $P_{\tilde{u}_n} \p_{\nu_{\tilde{g}_n}} (\tilde{u}_n)_{\tilde{g}_n} = f_n^*\left(P_{u_n}\p_{\nu_{g_n}} (u_n)_{g_n}\right)$. Hence  \eqref{est:map_almost_crit} implies that 
	\begin{equation}\label{est:u_tilde_ac}
		\norm{P_{\tilde{u}_n}\p_{\nu_{\tilde{g}_n}} (\tilde{u}_n)_{\tilde{g}_n}}_{L^2(\p\Si,\tilde{g}_n)} \rightarrow 0.
	\end{equation}
	Similarly, we see that the local energy bound \eqref{est:asm_energy_bound} holds for the sequence $(\tilde{u}_n,\tilde{g}_n)$, and so combining the estimate \eqref{est:H1_boundary} of Lemma \ref{lemma:loc_reg} together with the convergence \eqref{est:asm_metric_conv}, we have that $\tilde{u}_n$ is a bounded sequence in $H^1(\p\Si,g_\infty)$ and that $\p_{\nu_{g_\infty}}\left(\tilde{u}_n\right)_{g_\infty}$ is a bounded sequence in $L^2(\p\Si,g_\infty)$. Hence, upon passing to a subsequence, we can assume that $\tilde{u}_n \rightharpoonup u_\infty$ weakly in $H^1(\p\Si,g_\infty)$ and $\p_{\nu_{g_\infty}}\left(\tilde{u}_n\right)_{g_\infty} \rightharpoonup \p_{\nu_\infty} (u_\infty)_{g_\infty}$ weakly in $L^2(\p\Si,g_\infty)$. By the compactness of the embedding $H^1(\p\Si) \embed C(\p\Si)$, we further have uniform convergence of $\tilde{u}_n$ and hence also the projections $P_{\tilde{u}_n}$ converge uniformly to $P_{u_\infty}$. We thus conclude that 
    \begin{equation*}
		P_{u_\infty}\p_{\nu_{g_\infty}}(u_\infty)_{g_\infty} = 0,
	\end{equation*}
    i.e. that $u_\infty$ is a half-harmonic map (with respect to $g_\infty$).
	
	It remains to show that $(u_\infty)_{g_\infty}$ is conformal. For this, we first note that pulling back by diffeomorphisms does not affect \eqref{est:metric_almost_crit}, and so
	\begin{equation*}
		\norm{P_{\tilde{g}_n}^{H}(k((\tilde{u}_n)_{\tilde{g}_n},\tilde{g}_n)}_{L^2(\Si)} \rightarrow 0.
	\end{equation*}
	We can now use the estimates from Section \ref{sect:metric} on the stress-energy tensor and horizontal projection to show that $P_{g_\infty}^{H}k((u_\infty)_{g_\infty},g_\infty) = 0$. 

    Specifically, we use \eqref{est:PH_bounded} and \eqref{est:basic_3} to get the convergence
    \begin{align*}
        \norm{P_{g_\infty}^{H}\left(k((u_\infty)_{g_\infty},g_\infty) - k((\tilde{u}_n)_{\tilde{g}_n},\tilde{g}_n)\right)}_{L^2(\Si)} \rightarrow 0
    \end{align*}
    and the bounds \eqref{est:PH_Lip} and \eqref{est:k-L1-energy} to show
    \begin{equation*}
        \norm{(P_{g_\infty}^{H} - P_{\tilde{g}_n}^H) k((\tilde{u}_n)_{\tilde{g}_n},\tilde{g}_n)}_{L^2(\Si)} \rightarrow 0
    \end{equation*}
    which all combine to show $P_{g_\infty}^{H}k((u_\infty)_{u_\infty},g_\infty) = 0$.
    
    As observed in Remark \ref{rmk:conformal}, this immediately implies that $(u_\infty)_{g_\infty}$ is conformal, which completes the proof of the proposition.
\end{proof}

\begin{proof}[Proof of Theorem \ref{thm:2}]
Let  $(u,g)$ be a global solution to the flow \eqref{eq:flow} with $\inj(g(t)) \geq \iota_0$ for which there is no energy concentration as $t\to \infty$ as specified in the theorem. 
Then the energy decay formula \eqref{eq:energy-decay} ensures that there is a sequence of times $t_j$ such that \eqref{est:map_almost_crit} and \eqref{est:metric_almost_crit} hold
for $(u_j,g_j) = (u(t_j),g(t_j))$. Hence we can apply Proposition \ref{prop:sequence_converge} to obtain the required limiting pair $(u^*,g^*)$, the smooth convergence $g(t_j) \rightarrow g^*$ and weak $H^1(\p\Si,g^*)$ convergence of the map component $u(t_j) \rightarrow u^*$. We then note the estimate \eqref{est:higher_reg_smoothing} provides uniform $H^k$ bounds on $u(t_j)$, which allows us to upgrade to smooth convergence and hence conclude the proof.
\end{proof}

\appendix

\section{}

\subsection{Sobolev Inequalities}
Throughout the paper we use standard Sobolev inequalities which we briefly recall for the convenience of the reader. 
As 
$W^{s,p}(\Om)\hookrightarrow L^{q}(\Om)$ for $n$-dimensional domains $\Om$, $p<\frac{n}{s}$ and $q= \frac{np}{n-sp}$ we can use that $W^{s,p}(\Si)\hookrightarrow L^{q}(\Si)$ for $q=\frac{2p}{2-sp}$, 
in particular  $H^{\half}(\Si)\hookrightarrow L^4(\Si)$,
    and  $W^{s,p}(\pSi)\hookrightarrow L^{q}(\p\Si)$ for $q=\frac{p}{1-sp}$. 
    As  $W^{1,p}(\Si)\hookrightarrow W^{1-\frac{1}{p},p}(\pSi)$, we 
can furthermore use that 
$W^{1,p}(\Si)\hookrightarrow L^r(\pSi)$ for $r= \frac{p}{1-(1-\frac1p)p}=\frac{p}{2-p}$ and $1\leq p<2$, in particular $W^{1,\frac43}(\Si)\hookrightarrow L^2(\pSi)$.

We will also use that the following standard interpolation inequality 
\beq \label{app:Hs-interpol}
\norm{f}_{H^s(\Om)}^2\leq \norm{f}_{H^{s+r}(\Om)}\norm{f}_{H^{s-r}(\Om)}, \qquad f\in H^{s+r}(\Om)
\eeq
holds true both on $\Om=\Si$ and on $\Om=\pSi$ for any $0\leq r\leq s$ and any $g\in \MMiota$ for a constant $C$ that only depends on $r,s$, $\iota_0$ and as usual the topology of $\Si$, compare e.g. Theorem 1 of \cite{Brezis_Mironescu_interpolation}.

\subsection{Commutator estimates}\label{sect:app}
While many of the operations we use repeatedly throughout the paper, such as harmonic extensions, derivatives and projections, do not commute, the error terms  resulting from changing the order of operation will in general be of lower order. The purpose of this appendix is to collect and prove the relevant estimates for such commutators which will be used throughout the paper. 

To begin with we note that terms which are obtained from commuting two differential operators $L_{1,2}$ (which are defined in terms of the metric) of total order $j_1+j_2=j\in \N$ can be bounded by 
\beq\label{est:commute-L12}
\norm{L_1L_2 w-L_2L_1 w}_{L^2(\Si)}\leq C \norm{w}_{H^{j-1}(\Si)} \text{ and }
\norm{L_1L_2 w-L_2L_1 w}_{L^2(\pSi)}\leq C \sum_{k\leq j-1}\norm{\na^{k}w}_{L^2(\pSi)}.
\eeq

We also recall that the extensions $\tau$ and $\nu$ of the unit tangent and normal vector fields on $\pSi$ are chosen so that $\peta u_g$, $\eta\in \{\tau,\nu\}$ are again harmonic on the corresponding collar neighbourhood $\Col(\pSi)$ of the boundary, compare Remark \ref{collar:uniform}, and let $\varphi$ be a cut-off function with $\varphi\equiv 1$ near $\pSi$ as in that remark. As the $C^k$ semi-norm of harmonic functions can be controlled just in terms of the energy away from the boundary, we can hence bound the difference between $\peta u_g$ and its harmonic extension by 
\beq\label{est:commute-he-petau}
\norm{\varphi\big[\peta u_g-(\peta u_g)_g)\big]}_{C^k(\Si)}\leq C E(u,g)^\half 
\eeq 
for any $k\in \N$, any $g\in\MMiota$ and a constant $C$ that only depends on $k$ and $\iota_0$. 

Combined, these estimates in particular allow us to bound 
	\begin{equation}\label{est:grad_normal_commute}
		\norm{\nabla^j (\p_{\eta} u_g)_g - \p_{\eta}\nabla^j u_g}_{L^2(\p\Si)} 
        \leq C \sum_{k\leq j}\norm{\na^{k}u_g}_{L^2(\pSi)} \leq 
        C\norm{u_g}_{H^{j+1}(\Si)}
	\end{equation}
where the last estimate is not optimal, but will often be sufficient in applications.

We will need commutator estimates mostly for derivatives and harmonic extensions of functions of the form $w=P_{v_g}(\peta u_g)$
and $w=P_{v_g}^\perp (\peta u_g)$,  where 
 $P_p, P^\perp_p\in C_b^\infty(\R^n;\R^{n\times n})$ denote the (fixed) extensions of the projections onto the tangent and normal space of our support manifold 
chosen in Remark \ref{rmk:extension-projection}.

To state the relevant estimates in a form that makes them applicable to treat both types of terms, we let 
 $A \in C_b^\infty(\R^n;\R^{n\times n})$ be any bounded matrix valued function with bounded derivatives 
and consider commutator terms of the form 
\beq\label{def:comm-j}
    C_j(U,V):= L_1L_2\big(\varphi A(V)(\peta U)\big)-L_1\big(\varphi A(V)(\peta L_2 U)\big)
\eeq
for linear differential operators $L_{1,2}$ of order $j_1\geq 0$, $j_2\geq 1$ with  $j_1+j_2\leq j$ and a cut-off function $\varphi\in C_c^\infty(\Col(\pSi))$ which is chosen as in Remark \ref{collar:uniform} and is in particular so that $\varphi\equiv 1$ near $\pSi$. A short argument, which is carried out below, allows us to prove the following basic, but useful, estimates.

\begin{lemma}\label{lemma:commutator-app}
Let $g\in\MMiota$, $A \in C_b^\infty(\R^n;\R^{n\times n})$ and let $U,V\in H^{j}(\Col(\pSi))$, for some $j\geq 2$,  $\Col(\pSi)=\bigcup_i\Col(\si_i,g)$ the union of the neighbourhoods of the boundary curves $\si_i$ described in Section \ref{subsec:metric-basic}. Then the following holds true for commutator terms 
$C_{j}(U,V)$ and $C_{j-1}(U,V)$ of the form \eqref{def:comm-j} obtained from differential operators $L_{1,2}$ of order $j_{1,2}$ with $j_2\geq 1$ and $j_1+j_2\leq j$ respectively $j_1+j_2\leq j-1$.

 If $j=2$ then $C_2(U,V)\in L^{4/3}(\Si)$ and $C_1(U,V)\in L^2(\pSi)$ and we can bound
\beqa\label{est:app-c2-L43}
  &  \norm{C_2(U,V)}_{L^{4/3}(\Si)}
 +\norm{C_1(U,V)}_{L^{2}(\pSi)} \\
     & \qquad \qquad \leq C (1+\norm{ \na V}_{L^4(\Si)}) \norm{U}_{H^2(\Si)} + C\norm{V}_{H^2(\Si)}\norm{ \na U}_{L^4(\Si)}\\
 & \qquad \qquad + C(1+\norm{ \na V}_{L^4(\Si)}^2)\norm{ \na U}_{L^4(\Si)}
\eeqa
while for functions $U,V\in H^{5/2}(\Si)$ we furthermore have that $C_2(U,V)\in L^{2}(\Si)$ and
\beqa\label{est:app-c2-L2}
    \norm{C_2(U,V)}_{L^{2}(\Si)}&\leq C  (1+\norm{V}_{H^{3/2}(\Si)}) \norm{U}_{H^{5/2}(\Si)}+ C\norm{V}_{H^{5/2}(\Si)}\norm{U}_{H^{3/2}(\Si)}\\
    &\qquad +C(1+\norm{V}_{H^{2}(\Si)}^2)\norm{U}_{H^{3/2}(\Si)}.
\eeqa
If $j\geq 3$ then
 $C_j(U,V)\in L^{2}(\Si)$ and $C_{j-1}(U,V)\in L^2(\pSi)$
and we can bound 
\beqa\label{est:app-cj-L43}
   & \norm{C_j(U,V)}_{L^{4/3}(\Si)}+\norm{C_{j-1}(U,V)}_{L^{2}(\pSi)} \\
    & \qquad\qquad\leq C (1+\norm{V}_{H^{3/2}(\Si)}) \norm{U}_{H^{j}(\Si)} +C \norm{U}_{H^{3/2}(\Si)}\norm{V}_{H^{j}(\Si)}\\
    &\quad \qquad\qquad + C\norm{V}_{H^{5/2}(\Si)}\norm{U}_{H^{j-1}(\Si)} + C\norm{U}_{H^{5/2}(\Si)}\norm{V}_{H^{j-1}(\Si)}\\
    &\quad\qquad \qquad + C(1+\norm{V}_{H^{j-1}(\Si)}^j)\norm{U}_{H^{j-1}(\Si)}
\eeqa
and 
\beqa\label{est:app-cj-L2}
    \norm{C_j(U,V)}_{L^{2}(\Si)} &\leq C(1+\norm{V}_{H^{5/2}(\Si)}) \norm{U}_{H^{j}(\Si)}+ C\norm{U}_{H^{5/2}(\Si)} \norm{V}_{H^{j}(\Si)}\\
    &\quad +C(\norm{V}_{H^{5/2}}+\norm{V}_{H^2(\Si)}^2) \norm{U}_{H^{j-\half}(\Si)}\\
    &\quad + C(\norm{U}_{H^{5/2}(\Si)}+ \norm{V}_{H^2(\Si)}\norm{U}_{H^2(\Si)})\norm{V}_{H^{j-1/2}(\Si)}\\
    &\quad + C(1+\norm{V}_{H^{j-1}(\Si)}^j)\norm{U}_{H^{j-1}(\Si)}
\eeqa
for a constant $C$ that only depends on the operators $L_{1,2}$, $\iota_0$ and $A$.
\end{lemma}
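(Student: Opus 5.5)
The plan is to expand the commutator $C_j(U,V)$ by the Leibniz rule and bound each resulting term by Hölder's inequality combined with the Sobolev embeddings recalled above.

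\textbf{Expansion.} Write $w=\varphi A(V)\peta U$. Apply $L_2$ (order $j_2\ge1$) to $w$ and distribute derivatives. The single term in which all $j_2$ derivatives fall on $\peta U$ differs from $\varphi A(V)\peta L_2U$ only by commuting $L_2$ with $\peta$. By \eqref{est:commute-L12}, that commutator is a differential operator of order $j_2$ in $U$ with smooth coefficients. All other terms carry at least one derivative on $\varphi$ or on $A(V)$. The chain rule (Faà di Bruno) then gives
\[
C_j(U,V)=L_1\Big(\sum \partial^{a}\varphi\,(D^{p}A)(V)\,\na^{b_1}V\cdots\na^{b_p}V\,\na^{c}U\Big)+L_1(\text{lower order in } U),
\]
where $a+b_1+\dots+b_p+c\le j_2+1$ and $c\le j_2$. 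Whenever the $\varphi$-factor is differentiated ($a\ge1$ with $p=0$), only $U$ appears, at order at most $j_2$. After also distributing $L_1$, every term has the form
\[
\text{(bounded smooth factor)}\cdot \na^{b_1}V\cdots\na^{b_p}V\cdot\na^{c}U, \qquad \sum b_i+c\le j+1,\quad c\le j,\quad b_i\ge1,\quad c\ge1.
\]
The key point is that \emph{some derivative has left $U$}: either $c\le j$ with a $V$-factor present, or no $V$-factor present and $c\le j$, together with lower-order terms.

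\textbf{Estimating each term.} Each term is bounded by Hölder's inequality. Put the factor carrying the most derivatives in $L^2$, or in $H^{1/2}\hookrightarrow L^4$, and the remaining factors in $L^4$ or $L^\infty$, using $H^{3/2}\hookrightarrow W^{1,4}$ and $H^{5/2}\hookrightarrow W^{1,\infty}\cap W^{2,4}$.

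For $j=2$ in $L^{4/3}$, the terms are of four types:
\begin{itemize}
\item $\na^2U\cdot\na V$, bounded via $L^2\cdot L^4$;
\item $\na U\cdot\na^2V$, bounded via $L^4\cdot L^2$;
\item $\na U\cdot\na V\cdot\na V$, bounded via $L^4\cdot L^4\cdot L^4\subset L^{4/3}$;
\item pure $U$ terms of the form $\na^2U$ and $\na U$.
\end{itemize}
This reproduces \eqref{est:app-c2-L43}. The $L^2$ bound \eqref{est:app-c2-L2} follows the same way, upgrading each factor by half a derivative so that every factor lands in $L^4$ or $L^\infty$.

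For $j\ge3$ the same bookkeeping applies. The extremal terms $\na^jU\cdot\na V$ and $\na U\cdot\na^jV$ give the first line of \eqref{est:app-cj-L43}. Terms of the type $\na^{j-1}\cdot\na^2$ give the second line. The multilinear terms with all factors of order at most $j-1$ are handled by Gagliardo–Nirenberg and the algebra property of $H^{j-1}$ in dimension two, which produces the power $\norm{V}_{H^{j-1}}^j$. For \eqref{est:app-cj-L2}, the top-order factors are placed in $L^2$ and the others in $L^\infty$ via $H^{5/2}$. Intermediate terms are interpolated using \eqref{app:Hs-interpol}, which accounts for the $H^{j-1/2}$ terms.

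\textbf{Boundary estimates.} The $L^2(\pSi)$ bounds for $C_1$ and $C_{j-1}$ follow from $W^{1,4/3}(\Si)\hookrightarrow L^2(\pSi)$. The terms of $C_{j-1}$ have one derivative fewer, so their gradients are terms of the form just estimated in $L^{4/3}$, and the functions themselves are controlled even more easily.

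\textbf{Main obstacle.} The delicate step is the $L^{4/3}$/trace bound at the critical level $j=2$. Here the norms must stay $H^2$/$W^{1,4}$ with no $H^{5/2}$ appearing, and one has to verify that the Leibniz expansion produces no term in which a derivative of order $j+1$ falls on $U$. This holds precisely because $\peta L_2U$ is subtracted, and the remaining $[L_2,\peta]$ term is of order $j_2$ by \eqref{est:commute-L12}.
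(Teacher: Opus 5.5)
Your proposal is correct and is essentially the paper's argument. Expanding by the Leibniz and chain rules gives pointwise bounds on $C_j$ and on $\nabla C_{j-1}$ (the latter is itself a $C_j$-type term), with at most $j$ derivatives falling on $U$ or on $V$; you then apply H\"older with the Sobolev embeddings, and use $W^{1,4/3}(\Si)\hookrightarrow L^2(\pSi)$ for the boundary terms. Two bookkeeping points need adjusting. Cubic terms such as $\nabla U\,|\nabla V|^2$ or $|\nabla V|^2|\nabla^{j-1}U|$ in the $L^2$ bounds require $\nabla V\in L^8$ via $H^2\hookrightarrow W^{1,8}$, not $L^4$ or $L^\infty$. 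The $H^{j-1/2}$ norms come from applying $H^{1/2}\hookrightarrow L^4$ to the $(j-1)$-st derivatives, not from interpolation.
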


At times, it will suffice to use that the above estimates in particular imply that for $j \geq 3$
\beq \label{est:L^2-commut-weaker}
\norm{C_j(u_g,u_g)}_{L^2(\Si)}^2\leq S_{j-\half}(u,g)^{j}S_j(u,g), \text{ for } S_s(u,g) \text{ as in \eqref{def:S_k}}
\eeq

We can apply the above lemma not only to control error terms that result from commuting derivatives and projections, but also to compare functions of the form $w=P_{v_g}\peta u_g$ with their harmonic extensions. 
To this end, we note that  $\na^{j-2} \Delta (\varphi A(v_g) \peta u_g)$ can be viewed as a commutator term
$C_{j}(v_g,u_g)$ of the above form for 
 $L_1=\na_g^{j-2}
$ and $L_2=\Delta_g$ since $\Delta_g u_g \equiv 0$. 
To bound the difference between such $w=\varphi A(v_g)\peta u_g $ and their harmonic extension $w_g$ we will hence be able to combine the above commutator estimates with the standard
elliptic estimate 
\beq\label{est:diff-extension-sigma}
    \norm{w-w_g}_{W^{k,p}(\Si)}\leq C\norm{\Delta_g w}_{W^{k-2,p}(\Si)}, \quad C=C(k,p,\iota_0), \quad 1<p<\infty
\eeq and the fact that  $W^{1,\frac{4}{3}}(\Si)
\hookrightarrow L^2(\pSi)$. 

This immediately allows us to deduce that the above commutator estimates also apply for 
terms of the form $\na^j( \varphi \big[ A(v_g)\peta u_g-(A(v_g)\peta u_g)_g\big] )$
and, as $\varphi[(\peta u_g)_g-\peta u_g]$ is controlled by \eqref{est:commute-he-petau}, also for terms of the form $\na^j( \varphi \big[ A(v_g)(\peta u_g)_g-(A(v_g)\peta u_g)_g\big] )$. 
Combined with the $C^k$ control on harmonic functions on $\supp(1-\vph)\subset \Si\setminus \pSi$, 
we hence obtain the following useful consequence of the above lemma.

\begin{rmk}\label{rmk:extra-commutator-app}
    The commutator estimates stated in the above Lemma \ref{lemma:commutator-app} are all valid also for commutator terms of the form 
\begin{eqnarray}
    \label{def:comm-rmk-1}
C_j(v_g,u_g)&=&\varphi\big(
    \na_g^{j}\big[(A(v_g)\peta u_g)_g\big]-\na_g^{j_1} \big[A(v_g) \peta \na_g^{j_2} u_g\big]\big), \qquad \, j_1+j_2=j,
   \\
   \label{def:comm-rmk-2}
   C_j(v_g,u_g)&=&\vph \big(
    \na_g^{j}\big[(A(v_g)\peta u_g)_g\big]-\na_g^{j_1} \big[A(v_g) \na_g^{j_2} (\peta  u_g)_g\big]\big), \quad j_1+j_2=j,
\end{eqnarray} 
    and for cut-off terms that are supported away from $\pSi$ of the form 
      \begin{eqnarray}
      \label{def:comm-rmk-3-1}
        C_j(v_g,u_g) &=& \nabla_g^{j_0} \left[(1-\vph) \na_g^{j_1} (A(v_g)\na_g^{j_2}(\p_\eta u_g)_g )\right]
    , \qquad
        j_0+j_1+j_2=j\\
        \label{def:comm-rmk-3-2}
        C_j(v_g,u_g) &=& \nabla_g^{j_0} \left[(1-\vph) \na_g^{j_1} (A(v_g)\p_\eta u_g)_g \right], \qquad\qquad\, \qquad j_0+j_1=j.
    \end{eqnarray}
\end{rmk}

\begin{proof}[Proof of Lemma \ref{lemma:commutator-app}]
To obtain the claim for $j=2$ we can use that $\na C_1$ and $C_2$ satisfy pointwise bounds of
\beqas 
    \abs{C_2(U,V)}+\abs{\na C_1(U,V)} \leq C\big[ \abs{\na^2 U}(1+\abs{\na V}) +\abs{\na^2 V}\abs{\na U}+(1+\abs{\na V}^2)\abs{\na U}\big]
\eeqas
$C$ as in the lemma. The claims in this case can hence be obtained by using that $H^\half(\Si)\hookrightarrow L^4(\Si)$ and $W^{1,\frac{4}{3}}(\Si)\hookrightarrow L^2(\pSi)$ and applying H\"older's inequality.

Similarly, for $j\geq 3$ we have pointwise estimates of 
\beqas 
    \abs{C_j(U,V) }+\abs{\na C_{j-1}(U,V)} &\leq C (1+\abs{\na V}) \abs{\na^{j} U}+\abs{\na^{j} V}\abs{\na U}\\
    &\quad+C(\abs{\na V}^2+\abs{\na^2 V}+1) \abs{\na^{j-1} U}+(\abs{\na U}\abs{\na V}+\abs{\na^2 U})\abs{\na^{j-1} V}\\
    & \quad +C \sum_{\substack{\abs{\al} \leq j+1 \\ 1\leq \al_i \leq j-2}}\abs{\nabla^{\al_1} V}\cdots\abs{\nabla^{\al_{\ell-1}}V} \abs{\nabla^{\al_\ell} U}.
\eeqas
The claims in this case can hence be obtained by using the Sobolev embeddings $H^\half(\Si)\hookrightarrow L^4(\Si)$ and $H^{j-1}(\Si)\hookrightarrow W^{j-2,p}(\Si)$, $p<\infty$, and  $H^{3/2}(\Si)\hookrightarrow L^\infty(\Si)$  as well as H\"older's inequality. 
\end{proof}

\textsc{Melanie Rupflin} and \textsc{Christopher Wright}: Mathematical Institute, University of Oxford, Oxford, United Kingdom

\textsc{Michael Struwe}: Departement Mathematik, ETH Z\"urich, Z\"urich, Switzerland

\end{document}